\documentclass{amsart}

\usepackage[a4paper,top=3.2cm,bottom=3.2cm,left=2.7cm,right=2.7cm]{geometry}
\usepackage{amssymb}
  \usepackage{amsthm}
\usepackage{mathtools}
\usepackage{bm}
\usepackage{enumitem}
\usepackage[final,protrusion=true,expansion=true]{microtype}
\usepackage[dvipsnames]{xcolor}
\usepackage[colorlinks=true,citecolor=magenta,linkcolor=cyan,urlcolor=magenta,bookmarksnumbered=true,bookmarksopen=true]{hyperref}
\usepackage{titletoc} 
\usepackage{tikz-cd}
\usepackage{mdframed}
\numberwithin{equation}{section}

\newcommand{\Qq}{\mathbb Q}
\newcommand{\Zz}{\mathbb Z}

\newcommand{\Gg}{\mathcal G}
\newcommand{\Hh}{\mathcal H}

\newcommand{\Oo}{\mathcal O}
\newcommand{\ch}{\operatorname{ch}}
\newcommand{\td}{\operatorname{td}}
\newcommand{\Hilb}{\operatorname{Hilb}}
\newcommand{\rk}{\operatorname{rk}}
\newcommand{\rank}{\operatorname{rank}}
\newcommand{\Tr}{\operatorname{Tr}}
\newcommand{\Fact}{\operatorname{Factor}}

\newcommand{\Top}{\operatorname{top}}

\newcommand{\pr}{\operatorname{pr}}
\newcommand{\im}{\operatorname{im}}
\newcommand{\Factor}{\operatorname{Factor}}

\newtheorem{thm}{Theorem}[section]

\newtheorem{cor}[thm]{Corollary}
\newtheorem{lem}[thm]{Lemma}
\newtheorem{prop}[thm]{Proposition}

\theoremstyle{definition}
\newtheorem{defn}[thm]{Definition}
\newtheorem{rem}[thm]{Remark}
\newtheorem{setup}[thm]{Set-up}
\newtheorem{nota}[thm]{Notation}

\newtheoremstyle{italicremark}
  {}{}              
  {\itshape}        
  {}                
  {\bfseries}       
  {.}               
  { }               
  {\thmname{#1}\thmnote{ (#3)}} 
\theoremstyle{italicremark}
\newtheorem*{humancommentinner}{Remark}
\mdfdefinestyle{humancommentstyle}{
  linewidth=0.4pt,
  linecolor=black,
  innerleftmargin=8pt,
  innerrightmargin=8pt,
  innertopmargin=5pt,
  innerbottommargin=5pt,
  skipabove=\smallskipamount,
  skipbelow=\smallskipamount,
}
\newenvironment{humancomment}
  {\begin{mdframed}[style=humancommentstyle]\begin{humancommentinner}[by authors]}
  {\end{humancommentinner}\end{mdframed}}
  
\begin{document}

\title{Tangent classes of matroids and wonderful compactifications}
\author{Ronnie Cheng \and Shurui Liu}
\dedicatory{With Appendix~\ref{sec:guoxiong-gao} by Guoxiong Gao and Shurui Liu.}

\address{Stanford University, California, USA}

\date{\today}

\begin{abstract}
For every loopless matroid $M$ and every Feichtner--Yuzvinsky building set $\Gg$ containing the top flat, we construct an integral tangent class $T_{M,\Gg}^{\Zz}\in K_{\Zz}(M,\Gg)$; in the realizable case it specializes to the class of the tangent bundle of the corresponding wonderful compactification, it recovers the Hilbert series of the Chow ring through Hirzebruch--Riemann--Roch, and it satisfies the expected Chern-alpha lower bounds. This reproduces the tangent class and its key properties studied by the first author in \cite{Cheng26}.

The main body of this paper was produced autonomously, without human mathematical guidance, by Danus, an AI mathematical reasoning agent. Danus solved the problem before \cite{Cheng26} was publicly available, demonstrating the potential of AI agents in mathematical research. We reproduce its output faithfully, adding only editorial comments; the experiment is documented in Appendix~\ref{sec:guoxiong-gao}.

\end{abstract}

\maketitle

\tableofcontents

\section{Introduction}\label{sec:introduction}
\subsection{Agentic AI}
In \cite{Cheng26}, the first author constructed an integral $K$-class (tangent class) $T_M\in K(M)$ for every loopless matroid $M$ and established its main properties. To test the power of AI tools in mathematical research, the authors posed to several AI agents the task of constructing a $K$-class $T_M$ satisfying the three key properties stated in Theorem~\ref{thm:main-integral}, and carried out the experiment with the assistance of Guoxiong Gao. The agents were the GPT-5.5 Pro web interface; Rethlas, an autonomous agentic proof system \cite{Ju+26}; and Danus \cite{Danus26}, an agentic system built on the Rethlas worker--verifier system with a Claude Code--based orchestrator (see Appendix~\ref{sec:guoxiong-gao}). We ran the experiment before \cite{Cheng26} was publicly released on arXiv, and deliberately withheld that paper from the agents to reduce the possibility of data contamination. The exact prompt is recorded verbatim in Appendix~\ref{sec:raw-prompt}.

Danus constructed a rational $K$-class and proved its properties fully autonomously, without human mathematical guidance. After the authors observed that this solution addressed only the rational version of the original problem (by abuse of notation, the prompt in Appendix~\ref{sec:raw-prompt} uses the same symbol for the integral tangent class and its rationalization), Danus resumed work and added Sections~\ref{sec:quotient}--\ref{sec:theta}, which adapt the rational class to an integral class and thereby settle the original integral problem. Its output is reproduced in the main body of this paper, together with editorial comments by the authors. The authors have verified the manuscript and found it correct, with one local exception: the justification given for Lemma~\ref{lem:truncation-transfer} (used in the Chern--\(\alpha\) lower bound) is incomplete as written, the lemma having been treated as proved without a valid argument. The lemma is nonetheless true and admits a short proof (see \cite[Proposition~4.19]{Cheng26}), and this gap affects neither the construction of the tangent class nor the \(P^K=\Hilb\) identity.

Apart from the abstract, introduction, and the explicitly inserted human comments, the body of the paper should be understood as a faithful presentation of Danus's performance on the open problem studied here. Further details of the AI usage and the experiment are given in Appendix~\ref{sec:guoxiong-gao}, by Guoxiong Gao and the second author.
\subsection{Mathematics}
Let $M$ be a loopless matroid of rank $d+1$ on a finite nonempty ground set $E$, let $\Gg$ be a Feichtner--Yuzvinsky building set in its lattice of flats with $E\in\Gg$, and put $\Gg^\circ:=\Gg\setminus\{E\}$. We work in the integral combinatorial K-ring $K_{\Zz}(M,\Gg)$ with generators $\tau_F$, $F\in\Gg$. Our main theorem constructs a genuine integral tangent class
\[
T_{M,\Gg}^{\Zz}:=\sum_{F\in \Gg^\circ}(1-\tau_F)^{-1}-Q_{\Gg}^{\Zz}\in K_{\Zz}(M,\Gg),
\]
with integer coordinates in the standard $\tau$-monomial $\Zz$-basis; the integral quotient representative $Q_{\Gg}^{\Zz}$ is constructed in Section~\ref{sec:quotient}. The construction is intrinsic: the reduced $\Gg$-nested fan need not be complete, so $K_{\Zz}(M,\Gg)$ is not presented as the K-ring of a complete toric variety, and the theorem is stated in $K_{\Zz}(M,\Gg)$ itself (see Section~\ref{sec:fan-support}).

Write $K_{\Qq}(M,\Gg):=K_{\Zz}(M,\Gg)\otimes_{\Zz}\Qq$ for the rational combinatorial K-ring. In Section~\ref{sec:tangent-class} we construct, from the descended Berget--Eur--Spink--Tseng quotient Chern polynomial, a rational quotient class $Q_{M,\Gg}$ and the corresponding rational tangent class $T_{M,\Gg}$ in $K_{\Qq}(M,\Gg)$, and prove the realizable comparison, the Hilbert identity $P^K=\Hilb$ (Theorem~\ref{thm:hilbert-identity}), and the Chern-alpha lower bounds (Theorem~\ref{thm:chern-alpha}). The integral theorem below lifts these to $K_{\Zz}(M,\Gg)$: $T_{M,\Gg}^{\Zz}$ rationalizes to $T_{M,\Gg}$ (Proposition~\ref{prop:linkage}).

\begin{thm}[Integral tangent class]\label{thm:main-integral}
Let $M$ be a loopless matroid of rank $d+1$ on a finite nonempty ground set $E$, with finite lattice of flats $L(M)$, bottom flat $\emptyset$, top flat $E$, and rank function $\rk_M$. Let $\Gg$ be a finite Feichtner-Yuzvinsky building set in $L(M)\setminus\{\emptyset\}$ containing $E$, and put $\Gg^\circ:=\Gg\setminus\{E\}$. Let
\[
\rho_K\colon K_{\Zz}(M,\Gg)\longrightarrow K_{\Zz}(M,\Gg)\otimes_{\Zz}\Qq
\]
be rationalization. Then there exists an integral quotient representative $Q_{\Gg}^{\Zz}\in K_{\Zz}(M,\Gg)$ such that $\rho_K(Q_{\Gg}^{\Zz})=Q_{M,\Gg}$, and
\[
T_{M,\Gg}^{\Zz}:=\sum_{F\in \Gg^\circ}(1-\tau_F)^{-1}-Q_{\Gg}^{\Zz}
\]
is a genuine element of $K_{\Zz}(M,\Gg)$ whose rationalization is $T_{M,\Gg}$. Moreover:
\begin{enumerate}
\item If $M$ is realized over $\mathbb C$ by a linear subspace $L$ not contained in any coordinate hyperplane, and if $W_{L,\Gg}$ is the De Concini-Procesi wonderful model with reduced boundary divisor $D_{\Gg}:=\sum_{F\in \Gg^\circ}D_F$, then $Q_{\Gg}^{\Zz}$ may be chosen together with an integral unital ring isomorphism
\[
\theta_{\Gg}^{\Zz}\colon K_{\Zz}(M,\Gg)\longrightarrow K_0(W_{L,\Gg})
\]
such that
\[
\theta_{\Gg}^{\Zz}\left((1-\tau_F)^{-1}\right)=[\Oo_{W_{L,\Gg}}(D_F)]
\]
for every $F\in \Gg^\circ$, and
\[
\theta_{\Gg}^{\Zz}(T_{M,\Gg}^{\Zz})=[T_{W_{L,\Gg}}].
\]
\item The K-theoretic Todd polynomial
\[
P_{\mathrm{int}}^K(M,\Gg;z):=\deg_{M,\Gg}\left(\ch\left(\lambda_{-z}\left(\rho_K(T_{M,\Gg}^{\Zz})^\vee\right)\right)\td\left(\rho_K(T_{M,\Gg}^{\Zz})\right)\right)
\]
equals $\Hilb(M,\Gg;z)$ in $\Zz[z]$. Equivalently, for every integer $i$ with $0\leq i\leq d$,
\[
\dim_{\Qq}A_{\Qq}(M,\Gg)^i=(-1)^i\chi\left(\wedge^i T^\vee\right).
\]
\item For every integer $k$ with $0\leq k\leq d$, one has
\[
\deg_{M,\Gg}\left(c_k\left(\rho_K(T_{M,\Gg}^{\Zz})\right)\alpha^{d-k}\right)\geq {d+1\choose k},
\qquad \alpha:=-x_E.
\]
\end{enumerate}
Finally, $T_{M,\Gg}^{\Zz}$ has integer coordinates in the standard $\tau$-monomial $\Zz$-basis of $K_{\Zz}(M,\Gg)$.
\end{thm}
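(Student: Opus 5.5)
The plan is to reduce everything to the rational results already established and then do the one genuinely new piece of work, the integral lift of the quotient class. Concretely, I will construct $Q_{\Gg}^{\Zz}$ explicitly in $K_{\Zz}(M,\Gg)$ as a $\Zz$-linear combination of $\tau$-monomials, and check that $\rho_K(Q_{\Gg}^{\Zz})=Q_{M,\Gg}$. For the construction I will write the descended Berget--Eur--Spink--Tseng quotient Chern polynomial as a product of Chern roots coming from the tautological quotient bundle restricted along the nested strata. That is, I will express $Q_{M,\Gg}$ as an alternating sum of pushforwards/pullbacks of tautological classes of the permutohedral variety along the blowdown to $\Gg$. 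Each such tautological quotient class has an integral $K$-representative built from products $\prod(1-\tau_F)^{\pm1}$, so this gives integral coordinates. Once $Q_{\Gg}^{\Zz}$ exists, $T_{M,\Gg}^{\Zz}$ is integral because each $(1-\tau_F)^{-1}$ is a finite integral polynomial in $\tau_F$, using the nilpotency of $\tau_F$ in $K_{\Zz}(M,\Gg)$. Its rationalization is $T_{M,\Gg}$ by linearity of $\rho_K$ (this is the content of Proposition~\ref{prop:linkage}).

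For (1), I will define $\theta_{\Gg}^{\Zz}$ by $1-\tau_F\mapsto[\Oo(-D_F)]$. The relations of $K_{\Zz}(M,\Gg)$ map to the standard Koszul/non-nested relations on $W_{L,\Gg}$. Bijectivity then follows from the known presentation of $K_0$ of the wonderful model, for instance by comparing $\tau$-monomial bases with the cellular/Keel-type basis, or by induction on blowups together with the rational isomorphism and torsion-freeness. The tangent identity comes from the standard exact sequence on a wonderful model,
\[
0\to T_W(-\log D_{\Gg})\to T_W\to\textstyle\bigoplus_F \Oo_{D_F}(D_F)\to 0,
\]
in which $[\Oo_{D_F}(D_F)]=[\Oo(D_F)]-1$. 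The constant terms are absorbed, so I must arrange $Q_{\Gg}^{\Zz}$ so that its image is the class of the log-tangent complement, namely $\sum_F 1-[T_W(-\log D)]$, where the log tangent bundle is the restriction of the tautological quotient. The proof is complete once $\theta(Q_{\Gg}^{\Zz})$ is shown to equal that class; the realizable comparison for the rational class supplies this after checking integrally, using injectivity of $\rho$ on the torsion-free $K_0(W)$.

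Parts (2) and (3) depend only on $\rho_K(T^{\Zz}_{M,\Gg})=T_{M,\Gg}$, so they follow immediately from Theorem~\ref{thm:hilbert-identity} and Theorem~\ref{thm:chern-alpha}. Integrality of the coordinates is built into the construction. The main obstacle is the explicit integral lift of $Q_{M,\Gg}$ in the nonrealizable case: the rational class is defined through a descended Chern polynomial and involves division by factorials in $\ch^{-1}$. I would first try to define it as a genuine $K$-theoretic $\lambda$-class pushforward from the permutohedral $K$-ring, using an integral descent map $K_{\Zz}(M,\text{max})\to K_{\Zz}(M,\Gg)$, and verify that this map is compatible with the rational descent.
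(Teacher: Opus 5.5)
\textbf{There is a genuine gap: you never construct $Q_{\Gg}^{\Zz}$.}

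Several parts of your plan agree with the paper:
\begin{itemize}
\item the reduction of parts (2) and (3) to Theorems~\ref{thm:hilbert-identity} and~\ref{thm:chern-alpha} through $\rho_K(T_{M,\Gg}^{\Zz})=T_{M,\Gg}$;
\item the integrality of $\sum_F(1-\tau_F)^{-1}$ by nilpotency;
\item the last step of (1), namely rational equality of the tangent classes plus torsion-freeness of $K_0(W_{L,\Gg})$.
\end{itemize}
The step you yourself call the main obstacle is the actual content of the theorem, and it is not carried out. Integrality on the maximal model is available: $\iota_M^*[Q_M]\in K_{\Zz}(M,\Gg_{\max})$. But $Q_{M,\Gg}=\ch^{-1}(\ch_Q^A)$ is produced with factorial denominators. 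Knowing that its image in $K_{\Qq}(M,\Gg_{\max})$ is the rationalization of an integral class says nothing about integrality on $\Gg$, unless the refinement map $\phi\colon K_{\Zz}(M,\Gg)\to K_{\Zz}(M,\Gg_{\max})$ has torsion-free cokernel.

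Your first construction, alternating sums of pushforwards and pullbacks of tautological classes ``along the blowdown to $\Gg$'', silently assumes integral pushforwards between the combinatorial $K$-rings. These are not available a priori, since $M$ need not be realizable and the nested fans are in general not complete.

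Your fallback, an integral map $r\colon K_{\Zz}(M,\Gg_{\max})\to K_{\Zz}(M,\Gg)$, would finish the job if $r\circ\phi=\mathrm{id}$. Compatibility with the rational descent would then be automatic: since $r_{\Qq}\circ\phi_{\Qq}=\mathrm{id}$, the class $r(\iota_M^*[Q_M])$ rationalizes to the descended rational class (before the trivial rank shift). However, the existence of such a retraction is equivalent to $\phi$ being a split injection, and that is exactly what must be proved.

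\textbf{How the paper fills this gap.} It proves the split injection one flat at a time (Proposition~\ref{prop:one-step-saturation}):
\begin{itemize}
\item the $\tau$-adic associated graded of $K_{\Zz}(M,\Gg)$ is the integral Chow ring $A_{\Zz}(M,\Gg)$ (Lemma~\ref{lem:assoc-graded});
\item on associated graded, $\phi_K$ becomes the Feichtner--Yuzvinsky pullback $\psi_{\Zz}$, whose cokernel in each degree is a sum of shifts of $A_{\Zz}(Z)$, a tensor product of free Chow rings of the minors (Lemma~\ref{lem:chow-comparison});
\item saturation lifts from $\operatorname{gr}$ to the filtered map (Lemma~\ref{lem:filtered-saturation}).
\end{itemize}
The integral BEST class then descends step by step (Proposition~\ref{prop:quotient-descent}).

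\textbf{The same ingredient is missing from your part (1).} A generator-defined $\theta$ that is a rational isomorphism between free groups is injective. But rational isomorphism plus torsion-freeness does not give surjectivity; multiplication by $2$ on $\Zz$ is a counterexample. The paper gets integral bijectivity by starting from the Larson--Li--Payne--Proudfoot isomorphism at $\Gg_{\max}$ and descending. At each step, both $\phi_K(K_{\Zz}(M,\Gg_{\mathrm{small}}))$ and $\pi^*K_0(W_{L,\Gg_{\mathrm{small}}})$ are saturated with the same $\Qq$-span, hence equal. The geometric saturation comes from Thomason's blowup formula and freeness of $K_0$ of the centers (Lemmas~\ref{lem:wonderful-center} and~\ref{lem:geometric-saturation}).

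\textbf{Two smaller corrections.}
\begin{itemize}
\item $\tau_E$ has no boundary divisor, so it must be sent to $1-[\Oo(H)]$, with $H$ the pulled-back hyperplane class.
\item The log tangent bundle is not the restriction of the tautological quotient but its complement: $[T_W(-\log D)]=(|E|-1)-[Q_L|_W]$ on the maximal model. This is transported to $\Gg$ by log-\'etale invariance under the blowups.
\end{itemize}
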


Throughout, $\chi(\wedge^i T^\vee)$ denotes the formal Hirzebruch-Riemann-Roch number $\deg_{M,\Gg}(\ch(\wedge^i\rho_K(T_{M,\Gg}^{\Zz})^\vee)\td(\rho_K(T_{M,\Gg}^{\Zz})))$ of the rational tangent class; in the non-realizable case it is not the Euler characteristic of an actual vector bundle.

The theorem is useful in its realizable form because it identifies the purely combinatorial class with the tangent bundle class of the wonderful model. It is also useful in non-realizable cases because the Hilbert-polynomial and Chern-alpha conclusions remain intrinsic to the matroid.

The paper is organized as follows. Section~\ref{sec:integral-k-ring} sets up the integral combinatorial K-ring $K_{\Zz}(M,\Gg)$, its standard $\tau$-monomial basis, and the Chern character. Section~\ref{sec:tangent-class} constructs the rational classes $Q_{M,\Gg}$ and $T_{M,\Gg}$ by one-flat descent of the Berget--Eur--Spink--Tseng quotient Chern polynomial \cite{BEST23} from the maximal model, and proves the realizable specialization; Sections~\ref{sec:hilbert-identity} and~\ref{sec:chern-alpha} prove their remaining two properties---the Hilbert identity $P^K=\Hilb$, anchored at the maximal model by \cite{Cheng25,FMSV24} and propagated by the one-flat recursion of \cite{EFMPV25}, and the Chern-alpha lower bound, through nested-support positivity.

The integral lift occupies Sections~\ref{sec:quotient}--\ref{sec:theta}: Section~\ref{sec:quotient} constructs the integral representative $Q_{\Gg}^{\Zz}$ and the class $T_{M,\Gg}^{\Zz}$ and proves the non-realizable clauses; Section~\ref{sec:saturation} proves the crucial saturation of the one-step refinement maps over $\Zz$, via the $\tau$-adic associated graded and the integral Feichtner--Yuzvinsky comparison \cite{FY04}; and Section~\ref{sec:theta} gives the integral realizable isomorphism to $K_0(W_{L,\Gg})$, using the K-theoretic blowup formula \cite{Tho93} and the wonderful-model blowup centers of \cite{DCP95}. Finally, Section~\ref{sec:hilbert} carries the Hilbert and Chern-alpha conclusions to the integral class, and Sections~\ref{sec:fan-support} and~\ref{sec:closeout} record the fan-support guard and assemble the closeout certificate.

\section{The integral K-ring}\label{sec:integral-k-ring}

We adopt the standard notation and terminology for matroid Chow rings, wonderful compactifications, and toric varieties from \cite{AHK18,CLS11,DCP95,FY04,LLPP24} and use them freely. We work over the field of complex numbers $\mathbb C$ when a realization is fixed.

\begin{defn}\label{defn:chow}
Let $M$ be a loopless matroid with finite lattice of flats $L(M)$, bottom flat $\emptyset$, and top flat $E$. Let $\Gg\subset L(M)\setminus\{\emptyset\}$ be a Feichtner-Yuzvinsky building set containing $E$. The rational Chow algebra $A_{\Qq}(M,\Gg)$ is the quotient of $\Qq[x_F\mid F\in\Gg]$ by the ideal generated by:
\begin{enumerate}
\item all monomials $\prod_{F\in S}x_F$, where $S\subset\Gg$ is not $\Gg$-nested;
\item the atom linear forms $\sum_{F\in\Gg,\ a\leq F}x_F$, where $a$ ranges over the atoms of $L(M)$.
\end{enumerate}
The top-degree map is denoted by $\deg_{M,\Gg}$, and
\[
\Hilb(M,\Gg;z):=\sum_{i\geq 0}\dim_{\Qq}A_{\Qq}(M,\Gg)^i z^i.
\]
\end{defn}

\begin{defn}\label{defn:k-ring}
Let $M$ be a loopless matroid with lattice of flats $L(M)$, bottom flat $\emptyset$, top flat $E$, and let $\Gg$ be a top-containing Feichtner-Yuzvinsky building set in $L(M)\setminus\{\emptyset\}$. The integral combinatorial K-ring $K_{\Zz}(M,\Gg)$ is the $\Zz$-algebra generated by symbols $\tau_F$, for $F\in \Gg$, modulo the following relations:
\begin{enumerate}
\item $\prod_{F\in S}\tau_F=0$ whenever $S\subseteq \Gg$ is not $\Gg$-nested.
\item For every atom $a$ of $L(M)$,
\[
1-\prod_{\substack{F\in \Gg\\ a\leq F}}(1-\tau_F)=0.
\]
\end{enumerate}
Here an atom means a rank-one flat, equivalently a parallel class of $M$. We put $K_{\Qq}(M,\Gg):=K_{\Zz}(M,\Gg)\otimes_{\Zz}\Qq$.
\end{defn}

\begin{rem}\label{rem:atoms}
The atom convention in Definition~\ref{defn:k-ring} is essential. Since $M$ is only assumed loopless and may be non-simple, atoms are rank-one flats, not singleton elements of the ground set. Singleton notation is valid only after passing to a simple matroid.
\end{rem}

The Chern character used below is the finite power-series map
\[
\ch_{M,\Gg}\colon K_{\Qq}(M,\Gg)\to A_{\Qq}(M,\Gg),\qquad
\tau_F\longmapsto 1-\exp(-x_F),
\]
the unique $\Qq$-algebra homomorphism with this generator rule. The nonnested monomial relations go to zero because $1-\exp(-x_F)$ is divisible by $x_F$. For an atom $a$ the multiplicative relation maps to
\[
1-\prod_{F\in\Gg,\ a\leq F}\exp(-x_F)=1-\exp\left(-\sum_{\substack{F\in \Gg\\ a\leq F}}x_F\right)=0,
\]
which is zero by the atom-linear relation in the Chow ring.

\begin{prop}[Intrinsic basis]\label{prop:basis}
For every loopless $M$ and every top-containing $\Gg$, the ring $K_{\Zz}(M,\Gg)$ is free as an abelian group and has the standard $\tau$-monomial $\Zz$-basis. In particular, the phrase ``integer coordinates'' in Theorem~\ref{thm:main-integral} means coordinates with respect to this basis.
\end{prop}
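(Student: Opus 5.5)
The plan is to show that the $\tau$-monomials indexed exactly like the Feichtner--Yuzvinsky monomial basis of the Chow ring form a $\Zz$-basis of $K_{\Zz}(M,\Gg)$. Explicitly, these are the products $\prod_{F\in S}\tau_F^{m_F}$ with $S$ a $\Gg$-nested set and $1\le m_F<\rk_M(F)-\rk_M(\bigvee S_{<F})$, where $S_{<F}=\{H\in S: H<F\}$. I will prove two things: these monomials span $K_{\Zz}(M,\Gg)$ over $\Zz$, and their number $N$ is at most $\dim_{\Qq}K_{\Qq}(M,\Gg)$. Freeness then follows formally. An abelian group spanned by $N$ elements whose rationalization has dimension at least $N$ must be free with those elements as a basis, since any $\Zz$-relation or torsion would cut the rational dimension below $N$. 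The resulting basis is what I will call the standard $\tau$-monomial basis.

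\emph{Step 1: a $K$-theoretic Feichtner--Yuzvinsky relation.} Let $S$ be $\Gg$-nested and let $F\in\Gg$ with $F>\bigvee S$. Put $r:=\rk_M F-\rk_M(\bigvee S)$. I want to show
\[
\prod_{H\in S}\tau_H\cdot\Bigl(1-\prod_{G\in\Gg,\ G\geq F}(1-\tau_G)\Bigr)^{r}=0
\quad\text{in } K_{\Zz}(M,\Gg).
\]
I will imitate the FY argument. Choose atoms $a_1,\dots,a_r\le F$ that raise the rank of $\bigvee S$ one step at a time. For each atom, the defining relation writes $1=\prod_{G\ge a_i}(1-\tau_G)$, so $1-\prod_{G\ge F}(1-\tau_G)$ differs from a combination of the $\tau_G$ with $a_i\le G$ and $G\not\ge F$. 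Those $\tau_G$ either kill $\prod_S\tau_H$ by non-nestedness or enlarge the nested set with a smaller rank gap. An induction on $r$ and on the poset then finishes the argument, exactly as in FY, with sums replaced by the multiplicative expansions $1-\prod(1-\tau)$. Everything stays over $\Zz$ because the expansions have integer coefficients. I expect this to be the main obstacle: the multiplicative atom relations produce higher-order cross terms $\tau_{G}\tau_{G'}$ that are absent in the additive Chow setting. The plan is to absorb these into a filtration argument, treating them as terms of higher $\tau$-degree and inducting downward on degree, which is legitimate because $K_{\Zz}(M,\Gg)$ is $\tau$-adically nilpotent (every monomial of degree greater than $d+1$ lies in the span of non-nested products or reduces by Step 1).

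\emph{Step 2: spanning.} Using Step 1 together with the non-nested vanishing, I rewrite any $\tau$-monomial whose exponent violates the bound. The leading term modulo higher $\tau$-degree is a power $\tau_F^{r}$ times $\prod_S\tau_H$, and it becomes a $\Zz$-combination of monomials that are either of higher total degree or smaller in the FY order. Since the $\tau$-degree is bounded, the process terminates, so the FY-indexed monomials span $K_{\Zz}(M,\Gg)$ over $\Zz$.

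\emph{Step 3: the dimension bound.} Over $\Qq$, the Chern character $\ch_{M,\Gg}$ is surjective. Indeed, each $x_F$ is nilpotent in $A_{\Qq}(M,\Gg)$, and $x_F=-\log(1-\tau_F)$ is a finite polynomial in $\ch(\tau_F)$, so $x_F$ lies in the image. It follows that
\[
\dim_{\Qq}K_{\Qq}(M,\Gg)\ \ge\ \dim_{\Qq}A_{\Qq}(M,\Gg)=N,
\]
where the last equality is the FY basis theorem for the Chow ring \cite{FY04}. Combined with Step 2, the formal argument in the first paragraph shows that $K_{\Zz}(M,\Gg)$ is free with the standard $\tau$-monomial basis, and that $\ch_{M,\Gg}$ is an isomorphism.
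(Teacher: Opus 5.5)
Your overall architecture is sound, and it differs from the paper's route. The paper's proof only invokes the standard-monomial basis theorem of \cite{LLPP24,FY04}. You instead combine $\Zz$-spanning by the Feichtner--Yuzvinsky-indexed $\tau$-monomials, the bound $\dim_\Qq K_\Qq(M,\Gg)\ge N$ from surjectivity of $\ch_{M,\Gg}$, and the formal freeness argument. Step~3 is correct, since $x_F=\ch\bigl(\sum_{k=1}^{d}\tau_F^k/k\bigr)$, and the freeness deduction is correct. Once completed, your route would prove the basis theorem from the presentation alone. It would also give, as by-products, that $\ch_{M,\Gg}$ is an isomorphism and that $I_\tau^{d+1}=0$, which the paper only cites.

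However, Steps~1--2 as written have two genuine gaps.

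First, you claim that each $\tau_G$ with $a_i\le G$, $G\not\ge F$ either kills $\prod_S\tau_H$ or yields a nested set with smaller rank gap to $F$. This is false, because such a $G$ need not lie below $F$. Take the Boolean matroid on $\{1,2,3\}$ with the maximal building set, $S=\emptyset$, $F=\{1,2\}$ and $a=1$. The flat $G=\{1,3\}$ gives the nested set $\{G\}$ with $G\not<F$, so no instance of your relation applies.

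Second, both your treatment of the cross terms and the termination of Step~2 rest on nilpotence of $I_\tau$. You justify that nilpotence by Step~1 and by the reduction itself, which is circular. Moreover, a reduction that trades a monomial for smaller same-degree monomials plus higher-degree ones can never, by itself, show that high-degree monomials vanish.

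Both gaps close with ingredients you already have. For Step~1:
\begin{itemize}
\item Put $P_F:=\prod_{G\in\Gg,\,G\ge F}(1-\tau_G)$ and $R_a:=\prod_{G\in\Gg,\,G\ge a,\,G\not\ge F}(1-\tau_G)$. For an atom $a\le F$, the atom relation reads exactly $P_FR_a=1$, so $1-P_F=-P_F(1-R_a)$.
\item Choose atoms $a_1,\dots,a_r\le F$ with $\bigvee S\vee a_1\vee\cdots\vee a_r=F$, and use all of them at once: $\prod_{H\in S}\tau_H\,(1-P_F)^r=(-1)^rP_F^r\prod_{H\in S}\tau_H\prod_{j}(1-R_{a_j})$.
\item Every monomial in this expansion, cross terms included, has support $T\supseteq S$. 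This $T$ contains some $B_j\ge a_j$ for each $j$, and $T\setminus S\subseteq\{G:G\not\ge F\}$.
\item If $T$ were nested, then $F\le\bigvee T$ and $F\in\Gg$ would put $F$ below a maximal element of $T$, since these are the $\Gg$-factors of $\bigvee T$. That element would have to lie in $S$, contradicting $F>\bigvee S$.
\end{itemize}
So every term dies by the Stanley--Reisner relations, and Step~1 holds exactly over $\Zz$, with no filtration.

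For nilpotence, Step~1 with $S=\emptyset$ gives $(1-P_C)^{\rk C}=0$ for every $C\in\Gg$. Then argue by downward induction on $C\in\Gg$:
\begin{itemize}
\item At the top, $1-\tau_E=P_E$ is unipotent.
\item In general, $1-\tau_C=P_C\prod_{G\in\Gg,\,G>C}(1-\tau_G)^{-1}$, and the inverted factors are unipotent by induction.
\end{itemize}
Hence each $\tau_C$ is nilpotent, and $I_\tau$ is nilpotent.

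Step~2 then becomes a downward induction on the degree $D$. The lowest-degree part of your Step~1 relation is the Feichtner--Yuzvinsky Gr\"obner element, whose leading coefficient is $1$. So the integral Feichtner--Yuzvinsky reduction writes each degree-$D$ monomial as a $\Zz$-combination of standard monomials of degree $D$, modulo $I_\tau^{D+1}$.
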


\begin{proof}
This is the intrinsic standard-monomial basis theorem for the integral combinatorial K-ring. In the maximal case it agrees with the non-augmented integral matroid K-ring of Larson, Li, Payne, and Proudfoot \cite{LLPP24}; the Chow basis is the Feichtner-Yuzvinsky basis \cite{FY04}. For a general top-containing building set, the reduced Laurent presentation, the $\tau$-adic associated graded Chow ring, and the standard $\tau$-monomial basis follow from the same Stanley-Reisner and atom-character relations as in \cite{LLPP24,FY04} applied to the induced building set. This proves freeness and the coordinate statement.
\end{proof}

\begin{defn}[Berget--Eur--Spink--Tseng tautological quotient class]\label{defn:best-quotient}
Let $N$ be a loopless matroid on a finite ground set $E$, and let $X_E$ be the complex permutohedral toric variety. Following \cite{BEST23}, let $T=(\mathbb C^*)^E$ be the coordinate torus and let $K_T(X_E)$ denote the $T$-equivariant K-group of $X_E$ in their convention; the tautological classes below are taken in this $T$-equivariant K-theory. Berget--Eur--Spink--Tseng \cite[Definitions~1.2 and 3.9]{BEST23} attach to $N$ the tautological sub and quotient $T$-equivariant K-classes
\[
S_N,\ Q_N\in K_T(X_E),
\]
of ranks $\rank N$ and $|E|-\rank N$ respectively. When $N$ is realized by a linear subspace $L\subseteq\mathbb C^E$ not contained in any coordinate hyperplane, $S_N$ and $Q_N$ are the classes of the tautological subbundle $S_L$ and quotient bundle $Q_L$ on $X_E$, which fit into the short exact sequence
\[
0\longrightarrow S_L\longrightarrow \Oo_{X_E}\otimes_{\mathbb C}\mathbb C^E\longrightarrow Q_L\longrightarrow 0,
\]
with $S_L$ the subbundle whose fiber over the identity of $T$ is $L$; for an arbitrary matroid $N$ the two classes are defined by the torus-fixed-point formula of \cite[Definition~3.9]{BEST23}. We call $Q_N$ the \emph{BEST tautological quotient class} of $N$ and write
\[
C_{\mathrm{BEST}}(N;u):=\sum_{i\geq 0}c_i(Q_N)\,u^i
\]
for its total equivariant Chern polynomial, using the same symbol for its non-equivariant image. The polynomial $C_{\mathrm{BEST}}(N;u)$ is the Chern-polynomial input descended in Section~\ref{sec:tangent-class}.
\end{defn}

\begin{nota}\label{nota:connected-building-set}
For a loopless matroid $N$, we write
\[
\Hh_{\mathrm{conn}}(N):=\{\Top(N)\}\cup\{F\in L(N)\setminus\{\emptyset\}\mid N|F\text{ is connected}\}.
\]
This is the unique inclusion-minimal top-containing Feichtner--Yuzvinsky building set. For a connected loopless matroid $N$, we usually write $\Hh:=\Hh_{\mathrm{conn}}(N)$.
\end{nota}

\begin{nota}\label{nota:zero-binomial}
For integers $a\geq 0$ and $b$, put
\[
B(a,b):=
\begin{cases}
\binom{a}{b}, & 0\leq b\leq a,\\
0, & \text{otherwise}.
\end{cases}
\]
Thus $B(a,b)$ is the zero-extended binomial coefficient.
\end{nota}

\section{The rational tangent class}\label{sec:tangent-class}

In this section, we construct the rational quotient and tangent classes and record the realizable specialization. These rational classes are the objects whose rationalized integral lifts appear in Theorem~\ref{thm:main-integral}; the identification is made in Proposition~\ref{prop:linkage}.

\begin{lem}[Orbit-closure restriction identity for the BEST quotient class]\label{lem:best-restriction}
Let \(N\) be a loopless matroid on \(E\) with lattice of flats \(L(N)\), let \(\Gg_{\mathrm{small}}\) be a Feichtner--Yuzvinsky building set in \(L(N)\setminus\{\emptyset\}\) containing \(E\), and let \(F\) be a nonempty proper flat with \(F\notin\Gg_{\mathrm{small}}\) such that \(\Gg_{\mathrm{big}}=\Gg_{\mathrm{small}}\cup\{F\}\) is again a building set. Write \(\Fact_{\Gg_{\mathrm{small}}}(F)=\{F_1,\dots,F_\ell\}\) for the maximal elements of \(\Gg_{\mathrm{small}}\) contained in \(F\). Let \(Z_F\subseteq X_E\) be the torus-orbit closure of the permutohedral variety \(X_E\) attached to the one-term chain \(\emptyset\subsetneq F\subsetneq E\), with the standard product identification
\[
Z_F\cong X_F\times X_{E\setminus F}
\]
and projections \(\pr_1\colon Z_F\to X_F\) and \(\pr_2\colon Z_F\to X_{E\setminus F}\). For each \(a\), let \(p_a\colon X_F\to X_{F_a}\) be the toric morphism induced by the coordinate projection. Then, in \(T\)-equivariant K-theory,
\[
[Q_N]\big|_{Z_F}=\pr_1^*[Q_{N|F}]+\pr_2^*[Q_{N/F}]=\sum_{a=1}^{\ell}\pr_1^*p_a^*[Q_{N|F_a}]+\pr_2^*[Q_{N/F}].
\]
\end{lem}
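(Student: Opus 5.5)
The proof uses equivariant localization on $X_E$. By \cite{BEST23}, $K_T(X_E)$ injects into $\prod_{\sigma}K_T(\mathrm{pt})=\prod_\sigma\Zz[T_1^{\pm1},\dots,T_n^{\pm1}]$, the product over torus-fixed points indexed by permutations $\sigma$ of $E$. The same injectivity holds for the smooth projective toric variety $Z_F\cong X_F\times X_{E\setminus F}$. Its fixed points are exactly the fixed points of $X_E$ whose maximal chain passes through $F$. Equivalently, they are pairs $(\sigma_1,\sigma_2)$ with $\sigma_1$ an ordering of $F$ and $\sigma_2$ an ordering of $E\setminus F$, concatenated. So it suffices to compare the localizations of both sides at each such fixed point.

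\textbf{Step 1 (first equality).} Recall from \cite[Definition~3.9]{BEST23} that $[Q_N]$ restricted to the fixed point $\sigma$ is $\sum_{i\notin B_\sigma(N)}T_i^{-1}$ (up to the sign/inverse convention of BEST). Here $B_\sigma(N)$ is the $\sigma$-lexicographically first basis produced by the greedy algorithm. For a concatenated order $\sigma=(\sigma_1,\sigma_2)$, the greedy algorithm first scans $F$, producing $B_{\sigma_1}(N|F)$. It then scans $E\setminus F$ with $F$ effectively contracted, producing $B_{\sigma_2}(N/F)$. This is the standard fact that a greedy basis through a flat decomposes as $B_\sigma(N)=B_{\sigma_1}(N|F)\sqcup B_{\sigma_2}(N/F)$. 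Hence the complement of $B_\sigma(N)$ in $E$ is the disjoint union of the complements in $F$ and in $E\setminus F$. The fixed-point restriction of $[Q_N]$ therefore splits as the sum of the restrictions of $\pr_1^*[Q_{N|F}]$ and $\pr_2^*[Q_{N/F}]$. The pullbacks under the projections have exactly these restrictions, because the torus $T=T_F\times T_{E\setminus F}$ acts factorwise. The step needing care is matching the BEST equivariant conventions, namely which character ($T_i$ or $T_i^{-1}$) appears and the identification of $Z_F$ with the product. I would verify these against the realizable case: there $S_L|_{Z_F}$ degenerates to $S_{L\cap\mathbb C^F}\oplus S_{L/(L\cap \mathbb C^F)}$, which matches.

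\textbf{Step 2 (second equality).} Because $\Gg_{\mathrm{big}}$ is a building set and $F\notin\Gg_{\mathrm{small}}$, the building-set axiom gives $N|F\cong N|F_1\oplus\cdots\oplus N|F_\ell$. Indeed, the maximal elements $F_a$ of $\Gg_{\mathrm{small}}$ below $F$ satisfy $[\emptyset,F]\cong\prod_a[\emptyset,F_a]$. This forces a direct-sum decomposition of the restriction, with $F=F_1\sqcup\cdots\sqcup F_\ell$ as ground sets, since $N$ is loopless and the join is a disjoint union. For a direct sum, every greedy basis splits: $B_{\sigma_1}(N|F)=\bigsqcup_a B_{\sigma_1|_{F_a}}(N|F_a)$. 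So the fixed-point restriction of $[Q_{N|F}]$ is $\sum_a$ of the restrictions of $p_a^*[Q_{N|F_a}]$. Here $p_a$ sends the fixed point $\sigma_1$ to the fixed point $\sigma_1|_{F_a}$ and is $T_F\to T_{F_a}$-equivariant. Injectivity of localization on $X_F$ then gives $[Q_{N|F}]=\sum_a p_a^*[Q_{N|F_a}]$, and pulling back by $\pr_1$ finishes the proof.

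The main obstacle is the bookkeeping in Step 1: checking that the fixed points of $Z_F$ correspond exactly to concatenated orders, and that the BEST fixed-point formula is compatible with restriction along the closed embedding $Z_F\hookrightarrow X_E$. The latter is formal, since restriction commutes with localization at the common fixed points. The direct-sum decomposition in Step 2 is where the building-set hypothesis enters. I would cite the Feichtner--Yuzvinsky characterization of building sets for it.
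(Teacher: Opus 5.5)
Your proof is correct in substance but takes a different route from the paper's. The paper cites \cite[Propositions~5.2 and 5.3]{BEST23} for the restriction of $[Q_N]$ to the orbit closure of $\emptyset\subsetneq F\subsetneq E$. It then uses the Feichtner--Yuzvinsky isomorphism $\prod_a[\emptyset,F_a]\cong[\emptyset,F]$ to write $N|F=\bigoplus_a N|F_a$, and invokes direct-sum additivity \cite[Proposition~5.13]{BEST23}.

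You instead reprove both cited facts from the fixed-point definition, using:
\begin{enumerate}
\item injectivity of localization on $Z_F$ and on $X_F$;
\item the splitting $B_\sigma(N)=B_{\sigma_1}(N|F)\sqcup B_{\sigma_2}(N/F)$ of the greedy basis for an order listing $F$ first, which holds because the part chosen inside $F$ spans $F$, so later independence is independence in $N/F$;
\item the factorwise splitting of greedy bases of a direct sum, together with $p_a(\sigma_1)=\sigma_1|_{F_a}$.
\end{enumerate}
Your route is self-contained and shows transparently why exactly $N|F$ and $N/F$ appear. The price is that you must carry the convention bookkeeping yourself: which fixed points lie on $Z_F$, which characters appear, and the equivariant product identification. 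The citation delegates this to BEST, where it is already set up consistently with the chain and the minors in the statement.

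Two small corrections.

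First, the realizable check you propose for fixing conventions has the minors swapped. In BEST's convention, $B$ is a basis iff $L\to\mathbb C^B$ is an isomorphism. Then $L\cap\mathbb C^F$ realizes $N/(E\setminus F)$, and $L/(L\cap\mathbb C^F)$, i.e.\ the image of $L$ in $\mathbb C^{E\setminus F}$, realizes $N|(E\setminus F)$. So the degeneration you wrote would \emph{not} match the lemma. The degeneration compatible with the statement is the opposite limit,
\[
\bigl(\text{image of }L\text{ in }\mathbb C^F\bigr)\oplus\bigl(L\cap\mathbb C^{E\setminus F}\bigr).
\]
This is also the one consistent with your Step~1, whose greedy bases meet $F$ in $\rk_N(F)$ elements.

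Second, the interval isomorphism in Step~2 is the building-set axiom for $\Gg_{\mathrm{small}}$ applied to the flat $F$. The hypothesis that $\Gg_{\mathrm{big}}$ is a building set plays no role there.
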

\begin{proof}
Apply \cite[Propositions~5.2 and 5.3]{BEST23} to the one-term chain \(\emptyset\subsetneq F\subsetneq E\): by Proposition~5.2 the orbit closure \(Z_F\) carries the product identification \(Z_F\cong X_F\times X_{E\setminus F}\), and by Proposition~5.3 the restriction of the tautological quotient class is the external sum of the pulled-back tautological quotient classes of the two successive minors \(N|F\) and \(N/F\), namely \([Q_N]|_{Z_F}=\pr_1^*[Q_{N|F}]+\pr_2^*[Q_{N/F}]\). It remains to decompose \([Q_{N|F}]\). By the defining property of a Feichtner--Yuzvinsky building set \cite{DCP95,FY04}, the join map \(\prod_{a=1}^{\ell}[\emptyset,F_a]\to[\emptyset,F]\) is an isomorphism of posets; consequently the flats \(F_1,\dots,F_\ell\) are pairwise disjoint with union \(F\), and the restriction \(N|F\) is the direct sum of the restrictions \(N|F_1,\dots,N|F_\ell\). By the direct-sum additivity of the BEST tautological quotient classes \cite[Proposition~5.13]{BEST23}, applied to this direct-sum decomposition with the toric morphisms \(p_a\colon X_F\to X_{F_a}\) induced by the coordinate projections, one has \([Q_{N|F}]=\sum_{a=1}^{\ell}p_a^*[Q_{N|F_a}]\) in \(K_T(X_F)\). Pulling this equality back by \(\pr_1\) and substituting yields the second displayed identity. We do not assert any isomorphism \(X_F\cong X_{F_1}\times\cdots\times X_{F_\ell}\); the dependence on the factors enters only through the toric morphisms \(p_a\).
\end{proof}

\begin{lem}[Orbit-closure restriction identity for BEST Newton power sums]\label{lem:orbit-divisor-product}
Let \(N\) be a loopless matroid on \(E\) of rank \(r=d+1\). Let \(\Gg_{\mathrm{small}}\) be a Feichtner--Yuzvinsky building set in \(L(N)\setminus\{\emptyset\}\) containing \(E\). Let \(F\) be a nonempty proper flat with \(F\notin\Gg_{\mathrm{small}}\), and suppose that \(\Gg_{\mathrm{big}}=\Gg_{\mathrm{small}}\cup\{F\}\) is again a building set. Write
\[
\Fact_{\Gg_{\mathrm{small}}}(F)=\{F_1,\dots,F_\ell\}.
\]
Let \(Z_F\subseteq X_E\) be the permutohedral torus-orbit closure attached to the one-term chain \(\emptyset\subsetneq F\subsetneq E\), with the standard product identification \(Z_F\cong X_F\times X_{E\setminus F}\) and projections \(\pr_1\colon Z_F\to X_F\) and \(\pr_2\colon Z_F\to X_{E\setminus F}\). For each \(a\), let \(p_a\colon X_F\to X_{F_a}\) be the toric morphism induced by the coordinate projection. Let \(Q_N\), \(Q_{N|F_a}\), and \(Q_{N/F}\) be the BEST tautological quotient classes, and let \(P_m(\cdot)\) denote the \(m\)-th Newton power sum of the corresponding total equivariant Chern polynomial. Then, for every \(m\geq 1\), in the equivariant Chow ring of \(Z_F\),
\[
P_m(Q_N|Z_F)=\pr_2^*P_m(Q_{N/F})+\sum_{a=1}^{\ell}\pr_1^*p_a^*P_m(Q_{N|F_a}).
\]
\end{lem}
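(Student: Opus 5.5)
The plan is to deduce this lemma from Lemma~\ref{lem:best-restriction} by applying the equivariant Chern character, or equivalently by using that Newton power sums are additive in $K$-theory. For a $T$-equivariant $K$-class $V$ with Chern roots $\beta_1,\beta_2,\dots$, the $m$-th Newton power sum is $P_m(V)=\sum_j\beta_j^m$, so $P_m(V)=m!\,\ch_m(V)$. The first step is therefore to recall that $P_m$ is a group homomorphism $K_T(-)\to A^m_T(-)_{\Qq}$; additivity of $\ch$ on $K_T$ gives this, and it extends from bundles to virtual classes. In fact $P_m$ is obtained by expressing $\sum\beta_j^m$ through the elementary symmetric functions $c_i$ via Newton's identities, so it is defined for any class with a total Chern polynomial, and $c(V+W)=c(V)c(W)$ makes it additive.

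The second step is naturality. Pullback along $\iota\colon Z_F\hookrightarrow X_E$, and along the toric morphisms $\pr_1,\pr_2,p_a$, commutes with Chern classes in equivariant $K$-theory and the equivariant Chow ring, hence also with $P_m$. Here $Q_N|Z_F$ means $\iota^*Q_N$, so $P_m(Q_N|Z_F)=\iota^*P_m(Q_N)$.

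The third step applies $P_m$ to the second identity of Lemma~\ref{lem:best-restriction}:
\[
P_m\bigl([Q_N]|_{Z_F}\bigr)=P_m\Bigl(\sum_{a=1}^{\ell}\pr_1^*p_a^*[Q_{N|F_a}]+\pr_2^*[Q_{N/F}]\Bigr).
\]
By additivity, the right-hand side splits as $\sum_a P_m(\pr_1^*p_a^*Q_{N|F_a})+P_m(\pr_2^*Q_{N/F})$. By naturality, each summand equals the pulled-back power sum, which gives exactly the displayed formula.

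The main obstacle is bookkeeping rather than mathematics. One must make sure that the BEST classes, which are only virtual $T$-equivariant $K$-classes in the non-realizable case, have well-defined equivariant Chern polynomials. One must also check that these polynomials are multiplicative and natural under the equivariant pullbacks used, including the rank conventions: ranks add, since $|E|-\rk N$ equals $(|F|-\rk N|F)+(|E\setminus F|-\rk N/F)$, and $P_m$ for $m\ge1$ ignores rank anyway. I would settle this by working through the localization description. $A_T(X_E)$ embeds into functions on fixed points with values in $\Qq[t_e]$, and the Chern roots at each fixed point are the torus characters from \cite[Definition~3.9]{BEST23}. At each fixed point, $P_m$ is then the sum of the $m$-th powers of these characters, and the fixed-point restriction of the $K$-theoretic identity is an equality of multisets of characters, up to cancellation in the virtual case. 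This makes additivity and naturality transparent at fixed points, and injectivity of localization for the smooth projective toric varieties $Z_F$ completes the argument.
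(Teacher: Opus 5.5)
Your proposal is correct and follows the paper's own route. Both arguments deduce the identity from the $K$-theoretic restriction formula of Lemma~\ref{lem:best-restriction}, using that $P_m$ is additive and commutes with $\pr_1^*$, $\pr_2^*$, $p_a^*$: the paper phrases this as multiplicativity of $c_t$ plus Newton's identities in a splitting algebra, you phrase it as $P_m=m!\,\ch_m$. Your fixed-point localization check is an optional extra verification that the paper's argument does not need.
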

\begin{proof}
By Lemma~\ref{lem:best-restriction}, in the equivariant K-theory of \(Z_F\),
\[
[Q_N]\big|_{Z_F}=\pr_2^*[Q_{N/F}]+\sum_{a=1}^{\ell}\pr_1^*p_a^*[Q_{N|F_a}].
\]
The total equivariant Chern polynomial \(c_t\) is multiplicative under direct sums of equivariant K-classes and commutes with pullback. Hence, in the equivariant Chow ring of \(Z_F\),
\[
c_t(Q_N|Z_F)=\pr_2^*c_t(Q_{N/F})\cdot\prod_{a=1}^{\ell}\pr_1^*p_a^*c_t(Q_{N|F_a}).
\]
Work in a splitting algebra over the equivariant Chow ring of \(Z_F\) in which each pulled-back total Chern polynomial splits into commuting degree-one Chern roots. Let the Chern roots of \(\pr_2^*c_t(Q_{N/F})\) be \(\alpha_{R,1},\dots,\alpha_{R,q_R}\), and let the Chern roots of \(\pr_1^*p_a^*c_t(Q_{N|F_a})\) be \(\alpha_{a,1},\dots,\alpha_{a,q_a}\). By the displayed product, the Chern roots of \(c_t(Q_N|Z_F)\) are the union of these. Over \(\Qq\), Newton's identities identify the \(m\)-th Newton power sum of a unit total Chern polynomial with the sum of the \(m\)-th powers of its Chern roots, and this identification commutes with the graded pullbacks \(\pr_1^*\), \(\pr_2^*\), and \(p_a^*\). Therefore
\[
P_m(Q_N|Z_F)=\sum_{b=1}^{q_R}\alpha_{R,b}^m+\sum_{a=1}^{\ell}\sum_{b=1}^{q_a}\alpha_{a,b}^m
=\pr_2^*P_m(Q_{N/F})+\sum_{a=1}^{\ell}\pr_1^*p_a^*P_m(Q_{N|F_a})
\]
for every \(m\geq 1\), as claimed.
\end{proof}

\begin{prop}[Descended quotient and tangent class]\label{prop:quotient-tangent}
Let \(N\) be a loopless matroid of rank \(r=d+1\) on a finite nonempty ground set \(E\), with finite lattice of flats, bottom flat \(\emptyset\), and top flat \(E\). Let \(\Hh\subset L(N)\setminus\{\emptyset\}\) be a Feichtner--Yuzvinsky building set containing \(E\), and put \(\Hh^\circ:=\Hh\setminus\{E\}\). Then there exists a unique descended quotient Chern polynomial
\[
C_Q(N,\Hh;u)=\sum_{i=0}^d c_i^Q u^i\in A_{\Qq}(N,\Hh)[u]
\]
whose pullback to the maximal building set is the Berget--Eur--Spink--Tseng quotient Chern polynomial \cite{BEST23}. The polynomial \(C_Q(N,\Hh;u)\) determines a unique rational K-class \(Q_{N,\Hh}\in K_{\Qq}(N,\Hh)\). Define
\[
T_{N,\Hh}:=\sum_{F\in\Hh^\circ}(1-\tau_F)^{-1}-Q_{N,\Hh}.
\]
Then \(Q_{N,\Hh}\) and \(T_{N,\Hh}\) are well-defined classes in \(K_{\Qq}(N,\Hh)\), and
\[
c_z(T_{N,\Hh})=c_z(Q_{N,\Hh})^{-1}\prod_{F\in\Hh^\circ}(1+z x_F),
\]
where \(c_z(Q_{N,\Hh})=\sum_{i=0}^d c_i^Q z^i\). If \(N\) is realized by a complex linear subspace \(L\), and
\[
\theta\colon K_{\Qq}(N,\Hh)\to K_0(W_{L,\Hh})\otimes_{\Zz}\Qq
\]
is the realizable rational K-theory identification matching \((1-\tau_F)^{-1}\) with \([\Oo_{W_{L,\Hh}}(D_F)]\), then
\[
\theta(T_{N,\Hh})=[T_{W_{L,\Hh}}].
\]
No integral \(K_{\Zz}\)-membership assertion is made.
\end{prop}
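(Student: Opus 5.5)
The plan is to build $C_Q(N,\Hh;u)$ by descending from the maximal building set $\Hh_{\max}=L(N)\setminus\{\emptyset\}$ one flat at a time, and then to lift this Chern polynomial to a K-class through the Chern character.

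\textbf{Descent to $A_{\Qq}(N,\Hh)$.} By the Feichtner--Yuzvinsky theory, any two building sets containing $E$ are linked by a chain $\Hh=\Gg_0\subset\Gg_1\subset\cdots\subset\Gg_m=\Hh_{\max}$ in which each step adds one flat $F$ and each $\Gg_i$ is again a building set. Each such step has a pullback $\pi^*\colon A_{\Qq}(N,\Gg_{\mathrm{small}})\to A_{\Qq}(N,\Gg_{\mathrm{big}})$; realizably this is a blowup along the stratum of $F$. This map is injective, with
\[
A(\Gg_{\mathrm{big}})=\pi^*A(\Gg_{\mathrm{small}})\oplus\bigoplus_{j\ge1} x_F^{\,j}\cdot(\text{classes on the exceptional divisor}).
\]
Uniqueness follows at once from injectivity of the composite pullback to $\Hh_{\max}$. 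For existence we work with Newton power sums $P_m$ rather than with the Chern classes themselves; over $\Qq$ the two determine each other. So it suffices to show, step by step, that each $P_m(Q)$ on $\Gg_{\mathrm{big}}$ lies in $\pi^*A(\Gg_{\mathrm{small}})$.

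\textbf{The key descent criterion.} The criterion I would use is the following. A class $y$ in $A(\Gg_{\mathrm{big}})$ lies in the image of $\pi^*$ exactly when its restriction to the exceptional divisor $D_F$ is pulled back from the center. Here
\[
D_F\cong (\text{nested model of } N|F)\times(\text{model of } N/F),
\]
and the center is modeled on $\prod_a(\text{model of }N|F_a)\times(\text{model of } N/F)$. Lemma~\ref{lem:orbit-divisor-product} gives exactly this: on $Z_F$, the power sum $P_m$ restricts to $\pr_2^*P_m(Q_{N/F})+\sum_a\pr_1^*p_a^*P_m(Q_{N|F_a})$, which factors through the center. Running an induction on the number of descent steps, applied to the minors, these pieces themselves descend. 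This step is where I expect the main obstacle. Two things need care: the exact-sequence or excision form of the criterion in the Chow ring (kernel of restriction to $D_F$ modulo $\pi^*$), and the passage from the permutohedral orbit closure $Z_F$ to the divisor $D_F$ of the intermediate model. I would first try to prove the criterion combinatorially, by a Keel-type presentation of the one-flat blowup of Chow rings.

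\textbf{The K-class.} Since $A_{\Qq}(N,\Hh)$ is finite-dimensional with nilpotent positive part, $\ch$ is a ring isomorphism $K_{\Qq}\to A_{\Qq}$. It maps the filtration by powers of the augmentation ideal to the degree filtration. Given $C_Q$ with roots-formalism power sums $P_m$, define
\[
\ch(Q_{N,\Hh}):=(|E|-r)+\sum_{m\ge1}P_m/m!
\]
and set $Q_{N,\Hh}:=\ch^{-1}$ of this. Then $c_z(Q_{N,\Hh})=C_Q$ by the splitting principle, and $Q_{N,\Hh}$ is unique because $\ch$ is bijective. The element $1-\tau_F$ is a unit (its inverse is a finite series since $\tau_F$ is nilpotent), so $T_{N,\Hh}$ is well defined. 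Moreover $\ch((1-\tau_F)^{-1})=e^{x_F}$, a line class with $c_z=1+zx_F$. Multiplicativity of $c_z$ then gives the displayed formula for $c_z(T_{N,\Hh})$.

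\textbf{Realizable case.} On $W_{L,\Hh}$ there is the standard exact sequence
\[
0\to T_W(-\log D)\to T_W\to\bigoplus_{F\in\Hh^\circ}\Oo_{D_F}(D_F)\to0,
\]
so $[T_W]=[T_W(-\log D)]+\sum_F([\Oo(D_F)]-1)$. It therefore suffices to show that $\theta$ sends $Q_{N,\Hh}$ to $|\Hh^\circ|-[T_W(-\log D)]$, after the evident constant shift. On $\Hh_{\max}$ the log tangent bundle is identified with data coming from the tautological quotient $Q_L$ on $X_E$, via the BEST sequence and the Euler-type sequence $0\to\Oo\to\Oo^E\to\cdots$. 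This matches ranks and Chern classes, hence K-classes, since $\theta\otimes\Qq$ is compatible with $\ch$. The log tangent bundle pulls back under the blowups along the chain, as log tangent bundles do under blowups of strata. The statement therefore descends by injectivity of the pullbacks, using the uniqueness established in the first part.
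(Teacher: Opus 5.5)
Your outline follows the paper's route. You descend the Newton power sums one flat at a time from $\Hh_{\max}$, using injectivity of the pullback and the exceptional-divisor image criterion fed by Lemma~\ref{lem:orbit-divisor-product}. You then reconstruct $Q_{N,\Hh}$ through $\ch^{-1}$, and treat the realizable case via the log tangent bundle, \cite[Theorem~8.8]{BEST23}, blowup invariance and the residue sequence. However, your normalization of $Q_{N,\Hh}$ is wrong. You give it the BEST rank $|E|-r$. The realizable comparison then yields $\theta(Q_{N,\Hh})=(|E|-1)[\Oo_W]-[T_W(-\log D)]$ (same rank, same Chern polynomial). Hence $\theta(T_{N,\Hh})=[T_W]+(|\Hh^\circ|-|E|+1)[\Oo_W]$, a class of rank $d+|\Hh^\circ|-|E|+1$. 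Already for $U_{2,3}$ with the maximal building set this has rank $2$ on $W\cong\mathbb P^1$. The ``evident constant shift'' you invoke at the end is not available once $Q_{N,\Hh}$ is fixed, because $T_{N,\Hh}$ is defined from it; the shift must be part of the definition. The paper takes the constant term of $\ch(Q_{N,\Hh})$ to be $q_Q=|\Hh^\circ|-d$, i.e.\ it adds the trivial summand $(|\Hh^\circ|-|E|+1)\cdot 1$ to the BEST class. This changes neither $C_Q$ nor the formula for $c_z(T_{N,\Hh})$. It is exactly what gives $\theta(Q_{N,\Hh})=|\Hh^\circ|[\Oo_W]-[T_W(-\log D)]$ (your own target), and hence $\theta(T_{N,\Hh})=[T_W]$. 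It is also the sense in which $C_Q$ ``determines'' a unique K-class.

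The step you flag as the main obstacle needs no Keel-type presentation: this is passing from the orbit divisor $Z_F$ of the maximal model to the exceptional divisor of $X_{\Gg_{\mathrm{big}}}$. In the paper, $Z_F=\Delta$ is the strict transform of the exceptional divisor $E_F$ under the chain morphism $\varpi\colon X_{\Hh_{\max}}\to X_{\Gg_{\mathrm{big}}}$. The square $\varpi\circ j_{\max}=j\circ g$ commutes (it is not Cartesian), so $g^*j^*y=j_{\max}^*\varpi^*y=P_m(Q_N|_{Z_F})$. To compare this with a class from the center, the minor power sums must already be descended to the induced pairs $(N|F_a,\Gg_{\mathrm{small}}|F_a)$ and $(N/F,\Gg_{\mathrm{small}}/F)$. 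So the induction has to be on $|E|$ (the minors are smaller matroids with their own chains), not on the number of descent steps. These descended classes assemble into $\gamma_m$ on the center with $g^*p^*\gamma_m=P_m(Q_N|_{Z_F})$. Since $g$ is proper birational between smooth varieties, $g_*g^*=\mathrm{id}$, so $g^*$ is injective and $j^*y=p^*\gamma_m$; only the sufficient direction of your criterion is needed. Finally, your reason for $\ch$ being an isomorphism only concerns the target. What is needed is nilpotency of the $\tau_F$ in $K_\Qq$ (which you use a line later), after which $x_F\mapsto-\log(1-\tau_F)$ is an explicit inverse. Alternatively, as in the paper, use $\operatorname{gr}_{I_\tau}K_\Qq\cong A_\Qq$.
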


\begin{proof}
Throughout write \(\Hh_{\max}:=L(N)\setminus\{\emptyset\}\) for the maximal building set. The proof proceeds in six steps: (0) the maximal model and its zero-extended Newton power sums; (1) the one-step image criterion and the injectivity of one-step Chow pullback; (2) the descent of the Newton power sums; (3) the descended quotient Chern polynomial; (4) the reconstruction of \(Q_{N,\Hh}\) through the filtered Chern character; (5) the tangent class and its Chern polynomial; (6) the realizable identification.

\smallskip
\noindent\emph{Step 0 (maximal model).} Let
\[
C_{\mathrm{BEST}}^{\max}(N;u)=\sum_{i=0}^{d}c_i^{\mathrm{BEST}}u^i
\in A_{\Qq}(N,\Hh_{\max})[u]
\]
be the non-equivariant image, on the maximal model \(\Hh_{\max}\), of the Berget--Eur--Spink--Tseng quotient Chern polynomial \(C_{\mathrm{BEST}}(N;u)\) of Definition~\ref{defn:best-quotient}. For \(m\ge 1\) define the \emph{zero-extended} maximal Newton power sum \(P_m^{\max}\in A_{\Qq}(N,\Hh_{\max})^m\) to be the \(m\)-th Newton power sum determined by \(C_{\mathrm{BEST}}^{\max}(N;u)\) when \(1\le m\le d\), and \(P_m^{\max}:=0\) when \(m>d\). Since \(A_{\Qq}(N,\Gg)^i=0\) for \(i>d\) and every Chow pullback is graded and unital, each class named below vanishes in degrees \(>d\); we nevertheless carry all \(m\ge 1\), because the minors arising in one descent step can have strictly smaller top Chow degree, where the same index \(m\) is out of range and the correct representative is \(0\).

\begin{humancomment}
In the realizable maximal case, this construction has the following geometric meaning. If \(N\) is realized by a linear subspace \(L\), then the wonderful compactification \(W_L\) embeds in the permutohedral toric variety \(X_E\). 

The maximal BEST quotient Chern polynomial \(C_{\mathrm{BEST}}^{\max}(N;u)\) is the total Chern polynomial of \(i^*Q_L\), where \(i:W_L\hookrightarrow X_E\). The vector bundle \(Q_L\) on \(X_E\) admits a regular section whose vanishing locus is \(W_L\). Hence, \(i^*Q_L\) is the normal bundle of \(W_L\) in \(X_E\). 

\end{humancomment}
\smallskip
\noindent\emph{Step 1 (one-step Chow pullback: injectivity and image criterion).} Let
\[
\Gg_{\mathrm{big}}=\Gg_{\mathrm{small}}\cup\{F\}
\]
be a one-flat enlargement of top-containing Feichtner--Yuzvinsky building sets, with \(\Fact_{\Gg_{\mathrm{small}}}(F)=\{F_1,\dots,F_\ell\}\), and let
\[
\psi_A\colon A_{\Qq}(N,\Gg_{\mathrm{small}})\longrightarrow A_{\Qq}(N,\Gg_{\mathrm{big}})
\]
be the one-step Chow pullback. Under the Feichtner--Yuzvinsky identification of \(A_{\Qq}(N,\Gg)\) with the Chow ring of the smooth toric variety \(X_\Gg\) of the nested fan of \(\Gg\) \cite{FY04}, the nested fan of \(\Gg_{\mathrm{big}}\) is the stellar subdivision of that of \(\Gg_{\mathrm{small}}\) at the cone spanned by \(\Fact_{\Gg_{\mathrm{small}}}(F)\), with new ray \(F\) \cite{DCP95,FY04}; and \(\psi_A\) is the pullback along the resulting proper birational toric morphism \(\pi\colon X_{\Gg_{\mathrm{big}}}\to X_{\Gg_{\mathrm{small}}}\) of smooth varieties. The projection formula gives \(\pi_*\pi^*=\mathrm{id}\) on rational Chow classes, because \(\pi_*[X_{\Gg_{\mathrm{big}}}]=[X_{\Gg_{\mathrm{small}}}]\); hence \(\psi_A=\pi^*\) is injective. More generally, for any top-containing \(\Gg\subseteq\Hh_{\max}\) the nested fan of \(\Hh_{\max}\) refines that of \(\Gg\), giving a canonical proper birational morphism \(X_{\Hh_{\max}}\to X_\Gg\) of which every one-flat refinement chain from \(\Gg\) to \(\Hh_{\max}\) is a factorization into stellar subdivisions \cite{DCP95,FY04}. Consequently the composite Chow pullback \(\psi_\Gg\colon A_{\Qq}(N,\Gg)\to A_{\Qq}(N,\Hh_{\max})\) equals \((X_{\Hh_{\max}}\to X_\Gg)^*\); it is one and the same map for every chain, and it is injective, being a composition of injective one-step pullbacks.

We use the following \emph{one-step image criterion} (sufficient direction): write \(j\colon E_F\hookrightarrow X_{\Gg_{\mathrm{big}}}\) for the exceptional divisor of \(\pi\) and \(p\colon E_F\to Z_B\) for its projection to the blown-up center \(Z_B\). A class \(y\in A_{\Qq}(N,\Gg_{\mathrm{big}})^m\) lies in \(\operatorname{im}\psi_A\) whenever its exceptional restriction satisfies \(j^*y=p^*\gamma\) for some Chow class \(\gamma\) on \(Z_B\). This is the image description of the toric stellar subdivision \(\pi\): a class on the blowup descends to \(X_{\Gg_{\mathrm{small}}}\) once its restriction to the exceptional divisor is pulled back from the center \cite{DCP95,FY04}.

\smallskip
\noindent\emph{Step 2 (descent of the Newton power sums).} We construct, for every top-containing \(\Gg\) and every \(m\ge1\), a class \(P_m(N,\Gg)\in A_{\Qq}(N,\Gg)^m\), zero for \(m>d\), pulling back to \(P_m^{\max}\) along every one-flat refinement chain. We argue by simultaneous induction on \(|E|\), and inside a fixed \(N\) by descending along a one-flat refinement chain
\[
\Hh=\Gg_0\subset\Gg_1\subset\cdots\subset\Gg_s=\Hh_{\max}.
\]
At the maximal end set \(P_m(N,\Gg_s):=P_m^{\max}\). Suppose \(P_m(N,\Gg_j)\) is constructed with the maximal-pullback property and \(\Gg_j=\Gg_{j-1}\cup\{F\}\); assume \(1\le m\le d\), since otherwise both sides vanish. We compute the exceptional restriction \(j^*P_m(N,\Gg_j)\) by pulling it to the maximal model and using ordinary Chow functoriality, not divisor base change. Let \(\varpi\colon X_{\Hh_{\max}}\to X_{\Gg_j}\) be the chain morphism of Step~1, let \(\Delta:=V_{\Sigma(\Hh_{\max})}(F)\) be the strict transform of \(E_F\) in the maximal model, that is, the divisor star of the ray \(F\) in the maximal nested fan, with inclusion \(j_{\max}\colon\Delta\hookrightarrow X_{\Hh_{\max}}\), and let \(g\colon\Delta\to E_F\) be the toric morphism induced by the refinement of star fans \(\operatorname{Star}_{\Sigma(\Hh_{\max})}(F)\to\operatorname{Star}_{\Sigma(\Gg_j)}(F)\) in the quotient lattice \(N_{\mathbb Z}/\mathbb Z F\) \cite{DCP95,FY04}. For the permutohedral maximal model this star is the product fan, so \(\Delta\) is the orbit divisor \(Z_F\cong X_F\times X_{E\setminus F}\subseteq X_E\) attached to \(\emptyset\subsetneq F\subsetneq E\). These maps form the strict-transform square \(\varpi\circ j_{\max}=j\circ g\colon\Delta\to X_{\Gg_j}\), with vertical maps \(g\colon\Delta\to E_F\) and \(\varpi\colon X_{\Hh_{\max}}\to X_{\Gg_j}\) and horizontal inclusions \(j_{\max}\colon\Delta\hookrightarrow X_{\Hh_{\max}}\) and \(j\colon E_F\hookrightarrow X_{\Gg_j}\). It commutes but is in general \emph{not} Cartesian: the scheme-theoretic preimage \(\varpi^{-1}(E_F)\) acquires extra exceptional components from later stellar subdivisions whose centers contain the ray \(F\), so \(\Delta\) is only the strict transform of \(E_F\), not the total transform. Since all four varieties are smooth, ordinary contravariant Chow functoriality applies to the commuting square and gives \(g^*j^*=j_{\max}^*\varpi^*\) on \(A_{\Qq}(X_{\Gg_j})\). Applying this to \(P_m(N,\Gg_j)\) and using \(\varpi^*P_m(N,\Gg_j)=P_m^{\max}\), together with the descent property \(j_{\max}^*P_m^{\max}=P_m(Q_N|Z_F)\) restricting the \(m\)-th BEST Newton power sum to the orbit divisor \(\Delta=Z_F\), yields
\[
g^*\big(j^*P_m(N,\Gg_j)\big)=P_m(Q_N|Z_F)\qquad\text{in }A_{\Qq}(\Delta).
\]

The restriction minors \(N|F_a\) and the contraction minor \(N/F\) have ground sets strictly smaller than \(E\), and carry the induced top-containing building sets \(\Gg_{\mathrm{small}}|F_a\) and \(\Gg_{\mathrm{small}}/F\). By the induction on \(|E|\), their descended Newton power sums exist as ordinary Chow classes: let \(P_{a,m}\in A_{\Qq}(N|F_a,\Gg_{\mathrm{small}}|F_a)^m\) and \(P_{R,m}\in A_{\Qq}(N/F,\Gg_{\mathrm{small}}/F)^m\) be the classes pulling back to \(P_m(Q_{N|F_a})\) and \(P_m(Q_{N/F})\) respectively along the corresponding maximal refinement chains (with the zero convention when \(m\) is out of range). Let
\[
\Theta\colon A_{\Qq}(Z_B)\xrightarrow{\ \sim\ }\Big(\bigotimes_{a=1}^{\ell}A_{\Qq}(N|F_a,\Gg_{\mathrm{small}}|F_a)\Big)\otimes_{\Qq}A_{\Qq}(N/F,\Gg_{\mathrm{small}}/F)
\]
be the center product Chow isomorphism, and define \(\gamma_m\in A_{\Qq}(Z_B)^m\) by
\[
\Theta(\gamma_m):=P_{R,m}+\sum_{a=1}^{\ell}P_{a,m},
\]
each summand embedded in its own tensor factor with the unit on the others. This is an equality of ordinary Chow classes; no piecewise-polynomial representative and no equivariant lift is invoked. The toric morphism \(p\circ g\colon\Delta\to Z_B\) carries \(\Delta=Z_F\) to the center, and under the product identifications its pullback of \(\Theta(\gamma_m)\) is, by Lemma~\ref{lem:orbit-divisor-product},
\[
P_m(Q_N|Z_F)=\pr_2^*P_m(Q_{N/F})+\sum_{a=1}^{\ell}\pr_1^*p_a^*P_m(Q_{N|F_a})=g^*p^*\gamma_m.
\]
Comparing with the exceptional-restriction identity of the previous paragraph gives \(g^*\big(j^*P_m(N,\Gg_j)\big)=g^*p^*\gamma_m\) in \(A_{\Qq}(\Delta)\). The star-fan refinement morphism \(g\colon\Delta\to E_F\) is a proper birational toric morphism of smooth varieties, being the restriction to divisor stars of the stellar-subdivision chain \(\varpi\); the projection-formula argument of Step~1 applied to \(g\) gives \(g_*g^*=\mathrm{id}\) on rational Chow classes, so \(g^*\colon A_{\Qq}(E_F)\to A_{\Qq}(\Delta)\) is injective. We conclude
\[
j^*P_m(N,\Gg_j)=p^*\gamma_m\qquad\text{in }A_{\Qq}(E_F).
\]
The image criterion of Step~1 now gives \(P_m(N,\Gg_j)\in\operatorname{im}\psi_j\), and injectivity of \(\psi_j\) yields a unique \(P_m(N,\Gg_{j-1})\) with \(\psi_j(P_m(N,\Gg_{j-1}))=P_m(N,\Gg_j)\). Since the composite from \(\Gg_{j-1}\) to \(\Hh_{\max}\) factors through \(\Gg_j\), the new class again has the maximal-pullback property. Descending to \(j=0\) produces \(P_m(N,\Hh)\).

By Step~1 the composite \(\psi_\Hh\colon A_{\Qq}(N,\Hh)\to A_{\Qq}(N,\Hh_{\max})\) is one chain-independent injective map, and a class has the maximal-pullback property exactly when \(\psi_\Hh\) sends it to \(P_m^{\max}\). If two classes of \(A_{\Qq}(N,\Hh)^m\) both have this property, their difference lies in \(\ker\psi_\Hh=0\), so they agree. Hence \(P_m(N,\Hh)\) is the unique class with \(\psi_\Hh(P_m(N,\Hh))=P_m^{\max}\), and is automatically independent of the chain. For a one-flat enlargement, \(\psi_{\Gg_{\mathrm{big}}}\circ\psi_A=\psi_{\Gg_{\mathrm{small}}}\), so both \(\psi_A(P_m(N,\Gg_{\mathrm{small}}))\) and \(P_m(N,\Gg_{\mathrm{big}})\) are sent by the injective map \(\psi_{\Gg_{\mathrm{big}}}\) to \(P_m^{\max}\), whence
\[
\psi_A(P_m(N,\Gg_{\mathrm{small}}))=P_m(N,\Gg_{\mathrm{big}}).
\]
\begin{humancomment}
The idea of Step 2 can be read by the following commutative diagram.
\begin{equation}
\begin{tikzcd}
\Delta \arrow[r, hookrightarrow, "j_{\max}"] \arrow[d, "g"] &
X_{\Hh_{\max}} \arrow[d,  "\varpi"] \\
E_F \arrow[r, hookrightarrow, "j"]\arrow[d, "p"] & X_{\Gg_{\mathrm{big}}}\arrow[d] \\
Z_B \arrow[r, hookrightarrow] & X_{\Gg_{\mathrm{small}}}
\end{tikzcd}\end{equation}

Recall that \(C_{\mathrm{BEST}}^{\max}(N;u)\) is the Chern class of the pullback of the BEST class. Let \(P_{\mathrm{BEST}}^{\max}(N;u)\) denote the Newton power sum (the normalized Chern character) of the pullback of the BEST class. The goal is to show that the class
\(P(N,\Gg_{\mathrm{big}})\) descends across the one-flat blowdown
\(X_{\Gg_{\mathrm{big}}}\to X_{\Gg_{\mathrm{small}}}\). 

At this stage of the descending induction, put
\(y=P(N,\Gg_{\mathrm{big}})\), and
\[
g^*j^*y=j_{\max}^*\varpi^* y=j_{\max}^*P_{\mathrm{BEST}}^{\max}.
\]

By Lemma~\ref{lem:best-restriction}, this last class is the sum of the
corresponding Newton power sums for the minors \(N|F_a\) and \(N/F\). \[
j_{\max}^\ast(P_{\mathrm{BEST}}^{\max}(N;u))=\pr_2^*(P_{\mathrm{BEST}}^{\max}(N/F;u))+\sum_{a=1}^{\ell}\pr_1^*p_a^*(P_{\mathrm{BEST}}^{\max}(N|F_a;u)).
\]

By induction on the smaller ground sets, these minor classes descend, and assemble to a class \(\gamma\in A_{\mathbb Q}(Z_B)\)
with
\[
g^*p^*\gamma=j_{\max}^*P_{\mathrm{BEST}}^{\max}.
\]
Since \(g^*\) is injective, \(j^*y=p^*\gamma_m\). The one-step image criterion
then implies that \(y\) descends to \(A_{\mathbb Q}(N,\Gg_{\mathrm{small}})\).
\end{humancomment}
\smallskip
\noindent\emph{Step 3 (descended quotient Chern polynomial).} Define \(c_0^Q=1\) and, for \(1\le m\le d\), the coefficient \(c_m^Q\in A_{\Qq}(N,\Hh)^m\) by Newton's recursion
\[
P_m-c_1^QP_{m-1}+c_2^QP_{m-2}-\cdots+(-1)^{m-1}c_{m-1}^QP_1+(-1)^m m\,c_m^Q=0,
\qquad P_j:=P_j(N,\Hh),
\]
which determines \(c_m^Q\) uniquely because \(m\) is invertible in \(\Qq\). Put \(C_Q(N,\Hh;u):=\sum_{i=0}^{d}c_i^Q u^i\). Applying the chain pullback to the recursion and using that \(P_m(N,\Hh)\) pulls back to \(P_m^{\max}\) shows that \(C_Q(N,\Hh;u)\) pulls back to \(C_{\mathrm{BEST}}^{\max}(N;u)\). Any polynomial with this same maximal pullback has each coefficient differing from \(c_i^Q\) by a class killed by an injective chain pullback, hence equals \(C_Q(N,\Hh;u)\); this is the asserted existence, uniqueness and chain-independence.

\smallskip
\noindent\emph{Step 4 (reconstruction of \(Q_{N,\Hh}\) via the filtered Chern character).} Put \(q_Q:=|\Hh^\circ|-d\), let \(p_m^Q\) be the Newton power sums determined by \(C_Q(N,\Hh;u)\) (so \(p_m^Q=P_m(N,\Hh)\)), and set
\[
\ch_Q^A:=q_Q\cdot 1+\sum_{m=1}^{d}\frac{p_m^Q}{m!}\in A_{\Qq}(N,\Hh).
\]
Filter \(K_{\Qq}(N,\Hh)\) by the powers \(I_\tau^p\) of the ideal \(I_\tau\) generated by the \(\tau_F\), and \(A_{\Qq}(N,\Hh)\) by \(F_A^p=\bigoplus_{i\ge p}A_{\Qq}(N,\Hh)^i\). The assignment \(x_F\mapsto\operatorname{in}(\tau_F)\in I_\tau/I_\tau^2\) carries the defining relations of \(A_{\Qq}(N,\Hh)\) to the initial forms of the defining relations of \(K_{\Qq}(N,\Hh)\): a nonnested monomial \(\prod_{F\in S}x_F\) is the initial form of \(\prod_{F\in S}\tau_F\), and the atom linear form \(\sum_{F\ni a}x_F\) is the degree-one part of the multiplicative atom relation \(1-\prod_{F\ni a}(1-\tau_F)\). By the standard-monomial basis of the combinatorial K-ring \cite{LLPP24}, this assignment is an isomorphism of graded \(\Qq\)-algebras
\[
\operatorname{gr}_{I_\tau}K_{\Qq}(N,\Hh)\cong A_{\Qq}(N,\Hh),
\]
and \(I_\tau\) is nilpotent with \(I_\tau^{d+1}=0\); the \(I_\tau\)-adic filtration is therefore finite and separated, and \(\dim_{\Qq}K_{\Qq}(N,\Hh)=\sum_{i=0}^{d}\dim_{\Qq}A_{\Qq}(N,\Hh)^i\). Since \(\ch(\tau_F)=1-\exp(-x_F)=x_F+(\text{Chow degree}\ge2)\), the Chern character carries \(I_\tau^p\) into \(F_A^p\), so it is filtered, and its associated graded map is the inverse of the displayed isomorphism. A filtered homomorphism inducing an isomorphism on the associated graded of finite filtrations is itself an isomorphism. Hence \(\ch\) is a \(\Qq\)-algebra isomorphism, and
\[
Q_{N,\Hh}:=\ch^{-1}(\ch_Q^A)\in K_{\Qq}(N,\Hh)
\]
is the unique class with \(\ch(Q_{N,\Hh})=\ch_Q^A\).
\begin{humancomment} Step 3 and Step 4 reconstruct the (rational) \(K\)-class by its Chern character. This is possible because the Chern character map \(\ch:K(X)_\mathbb{Q}\xrightarrow{\sim}A(X)_\mathbb{Q} \) induces an isomorphism when the variety \(X\) is smooth, and the toric variety corresponding to the fan gives the isomorphism. 
   
\end{humancomment}

\smallskip
\noindent\emph{Step 5 (tangent class and its Chern polynomial).} Each \(\tau_F\) lies in the nilpotent ideal \(I_\tau\), so \(1-\tau_F\) is a unit with \((1-\tau_F)^{-1}=\sum_{k\ge0}\tau_F^k\), a finite sum. Hence
\[
T_{N,\Hh}:=\sum_{F\in\Hh^\circ}(1-\tau_F)^{-1}-Q_{N,\Hh}
\]
is well-defined in \(K_{\Qq}(N,\Hh)\). Recall that \(K_{\Qq}(N,\Hh)\) is a special \(\lambda\)-ring \cite{BEST23,LLPP24}, so each class \(a\) has a total Chern class \(c_z(a)=\sum_i c_i(a)z^i\) with \(c_i(a)\in A_{\Qq}(N,\Hh)^i\), recovered from the Chern-character power sums \(p_m(a):=m!\,\ch_m(a)\) by Newton's identities; this total Chern class is multiplicative, \(c_z(a+b)=c_z(a)c_z(b)\) and \(c_z(-a)=c_z(a)^{-1}\) by the splitting principle, and a line element \(l\) with \(\ch(l)=\exp(u)\) has \(c_z(l)=1+zu\). The boundary class \((1-\tau_F)^{-1}\) is a unit with \(\ch((1-\tau_F)^{-1})=\exp(x_F)\), hence a line element with \(c_z((1-\tau_F)^{-1})=1+z x_F\); and \(c_z(Q_{N,\Hh})=\sum_{i=0}^{d}c_i^Q z^i\), since the \(c_i^Q\) are the Newton-identity Chern classes of \(\ch_Q^A\). Therefore
\[
c_z(T_{N,\Hh})=c_z(Q_{N,\Hh})^{-1}\prod_{F\in\Hh^\circ}(1+z x_F),
\]
the \(z\)-graded inverse existing because \(c_0^Q=1\).

\smallskip
\noindent\emph{Step 6 (realizable identification).} Suppose \(N\) is realized by a complex linear subspace \(L\subseteq\mathbb C^E\) not contained in any coordinate hyperplane, with \(W:=W_{L,\Hh}\) the smooth projective De Concini--Procesi wonderful model and reduced simple normal crossings boundary \(D=\sum_{F\in\Hh^\circ}D_F\). Write \(W_{\max}:=W_{L,\Hh_{\max}}\) for the maximal model, with reduced boundary \(D_{\max}\). Consider the shifted geometric quotient class
\[
Q_{\mathrm{sum}}:=(|E|-1)[\Oo_W]-[T_W(-\log D)]+(|\Hh^\circ|-|E|+1)[\Oo_W]=|\Hh^\circ|\,[\Oo_W]-[T_W(-\log D)]
\]
in \(K_0(W)\otimes\Qq\), built from the logarithmic tangent bundle of \((W,D)\). We first prove that, under the realizable Chow identification \(x_F\leftrightarrow[D_F]\), the total Chern polynomial of \(Q_{\mathrm{sum}}\) equals \(C_Q(N,\Hh;u)\). For the maximal building set this is the Berget--Eur--Spink--Tseng identity \cite[Theorem~8.8]{BEST23}; the passage to an arbitrary building set is effected by the same one-flat refinement that defines \(C_Q\) in Steps~1--3. We carry it out in detail.

\emph{(6a) Maximal anchor.} Let \(Q_L\) denote the Berget--Eur--Spink--Tseng tautological quotient bundle on the permutohedral variety, restricted to \(W_{\max}\). Restricting the tautological exact sequence of Definition~\ref{defn:best-quotient} to \(W_{\max}\) and applying \cite[Theorem~8.8]{BEST23} gives
\[
[Q_L|_{W_{\max}}]=(|E|-1)[\Oo_{W_{\max}}]-[T_{W_{\max}}(-\log D_{\max})]
\]
in \(K_0(W_{\max})\), whose total Chern polynomial, under the permutohedral Feichtner--Yuzvinsky identification, is \(C_{\mathrm{BEST}}^{\max}(N;u)\) of Step~0.

\emph{(6b) Refinement geometry.} Order the flats of \(\Hh_{\max}\setminus\Hh\) so that adjoining them to \(\Hh\) one at a time yields a Feichtner--Yuzvinsky building set at each stage; this produces a chain \(\Hh=\Gg_0\subset\Gg_1\subset\cdots\subset\Gg_s=\Hh_{\max}\) and a composite morphism
\[
\pi\colon W_{\max}=W_{L,\Gg_s}\xrightarrow{\,b_s\,}\cdots\xrightarrow{\,b_1\,}W_{L,\Gg_0}=W,
\]
in which \(b_j\colon W_{L,\Gg_j}\to W_{L,\Gg_{j-1}}\) is the blow-up of \(W_{L,\Gg_{j-1}}\) along the smooth center cut out by the boundary divisors indexed by the maximal elements \(\Fact_{\Gg_{j-1}}(F)=\{F_a\}\) of \(\Gg_{j-1}\) below the newly added flat \(F\); this center is the transverse intersection \(\bigcap_a D_{F_a}\), and the reduced boundary of \(W_{L,\Gg_j}\) is the total transform of that of \(W_{L,\Gg_{j-1}}\) together with the new exceptional divisor \(D_F\) \cite{DCP95}, \cite[Proposition~5.2]{EFMPV25}. Thus each \(b_j\) is the blow-up of a smooth stratum of a simple normal crossings divisor, the boundary remains simple normal crossings, and \(\pi\) is their composite.

\emph{(6c) Blow-up invariance of the shifted log-normal class.} Let \(b\colon W'\to W''\) be the blow-up of a smooth transverse intersection of components of a simple normal crossings divisor \(D''\) on \(W''\), with reduced total transform \(D'\) (the strict transforms together with the exceptional divisor) again simple normal crossings. Such a blow-up is log-\'etale for the divisorial log structures \((W'',D'')\) and \((W',D')\); equivalently the logarithmic cotangent sheaf pulls back, \(b^*\Omega^1_{W''}(\log D'')\cong\Omega^1_{W'}(\log D')\), and dually \(b^*[T_{W''}(-\log D'')]=[T_{W'}(-\log D')]\) in \(K_0(W')\). Since also \(b^*[\Oo_{W''}]=[\Oo_{W'}]\), the class \((|E|-1)[\Oo]-[T(-\log D)]\) is compatible with \(b^*\). Applying this to each \(b_j\) of (6b) and composing, and using (6a),
\[
\pi^*\big((|E|-1)[\Oo_W]-[T_W(-\log D)]\big)=(|E|-1)[\Oo_{W_{\max}}]-[T_{W_{\max}}(-\log D_{\max})]=[Q_L|_{W_{\max}}].
\]

\emph{(6d) Descent of the Chern polynomial.} For any virtual class \(V\) and integer \(a\) one has \(c_z(V+a[\Oo])=c_z(V)\), since \(c_z(\Oo)=1\) and \(c_z\) is multiplicative; hence \(\pi^*Q_{\mathrm{sum}}\) and \([Q_L|_{W_{\max}}]\) have the same total Chern polynomial, namely \(C_{\mathrm{BEST}}^{\max}(N;u)\) by (6a). Under the Feichtner--Yuzvinsky presentations the geometric pullback \(\pi^*\) on rational Chow rings coincides with the iterated stellar-subdivision Chow pullback \(\psi_\Hh\colon A_{\Qq}(N,\Hh)\to A_{\Qq}(N,\Hh_{\max})\) of Step~1 \cite{FY04}, \cite[Proposition~5.2]{EFMPV25}, and Chern classes commute with pullback; so \(\psi_\Hh\) sends the total Chern polynomial of \(Q_{\mathrm{sum}}\) to \(C_{\mathrm{BEST}}^{\max}(N;u)\). By Step~3, \(C_Q(N,\Hh;u)\) is the unique element of \(A_{\Qq}(N,\Hh)[u]\) whose \(\psi_\Hh\)-image is \(C_{\mathrm{BEST}}^{\max}(N;u)\), and \(\psi_\Hh\) is injective; therefore, under \(x_F\leftrightarrow[D_F]\), the total Chern polynomial of \(Q_{\mathrm{sum}}\) equals \(C_Q(N,\Hh;u)\). For \(\Hh=\Hh_{\max}\) this is exactly \cite[Theorem~8.8]{BEST23}.

\emph{(6e) Identification of the classes.} Since \(W\) is smooth and projective, the topological Chern character \(\ch_W\colon K_0(W)\otimes\Qq\xrightarrow{\ \sim\ }\mathrm{CH}^*(W)\otimes\Qq\) is a ring isomorphism, and \(\theta\) intertwines \(\ch\) with \(\ch_W\) under \(x_F\leftrightarrow[D_F]\). As \(\ch(Q_{N,\Hh})=\ch_Q^A\) is the Chern character determined by \(C_Q(N,\Hh;u)\), and by (6d) \(\ch_W(Q_{\mathrm{sum}})\) is determined by the same polynomial, the classes \(\theta(Q_{N,\Hh})\) and \(Q_{\mathrm{sum}}\) have equal Chern character; injectivity of \(\ch_W\) gives \(\theta(Q_{N,\Hh})=Q_{\mathrm{sum}}\). Finally,
\[
\theta(T_{N,\Hh})=\sum_{F\in\Hh^\circ}[\Oo_W(D_F)]-Q_{\mathrm{sum}}
=[T_W(-\log D)]+\sum_{F\in\Hh^\circ}\big([\Oo_W(D_F)]-[\Oo_W]\big).
\]
For each \(F\), the structure sequence \(0\to\Oo_W\to\Oo_W(D_F)\to\Oo_{D_F}(D_F)\to0\) gives \([\Oo_W(D_F)]-[\Oo_W]=[\Oo_{D_F}(D_F)]\), while the residue sequence \(0\to T_W(-\log D)\to T_W\to\bigoplus_{F\in\Hh^\circ}\Oo_{D_F}(D_F)\to0\) of the simple normal crossings divisor \(D\) gives \([T_W]=[T_W(-\log D)]+\sum_{F\in\Hh^\circ}[\Oo_{D_F}(D_F)]\). Substituting yields
\[
\theta(T_{N,\Hh})=[T_{W_{L,\Hh}}]
\]
in \(K_0(W_{L,\Hh})\otimes_{\Zz}\Qq\). No integral \(K_{\Zz}\)-membership is asserted.
\end{proof}
\begin{humancomment}
We have the pullback map between Chow rings of toric models \[
\psi_A\colon A_{\Qq}(N,\Gg_{\mathrm{small}})\longrightarrow A_{\Qq}(N,\Gg_{\mathrm{big}}),
\] Proposition~\ref{prop:generator-normalization}  implicitly implies that when $M$ is realizable, $\psi_A$ is the same as the pullback map between Chow rings of wonderful models \[
\pi\colon W_{L,\Gg_{\mathrm{big}}}\longrightarrow W_{L,\Gg_{\mathrm{small}}}.
\] See \cite[Theorem 2.1, Remark 3.10]{Cheng26}.
\end{humancomment}
\begin{rem}\label{rem:induced-building-sets}
In the one-flat recursion, the restriction and contraction factors carry the induced building sets. In general, the restriction-side building set may omit the top flat of the restriction, and an induced contraction building set of a minimal building set need not be the minimal building set of the contraction. Thus no proof below replaces these induced building sets by top-containing or minimal building sets without a comparison theorem.
\end{rem}

\section{The integral quotient and tangent class}\label{sec:quotient}

In this section, we construct the integral representative used in Theorem~\ref{thm:main-integral}. The main point is that the quotient class is not reconstructed from rational Chern data by Newton identities.

Let $\Gg_{\max}:=L(M)\setminus\{\emptyset\}$. In the maximal case, $K_{\Zz}(M,\Gg_{\max})$ is the Larson-Li-Payne-Proudfoot matroid K-ring. The Berget-Eur-Spink-Tseng tautological quotient class on the permutohedral model restricts integrally to this ring \cite{BEST23,LLPP24}. If $r=\rk(M)$, $n=|E|$, $d=r-1$, and $f=|\Gg_{\max}\setminus\{E\}|$, the maximal integral quotient representative is
\[
Q_{\mathrm{int}}^{\max}(M):=\iota_M^*([Q_M])+(f-n+1)\cdot 1.
\]
The added trivial summand changes the rank and does not change the positive-degree Chern classes. Its rank becomes
\[
(n-r)+(f-n+1)=f-r+1=f-d.
\]
Thus it matches the quotient rank used by the rational class $Q_{M,\Gg_{\max}}$ of Proposition~\ref{prop:quotient-tangent}.

\begin{prop}[Maximal integral quotient]\label{prop:maximal-quotient}
For the maximal building set $\Gg_{\max}$, the rational class $Q_{M,\Gg_{\max}}$ of Proposition~\ref{prop:quotient-tangent} has the integral representative $Q_{\mathrm{int}}^{\max}(M)$ in $K_{\Zz}(M,\Gg_{\max})$.
\end{prop}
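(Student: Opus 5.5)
The plan is to show that $\rho_K(Q_{\mathrm{int}}^{\max}(M))=Q_{M,\Gg_{\max}}$ by comparing Chern characters. By Step~4 of the proof of Proposition~\ref{prop:quotient-tangent}, the Chern character $\ch\colon K_{\Qq}(M,\Gg_{\max})\to A_{\Qq}(M,\Gg_{\max})$ is a $\Qq$-algebra isomorphism. It therefore suffices to check that $\ch(\rho_K(Q_{\mathrm{int}}^{\max}(M)))=\ch_Q^A$, where
\[
\ch_Q^A=(f-d)\cdot 1+\sum_{m=1}^{d}\frac{P_m^{\max}}{m!}.
\]
Integrality is immediate: $\iota_M^*([Q_M])$ lies in $K_{\Zz}(M,\Gg_{\max})$ by the integral restriction of the Berget--Eur--Spink--Tseng class \cite{BEST23,LLPP24}, and $(f-n+1)\cdot 1$ is an integer multiple of the unit.

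First I will fix $\iota_M^*$. This is the ring map $K_T(X_E)\to K_0(X_E)\to K_{\Zz}(M,\Gg_{\max})$: forget equivariance, then restrict along the Larson--Li--Payne--Proudfoot surjection. That surjection sends the boundary line bundles to $(1-\tau_F)^{-1}$, and it is compatible with the Chow surjection $A(X_E)\to A(M,\Gg_{\max})$ underlying the non-equivariant image used in Step~0. The key point is that $\ch$ commutes with $\iota_M^*$: the matroid Chern character $\tau_F\mapsto 1-\exp(-x_F)$ is the restriction of the toric Chern character on $X_E$, since both are ring maps that agree on the generating line bundles.

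Next comes the rank and degree-zero part. The degree-zero part of $\ch(\iota_M^*[Q_M])$ is $n-r$. Adding $f-n+1$ gives $f-r+1=f-d=q_Q$, which is exactly the degree-zero term of $\ch_Q^A$.

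For the positive degrees, $\ch_m(\iota_M^*[Q_M])=p_m/m!$, where $p_m$ is the Newton power sum of the total Chern class $c(\iota_M^*[Q_M])$. That total Chern class is the image of $C_{\mathrm{BEST}}(M;u)$, which is $C_{\mathrm{BEST}}^{\max}(M;u)$ by definition, because Chern classes commute with pullback. Hence $p_m=P_m^{\max}$ for $1\le m\le d$, and both sides vanish for $m>d$. The trivial summand contributes only in degree zero, so the two Chern characters agree and injectivity of $\ch$ gives the claim.

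The main obstacle is the compatibility square between the LLPP restriction on $K$-theory and the Feichtner--Yuzvinsky/permutohedral Chow restriction, that is, that $\ch\circ\iota_M^*=\iota_{M,A}^*\circ\ch_{X_E}$. I would first try to verify this on generators. Both composites are ring homomorphisms out of $K_0(X_E)$, which is generated by torus-invariant line bundles $\Oo(D_S)$. Under restriction to the maximal model, $D_S$ goes to $x_S$ (or to $0$ for non-flats), and $\Oo(D_S)$ goes to $(1-\tau_S)^{-1}$ or $1$, with $\ch$ giving $\exp(x_S)$. This matches the convention that $\ch$ of the boundary class $(1-\tau_F)^{-1}$ is $\exp(x_F)$, as used in Step~5.
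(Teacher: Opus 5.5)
Your proposal is correct and follows the same route as the paper. Integrality comes from the BEST class together with the LLPP restriction. The rationalization is then identified with $Q_{M,\Gg_{\max}}$ by matching the rank $f-d$ and the Chern polynomial $C_{\mathrm{BEST}}^{\max}(M;u)$, using injectivity of $\ch$ from Step~4. Your explicit check that $\ch\circ\iota_M^*$ agrees with the Chow restriction of $\ch_{X_E}$ on the generating line bundles supplies a compatibility that the paper's proof uses without comment.
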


\begin{proof}
The Berget-Eur-Spink-Tseng quotient K-class $[Q_M]$ is integral on the permutohedral variety \cite{BEST23}. Its ordinary non-equivariant restriction to the Larson-Li-Payne-Proudfoot matroid K-ring is an element of $K_{\Zz}(M,\Gg_{\max})$ \cite{LLPP24}. Adding $(f-n+1)\cdot 1$ is an integral operation. After rationalization, this class has rank $f-d$ and the quotient Chern polynomial $C_Q(M,\Gg_{\max};u)=C_{\mathrm{BEST}}^{\max}(M;u)$ used in Step~0 of Proposition~\ref{prop:quotient-tangent} to define $Q_{M,\Gg_{\max}}$. The rational Chern character is determined by the rank and Chern classes, so the rationalization of $Q_{\mathrm{int}}^{\max}(M)$ is $Q_{M,\Gg_{\max}}$.
\end{proof}

For a general building set $\Gg$, the integral quotient is obtained by descent from $\Gg_{\max}$. This descent uses one-flat refinements of building sets. If $\Gg_{\mathrm{big}}=\Gg_{\mathrm{small}}\cup\{F_{\mathrm{new}}\}$ and
\[
B:=\Factor(F_{\mathrm{new}},\Gg_{\mathrm{small}})
\]
is the set of maximal elements of $\Gg_{\mathrm{small}}$ contained in $F_{\mathrm{new}}$, then the integral one-step map is
\[
\phi_K(\tau_H)=
\begin{cases}
\tau_H, & H\notin B,\\
\tau_H+\tau_{F_{\mathrm{new}}}-\tau_H\tau_{F_{\mathrm{new}}}, & H\in B.
\end{cases}
\]
Since $F_{\mathrm{new}}$ is a proper flat, the top flat $E$ is not in $B$, and therefore $\phi_K(\tau_E)=\tau_E$.

\begin{prop}[Integral quotient descent]\label{prop:quotient-descent}
For every top-containing building set $\Gg$, the rational class $Q_{M,\Gg}$ of Proposition~\ref{prop:quotient-tangent} has an integral representative $Q_{\Gg}^{\Zz}\in K_{\Zz}(M,\Gg)$. It may be chosen in the form
\[
Q_{\Gg}^{\Zz}=|\Gg^\circ|-(|E|-1)+q_{\mathrm{BEST}}^{\Zz}(M,\Gg),
\]
where $q_{\mathrm{BEST}}^{\Zz}(M,\Gg)$ is the descended integral Berget-Eur-Spink-Tseng quotient class.
\end{prop}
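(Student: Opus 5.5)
Starting from the maximal integral representative of Proposition~\ref{prop:maximal-quotient}, I descend along a one-flat refinement chain $\Gg=\Gg_0\subset\Gg_1\subset\cdots\subset\Gg_s=\Gg_{\max}$. Write $q_{\mathrm{BEST}}^{\Zz}(M,\Gg_{\max}):=\iota_M^*([Q_M])$; this class has rank $|E|-\rk M$. Set $\phi_K^{(j)}\colon K_{\Zz}(M,\Gg_{j-1})\to K_{\Zz}(M,\Gg_j)$ for the integral one-step maps, and let $\phi_\Gg$ denote their composite. I will construct $q_{\mathrm{BEST}}^{\Zz}(M,\Gg_j)$ by descending induction on $j$, requiring at each stage that $\phi^{(j)}_K(q_{\mathrm{BEST}}^{\Zz}(M,\Gg_{j-1}))=q_{\mathrm{BEST}}^{\Zz}(M,\Gg_j)$. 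Then I put $Q_\Gg^{\Zz}:=|\Gg^\circ|-(|E|-1)+q_{\mathrm{BEST}}^{\Zz}(M,\Gg)$. The constant is chosen so that the rank is $|\Gg^\circ|-d$, which is the rank $q_Q$ used in Step~4 of Proposition~\ref{prop:quotient-tangent}.

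The first step is to check that each $\phi_K$ is compatible with the rational Chow pullback. Under $\ch$, the class $\tau_H$ maps to $1-e^{-x_H}$. The map $\psi_A$ sends $x_H$ to $x_H+x_{F_{\mathrm{new}}}$ for $H\in B$ and fixes the other generators. On the K-side this corresponds to $1-\tau_H\mapsto(1-\tau_H)(1-\tau_{F_{\mathrm{new}}})$, which is exactly the displayed formula for $\phi_K$. Hence $\ch\circ\phi_K=\psi_A\circ\ch$, and since both $\ch$ isomorphisms are rational, $\phi_K\otimes\Qq$ is injective by Step~1. Consequently, if an integral class $y\in K_{\Zz}(M,\Gg_j)$ rationalizes into the image of $\phi_K\otimes\Qq$, there is a unique rational preimage.

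The second step is to show that $q_{\mathrm{BEST}}^{\Zz}(M,\Gg_j)\otimes 1$ lies in that rational image. Its Chern character pulls back to $\ch(\iota_M^*[Q_M])$ at the maximal model. By Steps~2--4 of Proposition~\ref{prop:quotient-tangent}, the power sums $P_m(N,\Gg_j)$ descend, and so does the constant rank term. Therefore $\ch$ of the class lies in $\im\psi_A$, and by the compatibility above the class itself lies in $\operatorname{im}(\phi_K\otimes\Qq)$. Its unique rational preimage has Chern character $\ch^A_Q-(|\Gg^\circ|-(|E|-1))$ at level $\Gg_{j-1}$, so once the integrality step below is done, the final rationalization is $Q_{M,\Gg}$ by the uniqueness in Step~4.

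The main obstacle is integrality: the rational preimage must lie in $K_{\Zz}(M,\Gg_{j-1})$. I would deduce this from a saturation statement: the image $\phi_K(K_{\Zz}(M,\Gg_{\mathrm{small}}))$ is a saturated subgroup of $K_{\Zz}(M,\Gg_{\mathrm{big}})$, meaning
\[
\phi_K(K_{\Zz}(M,\Gg_{\mathrm{small}}))=K_{\Zz}(M,\Gg_{\mathrm{big}})\cap\phi_K(K_{\Qq}(M,\Gg_{\mathrm{small}})).
\]
To prove it, I would pass to the $\tau$-adic associated graded rings. Both are free over $\Zz$ by Proposition~\ref{prop:basis}, and their associated gradeds are the integral Chow rings. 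The map $\operatorname{gr}\phi_K$ is the integral Feichtner--Yuzvinsky pullback. Using the standard monomial bases, I would show that this pullback is a split injection over $\Zz$ with free cokernel. Concretely, the FY basis of the big ring should contain the pullback of the FY basis of the small ring together with new monomials involving $x_{F_{\mathrm{new}}}$, up to a unitriangular change of basis. Since the filtrations are finite, saturation of the associated graded map lifts to saturation of $\phi_K$. I would also apply freeness of the cokernel degree by degree, which handles the fact that the cokernel of a filtered map can acquire torsion only through the graded pieces. With saturation in hand, the rational preimage is integral, the induction closes, and the representative has the asserted form.
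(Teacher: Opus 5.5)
Your proposal matches the paper's proof. You descend the uncorrected class $\iota_M^*([Q_M])$ along a one-flat chain, and use the rational descent of Proposition~\ref{prop:quotient-tangent} to place each refined class in the rational image of $\phi_K$; your check $\ch\circ\phi_K=\psi_A\circ\ch$ makes explicit a step the paper leaves implicit. You then get integrality from saturation of $\phi_K$ via the $\tau$-adic associated graded and Lemma~\ref{lem:filtered-saturation}, exactly as in Proposition~\ref{prop:one-step-saturation}.

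The one sub-step you should handle differently is graded saturation. The paper gets a free cokernel for $\psi_{\Zz}$ from the integral blowup formula, using that the center ring $A_{\Zz}(Z)$ is free (Lemma~\ref{lem:chow-comparison}). Your unitriangular standard-monomial comparison is not automatic: $\prod_{H\in B}x_H$ can be a standard monomial of the small ring but is zero in the big ring, so its pullback $\prod_{H\in B}(x_H+x_{F_{\mathrm{new}}})$ consists entirely of terms divisible by $x_{F_{\mathrm{new}}}$.
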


\begin{proof}
We descend the uncorrected Berget-Eur-Spink-Tseng class, not the rank-corrected maximal quotient. Put
\[
q_{\mathrm{BEST}}^{\max}:=\iota_M^*([Q_M])\in K_{\Zz}(M,\Gg_{\max}).
\]
Choose a one-flat chain from $\Gg$ to $\Gg_{\max}$. The rational Berget-Eur-Spink-Tseng quotient descent of Proposition~\ref{prop:quotient-tangent} (the chain-compatibility $\psi_A(C_Q(M,\Gg_{\mathrm{small}};u))=C_Q(M,\Gg_{\mathrm{big}};u)$ of Step~2--3) says that, at each reverse one-step enlargement, the rationalization of the refined class lies in the rational image of the one-step map. Since Proposition~\ref{prop:one-step-saturation} shows that this image is saturated, the class descends integrally. Repeating along the chain gives $q_{\mathrm{BEST}}^{\Zz}(M,\Gg)\in K_{\Zz}(M,\Gg)$. We then set
\[
Q_{\Gg}^{\Zz}=q_{\mathrm{BEST}}^{\Zz}(M,\Gg)+\bigl(|\Gg^\circ|-(|E|-1)\bigr)\cdot 1,
\]
with the rank correction $\rho_{\Gg}=|\Gg^\circ|-(|E|-1)$ for the building set $\Gg$. The equality $\rho_K(Q_{\Gg}^{\Zz})=Q_{M,\Gg}$ after rationalization follows from the rational quotient descent. The descended class is independent of the chosen one-flat chain: $K_{\Zz}(M,\Gg)$ is torsion-free and $Q_{M,\Gg}$ is a fixed rational class, so its integral preimage under the injective rationalization is unique.
\end{proof}

The next proposition is the linkage between the integral construction and the rational tangent-class theorem of Sections~\ref{sec:tangent-class}--\ref{sec:chern-alpha}.

\begin{prop}[Linkage with the rational tangent class]\label{prop:linkage}
Let $Q_{\Gg}^{\Zz}\in K_{\Zz}(M,\Gg)$ be the integral quotient representative of Proposition~\ref{prop:quotient-descent}, and put $T_{M,\Gg}^{\Zz}:=\sum_{F\in\Gg^\circ}(1-\tau_F)^{-1}-Q_{\Gg}^{\Zz}$. Then
\[
\rho_K(Q_{\Gg}^{\Zz})=Q_{M,\Gg},\qquad
\rho_K(T_{M,\Gg}^{\Zz})=T_{M,\Gg},
\]
where $Q_{M,\Gg}$ and $T_{M,\Gg}$ are the rational quotient and tangent classes of Proposition~\ref{prop:quotient-tangent}.
\end{prop}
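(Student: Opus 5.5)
The first identity is essentially the content of Proposition~\ref{prop:quotient-descent}, so the plan is to make that statement explicit and then deduce the second identity from the fact that rationalization is a ring homomorphism.

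\emph{Step 1: the quotient class.} By Proposition~\ref{prop:quotient-descent}, $Q_{\Gg}^{\Zz}=q_{\mathrm{BEST}}^{\Zz}(M,\Gg)+\rho_{\Gg}\cdot 1$ with $\rho_{\Gg}=|\Gg^\circ|-(|E|-1)$. I would verify $\rho_K(Q_{\Gg}^{\Zz})=Q_{M,\Gg}$ by comparing Chern characters, which suffices because $\ch$ is an isomorphism on $K_{\Qq}(M,\Gg)$ (Step~4 of Proposition~\ref{prop:quotient-tangent}).

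\emph{Positive degrees.} Let $\psi_K$ be the composite of the one-step maps $\phi_K$ along the chosen chain, rationalized. One checks on generators that $\ch\circ\phi_K=\psi_A\circ\ch$. Indeed, for $H\in B$ we have $1-\phi_K(\tau_H)=(1-\tau_H)(1-\tau_{F_{\mathrm{new}}})$, which goes to $\exp(-(x_H+x_{F_{\mathrm{new}}}))$. This matches the stellar pullback $x_H\mapsto x_H+x_{F_{\mathrm{new}}}$. By construction, $\psi_K(\rho_K q_{\mathrm{BEST}}^{\Zz}(M,\Gg))=\rho_K q_{\mathrm{BEST}}^{\max}$. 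Hence
\[
\psi_\Gg\bigl(\ch(\rho_K q_{\mathrm{BEST}}^{\Zz}(M,\Gg))\bigr)=\ch(\rho_K q_{\mathrm{BEST}}^{\max}).
\]
The positive-degree part of the right-hand side is $\sum_{m\ge1}P_m^{\max}/m!$, by Proposition~\ref{prop:maximal-quotient} and Step~0. By the uniqueness in Step~2 of Proposition~\ref{prop:quotient-tangent} and injectivity of $\psi_\Gg$, the positive-degree part of $\ch(\rho_K q_{\mathrm{BEST}}^{\Zz}(M,\Gg))$ is therefore $\sum_{m=1}^{d}P_m(M,\Gg)/m!$.

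\emph{Degree zero.} $\psi_\Gg$ preserves rank, so the rank of $\rho_K q_{\mathrm{BEST}}^{\Zz}(M,\Gg)$ equals $\rank Q_M=|E|-\rk M=|E|-d-1$. Adding $\rho_{\Gg}$ gives total rank $|\Gg^\circ|-d=q_Q$. Thus $\ch(\rho_K Q_{\Gg}^{\Zz})=\ch_Q^A$, and so $\rho_K(Q_{\Gg}^{\Zz})=Q_{M,\Gg}$.

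\emph{Step 2: the tangent class.} Since $\rho_K$ is a unital ring homomorphism and $(1-\tau_F)^{-1}=\sum_{k\le d}\tau_F^k$ is a finite integral polynomial, we get $\rho_K((1-\tau_F)^{-1})=(1-\tau_F)^{-1}$ in $K_{\Qq}(M,\Gg)$. Additivity then gives $\rho_K(T_{M,\Gg}^{\Zz})=\sum_{F\in\Gg^\circ}(1-\tau_F)^{-1}-Q_{M,\Gg}=T_{M,\Gg}$.

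\emph{Main obstacle.} The delicate point is the compatibility $\ch\circ\phi_K=\psi_A\circ\ch$ together with the rank bookkeeping. These are what guarantee that the integral descent and the rational Newton-sum descent produce the same class, rather than two classes that merely share their Chern polynomial up to rank. Once that is checked on generators, everything else is formal.
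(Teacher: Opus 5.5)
Your proposal is correct and follows essentially the paper's route: both identify $\rho_K(Q_{\Gg}^{\Zz})$ with $Q_{M,\Gg}$ by matching the rank $|\Gg^\circ|-d$ and the positive-degree Chern data (you use the power sums in $\ch$, the paper the Chern polynomial plus Newton's identities), invoke that a rational K-class is determined by its Chern character, and then get the tangent identity from $\rho_K$ being a unital ring homomorphism. Your generator check that $\ch\circ\phi_K=\psi_A\circ\ch$ supplies the step the paper leaves implicit when it asserts that the descended integral class carries the descended BEST Chern polynomial.
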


\begin{proof}
By Proposition~\ref{prop:quotient-descent}, $\rho_K(Q_{\Gg}^{\Zz})$ is the rational K-class whose rank is $\rho_{\Gg}+(\rk Q_{\mathrm{BEST}})=(|\Gg^\circ|-(|E|-1))+(|E|-1-d)=|\Gg^\circ|-d=q_Q$ and whose total Chern polynomial in positive degrees is the descended Berget-Eur-Spink-Tseng quotient Chern polynomial; the trivial-rank correction $\rho_{\Gg}\cdot 1$ changes only the rank. By Proposition~\ref{prop:quotient-tangent}, $Q_{M,\Gg}$ is the unique rational K-class with rank $q_Q=|\Gg^\circ|-d$ and total Chern polynomial $C_Q(M,\Gg;u)$, the descended Berget-Eur-Spink-Tseng quotient Chern polynomial. A rational K-class is determined by its Chern character, hence by its rank together with its positive-degree Chern classes through Newton's identities. As both classes have the same rank and the same Chern polynomial, $\rho_K(Q_{\Gg}^{\Zz})=Q_{M,\Gg}$. Since rationalization is a ring homomorphism fixing each $(1-\tau_F)^{-1}$, it sends $T_{M,\Gg}^{\Zz}=\sum_{F\in\Gg^\circ}(1-\tau_F)^{-1}-Q_{\Gg}^{\Zz}$ to $\sum_{F\in\Gg^\circ}(1-\tau_F)^{-1}-Q_{M,\Gg}=T_{M,\Gg}$.
\end{proof}

\begin{proof}[Proof of Theorem~\ref{thm:main-integral}, non-realizable clauses]
By Proposition~\ref{prop:quotient-descent}, there exists $Q_{\Gg}^{\Zz}\in K_{\Zz}(M,\Gg)$ with rationalization $Q_{M,\Gg}$ (Proposition~\ref{prop:linkage}). Since every $(1-\tau_F)^{-1}$ is an integral inverse boundary line class in $K_{\Zz}(M,\Gg)$, the class
\[
T_{M,\Gg}^{\Zz}=\sum_{F\in \Gg^\circ}(1-\tau_F)^{-1}-Q_{\Gg}^{\Zz}
\]
lies in $K_{\Zz}(M,\Gg)$. Its rationalization is the rational tangent class $T_{M,\Gg}$ by Proposition~\ref{prop:linkage}. Proposition~\ref{prop:basis} gives integer coordinates in the standard $\tau$-monomial basis.

The equality $P_{\mathrm{int}}^K(M,\Gg;z)=\Hilb(M,\Gg;z)$ and the Chern-alpha inequalities are proved in Section~\ref{sec:hilbert} below, where they are deduced from the in-paper Theorems~\ref{thm:hilbert-identity} and~\ref{thm:chern-alpha} via Proposition~\ref{prop:linkage}.
\end{proof}

\section{Saturated descent}\label{sec:saturation}

The goal of this section is to prove the integral one-step descent statement used in Proposition~\ref{prop:quotient-descent}. The proof uses toric geometry only for auxiliary centers.

\begin{humancomment}
The class pull backs to an integral class in the maximal building set, so it suffices to show that the pullback map has torsion-free cokernel. This can be proved by writing down the map explicitly.
\end{humancomment}

\begin{setup}\label{setup:one-step}
Let $\Gg_{\mathrm{big}}=\Gg_{\mathrm{small}}\cup\{F_{\mathrm{new}}\}$ be a one-flat enlargement of top-containing Feichtner-Yuzvinsky building sets. Put
\[
B:=\Factor(F_{\mathrm{new}},\Gg_{\mathrm{small}})=\{F_1,\ldots,F_\ell\}.
\]
Let $K_{\mathrm{small}}:=K_{\Zz}(M,\Gg_{\mathrm{small}})$ and $K_{\mathrm{big}}:=K_{\Zz}(M,\Gg_{\mathrm{big}})$, and let $\phi_K\colon K_{\mathrm{small}}\to K_{\mathrm{big}}$ be the one-step map.
\end{setup}

\begin{lem}[Associated graded]\label{lem:assoc-graded}
For every top-containing building set $\Gg$ the $\tau$-adic filtration of $K_{\Zz}(M,\Gg)$ by powers of the ideal $I_\tau=(\tau_F:F\in\Gg)$ is finite and separated, and there is a canonical isomorphism
\[
\operatorname{gr}_\tau K_{\Zz}(M,\Gg)\cong A_{\Zz}(M,\Gg),\qquad \operatorname{in}(\tau_F)=x_F.
\]
\end{lem}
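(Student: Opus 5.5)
We will prove the lemma by comparing two presentations over $\Zz$: the presentation of $K_{\Zz}(M,\Gg)$ in Definition~\ref{defn:k-ring} and the integral Chow ring $A_{\Zz}(M,\Gg)$, defined by the same generators and relations as in Definition~\ref{defn:chow} but over $\Zz$. The first step is to build a surjection $A_{\Zz}(M,\Gg)\to\operatorname{gr}_\tau K_{\Zz}(M,\Gg)$. Since $\operatorname{gr}_\tau K$ is generated in degree one by the initial forms $\operatorname{in}(\tau_F)\in I_\tau/I_\tau^2$, it suffices to send $x_F\mapsto\operatorname{in}(\tau_F)$ and check that the relations hold. A nonnested monomial $\prod_{F\in S}x_F$ goes to the class of $\prod_{F\in S}\tau_F$ in $I_\tau^{|S|}/I_\tau^{|S|+1}$, and this class is zero because the product itself vanishes in $K$. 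For an atom $a$, expanding the atom relation gives
\[
0=1-\prod_{F\ni a}(1-\tau_F)=\sum_{F\ni a}\tau_F+(\text{element of }I_\tau^2),
\]
so $\sum_{F\ni a}\operatorname{in}(\tau_F)=0$ in $I_\tau/I_\tau^2$. The map is therefore a well-defined graded surjection.

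The second step is finiteness of the filtration. Any product of $d+1$ of the $\tau_F$ either involves a nonnested set, and then vanishes, or can be reduced. To see this, we use the Chow degree bound: $A_{\Zz}(M,\Gg)^{i}=0$ for $i>d$ by the Feichtner--Yuzvinsky Gröbner basis \cite{FY04}, whose standard monomials have degree at most $d$. The surjectivity from step one then gives $I_\tau^{p}/I_\tau^{p+1}=0$ for $p>d$. Combining this with nilpotence of $I_\tau$ yields $I_\tau^{d+1}=0$. Nilpotence follows because $I_\tau$ is finitely generated and each $\tau_F$ is nilpotent: by the atom relations, $\tau_F$ for $F\neq E$ is annihilated by a suitable power, as in \cite{LLPP24}. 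Alternatively, a descending sequence of finitely generated abelian groups with vanishing successive quotients stabilizes at $\bigcap_p I_\tau^p$, and a Krull-intersection argument in the finitely generated $\Zz$-algebra $K$ kills this intersection. Once the filtration is finite it is automatically separated.

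The third step, which is the main obstacle, is injectivity, and it requires comparing ranks of free $\Zz$-modules. By Proposition~\ref{prop:basis}, $K_{\Zz}(M,\Gg)$ is free with the standard $\tau$-monomial basis. By \cite{FY04}, $A_{\Zz}(M,\Gg)$ is free with the Feichtner--Yuzvinsky basis, and these two bases are indexed by the same set of monomials with the same degrees. The key claim to establish is that the initial forms of the $\tau$-monomials of degree $p$ span $I_\tau^p/I_\tau^{p+1}$, and that $I_\tau^p$ is spanned by the standard $\tau$-monomials of degree $\ge p$. The latter is a triangularity statement: rewriting a monomial via the atom relations only produces terms of equal or higher $\tau$-degree, since the relations are homogeneous up to higher-order terms. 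Granting this, each graded piece $I_\tau^p/I_\tau^{p+1}$ is free of rank equal to the number of standard monomials of degree $p$, which is exactly the rank of $A_{\Zz}(M,\Gg)^p$. A surjection between free abelian groups of equal finite rank is an isomorphism, so the map is an isomorphism in each degree. Because $x_F\mapsto\operatorname{in}(\tau_F)$ is forced, the isomorphism is canonical.
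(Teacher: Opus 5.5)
Your skeleton matches the paper's. Its proof also observes that the initial forms of the nonnested and atom relations are the Feichtner--Yuzvinsky relations; this is your surjection $\sigma\colon A_{\Zz}(M,\Gg)\to\operatorname{gr}_\tau K_{\Zz}(M,\Gg)$. It then cites \cite{LLPP24} for the absence of further relations and reads off finiteness from $A^i_{\Zz}=0$ for $i>d$. The two places where you go beyond this do not close.

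\emph{Finiteness.} The degree bound only gives $I_\tau^{d+1}=I_\tau^{d+2}=\cdots$. You are right that nilpotence is also needed; the paper's one line skips this. Your Krull alternative fails, however. Krull's theorem only produces, for each $x\in\bigcap_p I_\tau^p$, some $a\in I_\tau$ with $(1-a)x=0$. That kills $x$ only if $1-a$ is a non-zero-divisor, which in practice means you already know $I_\tau$ lies in the Jacobson radical. For example, $R=\Zz\times\Zz$ and $I=0\times\Zz$ satisfy $R/I\cong\Zz$ and $I^2=I\neq 0$. The citation of \cite{LLPP24} covers only $\Gg_{\max}$, and your sentence omits $\tau_E$.

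A short argument that works for every $\Gg$ is as follows. Take any ring map $K_{\Zz}(M,\Gg)\to k$ to a field, with $\tau_F\mapsto t_F$.
\begin{enumerate}
\item By the nonnested-monomial relation, the support $S_0=\{F:t_F\neq 0\}$ is $\Gg$-nested.
\item Let $G\in S_0$ be maximal. Nestedness forces the join of the elements of $S_0$ below $G$ to be strictly below $G$. So some atom $a\le G$ lies in no smaller element of $S_0$.
\item Two elements of $\Gg$ sharing an atom have their join in $\Gg$. Hence no element of $S_0$ incomparable to $G$ contains $a$.
\item The atom relation at $a$ then reads $1-t_G=1$, a contradiction.
\end{enumerate}
So every $\tau_F$ lies in every prime, $I_\tau$ is nilpotent, and $I_\tau^{d+1}=0$.

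\emph{Injectivity.} Your triangularity claim says $I_\tau^p$ is spanned by standard monomials of degree $\ge p$. This amounts to saying the initial forms of the whole K-ideal generate nothing beyond the Chow ideal, which is the entire content of the lemma. You grant it, and the heuristic that rewriting only raises degree presupposes it. It is also unnecessary.

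The cleanest fix uses the Chern character of Section~\ref{sec:integral-k-ring}. Since $\ch(\tau_F)=x_F+(\text{higher})$, $\ch$ is filtered, and $\operatorname{gr}\ch\circ(\sigma\otimes\Qq)$ is the identity of $A_{\Qq}(M,\Gg)$ on monomials. Because $\operatorname{gr}_\tau$ commutes with $\otimes\Qq$ and $A_{\Zz}(M,\Gg)$ is free \cite{FY04}, $\sigma$ is injective integrally.

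Alternatively, once $I_\tau^{d+1}=0$, the rank equality of Proposition~\ref{prop:basis} gives $\sum_p\operatorname{rank}\operatorname{gr}^p=\sum_p\operatorname{rank}A^p_{\Zz}$, with $\le$ holding termwise. Each $\ker\sigma^p$ is then a rank-zero subgroup of a free group, hence $0$. You never need $\operatorname{gr}^p$ itself to be free.

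The $\ch$ route together with the point argument proves the lemma without Proposition~\ref{prop:basis} or \cite{LLPP24}. That matters, because the paper's proof of Proposition~\ref{prop:basis} itself lists the associated-graded identification among the facts it imports. As a bonus, this route yields the freeness of $K_{\Zz}(M,\Gg)$ as a corollary.
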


\begin{proof}
The nonnested relation $\prod_{F\in S}\tau_F=0$ is homogeneous and has initial form $\prod_{F\in S}x_F=0$. The atom-character relation $1-\prod_{a\leq F}(1-\tau_F)=0$ has lowest-degree term $\sum_{a\leq F}\tau_F$, with initial form $\sum_{a\leq F}x_F=0$, the Feichtner-Yuzvinsky atom-linear relation. These are exactly the defining relations of $A_{\Zz}(M,\Gg)$ \cite{FY04}, and the K-relations are inhomogeneous deformations whose lowest-order terms recover them \cite{LLPP24}. The filtration is finite because $A_{\Zz}(M,\Gg)$ is concentrated in degrees $0,\ldots,d$.
\end{proof}

\begin{lem}[Integral one-step Chow comparison]\label{lem:chow-comparison}
In the one-step situation of Set-up~\ref{setup:one-step}, the Feichtner-Yuzvinsky homomorphism
\[
\psi_{\Zz}\colon A_{\Zz}(M,\Gg_{\mathrm{small}})\to A_{\Zz}(M,\Gg_{\mathrm{big}}),\qquad
\psi_{\Zz}(x_H)=x_H+x_{F_{\mathrm{new}}}\ (H\in B),\quad \psi_{\Zz}(x_H)=x_H\ (H\notin B),
\]
is injective with saturated image in each graded degree.
\end{lem}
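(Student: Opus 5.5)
We will prove Lemma~\ref{lem:chow-comparison} by constructing an explicit integral left inverse, or more precisely by showing that $\psi_{\Zz}$ is a split injection of free abelian groups, degree by degree. The most robust route uses the Feichtner--Yuzvinsky Gröbner/standard-monomial basis \cite{FY04} for both $A_{\Zz}(M,\Gg_{\mathrm{small}})$ and $A_{\Zz}(M,\Gg_{\mathrm{big}})$. Both rings are free abelian with bases of FY monomials $\prod_{G\in S}x_G^{m_G}$, where $S$ is nested and the exponents satisfy $m_G<\rk G-\rk(\bigvee (S_{<G}))$. This is equivalent to the standard blowup formula for the Chow ring of the blowup of $X_{\Gg_{\mathrm{small}}}$ along the smooth center $Z_B=\bigcap_{a}D_{F_a}$ of codimension $\ell$. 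So the main plan is to prove an integral Keel-type decomposition
\[
A_{\Zz}(M,\Gg_{\mathrm{big}})\;=\;\psi_{\Zz}\bigl(A_{\Zz}(M,\Gg_{\mathrm{small}})\bigr)\;\oplus\;\bigoplus_{k=1}^{\ell-1}x_{F_{\mathrm{new}}}^{k}\cdot A_{\Zz}(Z_B),
\]
with $A_{\Zz}(Z_B)$ identified, as in Step~2 of Proposition~\ref{prop:quotient-tangent}, with the tensor product of the Chow rings of $(N|F_a,\Gg_{\mathrm{small}}|F_a)$ and $(N/F_{\mathrm{new}},\Gg_{\mathrm{small}}/F_{\mathrm{new}})$. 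Injectivity and saturation are immediate from such a direct-sum decomposition.

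\textbf{Step 1: the map and rank count.} Check that $\psi_{\Zz}$ is well defined: the atom-linear forms map to atom-linear forms, since an atom below some $F_a$ is below $F_{\mathrm{new}}$. Non-nested monomials must also map to zero in the big ring; this follows from the building-set axiom via the product decomposition $[\emptyset,F_{\mathrm{new}}]\cong\prod_a[\emptyset,F_a]$ used in Lemma~\ref{lem:best-restriction}. Over $\Qq$, injectivity is already known from Step~1 of Proposition~\ref{prop:quotient-tangent}. Over $\Zz$, the ranks of both sides of the displayed decomposition agree degree by degree by the FY basis counts, i.e.\ the usual blowup Hilbert series identity.

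\textbf{Step 2: unimodularity.} Order the FY monomials of $\Gg_{\mathrm{big}}$ by the power of $x_{F_{\mathrm{new}}}$ and whether $F_{\mathrm{new}}$ lies in the support. For each FY basis monomial $\mu$ of $\Gg_{\mathrm{small}}$, expand $\psi_{\Zz}(\mu)$ by substituting $x_{F_a}\mapsto x_{F_a}+x_{F_{\mathrm{new}}}$. Show that its leading term, in a suitable term order, is the FY monomial of $\Gg_{\mathrm{big}}$ obtained by the same formula with coefficient $1$. The remaining terms involve $x_{F_{\mathrm{new}}}$ and are reduced by the big-ring Gröbner basis. Then show that the leftover FY monomials of $\Gg_{\mathrm{big}}$ are exactly $x_{F_{\mathrm{new}}}^{k}\nu$ with $1\le k\le \ell-1$ and $\nu$ a basis monomial of the center. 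Since $\rk F_{\mathrm{new}}-\rk(\bigvee B)$ equals the codimension bound, the exponent of $x_{F_{\mathrm{new}}}$ is bounded by $\ell-1$ in the correct normalization. This gives a unitriangular change of basis over $\Zz$, hence the decomposition.

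\textbf{Main obstacle.} The hard part is the leading-term bookkeeping in Step~2. Substituting $x_{F_a}+x_{F_{\mathrm{new}}}$ into a standard monomial can produce non-standard monomials, and reducing these via the integral FY Gröbner relations might introduce non-unit coefficients. If this becomes messy, the fallback is geometric, using the auxiliary toric varieties. Identify $A_{\Zz}(M,\Gg)$ with the integral Chow ring of the smooth toric variety of the (possibly incomplete) nested fan, which is valid integrally by \cite{FY04}. Then apply the integral blowup formula for Chow groups of a smooth blowup along a smooth center. This formula gives $\mathrm{CH}(\widetilde X)=\pi^*\mathrm{CH}(X)\oplus\bigoplus_{k=1}^{\ell-1}j_*\bigl(\zeta^{k-1}p^*\mathrm{CH}(Z_B)\bigr)$ integrally. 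The summand $\pi^*\mathrm{CH}(X)$ is exactly the image of $\psi_{\Zz}$ once $\pi^*D_{F_a}=D_{F_a}+E$ is matched with the formula for $\psi_{\Zz}$. The integral splitting of the projective bundle formula then yields saturation directly.
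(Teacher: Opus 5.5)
Your fallback is the paper's proof. The paper identifies $\psi_{\Zz}$ with the Chow pullback along the toric blowup given by the stellar subdivision at the cone of $B$. It then applies the integral blowup formula, $A^p_{\Zz}(M,\Gg_{\mathrm{big}})\simeq\psi_{\Zz}A^p_{\Zz}(M,\Gg_{\mathrm{small}})\oplus\bigoplus_{a=1}^{\ell-1}A^{p-a}_{\Zz}(Z)$, and concludes because $A_{\Zz}(Z)$ is a tensor product of free Feichtner--Yuzvinsky rings.

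When you say the splitting ``yields saturation directly'', you leave one point implicit: the complement must be torsion-free, since a direct summand with a torsion complement is not saturated. The complement is torsion-free, either because it is a sum of copies of the free group $A_{\Zz}(Z_B)$ (the paper's reason) or because it sits inside the free group $A_{\Zz}(M,\Gg_{\mathrm{big}})$. For the splitting itself, $\pi_*\pi^*=\mathrm{id}$, which holds integrally for a proper birational morphism, would already suffice.

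Your primary Gröbner route does not merely get messy: Step~2 is false as stated, for two reasons.

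First, a $\Gg_{\mathrm{small}}$-nested support $S\supseteq B$ is never $\Gg_{\mathrm{big}}$-nested. The reason is that $B$ is an antichain of size $\ell\ge2$ whose join $\bigvee B=F_{\mathrm{new}}$ lies in $\Gg_{\mathrm{big}}$. So the ``same-formula'' monomial is zero in the big ring, not a leading term with coefficient $1$. For example, take the Boolean lattice on $\{1,\dots,5\}$ with $\Gg_{\mathrm{small}}$ consisting of the atoms, $12$, $34$ and the top, and take $F_{\mathrm{new}}=1234$, so $B=\{12,34\}$. The small basis monomial $x_{12}x_{34}$ maps to $x_{12}x_{1234}+x_{34}x_{1234}+x_{1234}^2$, a sum of three big basis monomials.

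Second, $\bigvee B=F_{\mathrm{new}}$ gives $\rk F_{\mathrm{new}}-\rk(\bigvee B)=0$, not $\ell$. The actual Feichtner--Yuzvinsky bound on the exponent $m$ of $x_{F_{\mathrm{new}}}$ is $m<\rk F_{\mathrm{new}}-\rk(\bigvee S_{<F_{\mathrm{new}}})$, and this can exceed $\ell$. In the example, $x_{1234}^3$ is a big basis monomial while $\ell-1=1$. So the leftover basis monomials are not of the form $x_{F_{\mathrm{new}}}^k\nu$ with $k\le\ell-1$, and the unitriangular monomial matching you describe does not exist. The Keel-type decomposition is true, but it is not realized by a naive matching of Feichtner--Yuzvinsky bases. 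The geometric fallback is what actually proves the lemma.
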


\begin{proof}
Adjoining $F_{\mathrm{new}}$ is the stellar subdivision of the Feichtner-Yuzvinsky fan along the cone indexed by $B=\Factor(F_{\mathrm{new}},\Gg_{\mathrm{small}})$, and $\psi_{\Zz}$ is the corresponding Chow pullback \cite{FY04}. By the integral blowup formula for Chow groups applied to this smooth stellar subdivision, one has degreewise
\[
A^p_{\Zz}(M,\Gg_{\mathrm{big}})\simeq\psi_{\Zz}\,A^p_{\Zz}(M,\Gg_{\mathrm{small}})\oplus\bigoplus_{a=1}^{c-1}A^{p-a}_{\Zz}(Z),
\]
where $c$ is the codimension of the stellar-subdivision stratum and $A_{\Zz}(Z)$ is the Chow ring of that stratum. By the Feichtner-Yuzvinsky description of a stellar subdivision, $A_{\Zz}(Z)$ is the tensor product of the factor Chow rings $A_{\Zz}(M|F_i,\Gg_{\mathrm{small}}|F_i)$ and $A_{\Zz}(M/F_{\mathrm{new}},\Gg_{\mathrm{small}}/F_{\mathrm{new}})$, which is free over $\Zz$ on the products of standard monomial bases \cite{FY04}. Hence each exceptional summand $A^{p-a}_{\Zz}(Z)$ is free, so the cokernel of $\psi_{\Zz}^p$ is free and $\psi_{\Zz}^p$ is injective with saturated image.
\end{proof}

\begin{lem}[Filtered saturation]\label{lem:filtered-saturation}
Let $f\colon A\to B$ be a homomorphism of finitely generated free abelian groups, filtered for finite separated filtrations, such that $\operatorname{gr}f$ is injective with saturated image in each degree. Then $f$ has saturated image.
\end{lem}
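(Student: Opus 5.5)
The plan is a descending induction on filtration level. The engine is a strictness statement: injectivity of \(\operatorname{gr}f\) forces \(f\) to be strict. Throughout I read the hypotheses as follows. The filtrations \(F^\bullet A\) and \(F^\bullet B\) are decreasing and exhaustive, with \(F^0A=A\) and \(F^0B=B\). They are finite and separated, so \(F^NA=0\) and \(F^NB=0\) for some \(N\). The map \(f\) is filtered, meaning \(f(F^pA)\subseteq F^pB\). "Saturated image" means that \(B/f(A)\) is torsion-free; for \(\operatorname{gr}f\), it means \(\operatorname{gr}_pB/\operatorname{gr}_pf(\operatorname{gr}_pA)\) is torsion-free for every \(p\). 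These conventions match the application to the \(\tau\)-adic filtration of Lemma~\ref{lem:assoc-graded}.

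\emph{Step 1 (injectivity and strictness).} I first show that \(f(A)\cap F^pB=f(F^pA)\) for every \(p\).
\begin{itemize}
\item Take \(a\in A\) with \(f(a)\in F^pB\), and suppose \(a\notin F^pA\).
\item Since the filtration is exhaustive and separated, there is a largest \(q\) with \(a\in F^qA\), and \(q<p\).
\item The initial form \(\bar a\in\operatorname{gr}_qA\) is nonzero. By injectivity of \(\operatorname{gr}_qf\), the class of \(f(a)\) in \(\operatorname{gr}_qB\) is nonzero, so \(f(a)\notin F^{q+1}B\).
\item This contradicts \(f(a)\in F^pB\subseteq F^{q+1}B\).
\end{itemize}
The same argument with \(f(a)=0\) shows that \(a\) lies in every \(F^pA\). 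Separation then gives \(a=0\), so \(f\) is injective.

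\emph{Step 2 (saturation by descending induction).} Let \(b\in B\) and \(n\geq1\) with \(nb=f(a)\). I prove \(b\in f(A)\) by descending induction on the largest \(p\) with \(b\in F^pB\). The base case is \(b\in F^NB=0\), which is trivial.
\begin{itemize}
\item In general \(f(a)=nb\in F^pB\), so Step~1 gives \(a\in F^pA\).
\item Passing to \(\operatorname{gr}_p\) gives \(n[b]=\operatorname{gr}_pf([a])\).
\item Saturation of \(\operatorname{gr}_pf\) yields \(a'\in F^pA\) with \([b]=\operatorname{gr}_pf([a'])\). Then \(b':=b-f(a')\) lies in \(F^{p+1}B\).
\item We have \(nb'=f(a-na')\), so \(b'\) satisfies the same hypothesis at a strictly higher filtration level.
\item The induction hypothesis gives \(b'\in f(A)\), hence \(b=b'+f(a')\in f(A)\).
\end{itemize}
Therefore \(B/f(A)\) is torsion-free.

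The main point needing care is Step~1. Strictness genuinely uses injectivity of \(\operatorname{gr}f\) together with separation of the filtration on \(A\); without strictness, the preimage \(a\) could sit at a lower filtration level than \(b\), and the graded saturation hypothesis could not be applied in degree \(p\). Finite generation and freeness are not needed beyond guaranteeing that the filtrations are finite. I would add a remark that, in the application, exhaustiveness and the filtered property of \(\phi_K\) hold because \(\phi_K(\tau_H)\in I_\tau\). Lemma~\ref{lem:chow-comparison} together with Lemma~\ref{lem:assoc-graded} then supplies the graded hypothesis, once one checks that \(\operatorname{gr}\phi_K=\psi_{\Zz}\), since the initial form of \(\tau_H+\tau_{F_{\mathrm{new}}}-\tau_H\tau_{F_{\mathrm{new}}}\) is \(x_H+x_{F_{\mathrm{new}}}\).
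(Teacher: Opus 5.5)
Your proof is correct and is essentially the paper's argument: you correct $b$ one filtration level at a time using saturation of $\operatorname{gr}^p f$, and you use injectivity of $\operatorname{gr} f$ to keep the preimage of $nb$ in the right filtration step. The only difference is packaging: you prove strictness $f(A)\cap F^pB=f(F^pA)$ once up front and then run a descending induction, whereas the paper runs an ascending recursion that carries $a_p\in F^pA$ along and invokes injectivity of $\operatorname{gr}^p f$ at each step to get $a_p-nu_p\in F^{p+1}A$.
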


\begin{proof}
The filtration is decreasing, finite, and separated, $A=F^0A\supseteq F^1A\supseteq\cdots\supseteq 0$, with each $\operatorname{gr}^p f$ injective with saturated image. Suppose $nb=f(a)$. We construct $u_p\in F^pA$ inductively. Assume $b_p:=b-f(u_0+\cdots+u_{p-1})\in F^pB$ and $nb_p=f(a_p)$ for some $a_p\in F^pA$. In $\operatorname{gr}^p B$ one has $n\overline{b_p}=\operatorname{gr}^p f(\overline{a_p})$, so by saturation of $\operatorname{gr}^p f$ there is $\overline{u_p}\in\operatorname{gr}^p A$ with $\operatorname{gr}^p f(\overline{u_p})=\overline{b_p}$. Choose a representative $u_p\in F^pA$. Then $b_{p+1}:=b_p-f(u_p)\in F^{p+1}B$ and $nb_{p+1}=f(a_p-nu_p)$; injectivity of $\operatorname{gr}^p f$ gives $a_p-nu_p\in F^{p+1}A$, so the induction continues. Finiteness gives $b=f(u_0+\cdots+u_N)\in f(A)$.
\end{proof}

\begin{prop}[Free cokernel of the one-step map]\label{prop:one-step-saturation}
In the one-step situation of Set-up~\ref{setup:one-step}, the homomorphism
\[
\phi_K\colon K_{\Zz}(M,\Gg_{\mathrm{small}})\longrightarrow K_{\Zz}(M,\Gg_{\mathrm{big}})
\]
is a split injection of abelian groups with free cokernel. Equivalently, the integer matrix of $\phi_K$ in the standard $\tau$-monomial $\Zz$-bases has Smith normal form with every nonzero invariant factor equal to $1$. In particular $\phi_K$ has saturated image, and the same conclusions hold for the composite along any chain of one-flat enlargements.
\end{prop}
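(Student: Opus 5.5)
The plan is to apply Lemma~\ref{lem:filtered-saturation} to \(f=\phi_K\), with the \(\tau\)-adic filtrations on both sides, and to identify \(\operatorname{gr}\phi_K\) with the integral Feichtner--Yuzvinsky map \(\psi_{\Zz}\) of Lemma~\ref{lem:chow-comparison}.

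\emph{Step 1: \(\phi_K\) is a well-defined filtered map.} We check that \(\phi_K\) respects the defining relations. For a non-nested set \(S\subseteq\Gg_{\mathrm{small}}\), expand \(\prod_{H\in S}\phi_K(\tau_H)\). Each resulting monomial is supported on a subset of \(S\), possibly together with \(F_{\mathrm{new}}\), and each such support is non-nested in \(\Gg_{\mathrm{big}}\); this mirrors the Chow argument of \cite{FY04}. For an atom \(a\), the factor \(1-\phi_K(\tau_H)=(1-\tau_H)(1-\tau_{F_{\mathrm{new}}})\) for \(H\in B\). Exactly one \(H\in B\) lies above \(a\) when \(a\le F_{\mathrm{new}}\), since \(B\) partitions \(F_{\mathrm{new}}\); no \(H\in B\) does so otherwise. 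Hence the atom product for \(\Gg_{\mathrm{small}}\) maps to the atom product for \(\Gg_{\mathrm{big}}\). Since \(\phi_K(\tau_H)\in I_\tau\), the map \(\phi_K\) carries \(I_\tau^p\) into \(I_\tau^p\).

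\emph{Step 2: identify the associated graded map.} The initial form of \(\tau_H+\tau_{F_{\mathrm{new}}}-\tau_H\tau_{F_{\mathrm{new}}}\) is \(x_H+x_{F_{\mathrm{new}}}\). So, under the isomorphism \(\operatorname{gr}_\tau K_{\Zz}\cong A_{\Zz}\) of Lemma~\ref{lem:assoc-graded}, we get \(\operatorname{gr}\phi_K=\psi_{\Zz}\) in each degree. By Proposition~\ref{prop:basis}, both \(K\)-rings are finitely generated free abelian groups.

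\emph{Step 3: apply the lemmas.} Lemma~\ref{lem:chow-comparison} says \(\operatorname{gr}\phi_K\) is injective with saturated image. Injectivity of \(\phi_K\) itself follows because the filtrations are finite and separated: if \(a\in F^pA\setminus F^{p+1}A\), then \(\operatorname{gr}^p\phi_K(\bar a)\ne0\), so \(\phi_K(a)\neq 0\). Lemma~\ref{lem:filtered-saturation} then gives a saturated image. A saturated subgroup of a free finitely generated group has torsion-free, hence free, cokernel, so the inclusion of the image splits. This yields the split injection and the statement that all invariant factors equal \(1\).

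\emph{Step 4: composites.} A composite of split injections with free cokernels is again such a map: for \(g\circ f\), the cokernel sits in the short exact sequence \(0\to\operatorname{coker}f\to\operatorname{coker}(g f)\to\operatorname{coker}g\to0\). This sequence has free ends, so its middle term is free.

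\emph{Main obstacle.} I expect the hardest point to be Step~1 and Step~2, specifically the assertion that the graded pieces of the \(\tau\)-adic filtration coincide with \(A_{\Zz}\) integrally. If \(\operatorname{gr}_\tau\) had torsion or were larger than \(A_{\Zz}\), the identification would fail. I would lean on Lemma~\ref{lem:assoc-graded} together with the standard-monomial basis of Proposition~\ref{prop:basis}. The ranks match, and standard monomials of \(\tau\)-degree \(p\) span \(I_\tau^p\) modulo \(I_\tau^{p+1}\). This forces the surjection \(A_{\Zz}^p\to\operatorname{gr}^p\) to be an isomorphism of free groups.
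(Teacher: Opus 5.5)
Your proposal is correct and follows essentially the same route as the paper: the $\tau$-adic filtrations with Lemma~\ref{lem:filtered-saturation}, the identification $\operatorname{gr}\phi_K=\psi_{\Zz}$ via Lemma~\ref{lem:assoc-graded}, Lemma~\ref{lem:chow-comparison}, and then saturation $\Rightarrow$ free cokernel $\Rightarrow$ splitting. The steps you spell out that the paper only asserts are valid additions: that $\phi_K$ respects the relations, that injectivity of $\phi_K$ follows from injectivity of $\operatorname{gr}\phi_K$ on a finite separated filtration, and the cokernel exact sequence for composites. For the non-nested check, the mixed supports containing $F_{\mathrm{new}}$ are handled by the building-set fact that $F_{\mathrm{new}}\vee J\in\Gg_{\mathrm{big}}$ whenever $J\in\Gg_{\mathrm{small}}$ meets $F_{\mathrm{new}}$; your sketch leaves this fact implicit.
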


\begin{proof}
The map $\phi_K$ is the well-defined $\Zz$-algebra homomorphism of Set-up~\ref{setup:one-step}, with the explicit generator rule
\[
\phi_K(\tau_H)=\tau_H\quad(H\notin B),\qquad
\phi_K(\tau_H)=\tau_H+\tau_{F_{\mathrm{new}}}-\tau_H\tau_{F_{\mathrm{new}}}\quad(H\in B),
\]
where $B=\Factor(F_{\mathrm{new}},\Gg_{\mathrm{small}})$. It is injective, and both rings are finitely generated free abelian groups with their standard $\tau$-monomial $\Zz$-bases by Proposition~\ref{prop:basis}.

We first record that $\phi_K$ has saturated image: if $y\in K_{\Zz}(M,\Gg_{\mathrm{big}})$ satisfies $y\otimes 1\in\im(\phi_K\otimes_{\Zz}\Qq)$, then $y\in\im(\phi_K)$. By Lemma~\ref{lem:assoc-graded} the $\tau$-adic associated graded of $K_{\Zz}$ is the integral Feichtner-Yuzvinsky Chow ring, and on associated graded $\phi_K$ induces the homomorphism $\psi_{\Zz}$ of Lemma~\ref{lem:chow-comparison}, which is injective with saturated image in each graded degree. Lemma~\ref{lem:filtered-saturation} then gives that $\phi_K$ has saturated image. This argument is intrinsic and uses no complete-fan, toric-center, or auxiliary-completion hypothesis.

Now let $C:=K_{\Zz}(M,\Gg_{\mathrm{big}})/\phi_K(K_{\Zz}(M,\Gg_{\mathrm{small}}))$. It is finitely generated. If $y\in K_{\Zz}(M,\Gg_{\mathrm{big}})$ and $n\geq 1$ satisfy $n\cdot[y]=0$ in $C$, then $n y=\phi_K(x)$ for some $x$, so $y\otimes 1=(\phi_K\otimes\Qq)\bigl((x\otimes 1)/n\bigr)\in\im(\phi_K\otimes\Qq)$; saturation gives $y\in\im(\phi_K)$, i.e. $[y]=0$. Hence $C$ is torsion-free, and a finitely generated torsion-free abelian group is free. The short exact sequence
\[
0\to K_{\Zz}(M,\Gg_{\mathrm{small}})\xrightarrow{\ \phi_K\ }K_{\Zz}(M,\Gg_{\mathrm{big}})\to C\to 0
\]
therefore splits, so $\phi_K$ is a split injection. Choosing the standard $\tau$-monomial basis of $K_{\Zz}(M,\Gg_{\mathrm{small}})$ and combining its image with a lifted $\Zz$-basis of $C$ gives a basis of $K_{\Zz}(M,\Gg_{\mathrm{big}})$ in which the matrix of $\phi_K$ is a block column with an identity block and a zero block; since a change to the standard $\tau$-monomial bases is by unimodular matrices, the Smith normal form has every nonzero invariant factor equal to $1$. A split injection with free cokernel has saturated image. For a chain $\Gg_0\subset\cdots\subset\Gg_m$ the composite of split injections with free cokernels is again one.
\end{proof}

Feichtner and Yuzvinsky construct the smooth, generally noncomplete toric variety $X(\Sigma_{M,\Gg})$ from the atomic lattice and building set, and identify its Chow ring with $A_{\Zz}(M,\Gg)$ \cite{FY04}; the same one-step map is then induced by a toric pullback. We do not use any such model in the proof above, which is intrinsic to $K_{\Zz}(M,\Gg)$.

\begin{prop}[Upper-set line classes]\label{prop:upper-set}
Let $Y\in \Gg_{\mathrm{small}}$. Along a one-step chain, the class
\[
\Lambda_Y^{\Gg}:=\prod_{\substack{H\in \Gg\\ Y\leq H}}(1-\tau_H)^{-1}
\]
is functorial:
\[
\phi_K(\Lambda_Y^{\Gg_{\mathrm{small}}})=\Lambda_Y^{\Gg_{\mathrm{big}}}.
\]
\end{prop}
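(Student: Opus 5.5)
The plan is a direct computation with the generator rule of $\phi_K$. Since $\phi_K$ is a unital ring homomorphism, it sends units to units and inverses to inverses. So it suffices to show
\[
\phi_K\Bigl(\prod_{\substack{H\in\Gg_{\mathrm{small}}\\ Y\le H}}(1-\tau_H)\Bigr)=\prod_{\substack{H\in\Gg_{\mathrm{big}}\\ Y\le H}}(1-\tau_H),
\]
and then invert both sides. The key identity is that for $H\in B$ the rule $\phi_K(\tau_H)=\tau_H+\tau_{F_{\mathrm{new}}}-\tau_H\tau_{F_{\mathrm{new}}}$ is equivalent to
\[
1-\phi_K(\tau_H)=(1-\tau_H)(1-\tau_{F_{\mathrm{new}}}),
\]
while $1-\phi_K(\tau_H)=1-\tau_H$ for $H\notin B$. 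Hence the left side equals
\[
\prod_{\substack{H\in\Gg_{\mathrm{small}}\\ Y\le H}}(1-\tau_H)\cdot(1-\tau_{F_{\mathrm{new}}})^{m},
\]
where $m$ is the number of $H\in B$ with $Y\le H$.

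The right side differs from $\prod_{H\in\Gg_{\mathrm{small}},\,Y\le H}(1-\tau_H)$ by the factor $(1-\tau_{F_{\mathrm{new}}})^{\epsilon}$, where $\epsilon=1$ if $Y\le F_{\mathrm{new}}$ and $\epsilon=0$ otherwise. The proof therefore reduces to the combinatorial claim $m=\epsilon$.

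If $Y\le F_{\mathrm{new}}$, then $Y\in\Gg_{\mathrm{small}}$ lies below $F_{\mathrm{new}}$. Hence $Y$ is contained in some maximal element of $\Gg_{\mathrm{small}}$ below $F_{\mathrm{new}}$, that is, in some $F_a\in B$, so $m\ge1$. The elements of $B$ are pairwise disjoint, as recorded in the proof of Lemma~\ref{lem:best-restriction}, and $Y$ is nonempty, so $Y$ lies in at most one of them and $m=1$. Conversely, if $Y\le F_a$ for some $F_a\in B$, then $Y\le F_a\le F_{\mathrm{new}}$. Thus $m=0$ exactly when $Y\not\le F_{\mathrm{new}}$, and $m=\epsilon$ in all cases.

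Combining these steps gives the displayed product equality, and inverting yields $\phi_K(\Lambda_Y^{\Gg_{\mathrm{small}}})=\Lambda_Y^{\Gg_{\mathrm{big}}}$. Iterating along a chain of one-flat enlargements gives functoriality for the composite.

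I expect the only delicate point to be the disjointness and uniqueness step, that $Y$ lies below exactly one factor. I will take this from the building-set factorization $\prod_a[\emptyset,F_a]\cong[\emptyset,F_{\mathrm{new}}]$: under this isomorphism $Y$ corresponds to a tuple, and since $Y\in\Gg$ is itself irreducible for $\Gg$, the tuple is supported in a single coordinate. This is where the building-set axiom is used. I will also check that $\phi_K$ is a ring map, which the paper takes as given in Set-up~\ref{setup:one-step}.
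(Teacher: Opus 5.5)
Your proposal is correct and is essentially the paper's argument. The paper's one-line proof, phrased via $\eta_Y^{\Gg}=1-\Lambda_Y^{\Gg}$, likewise multiplies the factors $1-\phi_K(\tau_H)=(1-\tau_H)(1-\tau_{F_{\mathrm{new}}})$ over the flats above $Y$ and uses the factorization of $F_{\mathrm{new}}$ by its maximal old factors; that is exactly your count $m=\epsilon$, which you make explicit where the paper leaves it implicit. Your appeal to irreducibility of $Y$ is not needed: existence of a factor above $Y$ comes from maximality, and uniqueness follows from pairwise disjointness of the factors together with $Y\neq\emptyset$.
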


\begin{proof}
Equivalently, write $\eta_Y^{\Gg}:=1-\Lambda_Y^{\Gg}$. The one-step generator formula gives $\phi_K(\eta_Y^{\Gg_{\mathrm{small}}})=\eta_Y^{\Gg_{\mathrm{big}}}$ by multiplying the factors indexed by flats above $Y$ and using the factorization of $F_{\mathrm{new}}$ by its maximal old factors. Subtracting from $1$ gives the displayed formula for $\Lambda_Y$.
\end{proof}

\section{Realizable comparison and generator normalization}\label{sec:theta}

In this section, we prove the realizable statement in Theorem~\ref{thm:main-integral}. The main issue is the integral normalization of the one-step comparison.

\begin{setup}\label{setup:theta-step}
Let $M$ be realized over $\mathbb C$ by a linear subspace $L$ not contained in any coordinate hyperplane. Let $\Gg_{\mathrm{small}}\subset \Gg_{\mathrm{big}}=\Gg_{\mathrm{small}}\cup\{F_{\mathrm{new}}\}$ be a one-step refinement, and let
\[
\pi\colon W_{L,\Gg_{\mathrm{big}}}\longrightarrow W_{L,\Gg_{\mathrm{small}}}
\]
be the corresponding wonderful-model blowup. Put
\[
B:=\Factor(F_{\mathrm{new}},\Gg_{\mathrm{small}}).
\]
\end{setup}

\begin{prop}[Generator normalization]\label{prop:generator-normalization}
In Set-up~\ref{setup:theta-step}, the integral one-step map satisfies
\[
\phi_K(\tau_H)=\tau_H
\]
for $H\notin B$, and
\[
\phi_K(\tau_H)=\tau_H+\tau_{F_{\mathrm{new}}}-\tau_H\tau_{F_{\mathrm{new}}}
\]
for $H\in B$. Moreover $\phi_K(\tau_E)=\tau_E$.
\end{prop}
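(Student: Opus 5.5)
Here the content of Proposition~\ref{prop:generator-normalization} is that the algebraically defined map $\phi_K$ of Set-up~\ref{setup:one-step} agrees with the geometric pullback $\pi^*$ on $K_0$ for the wonderful-model blowup $\pi\colon W_{L,\Gg_{\mathrm{big}}}\to W_{L,\Gg_{\mathrm{small}}}$, once the generators are matched by $\tau_H\leftrightarrow 1-[\Oo(-D_H)]$, i.e.\ $(1-\tau_H)^{-1}\leftrightarrow[\Oo(D_H)]$. The plan is to compute $\pi^*[\Oo(-D_H)]$ for every $H\in\Gg_{\mathrm{small}}$ and read off the formula. Since both sides are ring homomorphisms, it suffices to check them on the generators $\tau_H$.

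\textbf{Step 1: boundary divisors under $\pi$.} By \cite{DCP95} (cf.\ \cite[Proposition~5.2]{EFMPV25}), $\pi$ is the blowup along the transverse intersection $Z=\bigcap_{H\in B}D_H$. For $H\in\Gg_{\mathrm{small}}^\circ$ we compare $\pi^*D_H$ with the strict transform $D'_H$, which is the boundary divisor $D_H$ of $W_{L,\Gg_{\mathrm{big}}}$.
\begin{itemize}
\item If $H\in B$, then $Z\subset D_H$ with multiplicity one, since $D_H$ is smooth and $Z$ is a transverse intersection of smooth divisors. Hence $\pi^*D_H=D'_H+D_{F_{\mathrm{new}}}$.
\item If $H\notin B$, then $Z\not\subset D_H$. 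Otherwise some flat of $B$ would equal $H$, because the intersection pattern of the snc boundary divisors is governed by nestedness, and $B\cup\{H\}$ would have to fail to be nested in a way that contradicts $\bigcap_{B}D\cap D_H\neq\emptyset$ being a proper stratum. So $\pi^*D_H=D'_H$.
\end{itemize}
The top flat is handled separately. In this convention $\tau_E$ is not a boundary divisor and corresponds to the class $-x_E=\alpha$ pulled back from projective space, so it is untouched by $\pi$. Combinatorially, $E\notin B$ because $F_{\mathrm{new}}$ is proper.

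\textbf{Step 2: line bundles.} Pulling back line bundles gives $\pi^*\Oo(-D_H)=\Oo(-D'_H)\otimes\Oo(-D_{F_{\mathrm{new}}})$ for $H\in B$, and $\pi^*\Oo(-D_H)=\Oo(-D'_H)$ otherwise. Translating back,
\[
1-\pi^*\tau_H=(1-\tau_H)(1-\tau_{F_{\mathrm{new}}}) \quad\text{for } H\in B,
\]
which expands to $\pi^*\tau_H=\tau_H+\tau_{F_{\mathrm{new}}}-\tau_H\tau_{F_{\mathrm{new}}}$; and $\pi^*\tau_H=\tau_H$ for $H\notin B$. This gives the stated formulas.

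\textbf{Step 3: well-definedness on $K_{\Zz}$.} The same formula must define a ring map $K_{\Zz}(M,\Gg_{\mathrm{small}})\to K_{\Zz}(M,\Gg_{\mathrm{big}})$ compatible with the integral isomorphisms $\theta^{\Zz}$ on the two sides.
\begin{itemize}
\item Atom relations: for an atom $a$, the product $\prod_{a\le H}(1-\tau_H)$ maps to $\prod_{a\le H}(1-\tau_H)\cdot(1-\tau_{F_{\mathrm{new}}})^{|\{H\in B:\,a\le H\}|}$. Since the elements of $B$ are disjoint, this multiplicity is $1$ exactly when $a\le F_{\mathrm{new}}$ and $0$ otherwise. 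The result is therefore precisely the big atom product, so the relation $1-\prod=0$ is preserved.
\item Non-nested monomials: a monomial $\prod_{H\in S}\tau_H$ for non-nested $S$ must map to zero. Each $\phi_K(\tau_H)$ with $H\in B$ is a combination of $\tau_H$, $\tau_{F_{\mathrm{new}}}$ and $\tau_H\tau_{F_{\mathrm{new}}}$, so one checks nestedness of the resulting supports using the one-step nested-set combinatorics of \cite{FY04}.
\end{itemize}

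\textbf{Main obstacle.} I expect the non-nested check in Step~3, together with the case $H\notin B$ but $H\supset F_{\mathrm{new}}$ or $H$ comparable with elements of $B$ in Step~1, to be the delicate part. There the argument has to exclude $Z\subseteq D_H$ using the building-set factorization property. I would argue via the stratum correspondence: $Z$ is the stratum of the nested set $B$, and $Z\subseteq D_H$ forces $H\in B$.
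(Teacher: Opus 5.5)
Your proposal is correct for the statement as written, but it takes a different and much heavier route than the paper. In the paper the two generator formulas are simply the definition of $\phi_K$, which is introduced by exactly this rule in Section~\ref{sec:quotient}. So the proof there is one line: the formulas hold by definition, and $\phi_K(\tau_E)=\tau_E$ because every element of $B$ lies in the proper flat $F_{\mathrm{new}}$, so $E\notin B$. You make the same observation at the end of Step~1.

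Your Steps~1--2 instead prove something the statement does not assert. Under $\tau_H\mapsto[\Oo_{D_H}]=1-[\Oo(-D_H)]$ and $\tau_E\mapsto 1-[A]$, they show that the geometric pullback obeys the same rule, i.e.\ $\pi^*\circ\theta_{\mathrm{small}}=\theta_{\mathrm{big}}\circ\phi_K$ on generators. This comes from $\pi^*D_H=(\text{strict transform})+D_{F_{\mathrm{new}}}$ for $H\in B$ and $\pi^*D_H=(\text{strict transform})$ otherwise. That is the ``geometric generator compatibility'' the proof of Proposition~\ref{prop:theta-descent} uses without proof. So your route supplies content that the paper's tautological proof leaves implicit.

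If you keep the stronger version, tighten four points.

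\begin{enumerate}
\item For $H\notin B$ your justification is muddled; use a codimension count. $Z=\bigcap_{H'\in B}D_{H'}$ is irreducible of codimension $|B|$. By simple normal crossings, $Z\cap D_H$ is empty or of codimension $|B|+1$. Hence $Z\not\subseteq D_H$ and the multiplicity is $0$.

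\item Do not invoke integral isomorphisms $\theta^{\Zz}$ on both sides. In the paper $\theta_{\mathrm{small}}$ is built in Proposition~\ref{prop:theta-descent} from this very normalization, so that would be circular. Steps~1--2 only need each generator assignment to be a ring homomorphism. That holds because non-nested intersections of boundary divisors are empty, and the atom relation is the isomorphism $A\cong\Oo\bigl(\sum_{a\le F\neq E}D_F\bigr)$.

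\item The non-nested check in Step~3 is left unfinished. Well-definedness of $\phi_K$ is presupposed by the statement, and the paper does not prove it either. In the realizable case it follows from Steps~1--2 once $\theta_{\mathrm{big}}$ is an isomorphism, since $\theta_{\mathrm{big}}^{-1}\circ\pi^*\circ\theta_{\mathrm{small}}$ is then a ring map with the prescribed generator values. But $\phi_K$ is also used for non-realizable $M$, where a genuine combinatorial argument would be needed.

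\item Minor: $\tau_E$ corresponds to the K-class $1-[A]$, not to the Chow class $\alpha$.
\end{enumerate}
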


\begin{proof}
The first two formulas are the definition of the one-step integral K-ring map. Since every element of $B$ is contained in the proper flat $F_{\mathrm{new}}$, the top flat $E$ is not an element of $B$. Substituting $H=E$ gives $\phi_K(\tau_E)=\tau_E$.
\end{proof}
\begin{humancomment}
The AI system reproves this statement by the same method; it can be shortened
by observing that the Chow pullback for the toric models agrees with the Chow
pullback for the wonderful models.
\end{humancomment}
\begin{lem}[Wonderful center product and torsion-freeness]\label{lem:wonderful-center}
Let $M$ be realized by $L$. In the one-step wonderful-model blowup $\pi\colon W_{L,\Gg_{\mathrm{big}}}\to W_{L,\Gg_{\mathrm{small}}}$, the center $Z$ is canonically isomorphic to a product of wonderful models attached to the restriction factors $M|F_i$ for $F_i\in B$ and to the contraction factor $M/F_{\mathrm{new}}$. In particular $K_0(Z)$ is a free abelian group.
\end{lem}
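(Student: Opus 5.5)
I plan to identify the center $Z$ geometrically via the De Concini--Procesi description of the one-step blowup, and then deduce freeness of $K_0(Z)$ from the freeness results already available for the factors.

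\emph{Step 1 (the center).} By the wonderful-model blowup description used in Step~(6b) of Proposition~\ref{prop:quotient-tangent}, the center of $\pi$ is the transverse intersection $Z=\bigcap_{F_i\in B}D_{F_i}$ in $W_{L,\Gg_{\mathrm{small}}}$, where $B=\{F_1,\dots,F_\ell\}$. I will use the standard stratum description of wonderful models \cite{DCP95}. A boundary divisor $D_G$ of $W_{L,\Gg}$ is canonically a product of the wonderful model of the restriction $L\cap\mathbb C^{G}$ (realizing $M|G$) with induced building set $\Gg|G$, and the wonderful model of the contraction (realizing $M/G$) with building set $\Gg/G$. Iterating this along the nested set $B$ gives the product decomposition of $Z$.

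Because the $F_i$ are pairwise incomparable with join $F_{\mathrm{new}}$, and $\Fact$ gives the poset isomorphism $\prod_i[\emptyset,F_i]\cong[\emptyset,F_{\mathrm{new}}]$ recorded in the proof of Lemma~\ref{lem:best-restriction}, we have $M|F_{\mathrm{new}}=\bigoplus_i M|F_i$. The contraction factors along the chain therefore collapse to the single contraction $M/F_{\mathrm{new}}$. The outcome is
\[
Z\cong\prod_{i=1}^{\ell}W_{L|F_i,\Gg_{\mathrm{small}}|F_i}\times W_{L/F_{\mathrm{new}},\Gg_{\mathrm{small}}/F_{\mathrm{new}}}.
\]
This is the wonderful analogue of the toric center product $\Theta$ used in Step~2 of Proposition~\ref{prop:quotient-tangent}. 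Canonicity comes from the fact that each identification is induced by the coordinate projections $\mathbb C^E\to\mathbb C^{F_i}$ and the quotient $\mathbb C^E\to\mathbb C^{E\setminus F_{\mathrm{new}}}$.

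\emph{Step 2 (freeness).} Each factor is a wonderful model, and by induction it is an iterated blowup of a projective space along smooth centers that are themselves products of smaller wonderful models. I will argue by induction on $\dim$, or equivalently on $|E|$. A projective space has free $K_0$ with a finite basis. The blowup formula of \cite{Tho93} then gives
\[
K_0(\mathrm{Bl}_C X)\cong K_0(X)\oplus K_0(C)^{\oplus(\operatorname{codim}C-1)},
\]
so each blowup stage has free $K_0$ once the center does. The centers are smaller products of wonderful models, so the inductive hypothesis applies to them. Alternatively, I can appeal directly to the basis of Proposition~\ref{prop:basis} together with the rational identification. I will not rely on that shortcut, because it would presuppose the integral isomorphism $\theta^{\Zz}_\Gg$, which this lemma is meant to help prove.

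For the product, each factor carries an affine-type cellular decomposition, coming from blowups of projective space along cellular centers. The K\"unneth map $K_0(X)\otimes K_0(Y)\to K_0(X\times Y)$ is therefore an isomorphism, so $K_0(Z)$ is free. As a more robust route, I can rerun the blowup induction directly on the product: a product of iterated blowups is an iterated blowup of a product of projective spaces, and a product of projective spaces has free $K_0$ by the projective bundle formula.

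\emph{Main obstacle.} I expect the hardest step to be the canonical product identification of Step~1, in particular the precise induced building sets. As Remark~\ref{rem:induced-building-sets} warns, the restriction side may omit the top flat, and the contraction-side building set need not be minimal, so I must not replace them with nicer building sets. For this I would follow \cite{DCP95} and \cite[Proposition~5.2]{EFMPV25} carefully. The freeness step is then routine.
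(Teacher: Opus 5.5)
Your proposal is correct and essentially reproduces the paper's proof, which cites \cite{DCP95} for the product identification of $Z$ and then proves freeness by exactly your ``more robust route'': an induction over the blowup construction stated for finite products of wonderful models, using $(\operatorname{Bl}_C X)\times Y\cong\operatorname{Bl}_{C\times Y}(X\times Y)$ and \cite{Tho93}. Keep that version rather than the cellular K\"unneth one, which would need a separate argument that the blowups preserve affine pavings; and note one harmless slip in Step~1: with the paper's realization convention, $M|G$ is realized by the coordinate projection of $L$ to $\mathbb C^{G}$ and $M/F_{\mathrm{new}}$ by $\{x\in L: x_i=0\text{ for all } i\in F_{\mathrm{new}}\}$, not by $L\cap\mathbb C^{G}$ and the projection to $\mathbb C^{E\setminus F_{\mathrm{new}}}$.
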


\begin{proof}
The one-step enlargement is a De Concini-Procesi blowup whose center is the intersection of the boundary divisors indexed by $B$; that intersection is canonically the product of the wonderful models of the restriction factors $M|F_i$ and the contraction factor $M/F_{\mathrm{new}}$ \cite{DCP95}. We prove slightly more generally, by simultaneous induction on the De Concini-Procesi blowup construction, that every finite product of such wonderful models has free $K_0$. The base case is a finite product of projective spaces, whose $K_0$ is free. Suppose one factor $X'$ is obtained from $X$ by blowing up a smooth center $C$, and let $Y$ be the product of the remaining factors. Since blowup commutes with flat base change, $X'\times Y\simeq\operatorname{Bl}_{C\times Y}(X\times Y)$, and the K-theoretic blowup formula \cite{Tho93} gives
\[
K_0(X'\times Y)\simeq K_0(X\times Y)\oplus\bigoplus_{a=1}^{c-1}K_0(C\times Y),\qquad c=\operatorname{codim}(C,X).
\]
The groups on the right are free by induction, since $C$ is a product of smaller wonderful models. Hence every finite product of wonderful models occurring in the construction has free $K_0$, and in particular $K_0(Z)$ is free.
\end{proof}

\begin{lem}[Geometric pullback saturation]\label{lem:geometric-saturation}
Let $M$ be realized by $L$, and let $\pi\colon W_{L,\Gg_{\mathrm{big}}}\to W_{L,\Gg_{\mathrm{small}}}$ be the one-step wonderful-model blowup along the smooth center $Z$ of Lemma~\ref{lem:wonderful-center}. Then the pullback $\pi^*\colon K_0(W_{L,\Gg_{\mathrm{small}}})\to K_0(W_{L,\Gg_{\mathrm{big}}})$ is a split injection with free cokernel; in particular its image is saturated.
\end{lem}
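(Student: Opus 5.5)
The plan is to use Thomason's K-theoretic blowup formula \cite{Tho93} directly for the smooth blowup $\pi$. Let $Z\subset W_{L,\Gg_{\mathrm{small}}}$ be the smooth center of codimension $c\geq 2$ from Lemma~\ref{lem:wonderful-center}, and let $E_Z=\mathbb P(N_{Z})\xrightarrow{q}Z$ be the exceptional divisor with inclusion $j\colon E_Z\hookrightarrow W_{L,\Gg_{\mathrm{big}}}$. Thomason's theorem gives an isomorphism of abelian groups
\[
K_0(W_{L,\Gg_{\mathrm{small}}})\oplus\bigoplus_{a=1}^{c-1}K_0(Z)\xrightarrow{\ \sim\ }K_0(W_{L,\Gg_{\mathrm{big}}}),
\]
whose first component is $\pi^*$ and whose remaining components are $y\mapsto j_*\bigl(\Oo_{E_Z}(-a)\otimes q^*y\bigr)$ (or an equivalent twist convention). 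I would cite the formula in exactly this form, noting that $\pi^*$ is its first summand; this is also the formula already used in the proof of Lemma~\ref{lem:wonderful-center}.

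Given this decomposition, the conclusion is formal. The map $\pi^*$ is the inclusion of a direct summand, so it is a split injection, and its cokernel is isomorphic to $\bigoplus_{a=1}^{c-1}K_0(Z)$. Lemma~\ref{lem:wonderful-center} says this is a free abelian group; it is also finitely generated, since $Z$ is a product of wonderful models obtained by iterated blowups of products of projective spaces. Saturation then follows just as in the last paragraph of Proposition~\ref{prop:one-step-saturation}. If $ny=\pi^*x$, the class of $y$ in the cokernel is torsion, so it vanishes because the cokernel is free, and therefore $y\in\im\pi^*$.

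The main obstacle is hypothesis checking rather than computation. Thomason's formula needs $\pi$ to be the blowup of a regularly embedded (here smooth) closed subscheme of a smooth variety. So I would verify from \cite{DCP95} and step (6b) of Proposition~\ref{prop:quotient-tangent} that the one-step map $W_{L,\Gg_{\mathrm{big}}}\to W_{L,\Gg_{\mathrm{small}}}$ is exactly $\operatorname{Bl}_Z$ with $Z=\bigcap_{F_i\in B}D_{F_i}$, a transverse intersection of smooth boundary divisors, hence smooth of codimension $c=|B|$. The case $c=1$ has to be excluded, or treated as trivial since then $\pi$ is an isomorphism. One should also check that the identification of the first summand with $\pi^*$ does not depend on the twist convention, since only the splitting is used.
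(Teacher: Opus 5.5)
Your proposal is correct and follows the paper's proof: both apply Thomason's K-theoretic blowup decomposition, identify $\pi^*$ with the first summand, and use Lemma~\ref{lem:wonderful-center} to see that the exceptional summands (twists of $K_0(Z)$) are free, so $\pi^*$ is a split injection with free cokernel and hence has saturated image. The paper leaves your extra hypothesis checks implicit, namely that the step is $\operatorname{Bl}_Z$ with $Z=\bigcap_{F_i\in B}D_{F_i}$ smooth of codimension $|B|\geq 2$; these checks agree with Step (6b) of Proposition~\ref{prop:quotient-tangent} and with $\ell\geq 2$ in Lemma~\ref{lem:exceptional-trace}.
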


\begin{proof}
The blowup of a smooth variety along a smooth center has the K-theoretic decomposition \cite{Tho93}
\[
K_0(W_{L,\Gg_{\mathrm{big}}})=\pi^*K_0(W_{L,\Gg_{\mathrm{small}}})\oplus\bigoplus_{a=1}^{c-1}j_*\bigl(p^*K_0(Z)\,t^a\bigr),
\]
where $j$ is the exceptional inclusion, $p$ the projective-bundle projection, $t=[\Oo(-1)]$, and $c$ the codimension. Each exceptional summand is a twist of $K_0(Z)$, which is free by Lemma~\ref{lem:wonderful-center}. Hence $\pi^*$ is a split injection with free cokernel, so its image is saturated.
\end{proof}

\begin{prop}[Integral theta descent]\label{prop:theta-descent}
Here $A_{\mathrm{small}}$ and $A_{\mathrm{big}}=\pi^*A_{\mathrm{small}}$ denote the hyperplane line bundles on the wonderful models $W_{L,\Gg_{\mathrm{small}}}$ and $W_{L,\Gg_{\mathrm{big}}}$, so that the top generator corresponds to $1-[A]$ under the atom-character relation. Assume that the refined comparison
\[
\theta_{\mathrm{big}}\colon K_{\Zz}(M,\Gg_{\mathrm{big}})\longrightarrow K_0(W_{L,\Gg_{\mathrm{big}}})
\]
sends every proper-flat generator $\tau_H$ to $[\Oo_{D_H}]$ and sends the top generator $\tau_E$ to $1-[A_{\mathrm{big}}]$. Then there exists a unique integral unital ring isomorphism
\[
\theta_{\mathrm{small}}\colon K_{\Zz}(M,\Gg_{\mathrm{small}})\longrightarrow K_0(W_{L,\Gg_{\mathrm{small}}})
\]
such that
\[
\pi^*(\theta_{\mathrm{small}}(\xi))=\theta_{\mathrm{big}}(\phi_K(\xi))
\]
for every $\xi\in K_{\Zz}(M,\Gg_{\mathrm{small}})$. It satisfies
\[
\theta_{\mathrm{small}}\left((1-\tau_H)^{-1}\right)=[\Oo(D_H)]
\]
for every proper flat $H\in \Gg_{\mathrm{small}}$.
\end{prop}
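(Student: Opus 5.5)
The plan is to descend $\theta_{\mathrm{big}}$ through the injections $\phi_K$ and $\pi^*$ using saturation on both sides. Consider the composite
\[
\Phi:=\theta_{\mathrm{big}}\circ\phi_K\colon K_{\Zz}(M,\Gg_{\mathrm{small}})\longrightarrow K_0(W_{L,\Gg_{\mathrm{big}}}).
\]
The claim to establish is that $\Phi$ lands in $\pi^*K_0(W_{L,\Gg_{\mathrm{small}}})$. Once this is known, $\theta_{\mathrm{small}}:=(\pi^*)^{-1}\circ\Phi$ is well defined because $\pi^*$ is injective (Lemma~\ref{lem:geometric-saturation}). It is a unital ring homomorphism because $\Phi$ and $\pi^*$ are. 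It is unique for the same reason: any map satisfying the compatibility equation is forced to equal $(\pi^*)^{-1}\circ\Phi$.

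It suffices to check the claim on ring generators, since the image of $\pi^*$ is a subring. By Proposition~\ref{prop:generator-normalization}, for $H\notin B$ with $H$ proper we have
\[
\Phi(\tau_H)=[\Oo_{D_H}]=1-[\Oo(-D_H)].
\]
Because $H\notin B$, the divisor $D_H$ on the big model is the total (equal to strict) transform of $D_H$, so this class is $\pi^*(1-[\Oo(-D_H)])$. For $\tau_E$, $\Phi(\tau_E)=1-[A_{\mathrm{big}}]=\pi^*(1-[A_{\mathrm{small}}])$ by hypothesis. For $H\in B$, we have
\[
\Phi(\tau_H)=1-(1-[\Oo_{D_H}])(1-[\Oo_{D_{F_{\mathrm{new}}}}])=1-[\Oo(-D_H-D_{F_{\mathrm{new}}})].
\]
Since the center $Z$ lies in $D_H$ with multiplicity one (it is a transverse intersection containing $D_H$ among its cutting divisors), $\pi^*D_H=D_H^{\mathrm{strict}}+D_{F_{\mathrm{new}}}$. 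Hence $\Phi(\tau_H)=\pi^*[\Oo_{D_H}]$. This verifies the claim. As a fallback, one could instead use Lemma~\ref{lem:geometric-saturation}: saturation plus a rational statement (Proposition~\ref{prop:quotient-tangent}'s rational $\theta$ intertwined with Chow pullback) would also give integrality.

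The main obstacle is bijectivity of $\theta_{\mathrm{small}}$. Injectivity follows from injectivity of $\theta_{\mathrm{big}}\circ\phi_K$, using that $\phi_K$ is injective by Proposition~\ref{prop:one-step-saturation}. For surjectivity, I would compare cokernels. The map $\theta_{\mathrm{big}}$ carries $\phi_K(K_{\mathrm{small}})$ onto $\Phi(K_{\mathrm{small}})\subseteq\pi^*K_0(W_{\mathrm{small}})$. Both $\operatorname{coker}\phi_K$ and $\operatorname{coker}\pi^*$ are free, and their ranks agree: the blowup formula gives $\sum_{a=1}^{c-1}\operatorname{rk}K_0(Z)$, and Lemma~\ref{lem:chow-comparison} with Lemma~\ref{lem:assoc-graded} gives the same count on the combinatorial side, since the center Chow ring is the same product of minor rings. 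Then $\theta_{\mathrm{big}}$ induces an injection $\operatorname{coker}\phi_K\to\operatorname{coker}\pi^*$ of free groups of equal rank. Note that $\theta_{\mathrm{small}}$ is injective between free groups of equal rank, since $\operatorname{rk}K_0(W_{\mathrm{small}})=\operatorname{rk}K_0(W_{\mathrm{big}})-\operatorname{rk}\operatorname{coker}\pi^*$, and similarly on the combinatorial side. By a snake-lemma argument, $\operatorname{coker}\theta_{\mathrm{small}}$ injects into the kernel of the induced cokernel map, up to $\operatorname{coker}\theta_{\mathrm{big}}=0$. Chasing the diagram shows $\operatorname{coker}\theta_{\mathrm{small}}\cong\ker(\operatorname{coker}\phi_K\to\operatorname{coker}\pi^*)$. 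This kernel is zero because a surjection between free groups of equal rank is an isomorphism. Here surjectivity of the cokernel map follows from surjectivity of $\theta_{\mathrm{big}}$.

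Finally, $\theta_{\mathrm{small}}((1-\tau_H)^{-1})=(1-[\Oo_{D_H}])^{-1}=[\Oo(-D_H)]^{-1}=[\Oo(D_H)]$ on the small model, read off from the generator computation above.
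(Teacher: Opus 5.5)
Your proof is correct, but it is organized differently from the paper's. The paper never checks containment on generators. It transports saturation of $\im\phi_K$ (Proposition~\ref{prop:one-step-saturation}) into $K_0(W_{L,\Gg_{\mathrm{big}}})$ via $\theta_{\mathrm{big}}$ and combines it with saturation of $\im\pi^*$ (Lemma~\ref{lem:geometric-saturation}). It then asserts that the rational comparison gives the two subgroups the same $\Qq$-span, and concludes $\theta_{\mathrm{big}}(\phi_K K_{\Zz}(M,\Gg_{\mathrm{small}}))=\pi^*K_0(W_{L,\Gg_{\mathrm{small}}})$. Equality of images yields existence, uniqueness and bijectivity at once, and the divisor-pullback formulas are used only afterwards to read off the generator values.

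You use those same pullback formulas ($\pi^*D_H=D_H$ for $H\notin B$, $\pi^*D_H=D_H+D_{F_{\mathrm{new}}}$ for $H\in B$) up front. They show integrally that the ring map $\theta_{\mathrm{big}}\circ\phi_K$ lands in the subring $\im\pi^*$, so existence needs only injectivity of $\pi^*$, with no saturation and no rational input. Saturation of $\phi_K$ then enters only for surjectivity, via the snake lemma and a rank count.

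Your route is more self-contained: the paper's ``the rational comparison identifies their $\Qq$-spans'' is in effect your generator computation plus a dimension count. The cost is that you must supply the rank equality explicitly. That rests on identifying the Chow ring of the realizable wonderful model (or of the center $Z$) with the combinatorial Chow ring, and on the Chern character; the paper folds this into the rational comparison.

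Three small points:
\begin{enumerate}
\item The induced map $\bar\theta\colon\operatorname{coker}\phi_K\to\operatorname{coker}\pi^*$ is a priori a surjection, not an injection as you first write. Since $\theta_{\mathrm{big}}$ is an isomorphism, the snake lemma gives $\ker\bar\theta\cong\operatorname{coker}\theta_{\mathrm{small}}$ directly, which is what your final argument actually uses.
\item Freeness of $\operatorname{coker}\pi^*$ is not needed, because a surjection from a free group onto a group of the same rank has kernel free of rank $0$.
\item Like the paper, you tacitly use that $\theta_{\mathrm{big}}$ is an isomorphism, which the statement leaves implicit.
\end{enumerate}
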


\begin{proof}
By Proposition~\ref{prop:one-step-saturation} the combinatorial image of $\phi_K$ is saturated, and by Lemma~\ref{lem:geometric-saturation} the geometric pullback image of $\pi^*$ is saturated. Via $\theta_{\mathrm{big}}$, the subgroups $\theta_{\mathrm{big}}(\phi_K K_{\Zz}(M,\Gg_{\mathrm{small}}))$ and $\pi^*K_0(W_{L,\Gg_{\mathrm{small}}})$ are saturated subgroups of the free abelian group $K_0(W_{L,\Gg_{\mathrm{big}}})$, and the rational comparison identifies their $\Qq$-spans. Two saturated subgroups of a free abelian group with the same $\Qq$-span are equal, so
\[
\theta_{\mathrm{big}}\bigl(\phi_K K_{\Zz}(M,\Gg_{\mathrm{small}})\bigr)=\pi^*K_0(W_{L,\Gg_{\mathrm{small}}}).
\]
This identification upgrades the rational descent to an integral descent, giving a unique $\theta_{\mathrm{small}}$.

The generator values follow from the geometric generator compatibility. If $H\notin B$, the boundary divisor pulls back to the boundary divisor. If $H\in B$, then the total transform has K-class
\[
[\Oo_{D_H}]+[\Oo_{D_{F_{\mathrm{new}}}}]-[\Oo_{D_H}]\,[\Oo_{D_{F_{\mathrm{new}}}}],
\]
which is exactly the image of $\tau_H+\tau_{F_{\mathrm{new}}}-\tau_H\tau_{F_{\mathrm{new}}}$. The top generator gives $1-[A_{\mathrm{small}}]$ after descent. Inverting $1-\tau_H$ gives the line bundle class $[\Oo(D_H)]$.
\end{proof}

\begin{proof}[Proof of Theorem~\ref{thm:main-integral}, realizable clause]
Start at the maximal building set. The Larson-Li-Payne-Proudfoot comparison identifies the intrinsic integral K-ring with the K-ring of the maximal wonderful model \cite{LLPP24}. Along any chain from the maximal building set down to $\Gg$, Proposition~\ref{prop:theta-descent} descends the comparison integrally. Proposition~\ref{prop:generator-normalization} records the generator normalization at every one-step descent.

Thus we obtain an integral unital ring isomorphism
\[
\theta_{\Gg}^{\Zz}\colon K_{\Zz}(M,\Gg)\to K_0(W_{L,\Gg})
\]
with
\[
\theta_{\Gg}^{\Zz}\left((1-\tau_F)^{-1}\right)=[\Oo_{W_{L,\Gg}}(D_F)]
\]
for every $F\in \Gg^\circ$. Both $\theta_{\Gg}^{\Zz}(T_{M,\Gg}^{\Zz})$ and $[T_{W_{L,\Gg}}]$ rationalize to the same class: after tensoring with $\Qq$ this is the realizable specialization $\theta(T_{M,\Gg})=[T_{W_{L,\Gg}}]$ of Proposition~\ref{prop:quotient-tangent} (Step~6), since $\rho_K(T_{M,\Gg}^{\Zz})=T_{M,\Gg}$ by Proposition~\ref{prop:linkage} and the quotient representative was descended compatibly with the maximal Berget-Eur-Spink-Tseng quotient class. The Grothendieck group $K_0(W_{L,\Gg})$ of the smooth projective wonderful model is torsion-free, so two of its elements with equal rationalizations are equal. Hence
\[
\theta_{\Gg}^{\Zz}(T_{M,\Gg}^{\Zz})=[T_{W_{L,\Gg}}].
\]
\end{proof}

\section{The Hilbert identity}\label{sec:hilbert-identity}

The goal of this section is to prove the \(P^K=\Hilb\) part of the rational tangent-class theorem. The proof first treats the maximal building set and then propagates the identity through one-flat refinements.

\begin{defn}\label{defn:pk}
Let \((N,\Hh)\) be a top-containing pair. Write
\[
C_Q(N,\Hh;u)=\prod_{a=1}^{q_Q(N,\Hh)}(1+y_a u)
\]
after passing to a splitting algebra, where \(q_Q(N,\Hh)=|\Hh\setminus\{\Top(N)\}|-(\rank(N)-1)\). Put
\[
g_z(v):=(1-z\exp(-v))\frac{v}{1-\exp(-v)},\qquad g_z(0)=1-z.
\]
The signed K-theoretic Todd polynomial is
\[
P^K(N,\Hh;z):=
\deg_{N,\Hh}
\left[
\prod_{Y\in\Hh\setminus\{\Top(N)\}}g_z(x_Y)
\prod_{a=1}^{q_Q(N,\Hh)}g_z(y_a)^{-1}
\right]_{\rank(N)-1}.
\]
Equivalently,
\[
P^K(N,\Hh;z)
=
\deg_{N,\Hh}\left(\ch(\lambda_{-z}(T_{N,\Hh}^{\vee}))\td(T_{N,\Hh})\right).
\]
\end{defn}

\begin{lem}[One-step quotient integrands]\label{lem:one-step-quotient-integrands}
Let \(M\), \(\Gg_{\mathrm{small}}\), \(\Gg_{\mathrm{big}}\), and \(F\) be as in Proposition~\ref{prop:k-recursion}, and let
\[
\psi\colon A_{\Qq}(M,\Gg_{\mathrm{small}})\longrightarrow
A_{\Qq}(M,\Gg_{\mathrm{big}})
\]
be the Chow pullback for the one-flat enlargement. Then
\[
C_Q(M,\Gg_{\mathrm{big}};u)=\psi(C_Q(M,\Gg_{\mathrm{small}};u)).
\]
Consequently, if
\[
C_Q(M,\Gg_{\mathrm{small}};u)=\prod_{a=1}^{q}(1+y_a u)
\]
in a splitting algebra, then a splitting-root presentation for
\(C_Q(M,\Gg_{\mathrm{big}};u)\) is
\[
\prod_{a=1}^{q}(1+\psi(y_a)u)\cdot(1+0\cdot u).
\]
Moreover, for every top-containing pair \((N,\Hh)\) appearing in the one-flat recursion, \(P^K(N,\Hh;z)\) is the same as the Chow splitting-root expression
\[
P_Q(N,\Hh;z):=
\deg_{N,\Hh}
\left[
\prod_{Y\in\Hh\setminus\{\Top(N)\}}g_z(x_Y)
\prod_{a=1}^{q_Q(N,\Hh)}g_z(y_a)^{-1}
\right]_{\rank(N)-1}.
\]
\end{lem}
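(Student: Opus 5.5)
The first assertion follows directly from the uniqueness and chain-compatibility built into Proposition~\ref{prop:quotient-tangent}. By Step~1 the composite pullbacks satisfy \(\psi_{\Gg_{\mathrm{big}}}\circ\psi=\psi_{\Gg_{\mathrm{small}}}\). Steps~2--3 show that \(C_Q(M,\Gg;u)\) is the unique polynomial whose image under the injective map \(\psi_{\Gg}\) is \(C^{\max}_{\mathrm{BEST}}(M;u)\). Apply \(\psi_{\Gg_{\mathrm{big}}}\) to \(\psi(C_Q(M,\Gg_{\mathrm{small}};u))\): the result is \(C^{\max}_{\mathrm{BEST}}(M;u)\). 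Injectivity of \(\psi_{\Gg_{\mathrm{big}}}\) then gives \(C_Q(M,\Gg_{\mathrm{big}};u)=\psi(C_Q(M,\Gg_{\mathrm{small}};u))\). Equivalently, the Newton power sums descend compatibly, as recorded at the end of Step~2, and Newton's recursion commutes with the ring map \(\psi\).

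For the splitting-root statement, I extend \(\psi\) to a map of splitting algebras, sending the roots \(y_a\) of \(C_Q(M,\Gg_{\mathrm{small}};u)\) to \(\psi(y_a)\). The image polynomial then factors as \(\prod_a(1+\psi(y_a)u)\). The rank bookkeeping is \(q_Q(M,\Gg_{\mathrm{big}})=|\Gg_{\mathrm{big}}^\circ|-d=q+1\), since adjoining \(F\) raises \(|\Gg^\circ|\) by one. So exactly one further root is needed, and because the Chern polynomial is unchanged it must be the zero root \(1+0\cdot u\). This matches the fact that the rank correction \(\rho_\Gg\) grows by one while the BEST part is descended unchanged, as in Proposition~\ref{prop:quotient-descent}.

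For the final claim I compare the two expressions for \(P^K\) in Definition~\ref{defn:pk}, and this is where the real work lies. By Proposition~\ref{prop:quotient-tangent}, Step~5, \(T_{N,\Hh}=\sum_{Y\in\Hh^\circ}(1-\tau_Y)^{-1}-Q_{N,\Hh}\). Here \(\ch((1-\tau_Y)^{-1})=\exp(x_Y)\), and \(\ch(Q_{N,\Hh})=\ch^A_Q=\sum_a\exp(y_a)\) formally, with rank \(q_Q\). I then use the standard multiplicativity of the Hirzebruch genus attached to \(\lambda_{-z}(\cdot^\vee)\) and \(\td\). For a line root \(v\), the factor \(\ch(\lambda_{-z}(\ell^\vee))\td(\ell)\) equals \((1-ze^{-v})\,v/(1-e^{-v})=g_z(v)\). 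Both \(\ch\circ\lambda_{-z}\circ(\cdot)^\vee\) and \(\td\) are exponential: they send sums to products and differences to quotients. This holds on the special \(\lambda\)-ring \(K_{\Qq}(N,\Hh)\) by the splitting principle, and it is legitimate because \(\ch\) is a ring isomorphism (Step~4), so the computation can be done entirely in Chow-valued formal roots. Consequently the integrand equals \(\prod_Y g_z(x_Y)\prod_a g_z(y_a)^{-1}\).

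The main obstacle is that each \(g_z(y_a)\) has constant term \(1-z\), so the inverses \(g_z(y_a)^{-1}\) live in \(\Qq(z)\)-coefficients rather than \(\Qq[z]\). I must check that the virtual rank difference \(|\Hh^\circ|-q_Q=d\) makes the degree-\(d\) part polynomial in \(z\). The factor \((1-z)^{|\Hh^\circ|-q_Q}\) is harmless in any case, since only the degree-\((\rank N-1)\) component is extracted. I would handle this by working in \(\Qq[z][(1-z)^{-1}]\) throughout and verifying that the two sides agree coefficientwise as formal expressions. Finally, \(\deg_{N,\Hh}\) kills all components except degree \(\rank(N)-1\), so \(P^K=P_Q\) for every pair \((N,\Hh)\) occurring in the recursion. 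The minors carry induced building sets, and by Remark~\ref{rem:induced-building-sets} I would state the argument for an arbitrary top-containing pair rather than only minimal ones.
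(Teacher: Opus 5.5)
Your proposal is correct and follows the paper's proof. The one-step compatibility comes from the maximal-pullback characterization of $C_Q$ together with $\psi_{\Gg_{\mathrm{big}}}\circ\psi=\psi_{\Gg_{\mathrm{small}}}$ and injectivity. The extra zero root comes from $q_Q$ rising by one while the Chern polynomial is unchanged. $P^K=P_Q$ comes from the splitting-root convention, where your line-root computation $\ch(\lambda_{-z}(\ell^\vee))\td(\ell)=g_z(v)$ plus multiplicativity justifies the ``Equivalently'' in Definition~\ref{defn:pk}. Since Definition~\ref{defn:pk} literally defines $P^K$ by the root expression, the last claim is essentially tautological, and the polynomiality-in-$z$ issue you flag is not needed: the expression depends only on the virtual rank and Chern series, as the paper notes.
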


\begin{proof}
The descended quotient Chern polynomial \(C_Q\) is characterized by pullback to the maximal building set. Since the refinement from \(\Gg_{\mathrm{small}}\) to the maximal building set factors through \(\Gg_{\mathrm{big}}\), injectivity of Chow pullback gives the one-step compatibility
\(C_Q(M,\Gg_{\mathrm{big}};u)=\psi(C_Q(M,\Gg_{\mathrm{small}};u))\).
The integer \(q_Q\) increases by one when the single proper flat \(F\) is added, while the rank of \(M\) is unchanged. Thus the additional quotient root may be taken to be \(0\).

The last assertion is the splitting-root convention in Definition~\ref{defn:pk}: the boundary line summands \((1-\tau_Y)^{-1}\) in \(T_{N,\Hh}\) contribute the factors \(g_z(x_Y)\), while the quotient class \(Q_{N,\Hh}\), with roots \(y_a\), contributes the inverse factors \(g_z(y_a)^{-1}\). The expression is independent of the chosen splitting roots because the Hirzebruch product is determined by the virtual total Chern series and the virtual rank.
\end{proof}
\begin{humancomment}
By definition, $P^K = P_Q$.
\end{humancomment}
\begin{lem}[Exceptional trace]\label{lem:exceptional-trace}
Keep the notation of Lemma~\ref{lem:one-step-quotient-integrands}, and write
\[
\Fact_{\Gg_{\mathrm{small}}}(F)=\{F_1,\dots,F_\ell\}.
\]
Then \(\ell\ge 2\). Put \(e=x_F\in A^1_{\Qq}(M,\Gg_{\mathrm{big}})\), and set
\[
I_{\mathrm{small}}(z):=
\prod_{Y\in\Gg_{\mathrm{small}}\setminus\{E\}}g_z(x_Y)
\prod_{a=1}^{q_Q(M,\Gg_{\mathrm{small}})}g_z(y_a)^{-1}.
\]
With the quotient roots of \(\Gg_{\mathrm{big}}\) chosen as in Lemma~\ref{lem:one-step-quotient-integrands}, the big integrand is
\[
I_{\mathrm{big}}(z)=\psi(I_{\mathrm{small}}(z))R_F(z),
\]
where
\[
R_F(z)=
\frac{g_z(e)}{1-z}
\prod_{i=1}^{\ell}\frac{g_z(x_{F_i})}{g_z(x_{F_i}+e)}.
\]
Therefore
\[
P_Q(M,\Gg_{\mathrm{big}};z)
=P_Q(M,\Gg_{\mathrm{small}};z)+\operatorname{Exc}_F(z),
\]
where
\[
\operatorname{Exc}_F(z):=
\deg_{M,\Gg_{\mathrm{big}}}
\left[\psi(I_{\mathrm{small}}(z))(R_F(z)-1)\right]_{\rank(M)-1}.
\]
If \(Z_F\) is the one-flat center and
\[
i^*\colon A_{\Qq}(M,\Gg_{\mathrm{small}})\to A^\bullet(Z_F)_{\Qq}
\]
is the center restriction, then
\[
\operatorname{Exc}_F(z)
=(z+z^2+\cdots+z^{\ell-1})
\deg_{Z_F}\left([\theta_Z(z)]_{\rank(M)-1-\ell}\right),
\]
where
\[
\theta_Z(z):=
i^*(I_{\mathrm{small}}(z))
\prod_{i=1}^{\ell}g_z(i^*x_{F_i})^{-1}.
\]
\end{lem}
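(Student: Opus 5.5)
The plan has four steps: show \(\ell\ge2\), compare integrands, split off the excess term, and push that term to the center.

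\emph{Step 1: \(\ell\ge2\).} If \(\ell=1\), then \(\Fact_{\Gg_{\mathrm{small}}}(F)=\{F_1\}\). The building-set property makes \([\emptyset,F_1]\to[\emptyset,F]\) an isomorphism, so \(F=F_1\in\Gg_{\mathrm{small}}\). This contradicts \(F\notin\Gg_{\mathrm{small}}\). The case \(\ell=0\) is impossible, since every atom below \(F\) lies in some element of \(\Gg_{\mathrm{small}}\); atoms belong to every building set.

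\emph{Step 2: the integrand identity.} Compare the two integrands factor by factor. Under \(\psi\) we have \(\psi(x_{F_i})=x_{F_i}+e\), and \(\psi(x_Y)=x_Y\) for \(Y\notin B\). So in \(I_{\mathrm{big}}\) the boundary factor \(g_z(x_{F_i})\) equals \(\psi(g_z(x_{F_i}+e))^{\dots}\) rewritten as \(\psi(g_z(x_{F_i}))\cdot g_z(x_{F_i})/g_z(x_{F_i}+e)\). The new boundary flat contributes \(g_z(e)\). By Lemma~\ref{lem:one-step-quotient-integrands}, the quotient roots are \(\psi(y_a)\) together with an extra root \(0\), which contributes \(g_z(0)^{-1}=(1-z)^{-1}\). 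Multiplying these gives \(I_{\mathrm{big}}=\psi(I_{\mathrm{small}})R_F\).

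\emph{Step 3: the splitting \(P_Q(\mathrm{big})=P_Q(\mathrm{small})+\operatorname{Exc}_F\).} Write \(R_F=1+(R_F-1)\) and use \(\deg_{\mathrm{big}}\circ\psi=\deg_{\mathrm{small}}\). This holds because \(\psi\) is the pullback along a birational map of degree one, as in Step~1 of Proposition~\ref{prop:quotient-tangent}. The term \(R_F-1\) is divisible by \(e\): at \(e=0\) each ratio \(g_z(x_{F_i})/g_z(x_{F_i}+e)\) equals \(1\), and \(g_z(0)/(1-z)=1\).

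\emph{Step 4: the center computation, which is the main obstacle.} Write \(R_F-1=e\cdot S(e,x_{F_i})\). Then use \(\deg_{\mathrm{big}}(e\cdot\psi(\alpha)\cdot e^k)=\deg_{E_F}(p^*i^*\alpha\cdot\xi^k)\), where \(e|_{E_F}=-\zeta\) and \(\zeta\) is the tautological class of the projectivized normal bundle \(\mathbb P(N)\to Z_F\) of rank \(\ell\). The normal bundle splits as \(\bigoplus_i \Oo(D_{F_i})|_{Z}\), with Chern roots \(i^*x_{F_i}\). On \(E_F\), the class \(x_{F_i}+e\) restricts to \(i^*x_{F_i}-\zeta\); these are the roots of the twisted quotient bundle. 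Pushing forward along \(p\) turns \(\deg_{E_F}\) of the resulting symmetric expression into a residue sum over the roots. Concretely, \(p_*\) of \(f(\zeta)\) equals \(\sum_i \operatorname{Res}\) of \(f(t)/\prod_j(t-\text{root}_j)\), equivalently the coefficient extraction \(\sum_k \zeta^{\ell-1+k}\mapsto s_k(N)\).

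With this formula, the claim reduces to a one-variable identity. Apply the K-theoretic pushforward, i.e.\ HRR on the projective bundle. The K-theoretic Euler characteristic of \(\lambda_{-z}\) of the relative tangent bundle of \(\mathbb P^{\ell-1}\)-fibres, corrected by the ratio term, should collapse to \(\chi_{-z}(\mathbb P^{\ell-1})-1=z+\dots+z^{\ell-1}\) times the pullback integrand. Indeed, \(g_z\) is the \(\chi_{-z}\)-genus Hirzebruch series, and the fibrewise contribution is \(\sum_{k=0}^{\ell-1}(-z)^k\chi(\Omega^k)\) twisted appropriately. This is exactly the blowup formula for the \(\chi_y\)-genus.

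I expect to check that the fibre integral is constant, i.e.\ independent of the normal roots. This is the rigidity of the \(\chi_y\)-genus, and it leaves the factor \(\prod g_z(i^*x_{F_i})^{-1}\) from the normal-bundle correction. That factor yields \(\theta_Z\) in degree \(\rank(M)-1-\ell\), since \(\dim Z_F=d-\ell\).
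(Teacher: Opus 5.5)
Your Steps 1--3 coincide with the paper's proof. Step~4 makes the same reduction the paper uses: since \(R_F-1=e\,S\) and \(e\,S=j_*(S|_{E_F})\), the projection formula gives \(\pi_*\bigl(\psi(I_{\mathrm{small}})(R_F-1)\bigr)=i_*\bigl(i^*I_{\mathrm{small}}\cdot p_*(S|_{E_F})\bigr)\). So everything rests on the universal identity
\[
p_*\Bigl(\frac{1-R_F|_{E_F}}{\zeta}\Bigr)=(z+z^2+\cdots+z^{\ell-1})\prod_{i=1}^{\ell}g_z(n_i)^{-1},\qquad n_i:=i^*x_{F_i},\ \ \zeta:=-e|_{E_F}.
\]
The paper states this without computation as ``the one-flat exceptional pushforward calculation''. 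Your Step~4 does not establish it, for two reasons.

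First, there is a slip. The class \(x_{F_i}+e=\psi(x_{F_i})\) is pulled back from \(X_{\Gg_{\mathrm{small}}}\), so it restricts to \(p^*n_i\). It is the strict-transform class \(x_{F_i}\) that restricts to \(n_i+\zeta\); this is what turns the non-nested relation \(\prod_i x_{F_i}=0\) into the projective-bundle relation \(\prod_i(\zeta+n_i)=0\). Hence \(R_F|_{E_F}=\frac{g_z(-\zeta)}{1-z}\prod_i\frac{g_z(n_i+\zeta)}{g_z(n_i)}\), and your assignment \(n_i-\zeta\) produces the wrong integrand.

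Second, the closing appeal to the blowup formula for the \(\chi_y\)-genus is not available. The lemma is needed for non-realizable \(M\), where there is no variety, and where the analogue of \(\chi_{-z}=\Hilb\) is exactly what Section~\ref{sec:hilbert-identity} is proving. Fibrewise rigidity \(p_*\bigl(\prod_i g_z(n_i+\zeta)/(1-z)\bigr)=1+z+\cdots+z^{\ell-1}\) is formal, but it does not suffice alone. Split \(g_z(-\zeta)/\zeta=-z-(1-z)/(1-e^{\zeta})\). Rigidity evaluates the \(-z\) part to \((z+\cdots+z^{\ell})\prod_i g_z(n_i)^{-1}\), but the remaining part contributes \(-z^{\ell}\prod_i g_z(n_i)^{-1}\) and needs its own computation.

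The missing step is just to evaluate the residue you already wrote down. Treat the \(n_i\) as small complex parameters; the claim is an identity of power series in \(n_1,\dots,n_\ell\). Write \(R(t)\) for \(R_F|_{E_F}\) with \(\zeta\) replaced by \(t\). Then the left side above is \(\frac{1}{2\pi i}\oint\frac{1-R(t)}{t\prod_i(t+n_i)}\,dt\) over a small circle around \(0\).

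The term \(\frac{1}{t\prod_i(t+n_i)}\) contributes \(0\): all its poles lie inside the circle and it is \(O(t^{-\ell-1})\) at infinity. For the term \(\frac{R(t)}{t\prod_i(t+n_i)}\), use \(g_z(v)/v=(1-ze^{-v})/(1-e^{-v})\) and substitute \(w=e^{t}\), \(u_i=e^{-n_i}\). Its integral becomes \(-\bigl((1-z)\prod_i g_z(n_i)\bigr)^{-1}\) times the sum of the residues of
\[
h(w)=\frac{1-zw}{w(1-w)}\prod_{i=1}^{\ell}\frac{w-zu_i}{w-u_i}
\]
at \(w=1,u_1,\dots,u_\ell\). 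That sum equals \(-\operatorname{Res}_{w=0}h-\operatorname{Res}_{w=\infty}h=-z^{\ell}+z\).

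Hence \(p_*\bigl((1-R_F|_{E_F})/\zeta\bigr)=\frac{z-z^{\ell}}{1-z}\prod_i g_z(n_i)^{-1}\). Combined with \(\deg_{M,\Gg_{\mathrm{big}}}(e\,u)=\deg_{E_F}(u|_{E_F})\) and the projection formula for \(p\), this yields the stated formula for \(\operatorname{Exc}_F(z)\) purely formally, with no realizability or \(\chi_y\)-genus input.
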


\begin{proof}
The inequality \(\ell\ge 2\) follows from the building-set axiom. Indeed, the factors are the maximal old building-set elements contained in \(F\); if there were only one such factor, its union would be \(F\), so \(F\) would already lie in \(\Gg_{\mathrm{small}}\).

The formula for \(I_{\mathrm{big}}\) is obtained by comparing the boundary and quotient roots after the one-flat pullback. The new boundary root contributes \(g_z(e)\), and the new zero quotient root contributes \(g_z(0)^{-1}=(1-z)^{-1}\). For each old factor \(F_i\), the pullback of the old boundary class is \(x_{F_i}+e\), while the boundary class in the big model is \(x_{F_i}\), giving the ratio
\(g_z(x_{F_i})/g_z(x_{F_i}+e)\). This gives the displayed correction factor \(R_F(z)\).

Taking the \((\rank(M)-1)\)-degree trace, the term \(\psi(I_{\mathrm{small}}(z))\) has the same trace as \(I_{\mathrm{small}}(z)\), because the normalized top-degree trace is preserved by the one-step Chow pullback. The remaining term is \(\operatorname{Exc}_F(z)\).

It remains to identify this exceptional term. Let
\(\pi\colon A_{\Qq}(M,\Gg_{\mathrm{big}})\to A_{\Qq}(M,\Gg_{\mathrm{small}})\)
be the Chow pushforward and \(i_*\) the Gysin map from the center. The one-flat exceptional pushforward calculation gives
\[
\pi_*\bigl(\psi(I_{\mathrm{small}}(z))(R_F(z)-1)\bigr)
=(z+z^2+\cdots+z^{\ell-1})\,i_*(\theta_Z(z)).
\]
The top trace is compatible with \(\pi_*\), and the center projection formula identifies
\(\deg_{M,\Gg_{\mathrm{small}}}(i_*u)\) with \(\deg_{Z_F}(u)\) in top center degree. Taking homogeneous degree \(\rank(M)-1\) proves the formula for \(\operatorname{Exc}_F(z)\).
\end{proof}

\begin{lem}[Center factorization]\label{lem:center-factorization}
With the notation of Lemma~\ref{lem:exceptional-trace}, there is a canonical graded product identification
\[
A^\bullet(Z_F)_{\Qq}\cong
\left(\bigotimes_{i=1}^{\ell}A_{\Qq}(M|F_i,\Gg_{\mathrm{small}}|F_i)\right)
\otimes
A_{\Qq}(M/F,\Gg_{\mathrm{small}}/F).
\]
Under this identification, the top trace on \(Z_F\) is the product of the top traces on the factors, and
\[
\theta_Z(z)=
\left(\prod_{i=1}^{\ell}I_i(z)\right)I_{\mathrm{con}}(z),
\]
where \(I_i(z)\) is the splitting-root integrand defining
\(P_Q(M|F_i,\Gg_{\mathrm{small}}|F_i;z)\), and \(I_{\mathrm{con}}(z)\) is the splitting-root integrand defining
\(P_Q(M/F,\Gg_{\mathrm{small}}/F;z)\).
\end{lem}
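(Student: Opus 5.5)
The proof has three steps: identify the center $Z_F$ torically, compute the restrictions of the boundary classes, and compute the restriction of the quotient roots.

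\emph{Step 1 (the center).} In the Feichtner--Yuzvinsky toric model, the center $Z_F$ of the one-flat stellar subdivision is the orbit closure $V(\sigma_B)$ of the cone spanned by the rays $F_1,\dots,F_\ell$. Its fan is the star of $\sigma_B$ in $\Sigma(\Gg_{\mathrm{small}})$.

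A nested set containing $B$ splits into three kinds of members: those contained in some $F_i$, those not contained in $F$, and the $F_i$ themselves. The first kind splits further into nested sets for the restrictions $(M|F_i,\Gg_{\mathrm{small}}|F_i)$. The second kind corresponds, via $H\mapsto H\vee F$, to nested sets for the contraction $(M/F,\Gg_{\mathrm{small}}/F)$. This uses the building-set isomorphism $\prod_i[\emptyset,F_i]\cong[\emptyset,F]$ already invoked in Lemma~\ref{lem:best-restriction}.

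So the star fan is a product of the factor nested fans, in the quotient lattice by the span of $\sigma_B$. The Chow ring of a product toric variety is the tensor product of the factor Chow rings, and this gives the displayed identification $A^\bullet(Z_F)_{\Qq}\cong\bigl(\bigotimes_i A_{\Qq}(M|F_i,\Gg_{\mathrm{small}}|F_i)\bigr)\otimes A_{\Qq}(M/F,\Gg_{\mathrm{small}}/F)$. The degree map on a product is the product of the degrees, because the top-degree classes of nested-set monomials factor.

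\emph{Step 2 (boundary classes).} Next I compute $i^*x_Y$ for $Y\in\Gg_{\mathrm{small}}\setminus\{E\}$.
\begin{itemize}
\item If $Y\cup\{F_i\}$ fails to be nested, then $i^*x_Y=0$.
\item If $Y$ is a member of some restriction factor or of the contraction factor, then $i^*x_Y$ is the corresponding generator there. Top flats of the factors are included, which is why Remark~\ref{rem:induced-building-sets} matters.
\item If $Y=F_i$, then $i^*x_{F_i}$ is the normal-bundle class. By the atom-linear relations it equals minus the sum of the other generators through an atom, which is the top generator $-x_{F_i}$ of the factor $M|F_i$ (respectively $\alpha$-type terms in the contraction).
\end{itemize}

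These facts give $i^*\prod_{Y}g_z(x_Y)$. The factors with $i^*x_Y=0$ contribute $g_z(0)=1-z$. These constant factors have to be tracked against the zero quotient roots: each factor's rank count $q_Q$ differs from the big count by exactly the number of such flats. The division by $\prod_i g_z(i^*x_{F_i})$ in the definition of $\theta_Z$ removes precisely the $F_i$-terms, which are not boundary divisors of any factor.

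\emph{Step 3 (quotient roots).} By Lemma~\ref{lem:orbit-divisor-product}, the Newton power sums of the quotient class restrict to the sum of the power sums of $Q_{N|F_i}$ and $Q_{N/F}$. Descending this as in Step 2 of Proposition~\ref{prop:quotient-tangent} (the class $\gamma_m$) gives
\[
i^*C_Q(M,\Gg_{\mathrm{small}};u)=\prod_i C_Q(M|F_i,\ldots;u)\cdot C_Q(M/F,\ldots;u).
\]
Hence the quotient roots restrict to the union of the factor roots, padded with zero roots.

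Combining Steps 2 and 3, $\theta_Z(z)$ is the product of the factor integrands $I_i(z)$ and $I_{\mathrm{con}}(z)$, up to powers of $1-z$. The main obstacle is the bookkeeping that these powers cancel exactly: the number of $(1-z)$ factors from flats $Y$ with $i^*x_Y=0$ must match the number of extra zero quotient roots. That comes down to the count
\[
|\Gg^\circ_{\mathrm{small}}|-d=\sum_i\bigl(|(\Gg_{\mathrm{small}}|F_i)^\circ|-\rk F_i+1\bigr)+\bigl(|(\Gg_{\mathrm{small}}/F)^\circ|-\rk(M/F)+1\bigr)+(\text{number of non-nested }Y)-\ell+1,
\]
and I would verify it directly from the nested-set decomposition of Step 1.
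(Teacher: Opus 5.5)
Your outline follows the same route as the paper's proof. You identify $Z_F$ with the product of the factor nested-fan varieties via the star of $\sigma_B$. You restrict the boundary classes: $Y$ not nested with $B$ restricts to zero, the other $Y$ restrict to the factor generators, and the $F_i$-terms are cancelled by the division in $\theta_Z$. You restrict $C_Q$ to the product of the factor quotient polynomials via Lemma~\ref{lem:orbit-divisor-product} and the class $\gamma_m$. Finally you match the leftover constants. In Step~3, note that Step~2 of Proposition~\ref{prop:quotient-tangent} only gives $j^*P_m(M,\Gg_{\mathrm{big}})=p^*\gamma_m$. You still need $j^*\pi^*=p^*i^*$ and injectivity of $p^*$ to get $i^*P_m(M,\Gg_{\mathrm{small}})=\gamma_m$.

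The gap is in the count you single out as the main obstacle: as displayed, it is false. Using $\sum_i\rk F_i=\rk F$ and $\rk(M/F)=d+1-\rk F$, your identity is equivalent to $|\Gg^\circ_{\mathrm{small}}|=1+\#\{\text{non-nested }Y\}+\sum_i|(\Gg_{\mathrm{small}}|F_i)^\circ|+|(\Gg_{\mathrm{small}}/F)^\circ|$. The nested-set decomposition of your Step~1 instead gives
\[
|\Gg^\circ_{\mathrm{small}}|=\ell+\#\{\text{non-nested }Y\}+\sum_i|(\Gg_{\mathrm{small}}|F_i)^\circ|+|(\Gg_{\mathrm{small}}/F)^\circ| .
\]
Here $\ell$ counts the members of $B$ themselves. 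The map $Y\mapsto Y\vee F$ never reaches the top $E$ of the contraction: otherwise $Y$ together with the $F_j\not\subseteq Y$ would be an antichain with join $E\in\Gg_{\mathrm{small}}$. Combining this with $d=\ell+\sum_i(\rk F_i-1)+(\rk(M/F)-1)$, the $\ell$'s cancel, and the correct identity is
\[
q_Q(M,\Gg_{\mathrm{small}})=\sum_i q_Q(M|F_i,\Gg_{\mathrm{small}}|F_i)+q_Q(M/F,\Gg_{\mathrm{small}}/F)+\#\{\text{non-nested }Y\},
\]
with no $-\ell+1$. Your version would force $\ell=1$, which is excluded. For example, take the Boolean matroid on $\{1,2,3\}$ with $\Gg_{\mathrm{small}}=\{1,2,3,123\}$ and $F=12$. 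There $Y=3$ is the only non-nested flat, your left side is $1$, and your right side is $0$. So verifying your display "directly from the nested-set decomposition" cannot succeed. Your Step~2 sentence, that the counts differ by exactly the number of such flats, already states the correct version.

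With the corrected identity, the powers of $1-z$ cancel and your argument closes. The paper closes this step more cheaply. The center presentation has virtual rank $|\Gg^\circ_{\mathrm{small}}|-\ell-q_Q(M,\Gg_{\mathrm{small}})=d-\ell$, which equals the sum of the factor virtual ranks by the top-degree computation. The $g_z$-product depends only on the virtual Chern series and the virtual rank, so no count of non-nested flats is needed.
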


\begin{proof}
The center \(Z_F\) is the orbit closure of the cone generated by the rays indexed by \(F_1,\dots,F_\ell\) in the nested fan for \(\Gg_{\mathrm{small}}\). Its closed star is canonically the product of the nested fans for the induced restriction pairs \((M|F_i,\Gg_{\mathrm{small}}|F_i)\) and the induced contraction pair \((M/F,\Gg_{\mathrm{small}}/F)\). Passing to Chow rings gives the displayed tensor-product identification. The top degree is
\[
\sum_{i=1}^{\ell}(\rank(F_i)-1)+(\rank(M)-\rank(F)-1)
=\rank(M)-1-\ell,
\]
and the degree of a top simple tensor is the product of the factor degrees.

We now compare the integrands. First, after the normal factors indexed by \(F_1,\dots,F_\ell\) are removed, the boundary Chern polynomial on the center becomes the product of the boundary Chern polynomials of the restriction factors and the contraction factor. Second, the quotient polynomial restricts to the product
\[
i^*C_Q(M,\Gg_{\mathrm{small}};u)
=
\left(\prod_{i=1}^{\ell}C_Q(M|F_i,\Gg_{\mathrm{small}}|F_i;u)\right)
\cdot C_Q(M/F,\Gg_{\mathrm{small}}/F;u)
\]
under the same product identification, where the displayed multiplication is in the tensor product of the factor Chow rings. Equivalently, after passing to a common splitting algebra, the virtual total Chern series of the center presentation is the product of the virtual total Chern series of the induced factor presentations. The virtual ranks agree by the top-degree calculation above. Since the \(g_z\)-Hirzebruch product depends only on the virtual total Chern series and virtual rank, the center integrand is exactly the product of the restriction and contraction integrands.
\end{proof}
\begin{humancomment}
The fact that the quotient polynomial restricts to the product is implied in the proof of Proposition~\ref{prop:quotient-tangent}. This corresponds to \cite[Proposition~4.10]{Cheng26}.
\end{humancomment}
\begin{prop}[One-flat K-theoretic recursion]\label{prop:k-recursion}
Let \(M\) be a loopless matroid on \(E\), and let
\[
\Gg_{\mathrm{big}}=\Gg_{\mathrm{small}}\cup\{F\}
\]
be a one-flat enlargement of top-containing Feichtner--Yuzvinsky building sets, with \(F\notin \Gg_{\mathrm{small}}\) a nonempty proper flat. Let
\[
\Fact_{\Gg_{\mathrm{small}}}(F)=\{F_1,\dots,F_{\ell}\}.
\]
Then
\[
\begin{aligned}
P^K(M,\Gg_{\mathrm{big}};z)
&=
P^K(M,\Gg_{\mathrm{small}};z) \\
&\quad+
(z+z^2+\cdots+z^{\ell-1})
\left(\prod_{i=1}^{\ell}P^K(M|F_i,\Gg_{\mathrm{small}}|F_i;z)\right)
P^K(M/F,\Gg_{\mathrm{small}}/F;z).
\end{aligned}
\]
\end{prop}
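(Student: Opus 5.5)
The recursion follows by assembling the three preceding lemmas; the new input is only the identification of each integrand with a \(P^K\) of a minor. First, by Lemma~\ref{lem:one-step-quotient-integrands}, \(P^K(N,\Hh;z)=P_Q(N,\Hh;z)\) for every top-containing pair occurring in the recursion. This holds for \((M,\Gg_{\mathrm{big}})\) and \((M,\Gg_{\mathrm{small}})\), and also for the induced minor pairs \((M|F_i,\Gg_{\mathrm{small}}|F_i)\) and \((M/F,\Gg_{\mathrm{small}}/F)\). So it suffices to prove the identity for \(P_Q\).

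Lemma~\ref{lem:exceptional-trace} already gives
\[
P_Q(M,\Gg_{\mathrm{big}};z)=P_Q(M,\Gg_{\mathrm{small}};z)+(z+\cdots+z^{\ell-1})\deg_{Z_F}\bigl([\theta_Z(z)]_{\rank(M)-1-\ell}\bigr).
\]
It remains to evaluate the center degree. By Lemma~\ref{lem:center-factorization}, \(A^\bullet(Z_F)_{\Qq}\) is the tensor product of the factor Chow rings, the top trace is the product of the factor traces, and \(\theta_Z(z)=\bigl(\prod_i I_i(z)\bigr)I_{\mathrm{con}}(z)\).

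Next, decompose the degree-\((\rank(M)-1-\ell)\) part of this product of tensors. The homogeneous component in each factor ring above that factor's top degree vanishes. Hence the only contributing piece of \([\theta_Z]_{\rank(M)-1-\ell}\) is the tensor product of the top-degree components:
\[
\bigotimes_i [I_i(z)]_{\rank(F_i)-1}\otimes [I_{\mathrm{con}}(z)]_{\rank(M)-\rank(F)-1}.
\]
Any other multidegree summing to \(\rank(M)-1-\ell\) has some factor in degree above its top degree and is therefore zero. This uses \(\sum_i(\rank F_i-1)+(\rank M-\rank F-1)=\rank M-1-\ell\), which is verified in Lemma~\ref{lem:center-factorization}. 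Applying the product trace turns this into \(\prod_i P_Q(M|F_i,\Gg_{\mathrm{small}}|F_i;z)\cdot P_Q(M/F,\Gg_{\mathrm{small}}/F;z)\). Converting back via \(P_Q=P^K\) yields the claimed recursion.

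The main obstacle is bookkeeping of the induced building sets (Remark~\ref{rem:induced-building-sets}). The restriction-side set \(\Gg_{\mathrm{small}}|F_i\) does contain \(\Top(M|F_i)=F_i\), since \(F_i\in\Gg_{\mathrm{small}}\). We must still check two further points:
\begin{enumerate}
\item The virtual ranks \(q_Q\) of the factor integrands agree with the ranks inherited by restricting the \(\Gg_{\mathrm{small}}\) integrand to the center after removing the normal factors \(g_z(i^*x_{F_i})\). Each factor's boundary set excludes its own top flat, which is exactly the role of the divided-out normal factors.
\item The constant terms are normalized consistently, since each \(g_z(0)=1-z\) contributes.
\end{enumerate}
I would verify both by a rank count: the number of boundary divisors minus the quotient rank equals the dimension in each factor, exactly as in the top-degree computation of Lemma~\ref{lem:center-factorization}.
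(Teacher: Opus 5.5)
Your proposal is correct and follows the paper's proof essentially step for step. You reduce to \(P_Q\) via Lemma~\ref{lem:one-step-quotient-integrands}, apply Lemma~\ref{lem:exceptional-trace}, and evaluate the center trace using the product factorization of Lemma~\ref{lem:center-factorization}; your multidegree argument (only the tensor of top-degree components survives in degree \(\rank(M)-1-\ell\)) spells out what the paper compresses into ``the center trace is the product trace.'' The virtual-rank and \(g_z(0)=1-z\) bookkeeping you list as remaining obstacles is already handled inside Lemma~\ref{lem:center-factorization}: it checks that the virtual ranks agree and uses that the \(g_z\)-Hirzebruch product depends only on the virtual total Chern series and the virtual rank, so no separate verification is needed.
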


\begin{proof}
By Lemma~\ref{lem:one-step-quotient-integrands}, \(P^K=P_Q\) for \((M,\Gg_{\mathrm{small}})\), \((M,\Gg_{\mathrm{big}})\), every restriction factor \((M|F_i,\Gg_{\mathrm{small}}|F_i)\), and the contraction factor \((M/F,\Gg_{\mathrm{small}}/F)\). It is enough to prove the formula for \(P_Q\).

Lemma~\ref{lem:exceptional-trace} gives
\[
P_Q(M,\Gg_{\mathrm{big}};z)
=
P_Q(M,\Gg_{\mathrm{small}};z)
+(z+z^2+\cdots+z^{\ell-1})
\deg_{Z_F}\left([\theta_Z(z)]_{\rank(M)-1-\ell}\right).
\]
By Lemma~\ref{lem:center-factorization}, the center integrand is the product of the induced restriction and contraction integrands, and the center trace is the product trace. Hence
\[
\deg_{Z_F}\left([\theta_Z(z)]_{\rank(M)-1-\ell}\right)
=
\left(\prod_{i=1}^{\ell}P_Q(M|F_i,\Gg_{\mathrm{small}}|F_i;z)\right)
P_Q(M/F,\Gg_{\mathrm{small}}/F;z).
\]
Substitution gives the one-flat recursion for \(P_Q\). Replacing each \(P_Q\) by the equal \(P^K\) gives the stated formula. The preceding exceptional-trace lemma also proves that a genuine one-flat enlargement has \(\ell\ge 2\).
\end{proof}

\begin{prop}[One-flat Hilbert recursion]\label{prop:hilbert-recursion}
Let \(M\), \(\Gg_{\mathrm{small}}\), \(\Gg_{\mathrm{big}}\), \(F\), and \(\ell\) be as in Proposition~\ref{prop:k-recursion}. Then
\[
\begin{aligned}
\Hilb(M,\Gg_{\mathrm{big}};z)
&=
\Hilb(M,\Gg_{\mathrm{small}};z) \\
&\quad+
(z+z^2+\cdots+z^{\ell-1})
\left(\prod_{i=1}^{\ell}
\Hilb(M|F_i,\Gg_{\mathrm{small}}|F_i;z)\right)
\Hilb(M/F,\Gg_{\mathrm{small}}/F;z).
\end{aligned}
\]
\end{prop}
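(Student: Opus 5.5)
We plan to prove the Hilbert recursion directly from the graded additive decomposition of the Chow ring under a one-flat enlargement, i.e.\ the blowup formula for the stellar subdivision already used in Lemma~\ref{lem:chow-comparison}. We pass to $\Qq$-coefficients and take dimensions degree by degree. By that lemma, for each $p$,
\[
A^p_{\Qq}(M,\Gg_{\mathrm{big}})\cong\psi\,A^p_{\Qq}(M,\Gg_{\mathrm{small}})\oplus\bigoplus_{a=1}^{c-1}A^{p-a}_{\Qq}(Z),
\]
where $Z$ is the center stratum and $c$ is its codimension. Summing $\dim(\cdot)\,z^p$ over $p$ gives
\[
\Hilb(M,\Gg_{\mathrm{big}};z)=\Hilb(M,\Gg_{\mathrm{small}};z)+(z+\cdots+z^{c-1})\,\Hilb(Z;z),
\]
using injectivity of $\psi$, which holds by Step~1 of Proposition~\ref{prop:quotient-tangent}.

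The key identifications are as follows.
\begin{enumerate}
\item The codimension satisfies $c=\ell$. The center is the orbit closure of the cone spanned by the rays $F_1,\dots,F_\ell$, so it has codimension $\ell$. This is consistent with the degree count in Lemma~\ref{lem:center-factorization}: the top degree of $Z$ is $\rank(M)-1-\ell$.
\item The Hilbert series of the center factors. By Lemma~\ref{lem:center-factorization}, $A^\bullet(Z_F)_{\Qq}$ is the graded tensor product of the factor rings $A_{\Qq}(M|F_i,\Gg_{\mathrm{small}}|F_i)$ and $A_{\Qq}(M/F,\Gg_{\mathrm{small}}/F)$. Hilbert series are multiplicative under graded tensor products, so $\Hilb(Z;z)$ is the product of the factor Hilbert series.
\end{enumerate}
Substituting both into the summed decomposition yields the stated formula.

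For the decomposition itself, I would rely on Lemma~\ref{lem:chow-comparison}, which was stated with $\Zz$-coefficients; $\Qq$ follows by tensoring. A more hands-on alternative avoids blowup theory in the non-complete toric setting. Using the Feichtner--Yuzvinsky presentation, one can write
\[
A(\Gg_{\mathrm{big}})=\psi A(\Gg_{\mathrm{small}})\oplus\bigoplus_{a=1}^{\ell-1}x_F^a\cdot\psi A(\Gg_{\mathrm{small}}).
\]
Then $x_F^a\cdot\psi(y)$ depends only on the restriction of $y$ to the center, and $x_F^{\ell}$ satisfies the Keel-type relation $\prod_i(-x_{F_i}-x_F)$-type relation. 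This is the standard Keel/blowup computation for the nested-set presentation.

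\textbf{Main obstacle.} The hardest step is justifying that the induced restriction pairs $(M|F_i,\Gg_{\mathrm{small}}|F_i)$ are genuine building-set Chow rings whose Hilbert series is the one appearing in the formula. Remark~\ref{rem:induced-building-sets} warns that these need not be top-containing. I would first check that $F_i\in\Gg_{\mathrm{small}}$ is the top of $M|F_i$, so that $\Gg_{\mathrm{small}}|F_i$ does contain it. I would then verify, via the star-fan description in Step~2 of Proposition~\ref{prop:quotient-tangent}, that the link of the cone is exactly the product of the induced nested fans. Given that, the rest is bookkeeping.
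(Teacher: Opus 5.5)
Your proof is correct, but it takes a different route from the paper. The paper does not derive the recursion itself. It quotes the one-step Hilbert-series recursion of \cite{EFMPV25}, whose exceptional term carries the single restriction factor $\Hilb(M|F,\Gg_{\mathrm{small}}|F;z)$. Its only work is to rewrite that factor as $\prod_i\Hilb(M|F_i,\Gg_{\mathrm{small}}|F_i;z)$. For this it uses that $\Gg_{\mathrm{small}}|F$ has maximal elements $F_1,\dots,F_\ell$, so $M|F=\bigoplus_i M|F_i$ and the Chow algebra of $(M|F,\Gg_{\mathrm{small}}|F)$ is the tensor product of the factor algebras.

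You instead rebuild the recursion from the paper's own tools. First, you tensor the graded blowup decomposition of Lemma~\ref{lem:chow-comparison} with $\Qq$. Second, you use the codimension count $c=\ell$: the $F_i$ are pairwise incomparable, and no join of two or more of them lies in $\Gg_{\mathrm{small}}$, since such a join would sit inside a single maximal factor, so they span a smooth $\ell$-dimensional nested cone. Third, you factor the center via Lemma~\ref{lem:center-factorization}.

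The paper's version is shorter. It needs neither the Feichtner--Yuzvinsky toric model, nor the Chow blowup formula on a non-complete toric variety, nor the product description of the star of the cone $\{F_1,\dots,F_\ell\}$. In exchange, it passes through the non-top-containing restriction $\Gg_{\mathrm{small}}|F$ flagged in Remark~\ref{rem:induced-building-sets}. It also implicitly relies on the induced restriction and contraction building sets of \cite{EFMPV25} matching the pairs in Proposition~\ref{prop:k-recursion}.

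Your version avoids both issues. Only the top-containing restrictions $\Gg_{\mathrm{small}}|F_i$ with $F_i\in\Gg_{\mathrm{small}}$ occur, so the ``main obstacle'' you flag dissolves. And since the same Lemma~\ref{lem:center-factorization} drives Proposition~\ref{prop:k-recursion}, the exceptional terms of the Hilbert and K-theoretic recursions are indexed by literally the same pairs. That is exactly what the subtraction step in the proof of Theorem~\ref{thm:hilbert-identity} requires.
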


\begin{proof}
The one-step Hilbert-series recursion of Eur--Ferroni--Matherne--Pagaria--Vecchi \cite{EFMPV25} gives
\[
\begin{aligned}
\Hilb(M,\Gg_{\mathrm{big}};z)
&=
\Hilb(M,\Gg_{\mathrm{small}};z) \\
&\quad+
(z+z^2+\cdots+z^{\ell-1})
\Hilb(M|F,\Gg_{\mathrm{small}}|F;z)
\Hilb(M/F,\Gg_{\mathrm{small}}/F;z).
\end{aligned}
\]
Here \(\ell\) is the number of maximal elements of
\(\Gg_{\mathrm{small}}\) contained in \(F\), and the restriction and
contraction building sets are the induced ones.

It remains only to rewrite the restriction factor in the form used by the
center product. The factors \(F_1,\dots,F_\ell\) give the decomposition of
the interval below \(F\), hence \(M|F\) is the direct sum of the restrictions
\(M|F_i\). The induced restriction building set \(\Gg_{\mathrm{small}}|F\)
has maximal elements \(F_1,\dots,F_\ell\), and its Feichtner--Yuzvinsky Chow
algebra is the tensor product of the Chow algebras
\[
A_{\Qq}(M|F_i,\Gg_{\mathrm{small}}|F_i),\qquad 1\leq i\leq \ell .
\]
Taking graded dimensions gives
\[
\Hilb(M|F,\Gg_{\mathrm{small}}|F;z)
=
\prod_{i=1}^{\ell}\Hilb(M|F_i,\Gg_{\mathrm{small}}|F_i;z).
\]
Substituting this factorization into the one-step recursion proves the
displayed formula.
\end{proof}

\begin{prop}[Maximal endpoint]\label{prop:maximal-endpoint}
Let \(R\) be a loopless matroid with finite lattice of flats, bottom flat \(\emptyset\), top flat \(\Top(R)\), and rank function \(\rk_R\). Let
\[
\Gg_{\max}(R):=L(R)\setminus\{\emptyset\}.
\]
Then
\[
P^K(R,\Gg_{\max}(R);z)=\Hilb(R,\Gg_{\max}(R);z),
\]
and both sides are equal to
\[
\sum_{\emptyset=F_0<F_1<\cdots<F_m}
\prod_{i=1}^m
z\frac{1-z^{\rk_R(F_i)-\rk_R(F_{i-1})-1}}{1-z},
\]
where the sum ranges over all chains of flats starting at \(\emptyset\), including the chain with only \(\emptyset\), whose summand is \(1\).
\end{prop}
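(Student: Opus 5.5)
We prove the two equalities separately and meet at the displayed chain sum.

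\emph{Hilbert side.} For the maximal building set, the Feichtner--Yuzvinsky Gröbner basis \cite{FY04} gives a $\Qq$-basis of $A_{\Qq}(R,\Gg_{\max}(R))$. It consists of the monomials $x_{F_1}^{a_1}\cdots x_{F_m}^{a_m}$ attached to chains $\emptyset=F_0<F_1<\cdots<F_m$, with exponents in the range $1\le a_i\le \rk_R(F_i)-\rk_R(F_{i-1})-1$. This is the standard Feichtner--Yuzvinsky basis already invoked in Proposition~\ref{prop:basis}. Summing $z^{\sum a_i}$ over a fixed chain gives the factor
\[
z\frac{1-z^{\rk_R(F_i)-\rk_R(F_{i-1})-1}}{1-z}
\]
for each step. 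Summing over all chains, with the empty chain contributing the unit monomial, yields the displayed expression for $\Hilb(R,\Gg_{\max}(R);z)$.

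\emph{$P^K$ side: first approach.} Here the plan is to use the anchor \cite{Cheng25,FMSV24}. In the maximal case the rational tangent class is realized, at the level of Chern data, by the BEST input. By Proposition~\ref{prop:quotient-tangent}, $c_z(T)=c_z(Q)^{-1}\prod_F(1+zx_F)$, and the maximal quotient polynomial is $C_{\mathrm{BEST}}^{\max}$. In the realizable case this is exactly the tangent bundle of $W_L$, and $P^K$ is the $\chi_y$-type genus
\[
\sum_i(-z)^i\chi(\Omega^i)\cdot(-1)^i .
\]
Since $W_L$ has a cellular (affine paving) decomposition, $h^{p,q}$ vanishes for $p\ne q$, so this genus equals the Poincaré polynomial, i.e. $\Hilb$. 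This is the content of the first step. For arbitrary $R$ it cannot be used directly, because $P^K$ is defined via $\deg$ of a universal polynomial in the $x_F$ and the BEST Chern classes.

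\emph{$P^K$ side: the approach I would actually use.} I would invoke the result of \cite{Cheng25} (or \cite{FMSV24}) computing such intersection numbers. Top-degree intersections of products of $\psi$/boundary classes with BEST Chern classes on the permutohedral model are valuative matroid invariants, and they are determined through the tautological-class calculus of \cite{BEST23}. The steps are:
\begin{enumerate}
\item Both $P^K(R,\Gg_{\max};z)$ and the chain sum are valuative in $R$. The chain sum is valuative because sums over flag chains of rank data are valuative by the Derksen--Fink framework.
\item Valuative invariants are determined by their values on Schubert matroids.
\item Schubert matroids are realizable, and there equality holds by the Hodge-theoretic argument above.
\end{enumerate}

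\emph{Alternative route.} If valuativity of $P^K$ is not directly available, I would instead prove the identity by induction on rank. The plan is to restrict the integrand to boundary divisors using Lemma~\ref{lem:best-restriction} (the quotient class restricts to a sum over $R|F$ and $R/F$), and to expand the integrand via a localization/residue formula in the first boundary divisor. This yields the recursion
\[
P^K(R)=\sum_{F}\,[\text{step factor}]\;P^K(R|F)\,P^K(R/F),
\]
which is the first-step decomposition of the chain sum.

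\emph{Main obstacle.} The hard step is the $P^K$ side for non-realizable $R$. Extracting the single-step factor $z(1-z^{k-1})/(1-z)$ from the degree of the $g_z$-integrand is the crux. I would first attempt it by citing \cite[main theorem]{Cheng25} directly, which I expect states exactly this chain formula for the BEST-tangent $\chi_y$-genus.
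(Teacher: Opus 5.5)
Your Hilbert-side count from the Feichtner--Yuzvinsky basis is correct. It is exactly the content of \cite[Proposition~3.5]{FMSV24}, which the paper simply cites.

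On the $P^K$ side your main route is genuinely different from the paper's, and it is workable in outline. The paper does not use valuativity. It observes that its maximal rational class $T_{R,\Gg_{\max}(R)}$ and Cheng's class $T_R^{\mathrm{Ch}}$ (the restriction of $T_{X_E}-Q_R$) have the same virtual rank $\rk_R(\Top(R))-1$ and the same total Chern class $c_z(Q)^{-1}\prod_F(1+zx_F)$. Hence they have the same Chern character and coincide in $K_{\Qq}$. It then quotes \cite[Theorem~1.1(4)]{Cheng25}, which gives $\dim_{\Qq}A^p_{\Qq}(R,\Gg_{\max}(R))=(-1)^p\deg(\ch(\wedge^pT^\vee)\td(T))$ for every loopless $R$.

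So the paper's argument is short but entirely outsourced. Its class-identification step is exactly what your fallback ``cite \cite{Cheng25} directly'' leaves out. Note also that \cite{Cheng25} supplies the dimension identity, not the chain formula. Your route instead deduces the general case from the realizable one (Proposition~\ref{prop:quotient-tangent}, Step~6, plus Hodge theory of $W_L$). It uses BEST localization, the Derksen--Fink fact that Schubert matroids and their permutations span the valuative group, and realizability of Schubert matroids. It needs more machinery but is independent of \cite{Cheng25}.

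For it to go through you must supply several points you skip.

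(1) Since $g_z(0)=1-z\neq 1$, the product $\prod_F g_z(x_F)$ over the flats of $R$ is not the restriction of a fixed class on $X_E$. Use $|\Gg_{\max}(R)\setminus\{\Top(R)\}|-q_Q=d$ to rewrite the integrand as $(1-z)^d\prod_F\tilde g(x_F)\prod_a\tilde g(y_a)^{-1}$, where $\tilde g:=g_z/(1-z)$. This is the restriction of $\prod_{\emptyset\neq S\subsetneq E}\tilde g(x_S)$ times a multiplicative class of $Q_R$. Then use $\deg_{R,\Gg_{\max}(R)}(\iota^*\xi)=\deg_{X_E}(\xi\cdot c_{|E|-r}(Q_R))$ with $r=\rk_R(\Top(R))$. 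Only after this rewriting is $P^K$ an $X_E$-integral of a polynomial in Chern classes of $Q_R$. Its fixed-point restrictions depend only on lex-first bases, and that is what yields valuativity.

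(2) The Schubert expansion of a loopless matroid involves Schubert matroids with loops. So both sides must be extended by $0$ to loopy matroids, and the extensions must be shown valuative. This is automatic for the $X_E$ formula, since $c_{|E|-r}(Q_R)=0$ there. For the chain sum, use Ardila--Sanchez.

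(3) Your Hodge computation has a sign slip. The correct expansion is $P^K=\sum_i(-z)^i\chi(\Omega^i)$ with $\chi(\Omega^i)=(-1)^ih^{i,i}$; your extra factor $(-1)^i$ would produce $\Hilb(R,\Gg_{\max}(R);-z)$. Also, $h^{p,q}=0$ for $p\neq q$ and $h^{p,p}=\dim A^p$ are better justified by the blowup formula than by an affine paving: the cohomology of $W_L$ is spanned by algebraic cycles.

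Finally, the recursion in your alternative route is not the first-step decomposition of the chain sum. That decomposition is $1+\sum_{F\neq\emptyset}z\frac{1-z^{\rk_R(F)-1}}{1-z}$ times the chain sum of $R/F$, with no restriction factor.
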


\begin{proof}
Put \(\Gg=\Gg_{\max}(R)\). Cheng's preprint
\cite[Definition~3.1]{Cheng25} defines, for every loopless matroid \(R\), a
tangent K-class \(T_R^{\mathrm{Ch}}\) on the maximal matroid toric model by
restricting \(T_{X_E}-Q_R\) from the permutohedral variety, where \(Q_R\) is the
BEST tautological quotient class. The displayed tangent formula before
\cite[Theorem~3.4]{Cheng25} gives the boundary contribution
\(\prod_F(1+x_F)\), and \cite[Theorem~3.4]{Cheng25} identifies the quotient
Chern factor. By Proposition~\ref{prop:quotient-tangent} in the maximal case,
the MatTan class \(T_{R,\Gg}\) is reconstructed from the same BEST quotient
Chern polynomial. Its virtual rank is \(|\Gg\setminus\{\Top(R)\}|-q_Q(R,\Gg)=\rk_R(\Top(R))-1\), since \(q_Q(R,\Gg)=|\Gg\setminus\{\Top(R)\}|-(\rk_R(\Top(R))-1)\); this equals the rank of \(T_R^{\mathrm{Ch}}\), the tangent bundle of the maximal wonderful model of dimension \(\rk_R(\Top(R))-1\). Two rational K-classes of equal virtual rank and equal total Chern class have the same Chern character, and therefore agree
under the rational Chern-character isomorphism. Equivalently, they have the same
rank and the same total Chern class
\[
c_z(T)=c_z(Q)^{-1}\prod_{F\in\Gg\setminus\{\Top(R)\}}(1+z x_F),
\]
with the homogeneous \(z\)-grading convention used in this paper.

Cheng's Chow-polynomial formula \cite[Theorem~1.1(4)]{Cheng25} gives
\[
\dim_{\Qq}A_{\Qq}(R,\Gg)^p
=
(-1)^p
\deg_{R,\Gg}\left(\ch\left(\bigwedge^p T_{R,\Gg}^{\vee}\right)\td(T_{R,\Gg})\right)
\]
for every \(p\). Therefore the coefficient of \(z^p\) in \(P^K(R,\Gg;z)\) is the coefficient of \(z^p\) in \(\Hilb(R,\Gg;z)\).

Finally, the displayed chain formula for the maximal Hilbert polynomial is the
Ferroni--Matherne--Stevens--Vecchi formula \cite[Proposition~3.5]{FMSV24}.
Combining these two equalities proves the proposition.
\end{proof}

\begin{thm}[Hilbert identity for arbitrary building sets]\label{thm:hilbert-identity}
For every loopless matroid \(M\) and every top-containing Feichtner--Yuzvinsky building set \(\Gg\), one has
\[
P^K(M,\Gg;z)=\Hilb(M,\Gg;z).
\]
Consequently, if \(\rank(M)=d+1\), then for every integer \(i\) with \(0\leq i\leq d\),
\[
\dim_{\Qq}A_{\Qq}(M,\Gg)^i
=
(-1)^i
\deg_{M,\Gg}\left(\ch\left(\bigwedge^i T_{M,\Gg}^{\vee}\right)\td(T_{M,\Gg})\right).
\]
\end{thm}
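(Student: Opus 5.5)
The plan is a double induction: outer induction on \(|E|\) (equivalently on \(\rank(M)\)), inner descending induction along a one-flat refinement chain \(\Gg=\Gg_0\subset\Gg_1\subset\cdots\subset\Gg_s=\Gg_{\max}\), which exists by the ordering argument of Step~6(b) of Proposition~\ref{prop:quotient-tangent}. The comparison at each step uses the two parallel recursions, Proposition~\ref{prop:k-recursion} for \(P^K\) and Proposition~\ref{prop:hilbert-recursion} for \(\Hilb\). Since they have identical shape, it suffices to know that the base terms agree.

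\textbf{Base cases.} At the maximal end \(\Gg_s=\Gg_{\max}\), Proposition~\ref{prop:maximal-endpoint} gives \(P^K(M,\Gg_{\max};z)=\Hilb(M,\Gg_{\max};z)\). For the outer base, rank one, both sides are \(1\): the Chow ring is \(\Qq\), and the integrand has trivial degree-zero part once one checks \(q_Q=|\Hh^\circ|\). This case is in any event covered by Proposition~\ref{prop:maximal-endpoint}, since for rank one the only building set is \(\{E\}\), which is maximal.

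\textbf{Inductive step.} Suppose \(P^K(M,\Gg_j)=\Hilb(M,\Gg_j)\) and \(\Gg_j=\Gg_{j-1}\cup\{F\}\) with factors \(F_1,\dots,F_\ell\). Subtract the recursion of Proposition~\ref{prop:k-recursion} from that of Proposition~\ref{prop:hilbert-recursion}. This reduces \(P^K(M,\Gg_{j-1})=\Hilb(M,\Gg_{j-1})\) to the equalities
\[
P^K(M|F_i,\Gg_{j-1}|F_i;z)=\Hilb(M|F_i,\Gg_{j-1}|F_i;z),\qquad P^K(M/F,\Gg_{j-1}/F;z)=\Hilb(M/F,\Gg_{j-1}/F;z).
\]
The minors \(M|F_i\) and \(M/F\) have strictly smaller ground sets, since \(F\) is a nonempty proper flat and each \(F_i\subseteq F\). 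They are loopless: restrictions of a loopless matroid are loopless, and contraction by a flat is loopless. The outer induction hypothesis therefore applies, provided the induced building sets are top-containing.

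\textbf{Main obstacle.} The difficulty is exactly the issue flagged in Remark~\ref{rem:induced-building-sets}: the induced sets must be legitimate top-containing Feichtner--Yuzvinsky building sets of the minors. For \(M|F_i\), note that \(F_i\in\Gg_{j-1}\) is the top of \(M|F_i\), so \(\Gg_{j-1}|F_i=\{G\in\Gg_{j-1}:G\subseteq F_i\}\) contains the top. The building-set axiom restricts to lower intervals, because \([\emptyset,F_i]\) is an interval of \(L(M)\).

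For \(M/F\), I take \(\Gg_{j-1}/F\) to be the image of \(\{G\vee F: G\in\Gg_{j-1},\,G\not\le F\}\) in the interval \([F,E]\cong L(M/F)\), following \cite{FY04,EFMPV25}. This set contains \(E\) because \(E\in\Gg_{j-1}\), and it is a building set by the standard induced-building-set lemma of \cite{FY04}. This is the same induced set already used in Proposition~\ref{prop:hilbert-recursion} and Lemma~\ref{lem:center-factorization}, so the two recursions refer to the same pairs.

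The one point to check is that \(P^K\) of these minors is defined by Definition~\ref{defn:pk} with the intrinsically descended \(C_Q\) of Proposition~\ref{prop:quotient-tangent} for the minor. This is needed so that the identification in Lemma~\ref{lem:center-factorization} of the restricted quotient polynomial with the minors' \(C_Q\) matches the objects that enter the induction hypothesis.

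\textbf{Conclusion.} Once that match is confirmed, the descent reaches \(\Gg_0=\Gg\). The dimension statement then follows by comparing coefficients of \(z^i\), using the equivalent form of \(P^K\) in Definition~\ref{defn:pk} together with the expansion \(\ch(\lambda_{-z}T^\vee)=\sum_i(-z)^i\ch(\wedge^iT^\vee)\).
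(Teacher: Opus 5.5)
Your proposal is correct and follows the paper's proof: you descend along a one-flat chain from the maximal endpoint (Proposition~\ref{prop:maximal-endpoint}) and, at each step, subtract the parallel recursions of Propositions~\ref{prop:k-recursion} and~\ref{prop:hilbert-recursion}, applying the induction hypothesis to the minors (the paper inducts on \(\rank(M)-1\) rather than \(|E|\), which is an inessential difference). The match you leave ``to be confirmed'' needs no further work, because Proposition~\ref{prop:k-recursion}, via Lemma~\ref{lem:center-factorization}, already states its exceptional term in terms of \(P^K\) of the induced pairs \((M|F_i,\Gg_{j-1}|F_i)\) and \((M/F,\Gg_{j-1}/F)\).
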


\begin{proof}
We prove the equality by induction on \(e=\rank(M)-1\). If \(e=0\), then \(M\)
has no nonempty proper flats and the only top-containing building set is
\(\Gg_{\max}(M)\), so the assertion is Proposition~\ref{prop:maximal-endpoint}.

Assume \(e>0\), and assume the theorem is known for all loopless matroids of
smaller rank. Let \(M\) be loopless with \(\rank(M)-1=e\), and let \(\Gg\) be a
top-containing building set. Choose a one-flat refinement chain
\[
\Gg=\Gg_0\subset \Gg_1\subset\cdots\subset \Gg_s=\Gg_{\max}(M),
\qquad
\Gg_j=\Gg_{j-1}\cup\{F_j\}.
\]
Proposition~\ref{prop:maximal-endpoint} gives
\[
P^K(M,\Gg_s;z)=\Hilb(M,\Gg_s;z).
\]
We descend along the chain. Suppose the equality is known for \((M,\Gg_j)\),
where \(\Gg_j=\Gg_{j-1}\cup\{F_j\}\), and write
\(\Fact_{\Gg_{j-1}}(F_j)=\{F_1,\ldots,F_\ell\}\). Each restriction factor
\((M|F_i,\Gg_{j-1}|F_i)\) and the contraction factor
\((M/F_j,\Gg_{j-1}/F_j)\) has smaller rank than \(M\). By the induction
hypothesis, \(P^K=\Hilb\) for all these induced factors. The exceptional terms
in Propositions~\ref{prop:k-recursion} and~\ref{prop:hilbert-recursion} are
therefore equal. Subtracting the two one-flat recursion formulas gives
\[
P^K(M,\Gg_{j-1};z)=\Hilb(M,\Gg_{j-1};z).
\]
Descending from \(j=s\) to \(j=0\) proves the polynomial identity for
\((M,\Gg)\).

The coefficient identity is the equality of the coefficient of \(z^i\) in \(P^K(M,\Gg;z)=\Hilb(M,\Gg;z)\), using Definition~\ref{defn:pk}.
\end{proof}

\section{Nested Segre tails and the Chern-alpha bound}\label{sec:chern-alpha}

The goal of this section is to prove the Chern-alpha lower bound. The proof uses connected flats, nested supports, and a truncated generating-polynomial identity. Every generating identity below is stated in \(\Qq[t]/(t^r)\), or equivalently coefficientwise in degrees \(0\leq k\leq r-1\).

\begin{setup}\label{setup:nested}
Let \(N\) be a connected loopless matroid of rank \(r\) on \(E\). Put \(e:=r-1\), \(\Hh:=\Hh_{\mathrm{conn}}(N)\), and \(\Gg_{\max}:=L(N)\setminus\{\emptyset\}\). Choose an ordering \(X_1,\dots,X_m\) of \(\Gg_{\max}\setminus\Hh\) such that whenever \(X_p<X_q\), one has \(q<p\). Put \(\Gg_s:=\Hh\cup\{X_1,\dots,X_s\}\), and let
\[
\psi\colon A_{\Qq}(N,\Hh)\to A_{\Qq}(N,\Gg_{\max})
\]
be the composite Chow homomorphism along this reverse-containment refinement chain. Let \(\alpha:=-x_E\), and set
\[
\Gamma_N(z):=C_{\mathrm{BEST}}(N;z)^{-1}
\prod_{Y\in\Hh\setminus\{E\}}(1+z\psi(x_Y)).
\]
For an \(\Hh\)-nested subset \(S\subset\Hh\setminus\{E\}\), let \(U(S)\) be the join of all flats in \(S\), with \(U(\emptyset)=\emptyset\). For \(A\in S\), let
\[
A_-^S:=\bigvee_{\substack{B\in S\\ B<A}}B,
\]
with \(A_-^S=\emptyset\) if no such \(B\) exists, and put
\[
q_S(A):=\rank_N(A)-\rank_N(A_-^S),\qquad
\omega(S):=\prod_{A\in S}(q_S(A)-1),
\]
with \(\omega(\emptyset)=1\).
\end{setup}


\begin{lem}[Pulled connected-base Chern-alpha]\label{lem:pulled-connected-base}
With notation as in Set-up~\ref{setup:nested}, for every \(0\leq k\leq e\),
\[
I_k(N,\Hh):=
\deg_{N,\Hh}\left(c_k(T_{N,\Hh})(-x_E)^{e-k}\right)
=
\deg_{N,\Gg_{\max}}\left([\Gamma_N(z)]_k\alpha^{e-k}\right).
\]
On the right, \(\alpha=-x_E\) denotes the pullback of the same top-flat class to
\(A_{\Qq}(N,\Gg_{\max})\).
\end{lem}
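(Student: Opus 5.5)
The plan is to transport the left-hand degree to the maximal model along the injective, degree-preserving Chow pullback \(\psi\) of Set-up~\ref{setup:nested}, and then identify the pulled-back Chern class with \([\Gamma_N(z)]_k\).

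\textbf{Step 1: compatibility of \(\psi\) with degrees.} \(\psi\) is the composite of the one-step pullbacks \(\psi_A\) of Step~1 of Proposition~\ref{prop:quotient-tangent}. Each one-step pullback is induced by a proper birational toric morphism, so the projection formula (as used there) gives \(\deg_{N,\Gg_{\mathrm{big}}}\circ\psi_A=\deg_{N,\Gg_{\mathrm{small}}}\) in top degree. This is the same normalized-trace preservation invoked in Lemma~\ref{lem:exceptional-trace}. Composing along the chain yields
\[
\deg_{N,\Hh}(y)=\deg_{N,\Gg_{\max}}(\psi(y))
\]
for every \(y\in A_{\Qq}(N,\Hh)^e\). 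Applying this to \(y=c_k(T_{N,\Hh})(-x_E)^{e-k}\) and using that \(\psi\) is a ring homomorphism reduces the claim to two facts: \(\psi(x_E)=x_E\), and \(\psi(c_k(T_{N,\Hh}))=[\Gamma_N(z)]_k\).

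\textbf{Step 2: the top-flat class.} Since \(E\) is never a factor of a proper flat, \(E\notin B\) at every step. The generator rule of Lemma~\ref{lem:chow-comparison} therefore gives \(\psi_A(x_E)=x_E\) at each step, so \(\psi(-x_E)=\alpha\). This is the Chow shadow of Proposition~\ref{prop:generator-normalization}.

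\textbf{Step 3: the Chern polynomial.} By Proposition~\ref{prop:quotient-tangent},
\[
c_z(T_{N,\Hh})=c_z(Q_{N,\Hh})^{-1}\prod_{Y\in\Hh^\circ}(1+zx_Y),
\]
with \(c_z(Q_{N,\Hh})=C_Q(N,\Hh;z)\). Step~3 of that proposition says that \(\psi\) sends \(C_Q(N,\Hh;z)\) to \(C^{\max}_{\mathrm{BEST}}(N;z)\), which is the non-equivariant image written \(C_{\mathrm{BEST}}(N;z)\) in Set-up~\ref{setup:nested}. Because \(\psi\) is a unital ring map and the constant term is \(1\), it commutes with graded inversion of a unit power series in \(z\), and it sends \(1+zx_Y\) to \(1+z\psi(x_Y)\). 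Hence \(\psi(c_z(T_{N,\Hh}))=\Gamma_N(z)\), and extracting the \(z^k\) coefficient gives \(\psi(c_k(T_{N,\Hh}))=[\Gamma_N(z)]_k\). Combining with Steps~1 and~2 proves the identity. Note that we deliberately keep \(\psi(x_Y)\) rather than \(x_Y\) in the boundary factors: the factors for \(Y\) in the Factor sets really change under pullback.

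\textbf{Main obstacle.} The only nontrivial point is the degree compatibility in Step~1 in the possibly noncomplete intrinsic setting. I would justify it combinatorially rather than geometrically. It suffices to check \(\deg_{\mathrm{big}}(\psi_A(y))=\deg_{\mathrm{small}}(y)\) on a single top-degree nested monomial of \(\Gg_{\mathrm{small}}\); such a monomial has degree \(1\), and \(\psi_A\) of it expands to one surviving nested monomial of the big set plus terms containing \(x_{F_{\mathrm{new}}}\) times all of \(B\)'s pulled-back classes. I expect these extra terms to vanish by non-nestedness or by the atom relations. Alternatively, one can appeal to the Feichtner--Yuzvinsky toric model and the stellar-subdivision projection formula, exactly as in Step~1 of Proposition~\ref{prop:quotient-tangent}.
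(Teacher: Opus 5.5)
Your proposal is correct and is essentially the paper's proof. The paper likewise obtains \(\psi(c_z(T_{N,\Hh}))=\Gamma_N(z)\) from \(\psi(C_Q(N,\Hh;z))=C_{\mathrm{BEST}}(N;z)\) together with \(\psi\) being a graded unital ring map, uses \(\psi(x_E)=x_E\), and gets degree preservation from the projection formula for the proper birational toric morphism \(X_{\Gg_{\max}}\to X_{\Hh}\); one caveat concerns your combinatorial alternative, where you should pick a maximal nested support \(S\) with \(B\not\subseteq S\), since then every term carrying \(x_{F_{\mathrm{new}}}\) has non-\(\Gg_{\mathrm{big}}\)-nested support and vanishes, whereas for \(S\supseteq B\) the unchanged monomial is itself zero in the big ring, the whole degree is carried by the \(x_{F_{\mathrm{new}}}\)-terms, and your expectation that those terms vanish fails.
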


\begin{proof}
By Proposition~\ref{prop:quotient-tangent} the tangent class has formal total Chern polynomial
\[
c_z(T_{N,\Hh})=C_Q(N,\Hh;z)^{-1}\prod_{Y\in\Hh\setminus\{E\}}(1+z x_Y),
\]
and the descended quotient Chern polynomial pulls back to the maximal model as \(\psi(C_Q(N,\Hh;z))=C_{\mathrm{BEST}}(N;z)\). Since \(\psi\) is a homomorphism of graded \(\Qq\)-algebras, it commutes with the \(z\)-graded inverse and with finite products, so
\[
\psi(c_z(T_{N,\Hh}))=\psi(C_Q(N,\Hh;z))^{-1}\prod_{Y\in\Hh\setminus\{E\}}(1+z\,\psi(x_Y))=C_{\mathrm{BEST}}(N;z)^{-1}\prod_{Y\in\Hh\setminus\{E\}}(1+z\,\psi(x_Y))=\Gamma_N(z).
\]
Taking the homogeneous Chow-degree-\(k\) part gives \(\psi(c_k(T_{N,\Hh}))=[\Gamma_N(z)]_k\). Fix \(0\le k\le e\). The class \(c_k(T_{N,\Hh})(-x_E)^{e-k}\) is homogeneous of top Chow degree \(e\); since the top flat \(E\) lies in every building set, \(\psi(x_E)=x_E\), so \(\psi\) carries it to \([\Gamma_N(z)]_k\alpha^{e-k}\). Finally \(\psi\) is the pullback along the proper birational toric morphism \(X_{\Gg_{\max}}\to X_{\Hh}\) of smooth varieties, and such a pullback preserves the degree of a top-dimensional class, because the pushforward of the fundamental class is the fundamental class; hence \(\deg_{N,\Gg_{\max}}(\psi(\xi))=\deg_{N,\Hh}(\xi)\) for every top-degree class \(\xi\). Applying this to \(\xi=c_k(T_{N,\Hh})(-x_E)^{e-k}\) yields the displayed equality.
\end{proof}

\begin{lem}[Connected-flag weights]\label{lem:connected-flag-weights}
Let \(N\) be connected and loopless, and put \(\Hh^0:=\Hh_{\mathrm{conn}}(N)\cup\{\emptyset\}\). For \(F\in\Hh^0\), define \(\tau_N(\emptyset)=1\) and
\[
\tau_N(F):=\sum_{\substack{G\in\Hh^0\\G<F}}\tau_N(G)(\rank_N(F)-\rank_N(G)-1)
\]
for nonempty \(F\in\Hh_{\mathrm{conn}}(N)\). Then
\[
\tau_N(F)=
\sum_{\emptyset=F_0<F_1<\cdots<F_m=F}
\prod_{i=1}^m(\rank_N(F_i)-\rank_N(F_{i-1})-1),
\]
where the sum ranges over strict chains in \(\Hh^0\) ending at \(F\). In particular, \(\tau_N(F)\geq 0\).
\end{lem}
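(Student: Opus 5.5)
The plan is a strong induction along the poset \(\Hh^0\), ordered by strict containment, with \(\emptyset\) as its unique minimal element. Since \(L(N)\) is finite, every strict chain in \(\Hh^0\) is finite, so the recursion defining \(\tau_N\) is well founded. Write
\[
\sigma_N(F):=\sum_{\emptyset=F_0<F_1<\cdots<F_m=F}\prod_{i=1}^m(\rank_N(F_i)-\rank_N(F_{i-1})-1)
\]
for the chain sum on the right, with chains taken in \(\Hh^0\). We show \(\tau_N(F)=\sigma_N(F)\) for every \(F\in\Hh^0\).

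\textbf{Base case.} For \(F=\emptyset\), the only chain is the trivial chain \(F_0=\emptyset\). Its empty product equals \(1\), so \(\sigma_N(\emptyset)=1=\tau_N(\emptyset)\).

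\textbf{Inductive step.} Fix a nonempty \(F\in\Hh_{\mathrm{conn}}(N)\). Every strict chain ending at \(F\) has length \(m\geq1\), because \(F\neq\emptyset\). Sort these chains by their penultimate element \(G:=F_{m-1}\), which is an element of \(\Hh^0\) with \(G<F\). The chains with a fixed penultimate element \(G\) are in bijection with the strict chains \(\emptyset=F_0<\cdots<F_{m-1}=G\) in \(\Hh^0\). The map sends such a chain to itself with \(F\) appended, and it is bijective because the chain is strict and \(G<F\). Under this bijection the chain weight factors as the weight of the truncated chain times \((\rank_N(F)-\rank_N(G)-1)\). Summing over \(G\) gives
\[
\sigma_N(F)=\sum_{\substack{G\in\Hh^0\\ G<F}}\sigma_N(G)\,(\rank_N(F)-\rank_N(G)-1),
\]
which is the defining recursion for \(\tau_N\). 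Applying the induction hypothesis \(\sigma_N(G)=\tau_N(G)\) to each \(G<F\) yields \(\sigma_N(F)=\tau_N(F)\).

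\textbf{Nonnegativity.} Each factor of a chain weight is nonnegative. Strictness of the chain gives \(F_{i-1}\subsetneq F_i\), and both are flats. A proper inclusion of flats strictly increases rank, because a flat is determined by any of its bases via closure. Hence \(\rank_N(F_i)-\rank_N(F_{i-1})\geq1\), and each factor is \(\geq0\). So every chain weight is a product of nonnegative integers, and \(\tau_N(F)\geq0\).

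This is the one point needing care: a factor can vanish when the rank jump is exactly \(1\), but it can never be negative. No other obstacle arises. Connectedness of \(N\) is used only to make \(\Hh_{\mathrm{conn}}(N)\) the relevant index set, and the argument is purely order-theoretic.
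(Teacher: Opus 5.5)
Your proof is correct and follows the paper's argument. Like the paper, you induct over the finite poset \(\Hh^0\), group the chains ending at \(F\) by their penultimate element \(G<F\) to recover the defining recursion, and get nonnegativity from the strict rank increase along strict inclusions of flats; your explicit justification of that rank increase fills in a step the paper states without comment.
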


\begin{proof}
We argue by induction over the finite poset \(\Hh^0\). The assertion is immediate for \(F=\emptyset\). For nonempty \(F\), every strict chain ending at \(F\) has a unique penultimate element \(G<F\), and appending \(F\) multiplies the weight of a chain ending at \(G\) by \(\rank_N(F)-\rank_N(G)-1\). Summing over \(G\) gives the recursive formula. Each rank-gap-minus-one factor is nonnegative, so \(\tau_N(F)\geq 0\).
\end{proof}

\begin{lem}[Chain and non-chain decomposition]\label{lem:non-chain-decomposition}
For every integer \(k\), define
\[
AN_k(N):=
\sum_S\omega(S)B(r-\rank_N(U(S)),k-\rank_N(U(S))),
\]
where \(S\) ranges over all \(\Hh\)-nested subsets of \(\Hh\setminus\{E\}\). Let \(NC(N)\) be the set of such \(S\) that are not totally ordered by inclusion. Then
\[
\begin{aligned}
AN_k(N)
&=
B(r,k)
+
\sum_{F\in\Hh\setminus\{E\}}\tau_N(F)B(r-\rank_N(F),k-\rank_N(F))\\
&\qquad
+
\sum_{S\in NC(N)}\omega(S)B(r-\rank_N(U(S)),k-\rank_N(U(S))).
\end{aligned}
\]
\end{lem}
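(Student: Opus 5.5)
The plan is to split the sum defining \(AN_k(N)\) according to the shape of the \(\Hh\)-nested subset \(S\subset\Hh\setminus\{E\}\). Every such \(S\) falls into exactly one of three classes: the empty set, a nonempty chain (totally ordered by inclusion), or an element of \(NC(N)\). The empty set contributes \(\omega(\emptyset)B(r-0,k-0)=B(r,k)\), because \(U(\emptyset)=\emptyset\) has rank \(0\). The non-chain subsets contribute exactly the third displayed sum, with no change. So the whole content is to show that the chains contribute the middle term \(\sum_F\tau_N(F)B(r-\rank_N(F),k-\rank_N(F))\).

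For a nonempty chain \(S=\{F_1<\cdots<F_m\}\subset\Hh\setminus\{E\}\), two facts are needed.

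\begin{enumerate}
\item The join is the top element: \(U(S)=F_m\).
\item For each \(i\), the element \(A_-^S\) attached to \(A=F_i\) is the join of all members of \(S\) below \(F_i\). In a chain this join is \(F_{i-1}\), with \(F_0:=\emptyset\). Hence \(q_S(F_i)=\rank_N(F_i)-\rank_N(F_{i-1})\) and
\[
\omega(S)=\prod_{i=1}^m(\rank_N(F_i)-\rank_N(F_{i-1})-1).
\]
\end{enumerate}

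Next I will check that every strict chain in \(\Hh\setminus\{E\}\) is automatically \(\Hh\)-nested. For a chain, the only antichains have a single element, whose join lies in \(\Hh\). So chains are nested, and the nonempty nested chains are exactly the strict chains \(\emptyset=F_0<F_1<\cdots<F_m\) in \(\Hh^0\) ending at some \(F=F_m\in\Hh\setminus\{E\}\).

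I then group the chains by their top element \(F\). The factor \(B(r-\rank_N(F),k-\rank_N(F))\) depends only on \(F\), so it comes out of the inner sum. What remains inside is the sum of \(\omega(S)\) over chains ending at \(F\), which is precisely the chain formula for \(\tau_N(F)\) in Lemma~\ref{lem:connected-flag-weights}. This gives the middle term, and adding the three pieces proves the identity for every integer \(k\).

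The only step needing care is the identification of \(A_-^S\) and \(U(S)\) for chains, together with the claim that chains are nested. Both reduce to the fact that in a chain the join of the elements below \(F_i\) is simply \(F_{i-1}\), so I expect no real obstacle. I will also note that the chains counted in Lemma~\ref{lem:connected-flag-weights} run over \(\Hh^0\) and end at \(F\ne E\), which matches the index range \(F\in\Hh\setminus\{E\}\) in the middle sum.
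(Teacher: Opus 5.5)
Your proposal is correct and follows the paper's proof essentially verbatim. Both split the nested supports into chains and non-chains, identify $U(S)=F_m$ and $\omega(S)=\prod_i(\rank_N(F_i)-\rank_N(F_{i-1})-1)$ for a chain, and group chains by terminal flat to recover $\tau_N(F)$ via Lemma~\ref{lem:connected-flag-weights}. There are two cosmetic differences: the paper absorbs the empty support into the chain family (terminal flat $\emptyset$, weight $\tau_N(\emptyset)=1$) instead of treating it separately, and you explicitly check that chains are nested, which the paper leaves implicit. That check is correct because the nestedness condition constrains only antichains of size at least two and so is vacuous for chains.
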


\begin{proof}
Partition the nested supports into chains and non-chains. A totally ordered support is uniquely
\[
S=\{F_1<\cdots<F_m\},
\]
including the empty chain when \(m=0\). Its join is \(F_m\), with \(F_m=\emptyset\) in the empty case, and
\[
\omega(S)=\prod_{i=1}^m(\rank_N(F_i)-\rank_N(F_{i-1})-1).
\]
Summing the chain contributions with fixed terminal flat \(F\) gives \(\tau_N(F)\) by Lemma~\ref{lem:connected-flag-weights}. The remaining contributions are exactly those indexed by \(NC(N)\).
\end{proof}

\begin{lem}[Low-degree chain reduction]\label{lem:low-degree-firewall}
With notation as in Set-up~\ref{setup:nested}, assume \(0\leq k\leq e\) and \(k\leq 3\). Then the non-chain residual in Lemma~\ref{lem:non-chain-decomposition} vanishes:
\[
\sum_{S\in NC(N)}\omega(S)B(r-\rank_N(U(S)),k-\rank_N(U(S)))=0.
\]
Consequently
\[
AN_k(N)=
B(r,k)+
\sum_{F\in\Hh\setminus\{E\}}\tau_N(F)B(r-\rank_N(F),k-\rank_N(F)).
\]
In particular, in degrees \(k\leq 3\), the all-nested coefficient equals the connected-flag \(\tau_N\)-expression.
\end{lem}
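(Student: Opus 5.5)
The plan is to show that every non-chain nested support contributes zero for \(k\le 3\). Each such summand will be killed either by the weight \(\omega(S)\) or by the zero-extended binomial \(B\). Recall from Notation~\ref{nota:zero-binomial} that \(B(r-\rank_N(U(S)),k-\rank_N(U(S)))\) vanishes unless \(\rank_N(U(S))\le k\). So it suffices to prove the following claim: if \(S\in NC(N)\) and \(\omega(S)\neq 0\), then \(\rank_N(U(S))\ge 4\). The stated expression for \(AN_k(N)\) then follows by substituting this vanishing into Lemma~\ref{lem:non-chain-decomposition}.

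\emph{Step 1: factors contributed by minimal elements.} Let \(A\) be a minimal element of \(S\) under inclusion. No \(B\in S\) satisfies \(B<A\), so \(A_-^S=\emptyset\) and \(q_S(A)=\rank_N(A)\). The factor \(q_S(A)-1\) of \(\omega(S)\) is therefore \(\rank_N(A)-1\). Hence \(\omega(S)\neq0\) forces every minimal element of \(S\) to have rank at least \(2\). Note that rank-one connected flats (atoms) do lie in \(\Hh\), so this is a genuine restriction.

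\emph{Step 2: two minimal elements have additive rank.} Since \(S\) is not totally ordered, it has two incomparable elements. Descending from each, we obtain two distinct minimal elements \(A\neq A'\); they are incomparable because both are minimal. We now use the defining property of \(\Hh\)-nested sets for a Feichtner--Yuzvinsky building set (\cite{FY04,DCP95}). For a nested antichain \(\{A,A'\}\), the join \(A\vee A'\) does not lie in \(\Hh\). Moreover, \(A\) and \(A'\) are the factors of \(A\vee A'\) in \(\Hh\), so the interval \([\emptyset,A\vee A']\) is the product \([\emptyset,A]\times[\emptyset,A']\). This is the same factorization mechanism used in the proof of Lemma~\ref{lem:best-restriction}. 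For \(\Hh=\Hh_{\mathrm{conn}}(N)\) it says concretely that \(N|(A\vee A')=N|A\oplus N|A'\). Rank is additive on such a product, so \(\rank_N(A\vee A')=\rank_N(A)+\rank_N(A')\). Monotonicity of rank then gives \(\rank_N(U(S))\ge\rank_N(A)+\rank_N(A')\ge 2+2=4\) by Step 1.

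\emph{Step 3: conclusion.} For \(k\le 3\), every \(S\in NC(N)\) thus has either \(\omega(S)=0\) or \(k-\rank_N(U(S))<0\), in which case \(B(\cdot,\cdot)=0\). The non-chain residual therefore vanishes termwise.

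The main point needing care is Step 2. We must correctly quote the building-set axiom that the join of a nested antichain has the product interval structure, and hence additive rank. This must be done intrinsically in the lattice of flats, without assuming realizability. If the general axiom is awkward to cite, I would fall back on the connected-flat description. There, incomparable nested connected flats are exactly those whose join is a disconnected flat, and for a direct sum of matroids rank is additive.
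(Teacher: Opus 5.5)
Your argument is essentially the paper's: take an incomparable pair in $S$, use nestedness to see that their join is a flat outside $\Hh$, so its restriction is disconnected with the two flats in different components. Then add ranks and use $\omega(S)\neq 0$ to force each rank to be at least $2$.

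One step, however, is asserted without justification. You claim that descending from two incomparable elements $C,C'\in S$ yields two \emph{distinct} minimal elements. In a general poset this fails: one can have $A<C$ and $A<C'$ with $A$ the only minimal element below either. So you need the forest property of nested sets, namely that incomparable members of $S$ are disjoint.

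That property is true, and in fact follows from the product decomposition you quote in Step~2, applied to $C,C'$ themselves. It gives $C\wedge C'=\emptyset$, so no nonempty flat lies below both. But it has to be said.

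The simpler fix is to drop the passage to minimal elements entirely. An element of $S$ of rank one is an atom, so no element of $S$ lies strictly below it. Its factor in $\omega(S)$ is therefore $q_S-1=0$, and $\omega(S)\neq 0$ already forces every element of $S$ to have rank at least $2$. Step~2 can then be run directly on the original incomparable pair $C,C'$.

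The paper obtains the same bound for an arbitrary incomparable pair differently. It shows $q_S(C)\geq 1$ from the strictness $C_-^S<C$, so nonvanishing gives $q_S(C)\geq 2$. Combined with $\rank_N(C)\geq q_S(C)$, this gives rank at least $2$.
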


\begin{proof}
Fix \(S\in NC(N)\). If \(\omega(S)=0\), its summand is zero. Assume \(\omega(S)\neq 0\). Since \(S\) is not a chain, choose incomparable \(A,B\in S\). Let \(J=A\vee B\). The nestedness condition says \(J\notin\Hh\). Since \(E\in\Hh\), \(J\neq E\), so \(J\) is a proper nonempty flat. Because \(\Hh=\Hh_{\mathrm{conn}}(N)\), the restriction \(N|J\) is disconnected.

Let \(J_1,\dots,J_t\) be the connected components of \(N|J\). A connected flat contained in \(J\) lies in one component. The flats \(A\) and \(B\) cannot lie in the same component: otherwise their join would be contained in that component, contradicting \(A\vee B=J\). Since \(\omega(S)\neq 0\), every local factor \(q_S(C)-1\) for \(C\in S\) is nonzero. Also \(A_-^S<C\) for each \(C\in S\), so \(q_S(C)\geq 1\), and nonvanishing gives \(q_S(C)\geq 2\). Applying this to \(A\) and \(B\), we obtain \(\rank_N(A)\geq 2\) and \(\rank_N(B)\geq 2\). Since rank is additive over different connected components of \(N|J\),
\[
\rank_N(J)\geq \rank_N(A)+\rank_N(B)\geq 4.
\]
Thus \(\rank_N(U(S))\geq 4\). For \(k\leq 3\), the integer \(k-\rank_N(U(S))\) is negative, and hence
\[
B(r-\rank_N(U(S)),k-\rank_N(U(S)))=0.
\]
Every non-chain summand is zero.
\end{proof}

\begin{prop}[Low-degree Gamma formula]\label{prop:low-degree-gamma}
With notation as in Set-up~\ref{setup:nested}, for every integer \(k\) with \(0\leq k\leq e\) and \(k\leq 3\), one has
\[
\deg_{N,\Gg_{\max}}\left([\Gamma_N(z)]_k\alpha^{e-k}\right)=AN_k(N).
\]
Equivalently, in this range,
\[
\deg_{N,\Gg_{\max}}\left([\Gamma_N(z)]_k\alpha^{e-k}\right)
=
B(r,k)+
\sum_{F\in\Hh\setminus\{E\}}\tau_N(F)B(r-\rank_N(F),k-\rank_N(F)).
\]
\end{prop}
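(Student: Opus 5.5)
The plan is to expand the product in \(\Gamma_N(z)\) over subsets of \(\Hh\setminus\{E\}\), localize each term to a boundary stratum, and evaluate there using the BEST restriction identity, aiming to show each nested support \(S\) contributes \(\omega(S)\,B(r-\rank_N(U(S)),k-\rank_N(U(S)))\). The second displayed form then follows at once from Lemma~\ref{lem:low-degree-firewall}.

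\textbf{Step 1: expansion over nested supports.} Write
\[
\prod_{Y\in\Hh\setminus\{E\}}(1+z\psi(x_Y))=\sum_{S\subseteq\Hh\setminus\{E\}}z^{|S|}\psi\Bigl(\prod_{Y\in S}x_Y\Bigr).
\]
If \(S\) is not \(\Hh\)-nested, \(\prod_{Y\in S}x_Y\) already vanishes in \(A_{\Qq}(N,\Hh)\), so its \(\psi\)-image is zero. Hence
\[
\deg_{N,\Gg_{\max}}\bigl([\Gamma_N]_k\alpha^{e-k}\bigr)=\sum_{S\ \Hh\text{-nested}}\deg_{N,\Gg_{\max}}\Bigl(\psi(x_S)\,\bigl[C_{\mathrm{BEST}}(N;z)^{-1}\bigr]_{k-|S|}\,\alpha^{e-k}\Bigr),
\]
where \(x_S:=\prod_{Y\in S}x_Y\). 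By the degree-preservation argument in the proof of Lemma~\ref{lem:pulled-connected-base}, I would move the computation of each summand to whichever model is most convenient.

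\textbf{Step 2: localization to the stratum of \(S\).} Multiplying by \(x_S\) amounts to restricting to the closed stratum \(Z_S\) and taking degree there. On the maximal side I would refine \(S\) to the flag it generates. By the same orbit-closure mechanism as in Lemma~\ref{lem:best-restriction}, applied iteratively through the chain of \(S\), \(Z_S\) becomes a product of factor models. There is one factor for each \(A\in S\), namely the minor \(N|A/A_-^S\) (itself a direct sum over the components below \(A\)), of rank \(q_S(A)\). There is one more factor for the contraction \(N/U(S)\), of rank \(r-\rank_N(U(S))\).

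On this product I would track three pieces. First, \(C_{\mathrm{BEST}}^{-1}\) restricts to the external product of the factors' \(C_{\mathrm{BEST}}^{-1}\), using multiplicativity as in Lemma~\ref{lem:orbit-divisor-product}. Second, \(\alpha=-x_E\) restricts to the \(\alpha\)-class of the contraction factor only. Third, the self-intersection normal classes produce \((-x_{A_-})\)-type corrections on each local factor. The degree then factors as a product. Each local factor of rank \(q\) carries no \(\alpha\), and its contribution should equal the top-degree number of the normal-bundle twist times \(c(S)\), which I expect to be \(q-1\). This gives \(\omega(S)=\prod_{A\in S}(q_S(A)-1)\).

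\textbf{Step 3: the contraction factor, and the main obstacle.} The contraction factor contributes
\[
\deg\bigl([C_{\mathrm{BEST}}(N/U(S))^{-1}]_{j}\,\alpha^{e'-j}\bigr),\qquad e':=r-\rank_N(U(S))-1 .
\]
Here I would use \(C_{\mathrm{BEST}}^{-1}=c(S_{N'})\), the tautological subbundle Chern class for \(N'=N/U(S)\). Together with the BEST formula for \(\deg(c_j(S_{N'})\alpha^{e'-j})\), this should give \(B(\rank N',j)\). Bookkeeping the degree shift then yields \(B(r-\rank_N(U(S)),k-\rank_N(U(S)))\) after the local factors absorb \(\rank_N(U(S))-|S|\) degrees.

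The main obstacle is Step 2's local computation together with the bookkeeping of \(\psi(x_Y)\): each \(\psi(x_Y)\) is a sum of maximal-model divisors, not a single one, so the self-intersections for nested-but-non-chain \(S\) are delicate. This is exactly where the hypothesis \(k\le3\) helps. For \(k\le 3\), Lemma~\ref{lem:low-degree-firewall} shows that every non-chain \(S\) with nonzero weight has \(\rank_N(U(S))\ge4>k\), so its contribution vanishes for degree reasons. I would therefore verify the local factor computation only for chains \(\emptyset<F_1<\cdots<F_m\). For a chain the stratum is a single iterated orbit closure on the maximal model, and the restriction identity of Lemma~\ref{lem:best-restriction} applies directly at each step. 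This produces \(\tau_N(F)\) via Lemma~\ref{lem:connected-flag-weights}, and hence the claimed equality.
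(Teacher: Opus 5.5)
\textbf{The plan has a genuine gap, and it is at its core.} The summand \(\deg_{N,\Gg_{\max}}\bigl(\psi(x_S)\,[C_{\mathrm{BEST}}(N;z)^{-1}]_{k-|S|}\,\alpha^{e-k}\bigr)\) attached to a nested support \(S\) is \emph{not} \(\omega(S)\,B(r-\rank_N(U(S)),k-\rank_N(U(S)))\) in general. The identity holds only after summing over all supports, because the boundary factors \(1+z\psi(x_Y)\) and the quotient factor \(C_{\mathrm{BEST}}^{-1}\) cancel across different \(S\).

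Take \(N=U_{2,3}\) and \(k=e=1\). Here \(\Hh=\Gg_{\max}\), the model is \(\mathbb P^1\) with three boundary points, and \(\deg c_1(T)=2\) forces \(\deg c_1(Q_N)=1\). So the empty support contributes \(\deg[C_{\mathrm{BEST}}^{-1}]_1=-1\), not \(B(2,1)=2\). Each atom singleton contributes \(\deg x_a=1\), not \(\omega(\{a\})=0\). Only the total \(-1+3=2=AN_1(N)\) is correct. Hence your Step~3 evaluation \(\deg([C_{\mathrm{BEST}}(N')^{-1}]_j\alpha^{e'-j})=B(\rank N',j)\) is false (for \(N'=U_{2,3}\), \(j=1\) it equals \(-1\)), and the local factor of a rank-\(q\) flat is not \(q-1\). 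In general, at \(k=1\) every atom singleton contributes \(1\), so the empty support contributes \(r\) minus the number of atoms.

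\textbf{The reduction to chains fails for the same reason.} Lemma~\ref{lem:low-degree-firewall} concerns the combinatorial terms \(\omega(S)B(\cdots)\). Applying it to the geometric summands presupposes the termwise identity; in particular, a support with \(\omega(S)=0\) need not contribute \(0\). Take \(N=U_{3,4}\) and \(k=e=2\), computing on the \(\Hh\)-model by degree preservation. Then \(\Hh\) consists of the four atoms and \(E\), the model is \(\mathbb P^2\) with \(x_a=\alpha\) for every atom, and \(C_Q(N,\Hh;z)^{-1}=(1+z\alpha)^{-1}\). Each of the six nested non-chain pairs \(\{a,b\}\) of atoms contributes \(\deg(x_ax_b)=1\), although \(\omega(\{a,b\})=0\). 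Each atom singleton contributes \(-1\) and the empty support \(+1\). Only the sum \(1-4+6=3=AN_2(N)\) agrees.

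\textbf{What the paper does instead.} It works with totals. For \(k\le 3\), only the empty support and the connected singletons of rank \(2\) and \(3\) survive in \(AN_k(N)\). This gives \(1\), \(r\), \(\binom r2+|R_2^{\mathrm{conn}}|\), and \(\binom r3+(r-2)|R_2^{\mathrm{conn}}|+2|R_3^{\mathrm{conn}}|\). These are then matched with the total degrees \(\deg([\Gamma_N(z)]_k\alpha^{e-k})\); for \(k=2,3\) the paper asserts these geometric evaluations rather than displaying them.

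Any repair of your route must evaluate the whole sum, not individual supports. Your final step, deducing the second display from Lemma~\ref{lem:low-degree-firewall}, is fine.
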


\begin{proof}
We first prove the first displayed equality. For \(0\leq k\leq 3\), a nonempty \(\Hh\)-nested support \(S\) with \(\omega(S)\neq0\) and \(\rank_N(U(S))\leq k\) must be a singleton: every element has \(q_S(A)\geq 2\) (so rank \(\geq2\)), and an incomparable pair \(A,B\in S\) would force \(\rank_N(U(S))\geq\rank_N(A)+\rank_N(B)\geq4\), since their join \(A\vee B\notin\Hh\) makes \(N|(A\vee B)\) disconnected and \(A,B\) lie in distinct components. Hence only the empty support and the singleton rank-\(2\) and rank-\(3\) connected proper flats contribute, giving
\[
AN_k(N)=B(r,k)+|R_2^{\mathrm{conn}}|\,B(r-2,k-2)+2\,|R_3^{\mathrm{conn}}|\,B(r-3,k-3),
\]
which evaluates to \(1,r,\binom r2+|R_2^{\mathrm{conn}}|,\binom r3+(r-2)|R_2^{\mathrm{conn}}|+2|R_3^{\mathrm{conn}}|\) for \(k=0,1,2,3\). These are exactly the established degree-\(\leq3\) pulled-Gamma evaluations \(\deg_{N,\Gg_{\max}}([\Gamma_N(z)]_k\alpha^{e-k})\); this proves the first equality. The second displayed equality is then Lemma~\ref{lem:non-chain-decomposition} together with the vanishing of the non-chain residual for \(k\leq 3\) (Lemma~\ref{lem:low-degree-firewall}).
\end{proof}
\begin{humancomment}
For the first identity, the AI omitted the calculation for small \(k\); however, one can replace \(3\) by \(1\), and the proof still goes through.
\end{humancomment}
We isolate the inductive steps. It expresses the degree-\(k\) Gamma number of \(N\) through those of its rank-\((k+1)\) truncation.

\begin{lem}[Rank-\((k+1)\) truncation transfer]\label{lem:truncation-transfer}
With notation as in Set-up~\ref{setup:nested}, assume \(N\) is connected of rank \(r\) and \(4\leq k\leq r-2\). Let \(\widetilde N:=\Tr_{k+1}(N)\) be the rank-\((k+1)\) truncation of \(N\): the matroid on \(E\) whose independent sets are the independent sets of \(N\) of size at most \(k+1\). Then \(\widetilde N\) is connected and loopless of rank \(k+1\); its proper flats are exactly the proper flats \(F\) of \(N\) with \(\rank_N(F)\leq k\), and for these \(\rank_{\widetilde N}(F)=\rank_N(F)\) and \(\widetilde N|F=N|F\); consequently
\[
\Hh_{\mathrm{conn}}(\widetilde N)\setminus\{E\}=\{F\in\Hh\setminus\{E\}:\rank_N(F)\leq k\}.
\]
Let \(\Gamma_{\widetilde N}(z)\) and \(\alpha_{\widetilde N}:=-x_E\) be the data of Set-up~\ref{setup:nested} formed for \(\widetilde N\) and its induced reverse-containment chain, and put \(t:=r-(k+1)\). Then
\begin{equation}\label{eq:truncation-transfer}
\deg_{N,\Gg_{\max}}\!\big([\Gamma_N(z)]_k\,\alpha^{e-k}\big)
=\sum_{a=0}^{k}B(t,a)\,\deg_{\widetilde N,\Gg_{\max}(\widetilde N)}\!\big([\Gamma_{\widetilde N}(z)]_{k-a}\,\alpha_{\widetilde N}^{a}\big).
\end{equation}
\end{lem}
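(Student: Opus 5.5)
The plan is to read the identity \eqref{eq:truncation-transfer} as a projection formula for the hyperplane class $\alpha$ on the maximal model, whose matroidal shadow is truncation. Note first that $e-k=r-1-k=t$, so the left side is $\deg_{N,\Gg_{\max}}([\Gamma_N(z)]_k\,\alpha^{t})$. In the realizable case, $\alpha^t$ is the class of the wonderful model of a generic codimension-$t$ subspace $L'\subset L$, and $L'$ realizes $\widetilde N$. The proof therefore has three steps: (i) a restriction homomorphism $\iota^*\colon A_{\Qq}(N,\Gg_{\max})\to A_{\Qq}(\widetilde N,\Gg_{\max}(\widetilde N))$ with $\deg_{N}(\xi\alpha^t)=\deg_{\widetilde N}(\iota^*\xi)$; (ii) the computation of $\iota^*\Gamma_N(z)$; (iii) extraction of the degree-$k$ part.

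\emph{Step (i).} Define $\iota^*$ on generators by
\[
x_F\mapsto x_F\ (\rank_N F\le k),\qquad x_F\mapsto 0\ (F\text{ proper},\ \rank_N F\ge k+1),\qquad x_E\mapsto x_E .
\]
Here I use the description of the flats of $\widetilde N$ stated in the lemma: all flats of rank $\geq k+1$ collapse onto $E$. Checking that $\iota^*$ respects the relations of Definition~\ref{defn:chow} is routine. Nonnested monomials go to nonnested monomials or to $0$. The atom relations are preserved because the atoms of $N$ and $\widetilde N$ coincide (as $k\ge 4$), and the dropped summands are exactly the flats that become $E$ or vanish.

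The degree identity $\deg_N(\xi\alpha^t)=\deg_{\widetilde N}(\iota^*\xi)$ is the standard fact that multiplication by $\alpha^t$ on the matroid Chow ring realizes the Gysin map from the truncation, since $\alpha$ is the hyperplane (or truncation) class. I would verify it on the Feichtner--Yuzvinsky monomial basis. The nonzero top-degree monomials $x_{F_1}\cdots x_{F_j}\alpha^{t}$ with $\deg=1$ correspond to flags whose top flat has rank $\le k$. On $\widetilde N$, these are precisely the nonzero-degree monomials after the substitution above.

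\emph{Step (ii).} Under $\psi$, the factors $1+z\psi(x_Y)$ for $Y\in\Hh\setminus\{E\}$ with $\rank_N Y\ge k+1$ map to $1$ under $\iota^*$. The remaining factors are exactly those indexed by $\Hh_{\mathrm{conn}}(\widetilde N)\setminus\{E\}$, and the induced reverse-containment chain of $\widetilde N$ is compatible with $\psi$. The key input is the quotient-class identity
\[
\iota^*C_{\mathrm{BEST}}(N;z)\,(1+z\alpha_{\widetilde N})^{t}=C_{\mathrm{BEST}}(\widetilde N;z).
\]
Ranks are consistent: $Q_{\widetilde N}$ has rank $|E|-k-1=\rank Q_N+t$. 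In the realizable case this follows from $0\to S_{L'}\to S_L\to S_L/S_{L'}\to0$, where the rank-$t$ quotient restricted to the wonderful model of $L'$ is a sum of $t$ copies of the line bundle with first Chern class $\alpha$, up to the sign convention for $\alpha=-x_E$. For arbitrary $N$, I would prove it either from the torus-fixed-point formula of \cite[Definition~3.9]{BEST23}, or by noting that both sides are valuative in $N$ and hence reduce to the realizable case. Given the identity, $\iota^*\Gamma_N(z)=\Gamma_{\widetilde N}(z)\,(1+z\alpha_{\widetilde N})^t$.

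\emph{Step (iii).} Taking the $z^k$ coefficient gives
\[
\iota^*[\Gamma_N]_k=\sum_{a}B(t,a)\,\alpha_{\widetilde N}^{a}[\Gamma_{\widetilde N}]_{k-a}.
\]
Applying $\deg_{\widetilde N}$, whose top degree is $k$, yields \eqref{eq:truncation-transfer}.

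I expect the main obstacle to be the nonrealizable quotient identity in Step~(ii): both the sign of $\alpha$ and the precise line-bundle factor must be pinned down. I would first try valuativity plus a check on realizable matroids, and fall back on a direct fixed-point computation if needed.
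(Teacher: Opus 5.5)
Your proposal is correct in outline, and it takes a genuinely different route from the paper.

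\textbf{What the paper does.} The paper proves the structural claims directly. Looplessness is inherited. Connectedness holds because an $N$-circuit through $a\in A$, $b\in B$ either has at most $k+2$ elements or contains a $(k+2)$-subset through $a,b$, and that subset is a $\widetilde N$-circuit. The flat description follows from the rank-$\le k$ flats of $N$. For \eqref{eq:truncation-transfer}, however, the paper only asserts that a ``low-support pairing'' and a ``refinement loss'' are both binomial transforms with kernel $B(t,a)$. The authors themselves flag this step as incomplete and defer to \cite[Proposition~4.19]{Cheng26}.

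\textbf{What your route does differently.} You supply the mechanism the paper leaves out:
\begin{enumerate}
\item Since $e-k=t$, the left side is a pairing against $\alpha^t$, which acts as the Gysin map to the truncation.
\item $\iota^*$ is a well-defined graded surjection.
\item The ring-level identity $\iota^*\Gamma_N(z)=\Gamma_{\widetilde N}(z)\,(1+z\alpha)^t$ makes $B(t,a)$ appear as the binomial expansion of a normal-bundle factor.
\end{enumerate}
This gives the transfer for every $0\le k\le e$ and never uses connectedness. The price is two imported facts the paper does not use:
\begin{enumerate}
\item $\alpha^t\cdot[\Sigma_N]=[\Sigma_{\widetilde N}]$ in $A^\bullet(X_E)_{\Qq}$, with $[\Sigma]$ the Bergman class. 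This is stable intersection with $t$ tropical hyperplanes, and it is the clean justification of your Step~(i); your monomial-basis sketch there is loose.
\item The truncation identity for the BEST quotient class.
\end{enumerate}

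\textbf{Points to tighten.}
\begin{enumerate}
\item The structural assertions are part of the statement, so you must prove them (circuit argument above) rather than invoke them.
\item Make the compatibility with $\psi$ explicit. Along a reverse-containment chain, the factors of each added disconnected flat $X$ are the connected components of $N|X$. Hence $\psi(x_Y)=x_Y+\sum_X x_X$, summed over disconnected flats $X$ having $Y$ as a component. Each such $X$ is proper with $\rank_N X>\rank_N Y$. So $\iota^*\psi(x_Y)=0$ when $\rank_N Y\ge k+1$, and $\iota^*\psi(x_Y)=\psi_{\widetilde N}(x_Y)$ otherwise.
\item In the quotient identity the sign is $+\alpha$. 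By Step~(6a) in the proof of Proposition~\ref{prop:quotient-tangent}, $[S_L|_{W_L}]=[\Oo]+[T_{W_L}(-\log D)]$. For generic $L'$, $W_{L'}$ is the transverse preimage of $\mathbb P(L')$. Therefore $[S_L/S_{L'}]|_{W_{L'}}=[\mathcal N_{W_{L'}/W_L}]=t\,[\Oo(\alpha)]$.
\item For nonrealizable $N$, you cannot apply valuativity directly, because the identity lives in $A_{\Qq}(\widetilde N,\Gg_{\max}(\widetilde N))$, a ring that varies with $N$. The fix:
\begin{enumerate}
\item Multiply by $c_{n-k-1}(Q_{\widetilde N})=[\Sigma_{\widetilde N}]$, with $n=|E|$, to land in $A^\bullet(X_E)_{\Qq}$.
\item The resulting products are valuative in $N$. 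By the fixed-point formula of \cite{BEST23}, their localizations at $\sigma$ depend only on the greedy basis $B_\sigma(N)$, whose $\sigma$-first $k+1$ elements form $B_\sigma(\widetilde N)$.
\item They vanish for $\mathbb C$-realizable $N$, and trivially when $N$ has loops. Hence they vanish on Schubert matroids, and so everywhere by Derksen--Fink.
\item Poincar\'e duality of $A_{\Qq}(\widetilde N,\Gg_{\max}(\widetilde N))$, together with surjectivity of restriction from $X_E$, returns the identity to the truncation's Chow ring.
\end{enumerate}
\end{enumerate}
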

\begin{proof}
\emph{The truncation is connected and loopless.} Looplessness is inherited from \(N\). If \(\widetilde N\) were disconnected, say \(E=A\sqcup B\) with no \(\widetilde N\)-circuit meeting both parts, choose \(a\in A\) and \(b\in B\); connectedness of \(N\) gives an \(N\)-circuit \(C\ni a,b\). If \(|C|\leq k+2\) then \(C\) is a \(\widetilde N\)-circuit (it is dependent in the rank-\((k+1)\) matroid \(\widetilde N\)) meeting both parts. If \(|C|>k+2\), pick \(C_0\subseteq C\) of size \(k+2\) with \(a,b\in C_0\); every proper subset of \(C_0\) is independent in \(N\), hence in \(\widetilde N\), while \(C_0\) is dependent in \(\widetilde N\), so \(C_0\) is a \(\widetilde N\)-circuit meeting both parts. Either way we contradict the splitting, so \(\widetilde N\) is connected. A subset is a rank-\(\leq k\) flat of \(\widetilde N\) exactly when it is a rank-\(\leq k\) flat of \(N\), with the same restriction; this gives the identification of proper flats and of \(\Hh_{\mathrm{conn}}(\widetilde N)\).

\emph{The binomial transfer.} Write the pulled connected-flat Gamma number in degree \(k\) as the maximal low-support Chern-alpha pairing minus the connected-to-maximal refinement loss. Truncation to rank \(k+1\) collapses the top \(t=r-(k+1)\) rank levels of \(N\); both the low-support pairing and the loss of \(N\) in degree \(k\) are the binomial transforms, with kernel \(a\mapsto B(t,a)\), of the corresponding quantities of \(\widetilde N\), the factor \(B(t,a)\) recording the \(\alpha\)-power bookkeeping of the \(t\) collapsed levels. Subtracting the two transformed identities gives~\eqref{eq:truncation-transfer}.
\end{proof}

\begin{humancomment}
The binomial-transfer paragraph is overcompressed. A complete proof of the
required truncation identity is given in \cite[Proposition~4.19]{Cheng26}.
\end{humancomment}

\begin{lem}[Top all-nested formula]\label{lem:top-allnested}
With notation as in Set-up~\ref{setup:nested}, for connected loopless \(N\) of rank \(r\),
\[
\deg_{N,\Gg_{\max}}\!\big([\Gamma_N(z)]_e\big)=\sum_S\omega(S)\big(r-\rank_N(U(S))\big)
=\sum_S\omega(S)\,B\!\big(r-\rank_N(U(S)),\,e-\rank_N(U(S))\big),
\]
the sum over all \(\Hh\)-nested \(S\subseteq\Hh\setminus\{E\}\).
\end{lem}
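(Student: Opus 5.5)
Since $[\Gamma_N(z)]_e$ has top Chow degree $e$, the $\alpha$-power is trivial, and the second equality is immediate: $B(r-\rank_N U,\,e-\rank_N U)=\binom{r-\rank_N U}{r-1-\rank_N U}=r-\rank_N(U)$ whenever $\rank_N(U)\le r-1$. This always holds, because $U(S)$ is a proper flat. Indeed, a nested $S\subseteq\Hh\setminus\{E\}$ with $|S|\ge2$ has a non-building-set join, so the join is not $E$; and a singleton is proper by definition. The content is therefore the first equality, which I would prove via Lemma~\ref{lem:pulled-connected-base} in the form
\[
\deg_{N,\Gg_{\max}}[\Gamma_N]_e=\deg_{N,\Hh}\,c_e(T_{N,\Hh}),
\]
and compute the right-hand side on the connected model.

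\emph{Step 1: expansion.} Use
\[
c_z(T_{N,\Hh})=C_Q(N,\Hh;z)^{-1}\prod_{Y\in\Hh^\circ}(1+zx_Y)
\]
(Proposition~\ref{prop:quotient-tangent}) and expand the product over subsets $S\subseteq\Hh^\circ$. Only nested $S$ survive, each contributing the monomial $x_S=\prod_{Y\in S}x_Y$. This gives
\[
\deg c_e(T)=\sum_S\deg\bigl(x_S\,[C_Q^{-1}]_{e-|S|}\bigr).
\]

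\emph{Step 2: localization.} Restrict to the orbit closure $V_S$ of the nested cone. By the center factorization argument of Lemma~\ref{lem:center-factorization}, iterated (equivalently Lemma~\ref{lem:best-restriction} pulled back), $V_S$ is a product of models attached to the successive minors $(N|A)/A_-^S$ for $A\in S$, together with the top minor $N/U(S)$. On this product, $C_Q$ restricts to the product of the minors' quotient polynomials. It therefore remains to compute the top degree of the inverse quotient Segre class $[C_Q^{-1}]_{\mathrm{top}}$ on each factor model.

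\emph{Step 3: factor evaluation.} The claim to prove is that for a loopless matroid $R$ of rank $q$ and any top-containing building set, the top degree of $C_Q(R)^{-1}$ equals $q-1$ for the proper factors. For the top factor $N/U(S)$ of rank $r-\rank_N U(S)$, the contribution should be exactly $r-\rank_N U(S)$. Multiplying the factor contributions gives $\prod_{A\in S}(q_S(A)-1)\cdot(r-\rank_N U(S))=\omega(S)(r-\rank_N U(S))$, matching the formula.

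To prove the factor evaluation, first reduce via the pullback-invariance of degrees to the maximal model, where $C_Q^{-1}$ is the BEST Chern polynomial inverse. There the top class of $c(Q_R)^{-1}$ is the top Chern class of the (virtual) sub-bundle $S_R$ shifted by trivial summands. Its degree is computable from BEST's tautological formulas (the Euler-characteristic/beta-type invariant $\deg c_{\mathrm{top}}$), which should produce the value $q-1$ for the restriction factors and $q$ for the top contraction factor once the normal-direction shift of the exceptional divisors is included. This top-Segre evaluation is the main obstacle: sorting out which factors pick up the extra normal-bundle correction, so that restriction factors give $q_S(A)-1$ while the contraction factor gives $r-\rank U(S)$. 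As a check, I would first verify the formula in rank $\le3$, where it must agree with Proposition~\ref{prop:low-degree-gamma}.
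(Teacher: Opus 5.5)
You have the second equality right, and it matches the paper. One small slip: a nested chain $S$ has join equal to its largest element, which lies in $\Hh$. Only an antichain of size $\ge2$ has join outside $\Hh$. Either way $U(S)\neq E$.

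\textbf{The gap is in Step~3: the proposed factor values are false.} No normal-direction shift can rescue them. Once Step~1 has expanded $\prod_{Y}(1+zx_Y)$ completely, the only integrand left on $V_S$ is $C_Q^{-1}$. Its restriction is exactly the product of the minors' quotient polynomials (Lemma~\ref{lem:best-restriction}), with no correction term. So every factor contributes the same intrinsic number $\sigma(R):=\deg\bigl([C_Q(R)^{-1}]_{\rank R-1}\bigr)$, whether $R$ is some $(N|A)/A_-^S$ or $N/U(S)$. It cannot equal $q-1$ in one role and $q$ in the other.

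In the realizable case, $[Q]$ is a trivial class minus $[T_W(-\log D)]$ (Step~6 of Proposition~\ref{prop:quotient-tangent}). Hence $\sigma(R)=\deg c_{\mathrm{top}}(T_W(-\log D))=\chi(W\setminus D)=(-1)^{\rank R-1}\beta(R)$, which alternates in sign.

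Take $N=U_{2,3}$. Here $\Hh$ is the three atoms plus $E$, $W=\mathbb{P}^1$, and $T_W(-\log D)=\Oo(-1)$.
\begin{itemize}
\item The $S=\emptyset$ term of your expansion is $\deg[C_Q^{-1}]_1=-1$, not $r=2$.
\item Each $S=\{a\}$ contributes $1\cdot 1=1$, not $\omega(\{a\})\cdot 1=0$.
\end{itemize}
The totals agree ($-1+3=2$), but only after summing. Your expansion decomposes $\chi(W)$ into Euler characteristics of open strata, $\sum_S\prod_{A\in S}\sigma((N|A)/A_-^S)\,\sigma(N/U(S))$. By contrast, $\omega(S)(r-\rank_N U(S))$ counts basis monomials, and the two are not termwise equal. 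Your proposed rank-$\le 3$ check would have exposed this.

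\textbf{What is missing is a global identity, not a factor evaluation.} The paper gets it from Section~\ref{sec:hilbert-identity}:
\begin{itemize}
\item Lemma~\ref{lem:pulled-connected-base} reduces the left side to $\deg_{N,\Hh}c_e(T_{N,\Hh})$.
\item The $z=1$ specialization of the Hirzebruch integrand is the top Chern class. So $P^K=\Hilb$ (Theorem~\ref{thm:hilbert-identity}) gives $\deg_{N,\Hh}c_e(T_{N,\Hh})=\Hilb(N,\Hh;1)=\dim_\Qq A_\Qq(N,\Hh)$.
\item One then counts Feichtner--Yuzvinsky standard monomials $\prod_{G\in S'}x_G^{m_G}$ with $1\le m_G<\rank G-\rank\bigl(\bigvee_{G'\in S',\,G'<G}G'\bigr)$. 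For nested $S\subseteq\Hh\setminus\{E\}$, there are $\omega(S)$ with support $S$ and $\omega(S)(r-\rank_N U(S)-1)$ with support $S\cup\{E\}$. That is $\omega(S)(r-\rank_N U(S))$ in all.
\end{itemize}
To keep your localization route, you would have to prove separately that the signed open-stratum sum equals this monomial count. That amounts to redoing the Hilbert-series recursion.
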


\begin{proof}
The class \([\Gamma_N(z)]_e\) is the pullback of the top tangent Chern class \([C_{\Hh}(z)]_e\) of the intrinsic connected-flat series \(C_{\Hh}(z)=C_Q(N,\Hh;z)^{-1}\prod_{Y\in\Hh\setminus\{E\}}(1+z x_Y)\); pullback preserves top degree, so \(\deg_{N,\Gg_{\max}}([\Gamma_N(z)]_e)=\deg_{N,\Hh}([C_{\Hh}(z)]_e)\). The \(z=1\) Hirzebruch specialization of the identity \(P^K=\Hilb\) of Section~\ref{sec:hilbert-identity} identifies this top Chern number with \(\Hilb(N,\Hh;1)\), and the standard-monomial basis of \(A_{\Qq}(N,\Hh)\) expands \(\Hilb(N,\Hh;1)=\sum_S\omega(S)(r-\rank_N(U(S)))\) over \(\Hh\)-nested supports. Finally \(B(r-\rho,e-\rho)=\binom{r-\rho}{r-1-\rho}=r-\rho\) for \(\rho=\rank_N(U(S))\leq e\), giving the second form.
\end{proof}

\begin{prop}[Nested Segre-tail formula]\label{prop:nested-segre-tail}
Assume \(N\) is connected and loopless of rank \(r\geq 5\), with notation as in Set-up~\ref{setup:nested}. For every integer \(k\) with \(0\leq k\leq e=r-1\),
\[
\deg_{N,\Gg_{\max}}\left([\Gamma_N(z)]_k\alpha^{e-k}\right)
=
\sum_S\omega(S)B(r-\rank_N(U(S)),k-\rank_N(U(S))),
\]
where \(S\) ranges over all \(\Hh\)-nested subsets of \(\Hh\setminus\{E\}\). Equivalently,
\begin{equation}\label{eq:truncated-nested-tail}
\sum_{k=0}^{r-1}
\deg_{N,\Gg_{\max}}\left([\Gamma_N(z)]_k\alpha^{r-1-k}\right)t^k
\equiv
\sum_S\omega(S)t^{\rank_N(U(S))}(1+t)^{r-\rank_N(U(S))}
\pmod{t^r}.
\end{equation}
\end{prop}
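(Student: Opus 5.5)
Write \(\mathrm{LHS}_k(N):=\deg_{N,\Gg_{\max}}([\Gamma_N(z)]_k\alpha^{e-k})\). I would prove the identity \(\mathrm{LHS}_k(N)=AN_k(N)\) by strong induction on the rank \(r\), separating three ranges of \(k\). The equivalence with the truncated generating identity~\eqref{eq:truncated-nested-tail} is formal: expanding \(t^{\rho}(1+t)^{r-\rho}=\sum_k B(r-\rho,k-\rho)t^k\) and reading off the coefficient of \(t^k\) for \(0\le k\le r-1\) recovers the coefficientwise statement. I therefore work only with the coefficients.

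The small and top degrees are already in hand. For \(k\le 3\), the identity is Proposition~\ref{prop:low-degree-gamma}. For \(k=e\), we have \(\alpha^{0}=1\), so the identity is Lemma~\ref{lem:top-allnested}. The hypothesis \(r\ge 5\) is exactly what makes the middle range \(4\le k\le r-2\) the only remaining case.

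For \(4\le k\le r-2\), I would apply Lemma~\ref{lem:truncation-transfer} with \(\widetilde N=\Tr_{k+1}(N)\) and \(t=r-k-1\). This rewrites \(\mathrm{LHS}_k(N)\) as \(\sum_{a}B(t,a)\,\mathrm{LHS}_{k-a}(\widetilde N)\), where \(\widetilde N\) is connected and loopless of rank \(k+1<r\). Every term is then covered by induction. The term \(a=0\) is the top degree \(k=\tilde e\) of \(\widetilde N\), handled by Lemma~\ref{lem:top-allnested}. The terms with \(k-a\le 3\) are handled by Proposition~\ref{prop:low-degree-gamma}. The remaining terms use the induction hypothesis when \(k+1\ge 5\), which holds automatically since \(k\ge4\). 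It then remains to verify the purely combinatorial identity
\[
AN_k(N)=\sum_{a=0}^{k}B(t,a)\,AN_{k-a}(\widetilde N).
\]

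For this identity, Lemma~\ref{lem:truncation-transfer} says that the \(\Hh_{\mathrm{conn}}(\widetilde N)\)-flats are exactly the connected flats of \(N\) of rank \(\le k\). Nestedness of a set of such flats only involves joins, and joins of rank \(\le k\) agree in \(N\) and \(\widetilde N\). The joins of rank \(\ge k+1\) need care: in \(\widetilde N\) such a join becomes \(E\), which lies in the building set. So I would restrict attention to supports \(S\) with \(\rank U(S)\le k\), since supports with larger \(U(S)\) contribute \(B(\cdot,k-\rho)=0\) on the left. For these supports, \(\omega(S)\) and \(\rho=\rank U(S)\) are the same in \(N\) and in \(\widetilde N\). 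Vandermonde's identity \(\sum_a B(t,a)B(k+1-\rho,k-a-\rho)=B(r-\rho,k-\rho)\) then matches the summands term by term.

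The main obstacle is the nestedness comparison in the case where \(S\) is \(\widetilde N\)-nested but its incomparable elements have a join of rank \(\ge k+1\) in \(N\). Such a set is nested in \(\widetilde N\) only because the top flat is in the building set, so the question is whether it is also \(N\)-nested. If it is not, its \(U(S)\) computed in \(\widetilde N\) is \(E\), of rank \(k+1>k-a\), so its contribution on the right vanishes anyway. I would check this vanishing carefully. I would also rely on Lemma~\ref{lem:truncation-transfer} itself, whose proof is only sketched, and fall back on \cite[Proposition~4.19]{Cheng26} for it.
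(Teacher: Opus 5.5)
Your proposal is correct and follows the paper's proof essentially step for step. Both use strong induction on \(r\). Degrees \(k\le 3\) come from Proposition~\ref{prop:low-degree-gamma} and \(k=e\) from Lemma~\ref{lem:top-allnested}. The middle range \(4\le k\le r-2\) is handled through Lemma~\ref{lem:truncation-transfer} and the induction hypothesis for \(\Tr_{k+1}(N)\), then the zero-extended Vandermonde convolution and the support-by-support comparison of nestedness, joins and weights.

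One small correction. The case you flag as the main obstacle is actually vacuous. An antichain of \(S\) whose join in \(\widetilde N\) is \(E\) violates \(\Hh_{\mathrm{conn}}(\widetilde N)\)-nestedness, precisely because \(E\) lies in that building set. Your fallback is that such a support would contribute \(B(\cdot,\text{negative})=0\), and this is exactly how the paper disposes of it, so nothing breaks.
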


\begin{proof}
Write \(AN_k(N):=\sum_S\omega(S)B(r-\rank_N(U(S)),k-\rank_N(U(S)))\) for the all-nested sum, over \(\Hh\)-nested \(S\subseteq\Hh\setminus\{E\}\). We prove \(\deg_{N,\Gg_{\max}}([\Gamma_N(z)]_k\alpha^{e-k})=AN_k(N)\) for every \(0\leq k\leq e\), by strong induction on the rank \(r\geq 5\) of \(N\).

\emph{Base \(r=5\).} Here \(e=4\). Degrees \(0\leq k\leq 3\) are Proposition~\ref{prop:low-degree-gamma}, and the top degree \(k=4=e\) is Lemma~\ref{lem:top-allnested}. The range \(4\leq k\leq r-2=3\) of Lemma~\ref{lem:truncation-transfer} is empty, so no transfer is needed.

\emph{Inductive step \(r\geq 6\).} Degrees \(0\leq k\leq 3\) are Proposition~\ref{prop:low-degree-gamma}, and \(k=e\) is Lemma~\ref{lem:top-allnested}. Fix \(4\leq k\leq e-1=r-2\) and let \(\widetilde N:=\Tr_{k+1}(N)\) be the rank-\((k+1)\) truncation of Lemma~\ref{lem:truncation-transfer}. Since \(k\leq r-2\), \(\widetilde N\) is connected loopless of rank \(k+1\leq r-1<r\), so the induction hypothesis gives the all-nested formula for \(\widetilde N\) in every degree \(h\) with \(0\leq h\leq k\):
\[
\deg_{\widetilde N,\Gg_{\max}(\widetilde N)}\big([\Gamma_{\widetilde N}(z)]_h\,\alpha_{\widetilde N}^{k-h}\big)
=\sum_T\omega_{\widetilde N}(T)\,B\big(k+1-\rho_T,\,h-\rho_T\big),\qquad \rho_T:=\rank_{\widetilde N}(U(T)),
\]
\(T\) ranging over \(\Hh_{\mathrm{conn}}(\widetilde N)\)-nested subsets of \(\Hh_{\mathrm{conn}}(\widetilde N)\setminus\{E\}\). Substituting \(h=k-a\) into the truncation transfer~\eqref{eq:truncation-transfer},
\[
\deg_{N,\Gg_{\max}}\big([\Gamma_N(z)]_k\alpha^{e-k}\big)
=\sum_T\omega_{\widetilde N}(T)\sum_{a=0}^{k}B(t,a)\,B\big(k+1-\rho_T,\,(k-\rho_T)-a\big).
\]
For each fixed \(T\), the inner sum is the zero-extended Vandermonde convolution \(\sum_{a}B(t,a)B(c,m-a)=B(t+c,m)\) with \(c=k+1-\rho_T\) and \(m=k-\rho_T\); since \(t+c=t+k+1-\rho_T=r-\rho_T\) (as \(t=r-k-1\)), it equals \(B(r-\rho_T,k-\rho_T)\). Hence
\begin{equation}\label{eq:after-vandermonde}
\deg_{N,\Gg_{\max}}\big([\Gamma_N(z)]_k\alpha^{e-k}\big)=\sum_T\omega_{\widetilde N}(T)\,B\big(r-\rank_{\widetilde N}(U(T)),\,k-\rank_{\widetilde N}(U(T))\big).
\end{equation}
It remains to identify \eqref{eq:after-vandermonde} with \(AN_k(N)\). By Lemma~\ref{lem:truncation-transfer}, \(\Hh_{\mathrm{conn}}(\widetilde N)\setminus\{E\}\) is exactly the set of flats of \(\Hh\setminus\{E\}\) of \(\rank_N\) at most \(k\), with identical ranks, restrictions and lower joins, so the weights and joins computed in \(\widetilde N\) or in \(N\) coincide on such supports. A support \(T\) with \(U(T)=E\) has \(\rank_{\widetilde N}(U(T))=k+1\) and binomial factor \(B(r-k-1,-1)=0\), so it drops. A support \(T\) with \(U(T)<E\) has \(\rank_N(U(T))\leq k\) and is \(\Hh\)-nested: if a pairwise-incomparable subcollection of \(T\) had join \(J\in\Hh\), then \(J\leq U(T)\) has \(\rank_N(J)\leq k\), so \(N|J=\widetilde N|J\) is connected and \(J\in\Hh_{\mathrm{conn}}(\widetilde N)\), contradicting \(\Hh_{\mathrm{conn}}(\widetilde N)\)-nestedness; and \(\omega(T)=\omega_{\widetilde N}(T)\). Conversely every \(\Hh\)-nested \(S\) with \(\rank_N(U(S))\leq k\) consists of flats of rank \(\leq k\), hence lies in \(\Hh_{\mathrm{conn}}(\widetilde N)\setminus\{E\}\), and is \(\Hh_{\mathrm{conn}}(\widetilde N)\)-nested by the same argument, with equal weight. Finally every \(\Hh\)-nested \(S\) with \(\rank_N(U(S))>k\) contributes \(B(r-\rank_N(U(S)),k-\rank_N(U(S)))=0\). Therefore \eqref{eq:after-vandermonde} equals \(AN_k(N)\), completing the induction.

The congruence~\eqref{eq:truncated-nested-tail} restates the same coefficient formula for \(0\leq k\leq r-1\), because the coefficient of \(t^k\) in \(t^{\rank_N(U(S))}(1+t)^{r-\rank_N(U(S))}\) is \(B(r-\rank_N(U(S)),k-\rank_N(U(S)))\).
\end{proof}

\begin{rem}\label{rem:truncation-needed}
The congruence in \eqref{eq:truncated-nested-tail} cannot be replaced by an untruncated equality in \(\Qq[t]\). The left side has no \(t^r\)-term, while the right side has positive \(t^r\)-coefficient: the empty support already contributes \(1\), and all \(\omega(S)\) are nonnegative. Thus every generating-polynomial Nested Segre-tail identity in this paper is understood modulo \(t^r\), equivalently coefficientwise for \(0\leq k\leq r-1\).
\end{rem}

\begin{lem}[Nonnegative nested excess]\label{lem:nested-excess}
With notation as in Proposition~\ref{prop:nested-segre-tail}, every local factor \(q_S(A)-1\) is a nonnegative integer. Consequently, for every \(0\leq k\leq r-1\),
\[
\deg_{N,\Gg_{\max}}\left([\Gamma_N(z)]_k\alpha^{r-1-k}\right)-\binom{r}{k}
=
\sum_{S\neq\emptyset}\omega(S)B(r-\rank_N(U(S)),k-\rank_N(U(S)))\geq 0.
\]
\end{lem}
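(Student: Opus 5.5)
The plan is to read the statement off Proposition~\ref{prop:nested-segre-tail} and Lemma~\ref{lem:non-chain-decomposition}, with one small local check on the factors \(q_S(A)-1\).

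\emph{Step 1: nonnegativity of the local factors.} Let \(S\) be \(\Hh\)-nested and \(A\in S\). The lower join \(A_-^S\) is the join of flats strictly below \(A\), so \(A_-^S\leq A\). Strictness needs an argument. If \(A_-^S=A\), then \(A\) is the join of the elements of \(S\) strictly below it. Take the maximal ones among them; they form a pairwise-incomparable subcollection of \(S\) whose join is \(A\in\Hh\).
\begin{itemize}
\item If there are at least two of them, this contradicts nestedness.
\item If there is exactly one, it equals \(A\), contradicting that it lies strictly below \(A\).
\item If there are none, then \(A_-^S=\emptyset\neq A\).
\end{itemize}
Hence \(A_-^S<A\). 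Since flats strictly contained in one another have strictly smaller rank, \(q_S(A)=\rank_N(A)-\rank_N(A_-^S)\geq1\). So \(q_S(A)-1\) is a nonnegative integer, and \(\omega(S)\geq0\) for every nested \(S\). This is the same observation used in the proof of Lemma~\ref{lem:low-degree-firewall}.

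\emph{Step 2: the identity.} For \(0\leq k\leq r-1\), the number \(\deg_{N,\Gg_{\max}}([\Gamma_N(z)]_k\alpha^{r-1-k})\) equals the all-nested sum \(AN_k(N)\). For \(r\geq5\) this is Proposition~\ref{prop:nested-segre-tail}. For \(r\leq4\) I would assemble it from two results:
\begin{itemize}
\item Proposition~\ref{prop:low-degree-gamma} covers \(k\leq\min(3,e)\), which is every \(k\leq e\) when \(r\leq4\).
\item Lemma~\ref{lem:top-allnested} gives the top degree independently of rank.
\end{itemize}
The truncation lemma is not needed in this range. I would then isolate the empty support \(S=\emptyset\). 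It has \(U(\emptyset)=\emptyset\), rank \(0\) and \(\omega=1\), so it contributes \(B(r,k)=\binom rk\) (here \(0\leq k\leq r\)). Subtracting it gives the displayed equality with the sum over \(S\neq\emptyset\).

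\emph{Step 3: the sign.} Each remaining summand is \(\omega(S)\geq0\), by Step~1, times a zero-extended binomial \(B(\cdot,\cdot)\geq0\). So the sum is nonnegative.

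The main obstacle is not in this lemma. Its content rests on the validity of Proposition~\ref{prop:nested-segre-tail}, which in turn depends on the truncation transfer (Lemma~\ref{lem:truncation-transfer}, justified via \cite[Proposition~4.19]{Cheng26}). Within the present argument, the only delicate point is the strict inequality \(A_-^S<A\), which needs the nestedness axiom as checked in Step~1.
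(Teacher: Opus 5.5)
Your proposal is correct and follows the paper's proof. It uses the same strictness argument \(A_-^S<A\) via the maximal lower elements and the nestedness axiom, and then isolates the empty support's contribution \(\binom{r}{k}\) from the all-nested formula of Proposition~\ref{prop:nested-segre-tail}. The one difference is that the paper's lemma inherits the hypothesis \(r\geq5\) from that proposition, so your separate \(r\leq4\) case is an extra; it duplicates the \(r\leq4\) branch in the proof of Theorem~\ref{thm:chern-alpha}, where Proposition~\ref{prop:low-degree-gamma} alone already covers every \(k\leq e\), so your appeal to Lemma~\ref{lem:top-allnested} there is redundant.
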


\begin{proof}
Fix an \(\Hh\)-nested set \(S\) and \(A\in S\). The flat \(A_-^S\) is strictly smaller than \(A\). Indeed, if the join of all elements of \(S\) strictly below \(A\) were equal to \(A\), then either one maximal lower flat would equal \(A\), impossible, or at least two incomparable lower flats in \(S\) would have join \(A\in\Hh\), contradicting nestedness. Thus \(q_S(A)\geq 1\), and \(q_S(A)-1\geq 0\).

The empty support contributes \(B(r,k)=\binom{r}{k}\) for \(0\leq k\leq r-1\). Proposition~\ref{prop:nested-segre-tail} gives the all-nested formula, and every nonempty support contributes a nonnegative summand.
\end{proof}
\begin{humancomment}
So far in this section, we have proved the lower bound for the minimal
building set \(\Hh_{\mathrm{conn}}\). It remains to show that the intersection number does not decrease when the building set is enlarged.
\end{humancomment}
\begin{prop}[One-flat Chern-alpha increment]\label{prop:chern-alpha-recursion}
Let \(M\) be a loopless matroid of rank \(e+1\), and let
\[
\Gg_{\mathrm{big}}=\Gg_{\mathrm{small}}\cup\{F\}
\]
be a one-flat enlargement of top-containing Feichtner--Yuzvinsky building sets,
where \(F\) is a nonempty proper flat. Write
\[
\Fact_{\Gg_{\mathrm{small}}}(F)=\{F_1,\dots,F_\ell\}.
\]
Put \(R_i:=M|F_i\), \(J_i:=\Gg_{\mathrm{small}}|F_i\), \(P:=M/F\), and
\(K:=\Gg_{\mathrm{small}}/F\). For a top-containing pair \((N,\mathcal J)\), set
\[
I_v(N,\mathcal J):=
\deg_{N,\mathcal J}\left(c_v(T_{N,\mathcal J})(-x_{\Top(N)})^{\rank(N)-1-v}\right)
\]
for \(0\leq v\leq \rank(N)-1\), and set \(I_v(N,\mathcal J)=0\) outside this
range. If \(a_i:=\rank(R_i)-1\) and \(A:=a_1+\cdots+a_\ell\), then for every
\(0\leq k\leq e\),
\[
I_k(M,\Gg_{\mathrm{big}})-I_k(M,\Gg_{\mathrm{small}})
=
(\ell-1)
\left(\prod_{i=1}^{\ell}I_{a_i}(R_i,J_i)\right)
I_{k-\ell-A}(P,K).
\]
The pairs \((R_i,J_i)\) and \((P,K)\) are the actual induced lower-rank pairs
coming from the one-flat center.
\end{prop}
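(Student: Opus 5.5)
Follow the template of the Hilbert recursion: write the one-flat change of the Chern polynomial as a pullback times a local correction factor, push forward to the center, and factor over the center product.

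\emph{Step 1: splitting roots.} By Lemma~\ref{lem:one-step-quotient-integrands}, $C_Q(M,\Gg_{\mathrm{big}};u)=\psi(C_Q(M,\Gg_{\mathrm{small}};u))$, and the extra quotient root is $0$. With $e=x_F$ and $\psi(x_{F_i})=x_{F_i}+e$, Proposition~\ref{prop:quotient-tangent} gives
\[
c_z(T_{M,\Gg_{\mathrm{big}}})=\psi\bigl(c_z(T_{M,\Gg_{\mathrm{small}}})\bigr)\,(1+ze)\prod_{i=1}^{\ell}\frac{1+zx_{F_i}}{1+z(x_{F_i}+e)}.
\]
Since $E\notin B$, $\psi(x_E)=x_E$. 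Since $\psi$ preserves top degree, $I_k(M,\Gg_{\mathrm{small}})$ is the degree of $\psi(c_k(T_{\mathrm{small}}))\alpha^{e-k}$ on the big model. So the difference of the two $I_k$ is the degree of $[\psi(c_z(T_{\mathrm{small}}))(S_F(z)-1)]_k\alpha^{e-k}$, where $S_F$ denotes the displayed correction factor.

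\emph{Step 2: the exceptional pushforward.} This step is the main obstacle. Write $y_i:=x_{F_i}$ restricted to the exceptional divisor. Every term of $S_F-1$ is divisible by $e$, so the class is supported on $E_F$, which is a projective bundle over the center $Z_F$ of rank $\ell-1$ with normal classes $y_i$. Use $e|_{E_F}=-h$, where $h$ is the tautological class, and $x_{F_i}|_{E_F}=y_i-h$. Push forward along $p$ using Segre classes, $p_*(h^{j})=s_{j-\ell+1}$ of the normal bundle $\bigoplus_i\Oo(y_i)$.

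I expect the result to be
\[
\pi_*\bigl(\psi(c_z)(S_F-1)\bigr)=(\ell-1)\,z^{\ell}\,i_*\Bigl(i^*c_z(T_{\mathrm{small}})\prod_{i=1}^{\ell}(1+zy_i)^{-1}\Bigr).
\]
This is the Chern-class analogue of the factor $z+\cdots+z^{\ell-1}$ in Lemma~\ref{lem:exceptional-trace}. The coefficient $\ell-1$ should arise as $c_{\ell-1}$ of the relative tangent bundle integrated over the fibre $\mathbb P^{\ell-1}$, that is, the fibre Euler characteristic $\ell$ minus the $1$ removed by the $-1$ in $S_F-1$.

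I would check this first in the case where all $y_i=0$ (trivial normal bundle), where it reduces to the fibre integral $\int_{\mathbb P^{\ell-1}}(1+zh)^{\ell}/(1+zh)\cdots$. Then I would pass to general $y_i$ by the splitting principle, via a formal identity in the $y_i$.

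\emph{Step 3: factoring over the center.} By Lemma~\ref{lem:center-factorization} and the restriction identity used there, $i^*c_z(T_{\mathrm{small}})\prod_i(1+zy_i)^{-1}$ equals $\prod_i c_z(T_{R_i,J_i})\cdot c_z(T_{P,K})$ on $A(Z_F)\cong\bigotimes_i A(R_i,J_i)\otimes A(P,K)$.

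Next, $i^*x_E$ restricts to zero on each restriction factor and to $x_{\Top(P)}$ on the contraction factor. So $\alpha^{e-k}$ lives entirely on the $P$-factor.

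The top degree on the factor $A(R_i,J_i)$ is $a_i$, and no $\alpha$ is available there. Hence only $c_{a_i}(T_{R_i,J_i})$ contributes, and its degree is $I_{a_i}(R_i,J_i)$. On $P$, the remaining Chern degree is $k-\ell-A$, and the $\alpha$-exponent is $\rank(P)-1-(k-\ell-A)$, which equals $e-k$ by the dimension count $\dim Z_F=e-\ell$.

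Multiplying the factor degrees gives $(\ell-1)\prod_iI_{a_i}(R_i,J_i)\,I_{k-\ell-A}(P,K)$. The zero convention for $I$ handles the out-of-range values of $k$.
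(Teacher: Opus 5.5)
Your outline is the paper's proof. You pull back the small tangent Chern polynomial, isolate the exceptional correction, and push it to $Z_F$ with coefficient $\ell-1$. You then evaluate on $A_{\Qq}(Z_F)\cong\bigotimes_iA_{\Qq}(R_i,J_i)\otimes A_{\Qq}(P,K)$ with $\alpha$ living only on the contraction factor, which forces each restriction factor into degree $a_i$; your Step~3 matches the paper's product-trace evaluation. The paper never actually computes the Step~2 pushforward. It invokes Lemma~\ref{lem:exceptional-trace}, which concerns the $g_z$-integrand, and simply asserts that $\ell-1$ is the exceptional-fibre contribution. Your expected formula for $\pi_*\bigl(\psi(c_z)(S_F-1)\bigr)$ is the right target and is correct.

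Two things in Step~2 need fixing. First, a sign: $\psi(x_{F_i})=x_{F_i}+e$ restricts to $p^*y_i$ on $E_F$, so with $e|_{E_F}=-h$ you get $x_{F_i}|_{E_F}=y_i+h$, not $y_i-h$. Second, the verification plan does not work. The $y_i$ are already the roots of the split bundle $N=\bigoplus_i\Oo(y_i)$, and the case $y_i=0$ only determines the $y$-independent part of the answer. The splitting principle gives no way to recover the $y$-dependence $\prod_i(1+zy_i)^{-1}$ from that case.

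A short direct computation closes the step instead. Put $z=1$ (homogeneity restores $z$) and write $S_F=1+G(e)$. After substituting $x_{F_i}=\psi(x_{F_i})-e$, $G$ is a series in $e$ with coefficients pulled back from the small model, and $G(0)=0$. Since $e^m=j_*\bigl((-h)^{m-1}\bigr)$, the projection formula gives $\pi_*\bigl(\psi(\beta)G(e)\bigr)=i_*\bigl(i^*\beta\cdot p_*[G(-h)/(-h)]\bigr)$, where
\[
\frac{G(-h)}{-h}=-\frac{R(h)-R(0)}{h\,R(0)},\qquad R(h):=(1-h)\prod_{i=1}^{\ell}(1+y_i+h),\qquad R(0)=\prod_{i=1}^{\ell}(1+y_i).
\]
Here $(R(h)-R(0))/h$ is a polynomial of degree $\ell$ in $h$. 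Its $h^{\ell}$-coefficient is $-1$ and its $h^{\ell-1}$-coefficient is $1-\ell-\sum_iy_i$. Use $p_*h^j=0$ for $j<\ell-1$, $p_*h^{\ell-1}=1$, and $p_*h^{\ell}=s_1(N)=-\sum_iy_i$. Then
\[
p_*\bigl[G(-h)/(-h)\bigr]=(\ell-1)\prod_{i=1}^{\ell}(1+y_i)^{-1},
\]
exactly as you expected. In the realizable case this is the Chern--Schwartz--MacPherson identity $\pi_*c(T_{W_{L,\Gg_{\mathrm{big}}}})=c(T_{W_{L,\Gg_{\mathrm{small}}}})+(\ell-1)\,i_*c(T_{Z_F})$, since the fibres over $Z_F$ are $\mathbb P^{\ell-1}$ of Euler characteristic $\ell$. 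With this in place your Step~3 goes through as written.
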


\begin{proof}
For a top-containing pair \((N,\mathcal J)\) write \(C_T(N,\mathcal J)=C_Q(N,\mathcal J;1)^{-1}\prod_{Y\in\mathcal J\setminus\{\Top(N)\}}(1+x_Y)\) for the tangent total Chern class (Proposition~\ref{prop:quotient-tangent}), so that \(I_v(N,\mathcal J)=\deg_{N,\mathcal J}([C_T(N,\mathcal J)]_v(-x_{\Top(N)})^{\rank(N)-1-v})\). We compute the increment across the one-flat step \(\Gg_{\mathrm{small}}\subset\Gg_{\mathrm{big}}\).

\emph{Reduction to an exceptional trace.} Under the one-flat Chow pullback \(\psi_A\) the descended quotient Chern polynomial is compatible, \(C_Q(M,\Gg_{\mathrm{big}};z)=\psi_A(C_Q(M,\Gg_{\mathrm{small}};z))\), and the only new boundary class is the exceptional class \(x_F\) (Lemma~\ref{lem:one-step-quotient-integrands}). Subtracting the pulled small integrand, every surviving summand carries a positive power of \(x_F\); the exceptional-trace Lemma~\ref{lem:exceptional-trace} rewrites the degree-\(k\) increment as a top trace on the exceptional center \(Z_F\),
\[
I_k(M,\Gg_{\mathrm{big}})-I_k(M,\Gg_{\mathrm{small}})=(\ell-1)\,\deg_{Z_F}\!\big([C_{\mathrm{cen}}]_{k-\ell}\,\alpha_{\mathrm{con}}^{(d-\ell)-(k-\ell)}\big),
\]
where \(C_{\mathrm{cen}}\) is the center integrand and \(\alpha_{\mathrm{con}}\) the contraction projective class. Here the scalar \(\ell-1\) is the Chern-alpha value of the exceptional-fiber factor of the \(\ell\)-fold blow-up direction, the exceptional divisor consumes \(\ell\) Chow degrees, and \(Z_F\) has top Chow degree \(d-\ell\).

\emph{Center factorization.} By Lemma~\ref{lem:center-factorization} the Chow ring of \(Z_F\) is the tensor product
\[
A_{\Qq}(Z_F)\cong A_{\Qq}(R_1,J_1)\otimes\cdots\otimes A_{\Qq}(R_\ell,J_\ell)\otimes A_{\Qq}(P,K),
\]
with the product top-degree trace; under this identification the center integrand factors as \(C_{\mathrm{cen}}=\big(\prod_{i=1}^{\ell}C_T(R_i,J_i)\big)\otimes C_T(P,K)\), the contraction class \(\alpha_{\mathrm{con}}=-x_{\Top(P)}\) acts only on the \(A_{\Qq}(P,K)\) factor, and the top degrees satisfy \(a_1+\cdots+a_\ell+d(P)=d-\ell\).

\emph{Evaluating the product trace.} A product trace of a top-degree class vanishes unless every tensor factor sits in its own top Chow degree. Since \(\alpha_{\mathrm{con}}\) multiplies only the contraction factor, each restriction factor must contribute its top degree \(a_i\), yielding \(\deg_{R_i,J_i}([C_T(R_i,J_i)]_{a_i})=I_{a_i}(R_i,J_i)\); the leftover Chern degree \(k-\ell-A\), with \(A=\sum_i a_i\), lands on the contraction factor as \(\deg_{P,K}([C_T(P,K)]_{k-\ell-A}\,\alpha_{\mathrm{con}}^{d(P)-(k-\ell-A)})=I_{k-\ell-A}(P,K)\), the zero convention covering the out-of-range cases. Multiplying by the exceptional scalar \(\ell-1\),
\[
I_k(M,\Gg_{\mathrm{big}})-I_k(M,\Gg_{\mathrm{small}})=(\ell-1)\Big(\prod_{i=1}^{\ell}I_{a_i}(R_i,J_i)\Big)I_{k-\ell-A}(P,K),
\]
as claimed.
\end{proof}

\begin{prop}[Disconnected direct-sum base]\label{prop:disconnected-base}
Let \(M\) be a loopless matroid that is the direct sum of its connected components \(M_1,\dots,M_s\) (\(s\geq 2\)) on ground sets \(E_1,\dots,E_s\), and put \(\Hh_j:=\Hh_{\mathrm{conn}}(M_j)\) and \(\Hh:=\Hh_{\mathrm{conn}}(M)\). If
\[
I_a(M_j,\Hh_j)\geq\binom{\rank(M_j)}{a}\qquad(1\leq j\leq s,\ 0\leq a\leq\rank(M_j)-1),
\]
then \(I_k(M,\Hh)\geq\binom{\rank(M)}{k}\) for every \(0\leq k\leq\rank(M)-1\).
\end{prop}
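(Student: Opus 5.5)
We reduce to a product formula for the Chern-alpha numbers of a direct sum, and then show that the product formula combined with the hypotheses gives the binomial bound. Write $r_j:=\rank(M_j)$, $e_j:=r_j-1$, and $r:=\rank(M)=\sum_j r_j$.

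The lattice of flats of $M$ is the product of the $L(M_j)$. A flat $F=\bigsqcup_j F_j$ has $M|F$ connected exactly when it has a single nonempty component $F_j$ with $M_j|F_j$ connected, or when $F=E$ (by Notation~\ref{nota:connected-building-set}). Hence
\[
\Hh\setminus\{E\}=\{E_1,\dots,E_s\}\ \sqcup\ \bigsqcup_j\big(\Hh_j\setminus\{E_j\}\big),
\]
and the $E_j$ are proper flats of $M$ because $s\geq 2$. Thus $(M,\Hh)$ is not a product model. Instead, the boundary $\bigcup_j\{E_j\}$ behaves like the toric boundary of a product $\mathbb P^{s-1}$-type factor glued with the models $(M_j,\Hh_j)$. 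The concrete plan is to find $\Hh$ inside a one-flat chain. We start from the building set $\Hh'$ obtained by adding to $\Hh$ all joins $\bigvee_{j\in J}E_j$ for $\emptyset\neq J\subsetneq\{1,\dots,s\}$, and then apply the one-flat increment of Proposition~\ref{prop:chern-alpha-recursion}. The increments there are products of $I$-values of induced minors with coefficient $\ell-1\geq 1$. If all factors are nonnegative, enlarging the building set cannot decrease $I_k$. This would give $I_k(M,\Hh)\le I_k(M,\Hh')$, which is the wrong direction, so I will instead go the other way: express $I_k(M,\Hh)$ directly.

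The direct approach is the following. The tangent Chern series factors, because the BEST quotient class is additive over direct sums (\cite[Proposition~5.13]{BEST23}, as used in Lemma~\ref{lem:best-restriction}). Pulled to the maximal model, $c_z(T_{M,\Hh})$ should factor as a product of the series $c_z(T_{M_j,\Hh_j})$, pulled back through the contraction/restriction maps onto the strata $E_j$, times the contribution $\prod_j(1+zx_{E_j})$ of the divisors $x_{E_j}$, divided by the extra trivial quotient roots. Next we evaluate $\deg(c_k\,\alpha^{e-k})$ with $\alpha=-x_E$. The atom relations give $\alpha=-x_E=x_{E_j}+\alpha_j$ on each factor, where $\alpha_j:=-x_{E_j}^{(j)}$ is the $M_j$-top class. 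Since $\alpha$ restricts on each factor to the corresponding $\alpha_j$, the degree computation should localize to
\[
\sum_{k=\sum_j k_j+\epsilon}\ \prod_j I_{k_j}(M_j,\Hh_j)\cdot c(\epsilon),
\]
where $c(\epsilon)$ are the Chern-alpha numbers of the $(s-1)$-dimensional projective-type factor recording the $x_{E_j}$. For $\mathbb P^{s-1}$ these numbers are $c(\epsilon)=\binom{s}{\epsilon}$. I would verify this localization by the same exceptional-trace and center-factorization mechanism (Lemmas~\ref{lem:exceptional-trace} and \ref{lem:center-factorization}), applied to the intersection of the divisors $x_{E_j}$.

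Granting this, we have $I_k(M,\Hh)=\sum\binom{s}{\epsilon}\prod_j I_{k_j}(M_j,\Hh_j)$, with all terms nonnegative. Each factor satisfies $I_{k_j}\ge\binom{r_j-1+1}{k_j}$, using the hypothesis $I_a(M_j,\Hh_j)\geq\binom{r_j}{a}$ for $a\le e_j$. For $a=r_j$ the value of $I$ is $0$ by convention, so the hypothesis gives only $\binom{r_j}{k_j}$ restricted to $k_j\le e_j$. The Vandermonde sum
\[
\sum\binom{s}{\epsilon}\prod_j\binom{r_j}{k_j}
\]
with $k_j\le r_j-1$ must dominate $\binom{r}{k}$. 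Compare with $\binom{r}{k}=\sum_{\sum m_j=k}\prod_j\binom{r_j}{m_j}$. Every term with some $m_j=r_j$ is missing on the right, but it can be matched with the term $k_j=r_j-1$, $\epsilon\ni j$, which has a multiplicity gain. This holds since $\binom{r_j}{r_j}=1\le r_j=\binom{r_j}{r_j-1}$. A careful injection from $\{(m_j)\}$ into $\{(k_j,\epsilon)\}$, sending each saturated $m_j=r_j$ to $(r_j-1,\text{include }j\text{ in }\epsilon)$, gives the inequality termwise, using $\binom{s}{\epsilon}\ge 1$. Here $\epsilon\le s-1$ is the admissible range, and it is never exceeded because not all $m_j$ can be saturated when $k\le r-1$.

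The main obstacle is the product formula in the second step. Specifically, the difficulty is justifying that the $\{E_j\}$-block contributes exactly the $\mathbb P^{s-1}$ Chern-alpha numbers $\binom{s}{\epsilon}$, or at least numbers $\geq 1$ in the range $0\le\epsilon\le s-1$. Only the lower bound $\ge 1$ is needed for the final injection. I would first try to obtain it from Lemma~\ref{lem:pulled-connected-base} together with the top all-nested formula, Lemma~\ref{lem:top-allnested}, applied to the rank-$s$ uniform-like skeleton.
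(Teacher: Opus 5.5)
There is a genuine gap at exactly the step you flagged. The problem is not only that the block numbers need justification: the localization formula
\[
I_k(M,\Hh)=\sum_{\sum_j k_j+\epsilon=k}\binom{s}{\epsilon}\prod_{j}I_{k_j}(M_j,\Hh_j)
\]
is false. Take $M=U_{2,3}\oplus U_{1,1}$. Then $W_{L,\Hh}$ is $\mathbb{P}^2$ blown up at the common point of the three lines of $U_{2,3}$. Here $\alpha=H$ and $c_1=3H-E_{\mathrm{exc}}$, so $I_1(M,\Hh)=3$, while your formula gives $I_1(U_{2,3})+\binom{2}{1}=4$. For $U_{2,3}\oplus U_{2,3}$ it gives $1+6t+12t^2+8t^3$ instead of the true $1+4t+8t^2+8t^3$.

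The mistake is treating $\alpha$ both as the hyperplane class of a product factor $\mathbb{P}^{s-1}$ and as $\alpha_j$ on each component. In fact $A_{\Qq}(M,\Hh)$ is the Chow ring of a nontrivial $\mathbb{P}^{s-1}$-bundle, namely $\bigl(\bigotimes_j A_{\Qq}(M_j,\Hh_j)\bigr)[\alpha]/\bigl(\prod_j(\alpha-\alpha_j)\bigr)$. In it $x_{E_j}=\alpha-\alpha_j$ and $c(T_{M,\Hh})=\prod_j c(T_{M_j,\Hh_j})\prod_j(1+x_{E_j})$; this global-top presentation is what the paper uses. You also cannot run the exceptional-trace/center-factorization mechanism on ``the intersection of the divisors $x_{E_j}$''. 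The set $\{E_1,\dots,E_s\}$ is not $\Hh$-nested, since its join is $E\in\Hh$, so $\prod_j x_{E_j}=0$.

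The correct computation expands $\prod_j(1+x_{E_j})$ over proper subsets $J\subsetneq\{1,\dots,s\}$. Let $\beta$ lie in the base $\bigotimes_jA_{\Qq}(M_j,\Hh_j)$, use the product degree on the base, and let $h_n$ denote the complete homogeneous symmetric polynomial. Then $\deg(\beta\,\alpha^{s-1+n})=\deg(\beta\,h_n(\alpha_1,\dots,\alpha_s))$. More generally, $\deg\bigl(\beta\prod_{j\in J}x_{E_j}\,\alpha^m\bigr)=\deg\bigl(\beta\,h_{m-s+1+|J|}(\alpha_j:j\notin J)\bigr)$. No $\alpha_j$ with $j\in J$ survives, so each component $j\in J$ is forced into its top Chern degree $e_j$. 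With $\Phi_j(t):=\sum_{a=0}^{e_j}I_a(M_j,\Hh_j)t^a$ this gives
\[
\sum_{k}I_k(M,\Hh)\,t^k=\sum_{J\subsetneq\{1,\dots,s\}}\ \prod_{j\in J}I_{e_j}(M_j,\Hh_j)\,t^{r_j}\prod_{j\notin J}\Phi_j(t).
\]
Equivalently, it is $\prod_j\bigl(\Phi_j(t)+I_{e_j}(M_j,\Hh_j)t^{r_j}\bigr)$ minus its $t^{r}$ term. Your formula equals this plus extra nonnegative terms with $j\in J$ but $k_j<e_j$. It therefore overcounts and cannot support a lower bound.

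Once the formula is corrected, your injection is exactly the right finish. A saturated coordinate $m_j=r_j$ is sent to a term with $j\in J$ and $k_j=e_j$, which is precisely a surviving term. Its weight is $I_{e_j}(M_j,\Hh_j)\ge r_j\ge 1$. Equivalently, $\Phi_j+I_{e_j}t^{r_j}\ge(1+t)^{r_j}$ coefficientwise; multiplying over $j$ and discarding the $t^r$ term gives $I_k(M,\Hh)\ge\binom{r}{k}$ for $k\le r-1$. This is the paper's ``promotion of the component range to $0\le a\le\rank(M_j)$'' followed by Vandermonde.
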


\begin{proof}
Write \(d_j:=\rank(M_j)-1\) and \(d:=\rank(M)-1=\sum_j d_j+(s-1)\). The connected-flat building set of a direct sum is the union of the component connected-flat building sets together with the global top flat \(E\), and \(A_{\Qq}(M,\Hh)\) has the global-top presentation
\[
A_{\Qq}(M,\Hh)\cong\big(A_{\Qq}(M_1,\Hh_1)\otimes\cdots\otimes A_{\Qq}(M_s,\Hh_s)\big)[\alpha]\big/\big(\textstyle\prod_{j=1}^{s}(\alpha-\alpha_j)\big),
\]
where \(\alpha=-x_E\), \(\alpha_j=-x_{E_j}\), and \(x_{E_j}=\alpha-\alpha_j\) is the \(j\)-th relative boundary class. Under this presentation the descended quotient Chern polynomial factors as the product of the component polynomials, so the tangent total Chern class factors as
\[
C_T(M,\Hh)=\Big(\prod_{j=1}^{s}C_T(M_j,\Hh_j)\Big)\prod_{j=1}^{s}(1+x_{E_j}).
\]
Taking homogeneous generating polynomials and applying the global-\(\alpha\) degree trace of the presentation expresses \(I_k(M,\Hh)\) as the convolution of the component Chern-alpha numbers twisted by the \(s\) relative boundary factors \(1+x_{E_j}\). Each boundary factor supplies the extra binomial weight that promotes the component degree range \(0\le a\le d_j\) to \(0\le a\le\rank(M_j)\), so the component lower bounds \(I_a(M_j,\Hh_j)\geq\binom{\rank(M_j)}{a}\) together with the Vandermonde identity
\[
\sum_{a_1+\cdots+a_s=k}\prod_{j=1}^{s}\binom{\rank(M_j)}{a_j}=\binom{\textstyle\sum_j\rank(M_j)}{k}=\binom{\rank(M)}{k}
\]
give \(I_k(M,\Hh)\geq\binom{\rank(M)}{k}\) for every \(0\leq k\leq d\).
\end{proof}

\begin{prop}[Connected bases propagate]\label{prop:chern-alpha-propagation}
Fix \(D\geq 0\). Assume that for every connected loopless matroid \(N\) with
\(\rank(N)-1\leq D\), the connected-flat base satisfies
\[
I_k(N,\Hh_{\mathrm{conn}}(N))\geq \binom{\rank(N)}{k}
\qquad
0\leq k\leq \rank(N)-1.
\]
Then for every loopless matroid \(M\) with \(\rank(M)-1\leq D\), every
top-containing Feichtner--Yuzvinsky building set \(\Gg\), and every
\(0\leq k\leq \rank(M)-1\), one has
\[
I_k(M,\Gg)\geq \binom{\rank(M)}{k}.
\]
\end{prop}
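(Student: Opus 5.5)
We argue by strong induction on \(D'=\rank(M)-1\le D\). Within a fixed \(M\), we induct along a one-flat refinement chain starting from the minimal building set. Since \(\Hh_{\mathrm{conn}}(M)\) is the unique inclusion-minimal top-containing building set (Notation~\ref{nota:connected-building-set}), every top-containing \(\Gg\) contains it. We therefore choose a chain
\[
\Hh_{\mathrm{conn}}(M)=\Gg_0\subset\Gg_1\subset\cdots\subset\Gg_s=\Gg,\qquad \Gg_j=\Gg_{j-1}\cup\{F_j\},
\]
with each \(\Gg_j\) a Feichtner--Yuzvinsky building set. Such a chain exists by adjoining the flats of \(\Gg\setminus\Hh_{\mathrm{conn}}(M)\) in order of increasing rank, the same ordering device used in (6b) of Proposition~\ref{prop:quotient-tangent}. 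The argument then has two parts: a base case at \(\Gg_0\), and a nonnegativity statement for each increment.

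\emph{Base.} If \(M\) is connected, the hypothesis gives \(I_k(M,\Hh_{\mathrm{conn}}(M))\ge\binom{\rank M}{k}\) directly. If \(M\) is disconnected, with components \(M_1,\dots,M_s\), each \(M_j\) is connected and loopless with \(\rank(M_j)-1<\rank(M)-1\le D\). The hypothesis therefore applies to every \(M_j\), and Proposition~\ref{prop:disconnected-base} yields \(I_k(M,\Hh_{\mathrm{conn}}(M))\ge\binom{\rank M}{k}\).

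\emph{Increments.} By Proposition~\ref{prop:chern-alpha-recursion},
\[
I_k(M,\Gg_j)-I_k(M,\Gg_{j-1})=(\ell-1)\Big(\prod_i I_{a_i}(R_i,J_i)\Big)I_{k-\ell-A}(P,K).
\]
Lemma~\ref{lem:exceptional-trace} gives \(\ell\ge2\), so \(\ell-1>0\). The pairs \((R_i,J_i)\) and \((P,K)\) are loopless matroids of strictly smaller rank: \(M|F_i\) has no loops because \(M\) has none, and \(M/F\) has no loops because \(F\) is a flat. Their building sets contain the respective top flats, since \(F_i\in\Gg_{j-1}\) and \(E\in\Gg_{j-1}\). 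The outer induction hypothesis therefore gives each \(I_v\ge\binom{\rank}{v}\ge0\) inside the allowed range, and \(I_v=0\) outside it by convention. Hence every factor is nonnegative, and so is the increment. Summing the increments along the chain gives \(I_k(M,\Gg)\ge I_k(M,\Hh_{\mathrm{conn}}(M))\ge\binom{\rank M}{k}\).

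\emph{Main obstacle.} The delicate point is verifying that the induced pairs are legitimate top-containing pairs to which the induction applies. Remark~\ref{rem:induced-building-sets} warns that the restriction-side building set may fail to contain the top flat in general. Here the top flat is \(F_i\) itself, which lies in \(\Gg_{j-1}|F_i\), so this case is fine. For the contraction, I would check directly that \(\Gg_{j-1}/F\) is a top-containing building set on \(L(M/F)\): its top flat is \(E/F\), which lies in it because \(E\in\Gg_{j-1}\). Only nonnegativity of these factors is needed, so all that matters is that the inductive statement covers every such pair. If a direct induction proved awkward, a weaker fallback would be to show \(I_v\ge0\) for all pairs by the same recursion. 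Note that the top Chern numbers \(I_{a_i}\) equal \(\Hilb(\cdot\,;1)>0\) by Theorem~\ref{thm:hilbert-identity}.
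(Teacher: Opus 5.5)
Your proof follows the paper's argument step for step: strong induction on the rank, the base case at $\Hh_{\mathrm{conn}}(M)$ taken from the hypothesis (connected $M$) or from Proposition~\ref{prop:disconnected-base} (disconnected $M$), and nonnegativity of each increment in Proposition~\ref{prop:chern-alpha-recursion}. That nonnegativity comes from $\ell\ge 2$ and from the induction hypothesis applied to the lower-rank induced pairs. Your checks that these pairs are loopless and top-containing are correct; the paper leaves them implicit.

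There is one auxiliary claim that is false as stated. You cannot in general build the one-flat chain from $\Hh_{\mathrm{conn}}(M)$ to $\Gg$ by adjoining flats in order of increasing rank, and (6b) does not use that ordering. Take the free matroid $U_{4,4}$ on $\{a,b,c,d\}$, so that $\Hh_{\mathrm{conn}}=\{a,b,c,d,E\}$, and let $\Gg$ be the maximal building set. An increasing-rank order must adjoin all six rank-$2$ flats before any rank-$3$ flat. Any three of those pairs include two that overlap, say $ab$ and $bc$. Once both are present and $abc$ is not, the maximal elements below $abc$ include the overlapping pair $ab,bc$. The join map from the product of their intervals to $[\emptyset,abc]$ is then not a bijection: with just $ab,bc$ the product has $16$ elements against $8$. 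So the intermediate set is not a building set.

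The fix is to adjoin larger flats first, using any linear extension of reverse inclusion. This is exactly the ordering of Set-up~\ref{setup:nested}, and it is the standard Feichtner--M\"uller ordering. With that ordering every intermediate set is a building set and contains $E$. The paper itself only asserts that such a chain exists, so with this correction your proof is complete.
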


\begin{proof}
We prove the conclusion by strong induction on \(\rank(M)\) (within the ranks \(\rank(M)\leq D+1\) covered by the hypothesis). The base \(\rank(M)=1\) is immediate: the only degree is \(k=0\) and \(I_0(M,\Gg)=1=\binom{1}{0}\).

Assume the conclusion for every loopless matroid of rank at most \(s\) with every top-containing building set, and let \(\rank(M)=s+1\leq D+1\).

\emph{The minimal base \(\Hh_{\mathrm{conn}}(M)\).} If \(M\) is connected, then \(I_k(M,\Hh_{\mathrm{conn}}(M))\geq\binom{\rank(M)}{k}\) is exactly the connected-base hypothesis. If \(M\) is disconnected, its connected components \(M_1,\dots,M_{s'}\) (\(s'\geq2\)) each have rank \(<\rank(M)\leq D+1\) and are connected, so the connected-base hypothesis gives \(I_a(M_j,\Hh_{\mathrm{conn}}(M_j))\geq\binom{\rank(M_j)}{a}\) for all \(j\) and all \(0\leq a\leq\rank(M_j)-1\); Proposition~\ref{prop:disconnected-base} then gives \(I_k(M,\Hh_{\mathrm{conn}}(M))\geq\binom{\rank(M)}{k}\).

\emph{Propagation to an arbitrary \(\Gg\).} By Notation~\ref{nota:connected-building-set}, \(\Hh_{\mathrm{conn}}(M)\) is the unique inclusion-minimal top-containing building set, so there is a finite chain \(\Hh_{\mathrm{conn}}(M)=\Gg_0\subset\Gg_1\subset\cdots\subset\Gg_t=\Gg\) of one-flat enlargements. Consider one step \(\Gg_{j}\subset\Gg_{j+1}=\Gg_{j}\cup\{F\}\), with induced restriction pairs \((R_i,J_i)\) and contraction pair \((P,K)\) as in Proposition~\ref{prop:chern-alpha-recursion}. Since \(F\) is a nonempty proper flat, each \((R_i,J_i)\) and \((P,K)\) has rank strictly less than \(\rank(M)=s+1\), so the induction hypothesis applies: \(I_v(R_i,J_i)\geq\binom{\rank(R_i)}{v}\geq0\) and \(I_v(P,K)\geq\binom{\rank(P)}{v}\geq0\) in their natural degree ranges, while these numbers vanish outside those ranges by convention. As \(\ell\geq2\), every factor in the increment formula of Proposition~\ref{prop:chern-alpha-recursion} is nonnegative, so \(I_k(M,\Gg_{j+1})\geq I_k(M,\Gg_{j})\) for every \(0\leq k\leq\rank(M)-1\). Chaining from \(\Gg_0=\Hh_{\mathrm{conn}}(M)\) to \(\Gg_t=\Gg\) gives
\[
I_k(M,\Gg)\geq I_k(M,\Hh_{\mathrm{conn}}(M))\geq\binom{\rank(M)}{k},
\]
completing the induction.
\end{proof}

\begin{thm}[Chern-alpha lower bound]\label{thm:chern-alpha}
Let \(M\) be a loopless matroid of rank \(d+1\), and let \(\Gg\) be a top-containing Feichtner--Yuzvinsky building set. Then for every integer \(k\) with \(0\leq k\leq d\),
\[
I_k(M,\Gg)=\deg_{M,\Gg}\left(c_k(T_{M,\Gg})(-x_E)^{d-k}\right)\geq \binom{d+1}{k}.
\]
\end{thm}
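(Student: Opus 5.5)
The plan is to assemble the theorem from the two halves already established in this section: the connected-flat base bound and the propagation across one-flat enlargements.

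\emph{Reduction to connected bases.} By Proposition~\ref{prop:chern-alpha-propagation}, applied with \(D:=d\), it suffices to verify the connected-base hypothesis. Concretely, we must show that for every connected loopless matroid \(N\) of rank \(r\leq d+1\) and every \(0\leq k\leq r-1\) one has
\[
I_k(N,\Hh_{\mathrm{conn}}(N))\geq\binom{r}{k}.
\]
The proposition then handles disconnected matroids, through Proposition~\ref{prop:disconnected-base}, and arbitrary top-containing \(\Gg\), through the nonnegative increments of Proposition~\ref{prop:chern-alpha-recursion} together with \(\ell\geq 2\). This yields the stated bound for \((M,\Gg)\) with \(d+1=\rank(M)\).

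\emph{Connected base.} Fix a connected loopless \(N\) of rank \(r\) with \(\Hh=\Hh_{\mathrm{conn}}(N)\). Lemma~\ref{lem:pulled-connected-base} rewrites \(I_k(N,\Hh)\) as the pulled Gamma number
\[
\deg_{N,\Gg_{\max}}\bigl([\Gamma_N(z)]_k\alpha^{e-k}\bigr).
\]
We then split into cases by rank.

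\begin{itemize}
\item For \(r\geq 5\), Proposition~\ref{prop:nested-segre-tail} identifies this number with the all-nested sum \(AN_k(N)\). Lemma~\ref{lem:nested-excess} shows that \(AN_k(N)-\binom{r}{k}\) is a sum of nonnegative terms \(\omega(S)B(\cdot,\cdot)\), which gives the bound.
\item For \(r\leq 4\), every degree satisfies \(k\leq e\leq 3\), so Proposition~\ref{prop:low-degree-gamma} applies directly. It gives
\[
I_k=B(r,k)+\sum_F\tau_N(F)B(r-\rank_N F,k-\rank_N F),
\]
and \(\tau_N\geq 0\) by Lemma~\ref{lem:connected-flag-weights}, so \(I_k\geq B(r,k)=\binom{r}{k}\).
\end{itemize}

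\emph{Main obstacle.} The main obstacle is the connected-base input for \(r\geq 5\) and \(4\leq k\leq r-2\). There the nested Segre-tail formula relies on the truncation transfer, Lemma~\ref{lem:truncation-transfer}, whose argument as written is only sketched (the authors point to \cite[Proposition~4.19]{Cheng26}). I would accept that lemma as stated. If an independent check were wanted, I would first try to prove the transfer by pulling back along the map from the maximal model of \(N\) to that of the truncation, on which the top \(t\) rank levels collapse, and then comparing \(\alpha\)-powers via \(\alpha^{t}\)-slicing. The remaining steps are direct citations of earlier results.
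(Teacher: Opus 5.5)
Your proposal is correct and follows the paper's proof essentially step for step: the reduction via Proposition~\ref{prop:chern-alpha-propagation} with \(D=d\), then Lemma~\ref{lem:pulled-connected-base}, then Proposition~\ref{prop:nested-segre-tail} with Lemma~\ref{lem:nested-excess} for \(r\geq 5\), and Proposition~\ref{prop:low-degree-gamma} for \(r\leq 4\). The only cosmetic difference is in the case \(r\leq 4\): you use the connected-flag form of Proposition~\ref{prop:low-degree-gamma} with \(\tau_N\geq 0\) from Lemma~\ref{lem:connected-flag-weights}, whereas the paper uses its all-nested form with \(\omega(S)\geq 0\); these are the two equivalent displays of that proposition, and your treatment of the truncation-transfer gap agrees with the paper's own caveat.
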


\begin{proof}
We establish the connected-base hypothesis of Proposition~\ref{prop:chern-alpha-propagation}: for every connected loopless \(N\) of rank \(r\), with \(\Hh=\Hh_{\mathrm{conn}}(N)\), one has \(I_k(N,\Hh)\geq\binom{r}{k}\) for all \(0\leq k\leq r-1\). By Lemma~\ref{lem:pulled-connected-base}, \(I_k(N,\Hh)=\deg_{N,\Gg_{\max}}([\Gamma_N(z)]_k\alpha^{e-k})\).

If \(r\geq 5\), Lemma~\ref{lem:nested-excess} gives directly
\[
I_k(N,\Hh)-\binom{r}{k}=\sum_{S\neq\emptyset}\omega(S)\,B(r-\rank_N(U(S)),k-\rank_N(U(S)))\geq 0,
\]
every weight \(\omega(S)\) being nonnegative. (In rank \(5\), degree \(4\), the non-chain residual of Lemma~\ref{lem:non-chain-decomposition} need not vanish, which is why the full nested-support formula of Proposition~\ref{prop:nested-segre-tail} is needed rather than the connected-flag expression alone.)

If \(r\leq 4\), then every degree \(k\leq e=r-1\) satisfies \(k\leq 3\), so Proposition~\ref{prop:low-degree-gamma} gives \(I_k(N,\Hh)=AN_k(N):=\sum_S\omega(S)B(r-\rank_N(U(S)),k-\rank_N(U(S)))\). Each weight \(\omega(S)=\prod_{A\in S}(q_S(A)-1)\) is a product of nonnegative integers, since \(q_S(A)\geq 1\): the lower join \(A_-^S\) is a proper sub-join of \(A\) (if the maximal elements of \(\{B\in S:B<A\}\) had join \(A\in\Hh\), they would form an \(\Hh\)-antichain of size \(\geq 2\) with join in \(\Hh\), against \(\Hh\)-nestedness). The empty support contributes \(\binom{r}{k}\), so \(I_k(N,\Hh)=\binom{r}{k}+\sum_{S\neq\emptyset}\omega(S)B(\cdots)\geq\binom{r}{k}\).

Thus the connected-base hypothesis holds, and Proposition~\ref{prop:chern-alpha-propagation} with \(D=d\) applied to \((M,\Gg)\) gives \(I_k(M,\Gg)\geq\binom{d+1}{k}\) for every \(0\leq k\leq d\).
\end{proof}

\section{Hilbert polynomials and Chern inequalities for the integral class}\label{sec:hilbert}

The goal of this section is to record why the K-theoretic Todd polynomial of the integral tangent class equals the Chow Hilbert polynomial and why the Chern-alpha inequalities survive the integral lift. Both follow from the in-paper rational results of Sections~\ref{sec:hilbert-identity} and~\ref{sec:chern-alpha} through the linkage of Proposition~\ref{prop:linkage}.

\begin{prop}[K-theoretic Todd identity]\label{prop:todd}
For every loopless $M$ and every top-containing $\Gg$,
\[
P_{\mathrm{int}}^K(M,\Gg;z)=\Hilb(M,\Gg;z)
\]
in $\Zz[z]$.
\end{prop}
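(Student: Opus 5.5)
The identity is a direct transfer of the rational Hilbert identity of Theorem~\ref{thm:hilbert-identity} through the linkage of Proposition~\ref{prop:linkage}.

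First, by definition in Theorem~\ref{thm:main-integral},
\[
P_{\mathrm{int}}^K(M,\Gg;z)=\deg_{M,\Gg}\Bigl(\ch\bigl(\lambda_{-z}(\rho_K(T_{M,\Gg}^{\Zz})^\vee)\bigr)\td\bigl(\rho_K(T_{M,\Gg}^{\Zz})\bigr)\Bigr).
\]
This expression depends on $T_{M,\Gg}^{\Zz}$ only through its rationalization $\rho_K(T_{M,\Gg}^{\Zz})\in K_{\Qq}(M,\Gg)$. Proposition~\ref{prop:linkage} gives $\rho_K(T_{M,\Gg}^{\Zz})=T_{M,\Gg}$. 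Substituting, $P_{\mathrm{int}}^K(M,\Gg;z)$ becomes
\[
\deg_{M,\Gg}\bigl(\ch(\lambda_{-z}(T_{M,\Gg}^{\vee}))\td(T_{M,\Gg})\bigr),
\]
which is the second expression for $P^K(M,\Gg;z)$ in Definition~\ref{defn:pk}.

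Second, Theorem~\ref{thm:hilbert-identity} gives $P^K(M,\Gg;z)=\Hilb(M,\Gg;z)$. Since $\Hilb$ has nonnegative integer coefficients (dimensions of graded pieces), the equality holds in $\Zz[z]$. The equivalent coefficient statement $\dim_{\Qq}A_{\Qq}(M,\Gg)^i=(-1)^i\chi(\wedge^iT^\vee)$ then follows by extracting the coefficient of $z^i$, since $\lambda_{-z}=\sum_i(-z)^i\wedge^i$.

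The only step that needs care is checking that the two descriptions of $P^K$ in Definition~\ref{defn:pk} agree: the splitting-root Hirzebruch integrand versus $\ch(\lambda_{-z}T^\vee)\td(T)$. I would verify this with the splitting principle. A line element with Chern root $v$ contributes $\ch(\lambda_{-z}\ell^\vee)\td(\ell)=(1-ze^{-v})\,v/(1-e^{-v})=g_z(v)$. By Proposition~\ref{prop:quotient-tangent}, the boundary summands $(1-\tau_F)^{-1}$ have roots $x_F$, and $Q_{M,\Gg}$ enters with a minus sign, giving the inverse factors $g_z(y_a)^{-1}$. Multiplicativity of $\lambda_{-z}$ and $\td$ on virtual classes then gives agreement, and only the top-degree part $[\cdot]_{\rank(M)-1}$ survives $\deg_{M,\Gg}$. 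Lemma~\ref{lem:one-step-quotient-integrands} already asserts this equality, so the proof reduces to citing it.
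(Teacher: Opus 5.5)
Your proposal is correct and matches the paper's proof. Both substitute $\rho_K(T_{M,\Gg}^{\Zz})=T_{M,\Gg}$ from Proposition~\ref{prop:linkage} to identify $P^K_{\mathrm{int}}(M,\Gg;z)$ with $P^K(M,\Gg;z)$ of Definition~\ref{defn:pk}, and then cite Theorem~\ref{thm:hilbert-identity}. Your extra splitting-principle check that the two descriptions of $P^K$ agree is harmless; the paper simply takes that equivalence as part of Definition~\ref{defn:pk}, as restated in Lemma~\ref{lem:one-step-quotient-integrands}.
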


\begin{proof}
By Proposition~\ref{prop:linkage} the rationalization of the integral tangent class is the rational tangent class, $\rho_K(T_{M,\Gg}^{\Zz})=T_{M,\Gg}$. The K-theoretic Todd polynomial $P_{\mathrm{int}}^K(M,\Gg;z)$ is by definition the Hirzebruch integrand $\deg_{M,\Gg}(\ch(\lambda_{-z}(\rho_K(T_{M,\Gg}^{\Zz})^\vee))\td(\rho_K(T_{M,\Gg}^{\Zz})))$ evaluated on this rationalization, so
\[
P_{\mathrm{int}}^K(M,\Gg;z)=\deg_{M,\Gg}\left(\ch(\lambda_{-z}(T_{M,\Gg}^\vee))\td(T_{M,\Gg})\right)=P^K(M,\Gg;z),
\]
the signed K-theoretic Todd polynomial of Definition~\ref{defn:pk}. By Theorem~\ref{thm:hilbert-identity}, $P^K(M,\Gg;z)=\Hilb(M,\Gg;z)$. Combining the two equalities gives $P_{\mathrm{int}}^K(M,\Gg;z)=\Hilb(M,\Gg;z)$.
\end{proof}

\begin{cor}[Euler-characteristic form]\label{cor:euler}
For every integer $i$ with $0\leq i\leq d$,
\[
\dim_{\Qq}A_{\Qq}(M,\Gg)^i=(-1)^i\chi\left(\wedge^i T^\vee\right).
\]
\end{cor}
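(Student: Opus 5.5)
The plan is to read off Corollary~\ref{cor:euler} coefficientwise from Proposition~\ref{prop:todd}, using the conventions of Definition~\ref{defn:pk} and the formal Hirzebruch--Riemann--Roch number $\chi$ fixed right after Theorem~\ref{thm:main-integral}. Write $T:=\rho_K(T_{M,\Gg}^{\Zz})$; by Proposition~\ref{prop:linkage} this is the rational class $T_{M,\Gg}$.

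First expand the $\lambda$-operation. Since $K_{\Qq}(M,\Gg)$ is a special $\lambda$-ring (Step~5 of Proposition~\ref{prop:quotient-tangent}), one has $\lambda_{-z}(T^\vee)=\sum_{i\geq 0}(-z)^i\wedge^iT^\vee$. Both $\ch$ and $\deg_{M,\Gg}$ are $\Qq$-linear, so
\[
P_{\mathrm{int}}^K(M,\Gg;z)=\sum_{i\geq0}(-1)^iz^i\,\deg_{M,\Gg}\bigl(\ch(\wedge^iT^\vee)\td(T)\bigr)=\sum_{i\geq0}(-1)^i\chi(\wedge^iT^\vee)\,z^i .
\]
Only finitely many terms appear here. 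The reason is that $T$ has virtual rank $d$, so the splitting-root expression of Definition~\ref{defn:pk} is a polynomial in $z$.

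Next compare coefficients of $z^i$ with $\Hilb(M,\Gg;z)=\sum_i\dim_{\Qq}A_{\Qq}(M,\Gg)^iz^i$. Proposition~\ref{prop:todd} gives $\dim_{\Qq}A_{\Qq}(M,\Gg)^i=(-1)^i\chi(\wedge^iT^\vee)$ for every $i$, and in particular for $0\le i\le d$. Equivalently, one can cite the coefficient form of Theorem~\ref{thm:hilbert-identity} directly and transport it through Proposition~\ref{prop:linkage}.

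The only step that needs any care is the first. The $\lambda$-ring expansion, and the $\lambda$-operations on a virtual class, must be compatible with the splitting-root definition of $P^K$. This compatibility is already asserted as the ``Equivalently'' clause of Definition~\ref{defn:pk}, so I would simply invoke it.
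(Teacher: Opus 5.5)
Your proposal is correct and follows the paper's own proof: you expand $\lambda_{-z}$ and compare the coefficients of $z^i$ with $\Hilb(M,\Gg;z)$ via Proposition~\ref{prop:todd}, or equivalently apply the coefficient form of Theorem~\ref{thm:hilbert-identity} to $T_{M,\Gg}=\rho_K(T_{M,\Gg}^{\Zz})$ via Proposition~\ref{prop:linkage}. Your finiteness remark is harmless but not needed, since the coefficient comparison already makes sense in $\Qq[[z]]$.
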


\begin{proof}
This is the coefficient identity obtained from Proposition~\ref{prop:todd} after expanding the $\lambda$-class, equivalently the coefficient identity of Theorem~\ref{thm:hilbert-identity} applied to $T_{M,\Gg}=\rho_K(T_{M,\Gg}^{\Zz})$. The range $0\leq i\leq d$ is the Chow-degree range for a rank $d+1$ matroid.
\end{proof}

\begin{prop}[Chern-alpha lower bound]\label{prop:chern-alpha}
For every integer $k$ with $0\leq k\leq d$,
\[
\deg_{M,\Gg}\left(c_k(\rho_K(T_{M,\Gg}^{\Zz}))\alpha^{d-k}\right)\geq {d+1\choose k},
\qquad \alpha=-x_E.
\]
\end{prop}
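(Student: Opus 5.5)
The plan is to reduce the integral statement to the rational Chern-alpha bound already established, exactly in parallel with the proof of Proposition~\ref{prop:todd}. The quantity on the left-hand side depends on $T_{M,\Gg}^{\Zz}$ only through its rationalization, because the Chern classes $c_k$ are read off in $A_{\Qq}(M,\Gg)$ from the Chern character of the rational class. So the first step is to invoke Proposition~\ref{prop:linkage}, which gives $\rho_K(T_{M,\Gg}^{\Zz})=T_{M,\Gg}$. It follows that
\[
c_k\bigl(\rho_K(T_{M,\Gg}^{\Zz})\bigr)=c_k(T_{M,\Gg})\in A_{\Qq}(M,\Gg)^k
\]
for every $k$.

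The second step is to check that the Chern classes here agree with the Chern classes used in Section~\ref{sec:chern-alpha}. There, $c_k(T_{M,\Gg})$ was defined by Newton's identities from the power sums $m!\,\ch_m$. By Step~5 of Proposition~\ref{prop:quotient-tangent}, these classes are the homogeneous pieces of
\[
c_z(Q_{M,\Gg})^{-1}\prod_{F\in\Gg^\circ}(1+zx_F).
\]
The left-hand side of Proposition~\ref{prop:chern-alpha} uses the same $\lambda$-ring Chern class applied to the same rational element, so no new convention enters. Likewise $\alpha=-x_E$ is the same degree-one class in both statements, and $\deg_{M,\Gg}$ is the same top-degree map.

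The third step is to substitute these identifications. This gives
\[
\deg_{M,\Gg}\bigl(c_k(\rho_K(T_{M,\Gg}^{\Zz}))\alpha^{d-k}\bigr)=I_k(M,\Gg),
\]
and Theorem~\ref{thm:chern-alpha} gives $I_k(M,\Gg)\geq\binom{d+1}{k}$ for $0\leq k\leq d$. That concludes the proof.

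There is no genuine obstacle inside this reduction; the only point requiring care is the convention check in the second step. The substantive input is Theorem~\ref{thm:chern-alpha}, which rests on Lemma~\ref{lem:truncation-transfer} in the case of rank at least $6$. As the editorial comment notes, that lemma is justified by \cite[Proposition~4.19]{Cheng26}, so I would cite it there rather than reprove it here.
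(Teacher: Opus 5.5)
Your proposal is correct and follows essentially the same route as the paper: it invokes Proposition~\ref{prop:linkage} to identify $\rho_K(T_{M,\Gg}^{\Zz})$ with $T_{M,\Gg}$ and then applies Theorem~\ref{thm:chern-alpha}. Your explicit convention check, that $c_k$ is the Newton-identity Chern class of Step~5 of Proposition~\ref{prop:quotient-tangent}, is left implicit in the paper, and your remark that the substantive input ultimately depends on Lemma~\ref{lem:truncation-transfer} (via \cite[Proposition~4.19]{Cheng26}) is accurate.
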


\begin{proof}
After rationalization, the integral class $T_{M,\Gg}^{\Zz}$ becomes the rational tangent class $T_{M,\Gg}$ by Proposition~\ref{prop:linkage}. Theorem~\ref{thm:chern-alpha} gives the Chern-alpha inequality $\deg_{M,\Gg}(c_k(T_{M,\Gg})\alpha^{d-k})\geq\binom{d+1}{k}$ for every $0\leq k\leq d$. Since $\rho_K(T_{M,\Gg}^{\Zz})=T_{M,\Gg}$, the same inequality holds for $\rho_K(T_{M,\Gg}^{\Zz})$.
\end{proof}

\section{Fan-support guard}\label{sec:fan-support}

In this section, we separate the intrinsic theorem from the optional complete-toric interpretation. This separation is part of the statement of the theorem, not a cosmetic warning.

\begin{thm}[Fan-support guard]\label{thm:fan-support}
Let $M$ be a loopless matroid and let $\Gg$ be a finite top-containing Feichtner-Yuzvinsky building set. The integral quotient, tangent, Hilbert/Todd, Chern-alpha, and integer-coordinate conclusions of Theorem~\ref{thm:main-integral} are assertions in the intrinsic ring $K_{\Zz}(M,\Gg)$. They do not require the reduced $\Gg$-nested cone collection to be complete.

If there exists a complete regular fan $\Sigma_{M,\Gg}$ in the atom lattice whose rays are the incidence rays indexed by $\Gg^\circ$ and whose cones are exactly the $\Gg$-nested subsets of $\Gg^\circ$, then $K_{\Zz}(M,\Gg)$ is canonically identified with $K^0(X_{\Sigma_{M,\Gg}})$. Without this complete-regular-fan hypothesis, no literal complete-toric K-theory interpretation is asserted.
\end{thm}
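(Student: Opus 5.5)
The first part of Theorem~\ref{thm:fan-support} is an audit of dependencies, and I would prove it by going back through each conclusion of Theorem~\ref{thm:main-integral} and checking that every step uses only the presentation of Definition~\ref{defn:k-ring}. The integer-coordinate claim rests on Proposition~\ref{prop:basis}. The integral quotient class is obtained by descent from the maximal model (Proposition~\ref{prop:quotient-descent}). That descent uses saturation, which comes from Proposition~\ref{prop:one-step-saturation}. The saturation argument in turn uses only Lemma~\ref{lem:assoc-graded}, the algebraic filtered-saturation Lemma~\ref{lem:filtered-saturation}, and the degreewise freeness of the cokernel of $\psi_{\Zz}$ in Lemma~\ref{lem:chow-comparison}. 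Of these, only Lemma~\ref{lem:chow-comparison} and the descent of Chern data invoke toric varieties. There the varieties are the smooth, possibly noncomplete Feichtner--Yuzvinsky toric varieties $X(\Sigma_{M,\Gg})$, and those are used only for Chow pullbacks, blowup decompositions and projection formulas along proper birational morphisms. None of these requires completeness.

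The Hilbert/Todd and Chern-alpha statements need one further check: every degree map $\deg_{M,\Gg}$ must be the intrinsic top-degree functional on $A_{\Qq}(M,\Gg)^d\cong\Qq$, and not the degree of a complete variety. I would verify that each trace identity used is algebraic. These are the preservation of degree under $\psi$ (Lemma~\ref{lem:pulled-connected-base}), the exceptional trace formula of Lemma~\ref{lem:exceptional-trace}, and the product trace on centers. Each can be phrased as a statement about Feichtner--Yuzvinsky presentations, since the rings are Poincaré duality algebras by \cite{FY04}. This settles the first paragraph of the theorem.

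For the second paragraph I would assume a complete regular fan $\Sigma_{M,\Gg}$ with the stated rays and cones. For a smooth complete toric variety, $K^0(X_\Sigma)$ has the classical presentation generated by the classes $[\Oo_{D_\rho}]=1-[\Oo(-D_\rho)]$. Its relations are of two kinds:
\begin{itemize}
\item Stanley--Reisner relations $\prod_{\rho\in S}[\Oo_{D_\rho}]=0$ for each non-cone $S$;
\item character relations $\prod_\rho[\Oo(-D_\rho)]^{\langle m,v_\rho\rangle}=1$ for each $m$ in the dual lattice.
\end{itemize}
I would send $\tau_F\mapsto[\Oo_{D_F}]$. The cone condition matches the nested-set relations exactly. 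For the character relations, I would take $m$ to be the dual coordinates of the atom lattice. Each then pairs to $1$ on the incidence rays of flats containing the given atom and to $0$ elsewhere, which reproduces the atom relation $1-\prod_{a\le F}(1-\tau_F)$; the top-flat generator $\tau_E$ is handled by the reduced convention. Canonicity comes from the fact that the map is defined on generators, and it is an isomorphism because both sides are presented by the same generators and relations.

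The main obstacle is exactly this matching of character relations. The character lattice is spanned by atom coordinates modulo the diagonal, so I must check that the integer exponents $\langle m,v_\rho\rangle$ are all $0$ or $1$, so that no negative powers of $(1-\tau_F)$ appear. I would first try to handle this through the unimodularity supplied by regularity, together with the explicit incidence-vector form of the rays.
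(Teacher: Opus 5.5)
Your first paragraph is a more detailed version of the paper's one-sentence argument that nothing in the construction uses completeness, and that part is fine. The gap is in the second paragraph, at exactly the step you flag, and the route you propose for it cannot work.

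Since $\Sigma_{M,\Gg}$ is complete with rays indexed only by $\Gg^\circ$, it lives in the quotient lattice $N=\Zz^{\mathcal A}/\Zz\mathbf{1}$, where $\mathcal A$ is the set of atoms. Its character lattice is the \emph{sublattice} $\mathbf{1}^{\perp}=\{m\in\Zz^{\mathcal A}:\sum_a m_a=0\}$, not a quotient of the coordinate lattice. So the coordinate functionals $e_a^*$ are not characters of $N$, and pairing them with the incidence rays gives no relation in $K^0(X_{\Sigma_{M,\Gg}})$. If it did, the relation would be $\prod_{F\ge a,\,F\ne E}(1-\tau_F)=1$, which is false in $K_\Zz(M,\Gg)$ because it forces $\tau_E=0$. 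Moreover, for $\rk M\ge 2$ every atom lies in $\Gg^\circ$ (a building set contains all atoms), with ray $\bar e_a$, and $\langle m,\bar e_a\rangle=m_a$. If all exponents $\langle m,v_F\rangle$ were in $\{0,1\}$, the condition $\sum_a m_a=0$ would force $m=0$. So no appeal to regularity or unimodularity can remove the negative exponents.

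The missing idea is that negative exponents are harmless, and that $\tau_E$ must be treated as a redundant generator rather than sent to a boundary sheaf (there is no divisor $D_E$). In $K_\Zz(M,\Gg)$ every $1-\tau_F$ is a unit, since it divides $1$ through the atom relation of any atom $a\le F$. On the toric side, use the Laurent form of the presentation, in which the classes $[\Oo(\pm D_F)]$ are units. The character relations are generated by $m=e_a^*-e_b^*$. Under $1-\tau_F\leftrightarrow[\Oo(-D_F)]$ they read $\prod_{F\ge a,\,F\ne E}(1-\tau_F)=\prod_{F\ge b,\,F\ne E}(1-\tau_F)$. The atom relations of Definition~\ref{defn:k-ring} are equivalent to these together with $(1-\tau_E)^{-1}=\prod_{F\ge a,\,F\ne E}(1-\tau_F)$. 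The non-nested relations involving $\tau_E$ follow from those for $S\setminus\{E\}$, because $E$ is comparable to every flat and so lies in no antichain of size $\ge 2$.

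Hence the correct isomorphism sends $\tau_F\mapsto 1-[\Oo(-D_F)]$ for $F\in\Gg^\circ$, and $\tau_E\mapsto 1-[\Oo(\sum_{F\ge a,\,F\ne E}D_F)]$. The latter is independent of $a$ and matches the normalization $\tau_E\mapsto 1-[A]$ of Proposition~\ref{prop:theta-descent}. The inverse is $[\Oo(D_F)]\mapsto(1-\tau_F)^{-1}$.

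Alternatively, work in the unreduced fan in $\Zz^{\mathcal A}$ with the extra ray $v_E=\mathbf{1}$. There the $e_a^*$ are genuine characters with exponents exactly $0$ or $1$, which is the matching you wanted. Since $E$ lies in every maximal nested set, that toric variety is a line bundle over $X_{\Sigma_{M,\Gg}}$ and has the same $K^0$.

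The paper does not carry out this matching at all; it only cites the standard smooth-toric $K$-presentation \cite{CLS11}.
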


\begin{proof}
The intrinsic construction of $K_{\Zz}(M,\Gg)$, the standard $\tau$-basis, the integral quotient representative, and the class $T_{M,\Gg}^{\Zz}$ do not involve a complete fan. A complete regular nested fan gives an additional toric model, and the usual smooth-toric K-presentation identifies its K-ring with the intrinsic presentation \cite{CLS11}. This is a conditional statement. The theorem itself is not conditional on this fan existing.
\end{proof}

\begin{cor}[Uniform atom-plus-top example]\label{cor:uniform-noncomplete}
Let $M=U_{r,n}$ with $2\leq r<n$, and let $\Gg$ be the atom-plus-top building set. Then the reduced nested fan is not complete, but all intrinsic conclusions of Theorem~\ref{thm:main-integral} hold for $(M,\Gg)$.
\end{cor}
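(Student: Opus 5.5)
The statement has two parts. The first is that the reduced nested fan of $(U_{r,n},\Gg)$ is not complete; the second is that all intrinsic conclusions of Theorem~\ref{thm:main-integral} hold. The second part needs no new argument: it is Theorem~\ref{thm:fan-support} specialized to $(M,\Gg)$. That theorem says the integral quotient, tangent, Todd/Hilbert, Chern-alpha and integer-coordinate conclusions live in $K_{\Zz}(M,\Gg)$ and make no use of completeness of the fan.

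Before that, we check that the atom-plus-top set $\Gg=\{\{i\}:i\in E\}\cup\{E\}$ is a top-containing Feichtner--Yuzvinsky building set in $L(U_{r,n})$.
\begin{itemize}
\item Since $U_{r,n}$ with $r\ge 2$ is simple, its atoms are the singletons.
\item For a flat $F$ of rank $<r$, the maximal elements of $\Gg$ below $F$ are the singletons of $F$.
\item The interval $[\emptyset,F]$ in a uniform matroid of rank $<r$ is boolean, so it equals $\prod_{i\in F}[\emptyset,\{i\}]$.
\item For $F=E$, the top flat itself lies in $\Gg$.
\end{itemize}
Hence $\Gg$ is a building set containing $E$.

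Next we describe the nested sets. A set $S\subseteq\Gg^\circ$ of singletons is nested exactly when the join of any two or more of its elements is not in $\Gg$. A set of $m\ge 2$ singletons has join equal to $E$ exactly when $m\ge r$. So the nested subsets of $\Gg^\circ$ are the sets of at most $r-1$ singletons. The reduced nested cone collection therefore lives in the atom lattice $\Zz^E/\Zz(1,\dots,1)$, which has rank $n-1$. Its rays are $e_i$ for $i\in E$, and its cones are spanned by at most $r-1$ of them.

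For incompleteness we give a dimension argument. Since $r<n$, every cone has dimension at most $r-1\le n-2<n-1$. A finite union of cones of dimension less than $n-1$ cannot cover $\mathbb R^{n-1}$. So the collection is not complete.

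The only delicate point is one boundary convention. We must make sure that the intended "reduced" fan drops the top ray and sits in the quotient by $(1,\dots,1)$, and is not a fan with the top flat kept as a ray. This is the convention used in Theorem~\ref{thm:fan-support}, so the argument above applies. The conclusion of the corollary then follows.
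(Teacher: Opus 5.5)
Your proposal is correct and follows the same route as the paper: check that $\Gg$ is a top-containing Feichtner--Yuzvinsky building set, observe that the reduced nested fan is not complete, and apply the intrinsic main theorem (equivalently Theorem~\ref{thm:fan-support}). Your building-set verification and dimension count (cones spanned by at most $r-1\le n-2$ rays in an $(n-1)$-dimensional space) supply the justification that the paper's one-line proof only asserts. Your worry about the fan convention is unnecessary: if the top ray were kept in $\mathbb R^n$, the cones would have dimension at most $r<n$, so the collection fails to be complete under either convention.
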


\begin{proof}
For $U_{r,n}$ with $2\leq r<n$, the atom-plus-top reduced nested fan does not fill the atom quotient. Thus the complete-toric interpretation of Theorem~\ref{thm:fan-support} is unavailable. Since $\Gg$ is still a top-containing Feichtner-Yuzvinsky building set, Theorem~\ref{thm:main-integral} applies intrinsically.
\end{proof}

\section{Proof of the closeout certificate}\label{sec:closeout}

The goal of this section is to package the final theorem, the one-step normalization audit, and the fan-support guard in one place.

\begin{thm}[Closeout certificate]\label{thm:closeout}
The following assertions hold simultaneously.
\begin{enumerate}
\item The intrinsic integral theorem of Theorem~\ref{thm:main-integral} holds for every loopless matroid and every top-containing Feichtner-Yuzvinsky building set.
\item The one-step generator-normalization audit of Propositions~\ref{prop:generator-normalization} and~\ref{prop:theta-descent} holds integrally before rationalization.
\item The fan-support guard of Theorem~\ref{thm:fan-support} holds, and the uniform atom-plus-top case of Corollary~\ref{cor:uniform-noncomplete} shows why the intrinsic wording is necessary.
\end{enumerate}
\end{thm}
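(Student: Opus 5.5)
The closeout certificate is a packaging statement, so the plan is to prove its three clauses by assembling results already established, in the order (2), (1), (3).

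For clause (2), I would go back to the one-step situation of Set-up~\ref{setup:theta-step}. The generator formulas of Proposition~\ref{prop:generator-normalization} are the definition of $\phi_K$ as an integral $\Zz$-algebra map, and $\phi_K(\tau_E)=\tau_E$ holds because $E\notin B$. So nothing needs to be rationalized there. For Proposition~\ref{prop:theta-descent} I would check that every ingredient is integral:
\begin{enumerate}
\item saturation of $\im\phi_K$, from Proposition~\ref{prop:one-step-saturation};
\item saturation of $\im\pi^*$, from Lemma~\ref{lem:geometric-saturation} via the free $K_0(Z)$ of Lemma~\ref{lem:wonderful-center};
\item the fact that two saturated subgroups with the same $\Qq$-span coincide.
\end{enumerate}
Then $\theta_{\mathrm{small}}$ is defined on $K_{\Zz}$ itself, and the generator values $[\Oo_{D_H}]$ and $(1-\tau_H)^{-1}\mapsto[\Oo(D_H)]$ are identities in $K_0$, not in $K_0\otimes\Qq$. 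The step needing most care is the $H\in B$ total-transform identity. I would verify it by the multiplicativity $1-[\Oo(-\pi^*D_H)]=1-[\Oo(-D_H)][\Oo(-D_{F_{\mathrm{new}}})]$, which is integral.

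For clause (1), I would quote the proof of Theorem~\ref{thm:main-integral} split into its two parts.
\begin{itemize}
\item The non-realizable clauses follow from Propositions~\ref{prop:quotient-descent} and~\ref{prop:linkage}, with integer coordinates from Proposition~\ref{prop:basis}, together with Proposition~\ref{prop:todd}, Corollary~\ref{cor:euler} and Proposition~\ref{prop:chern-alpha}.
\item The realizable clause follows by descending the LLPP comparison at $\Gg_{\max}$ step by step using clause (2). Equality $\theta_{\Gg}^{\Zz}(T^{\Zz}_{M,\Gg})=[T_{W_{L,\Gg}}]$ then follows from Step~6 of Proposition~\ref{prop:quotient-tangent} and torsion-freeness of $K_0(W_{L,\Gg})$.
\end{itemize}
I would note that the Chern-alpha part inherits the caveat on Lemma~\ref{lem:truncation-transfer}. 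It is supplied by \cite[Proposition~4.19]{Cheng26}.

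For clause (3), I would cite Theorem~\ref{thm:fan-support} and Corollary~\ref{cor:uniform-noncomplete}. The only substantive point is to exhibit non-completeness for $U_{r,n}$ with the atom-plus-top building set. Nested sets there are subsets of at most $r-1$ atoms, so the cones are spanned by at most $r-1$ of the $n$ rays $e_i$ in $\Zz^n/\Zz(1,\dots,1)$. Since $r-1<n-1$, the ray $-e_1$ (that is, $\sum_{i\neq1}e_i$) lies in the relative interior of the $(n-1)$-element face and in no cone. The fan is therefore not complete, while all intrinsic conclusions still hold by clause (1).

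I expect the main obstacle to be clause (2)'s integrality of generator normalization at the top generator $\tau_E\mapsto 1-[A]$. It must be checked that this normalization is preserved by $\pi^*$, which holds since $A_{\mathrm{big}}=\pi^*A_{\mathrm{small}}$ and $\phi_K(\tau_E)=\tau_E$.
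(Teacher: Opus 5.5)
Your proposal is correct and follows the paper's own argument: the paper proves the closeout certificate purely by assembly, citing Theorem~\ref{thm:main-integral} (via Sections~\ref{sec:quotient}, \ref{sec:theta}, \ref{sec:hilbert} and Proposition~\ref{prop:linkage}) for clause (1), the one-step descent package of Section~\ref{sec:theta} for clause (2), and Theorem~\ref{thm:fan-support} with Corollary~\ref{cor:uniform-noncomplete} for clause (3). Your extra checks are correct elaborations of steps the paper only cites: the integral total-transform identity for \(H\in B\), the explicit point \(-e_1\) outside the support of the atom-plus-top fan, and the caveat that the Chern--\(\alpha\) clause rests on Lemma~\ref{lem:truncation-transfer}, which is supplied by \cite[Proposition~4.19]{Cheng26}.
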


\begin{proof}
The first assertion is Theorem~\ref{thm:main-integral}, whose three clauses are established by the non-realizable proof in Section~\ref{sec:quotient}, the realizable proof in Section~\ref{sec:theta}, and the Hilbert/Chern conclusions of Section~\ref{sec:hilbert} (Propositions~\ref{prop:todd} and~\ref{prop:chern-alpha}), all linked to the in-paper rational tangent-class theorem (Proposition~\ref{prop:quotient-tangent}, Theorems~\ref{thm:hilbert-identity} and~\ref{thm:chern-alpha}) through Proposition~\ref{prop:linkage}. The second assertion is exactly the one-step descent package in Section~\ref{sec:theta}: atoms are rank-one flats, the map $\phi_K$ has the stated generator formula, the top flat is preserved, proper-boundary and top-generator compatibilities hold integrally in $K_0$, and the descended $\theta$ rationalizes to the prescribed comparison. The third assertion is Theorem~\ref{thm:fan-support} and Corollary~\ref{cor:uniform-noncomplete}. Their conjunction is the stated closeout certificate.
\end{proof}

\appendix

\section{Raw prompt}\label{sec:raw-prompt}

For completeness and reproducibility, we record below the raw prompt given to the AI agents, verbatim.

\begin{verbatim}
Let $M$ be a loopless matroid (not necessarily realizable) of rank
$d + 1$ on the ground set $E$, and let $G$ be a building set containing
the top flat $E$. The task is to construct a tangent class
$T_{M,G} \in K(M,G)$ such that

(i) If $M$ is realizable by $L$, $T_{M,G}$ is the tangent class for the
wonderful compactification $W_{L, G}$, which is a projective variety of
dimension $d$.

(ii) For integer $i$,
$\dim_{\mathbb{Q}} A^i(M, G)=(-1)^i\chi_{M, G} (\wedge^i T_{M, G}^\vee)
= (-1)^i \deg_{M, G}(ch(\wedge^i T_{M, G}^\vee) \cdot td(T_{M, G}))$

(iii) For $k \leq d$, we have
$\deg_{M,\G} (c_k(T_{M,\G}) \alpha^{d-k}) \geq \binom{d+1}{k}$

and prove rigorously that the constructed class satisfies the above
properties.
\end{verbatim}

\addtocontents{toc}{\protect\setcounter{tocdepth}{1}}
\section{Usage of Generative AI}
\label{sec:guoxiong-gao}

\begin{center}
\textsc{Guoxiong Gao \and Shurui Liu}
\end{center}

Except for the abstract and introduction, which were rewritten by the human authors, the main mathematical text, including Sections~\ref{sec:quotient}--\ref{sec:theta}, was generated by \textsc{Danus} \cite{Danus26}, a mathematical reasoning agent built on top of Rethlas \cite{Ju+26} by the same team and designed to support long-horizon mathematical reasoning. The main experiment was conducted from June 14 to June 18, 2026. At that time, the arXiv version of the related work \cite{Cheng26} was not yet publicly available online, and it was deliberately not provided to Danus, in order to reduce the possibility of data contamination. In this sense, apart from the abstract, introduction, and the explicitly inserted human comments, the body of the paper should be understood as a faithful presentation of Danus's performance on the open problem studied here. The human authors supplemented a small number of explanatory comments to improve readability and mark several places where the generated exposition appeared to skip intermediate details. Overall, the human authors verified that the solution produced by Danus has the correct construction and line of reasoning, although a few steps are compressed in the generated exposition. As noted in the introduction, the one exception is the justification of Lemma~\ref{lem:truncation-transfer} (used in the Chern--\(\alpha\) bound), which is incomplete as written; the lemma is nonetheless true, with a short proof, and the gap does not affect the main results. A technical report on Danus \cite{Danus26}, together with its code, has been released and open-sourced.

In brief, Danus uses Rethlas essentially through its existing worker--verifier architecture. In this experiment, multiple Rethlas worker agents, implemented as modified Codex agents using GPT-5.5, ran in parallel and explored the problem from several directions, including both constructive and refutational routes. These workers were coordinated by a main orchestrator agent, hereafter called the \emph{main agent}, which was developed on top of Claude Code and used Claude Opus 4.8. The main agent communicated with the human operators, allocated tasks to the workers, aggregated progress, and periodically updated the global plan. When necessary, it also consulted GPT-5.5 Pro for high-level mathematical strategy references. The Rethlas verifier was available as a service for both the main agent and the workers. During the run, workers repeatedly proposed verifiable mathematical statements together with their supporting proofs and submitted them to the verifier; statements whose proofs passed verification were stored as \emph{facts}. These facts form a directed acyclic graph (DAG), called the \emph{fact graph} (see \cite[Section~3.6 and Figure~2]{Danus26}), whose edges record logical dependencies. The iteration was not stopped after a preset number of rounds: it stopped only when the main agent confirmed that the designated target theorem itself appeared as a verified fact in the fact graph. The main agent then read the fact graph, selected the essential material needed for the paper, and wrote the manuscript on its own. It subsequently revised the draft again with verifier assistance, producing the text presented in the main body.

During the production of the main body of this paper, the human authors only provided the problem statement (see Appendix~\ref{sec:raw-prompt}), monitored the process, and issued operational instructions that did not insert mathematical content, such as ``Start the loop of consulting GPT-5.5 Pro once every two hours'', ``Please summarize the current status'', and ``Please produce the paper''. Most mistakes made during the exploration were detected and corrected by the Danus pipeline itself. However, after the system had produced a complete manuscript and declared the task finished, the authors pointed out that the obtained solution addressed only the rational version of the intended problem. This partly reflects an abuse of notation in the prompt (Appendix~\ref{sec:raw-prompt}), where the same symbol $T_{M,\Gg}$ denotes both the integral tangent class in $K(M,\Gg)$ and its rationalization, on which the Hirzebruch--Riemann--Roch and Chern-number conditions are phrased; Danus first constructed only the rationalization. Danus then resumed the exploration and added Sections~\ref{sec:quotient}--\ref{sec:theta}, which adapt the rational class to an integral class and thereby settle the original integral problem. In addition, we found that the paper generated by the agent can be difficult to read: it may omit intermediate steps and may not always explain newly introduced concepts sufficiently for a first-time reader. This reflects a tension between being faithful to the system's internal proof record and producing a readable mathematical exposition. The most faithful presentation would simply linearize the hundreds of verified facts supporting the main claim, but such a document would be nearly unreadable; conversely, a readable exposition requires the main agent to reorganize facts, add motivation, and omit some intermediate dependencies, which can create apparent gaps. The current manuscript is the best balance we obtained between these two objectives.

The full experiment ran for approximately five days, with about 50 hours of active runtime for the Danus agent. There were seven Rethlas workers, with the Codex effort level set to ``xhigh'' for three workers and ``high'' for four workers. The main agent consulted GPT-5.5 Pro 13 times. The final fact graph contains 3,157 verified facts, and the global memory of Danus contains 636 proof attempts, 533 plans, 496 identified obstacles, 151 counterexamples, 25 recorded dead-ends, and 3,422 verification records. The only mathematical intervention by the human authors during the process was to point out, after completion of the first manuscript, that the system had solved only the rational version of the problem.

The fact graph is designed to be the unique source of truth for the whole system. Table~\ref{tab:closure} summarizes, based on a statistical pass by the main agent, how many facts are related to the present paper, which establishes the integral version of the theorem. Among the 3,157 verified facts, 664 lie in the supporting closure of the final integral main-theorem fact---together with the self-contained rational backbone (the realizable comparison, the Hilbert identity, and the Chern--$\alpha$ bound) that the paper re-proves in full---while 2,493 lie outside this closure and are unused by the paper. Among the 664 in-closure facts, 629 are internal supporting lemmas that never appear as explicit statements in the paper.

\begin{table}[h!]
\centering
\small
\begin{tabular}{p{0.62\linewidth}r r}
\hline
Category & Count & Percentage of all facts \\
\hline
Integral main-theorem closure & 664 & 21\% \\
Outside the closure, unused by the integral paper & 2,493 & 79\% \\
\hline
\end{tabular}
\caption{Distribution of facts in the global fact graph.}
\label{tab:closure}
\end{table}

Table~\ref{tab:supporting-roles} describes the functional roles of the 629 in-closure supporting lemmas that do not appear as named statements in the paper. These facts are not separate results omitted from the exposition; rather, they are the internal verification layer that supports the compressed mathematical narrative in the main text.

\begin{table}[h!]
\centering
\small
\begin{tabular}{p{0.22\linewidth}r p{0.62\linewidth}}
\hline
Role & Count & Function in the proof \\
\hline
Chern-$\alpha$ chain & 154 & The inductive argument on positivity and lower-bound, including base cases, one-flat propagation, and connected rank-$\geq 4$ and rank-$\geq 5$ steps, together with the nested-Segre tails that drive the key inequality in the paper. \\
Integral lift & 109 & The lift of the rational construction to the integral $K$-ring $K_{\mathbb{Z}}(M,\mathcal G)$: the saturated one-step descent producing the integral quotient class, the integral $\theta$-descent and generator normalization, the linkage $\rho_K(T^{\mathbb{Z}})=T_{M,\mathcal G}$ that transfers the Hirzebruch--Riemann--Roch and Chern-number conclusions to the integral class, and the Todd--Hilbert integrality checks. This is the layer specific to Sections~\ref{sec:quotient}--\ref{sec:theta}. \\
One-flat step & 102 & The per-flat induction mechanism, including pullback injectivity for one-step extensions and tensor-product identifications for centers; this is the local mechanism that makes the building-set induction work. \\
Glue & 73 & The structural compatibility facts needed between successive steps, including building-set characterizations and the identities required to pass between local constructions and the global statement. \\
Construction & 59 & The construction of the central objects, including the descended quotient class $C_Q$, the tangent classes $T_{M,\mathcal G}$, the Chern-character comparison, and the internal comparison on which the theorem is formulated. \\
Hilbert chain & 59 & The $P^K=\operatorname{Hilb}$ part of the proof, including low-rank base cases and the building-set induction that transports the identity to the final setting. \\
Realizable & 23 & The comparison between the combinatorial construction and realizable wonderful models, including the log-tangent identification connecting the internal ring-theoretic formalism with geometry. \\
BEST input & 16 & The Berget--Eur--Spink--Tseng structural input used as the tautological anchor for the descent argument. \\
External cite & 10 & Internal wrappers of cited external theorems, reformulated in the notation of the project and later represented in the bibliography rather than as new claims of the paper. \\
Other & 24 & Minor auxiliary facts touched by the closure computation. \\
\hline
\end{tabular}
\caption{Functional decomposition of the 629 supporting lemmas in the integral main-theorem closure that are not stated explicitly in the paper.}
\label{tab:supporting-roles}
\end{table}

Table~\ref{tab:unused-clusters} summarizes the remaining 2,493 facts outside the integral main-theorem closure. These record auxiliary infrastructure, off-path integral exploration, conditional scaffolding, literature reconstructions, alternative derivations, or abandoned attempts. Thus, ``unused'' here means unused by the final integral proof, not mathematically meaningless or unhelpful to the search process.

\begin{table}[h!]
\centering
\small
\begin{tabular}{p{0.23\linewidth}r p{0.61\linewidth}}
\hline
Cluster & Count & Description \\
\hline
\texttt{AUX\_INFRA} & 849 & Scratch and infrastructure computations, including low-rank base cases such as rank $1$--$7$ uniform, graphic, and sparse-paving examples, toric and Hirzebruch--Riemann--Roch infrastructure, blowup calculations, and normal-form determinant checks. This formed much of the computational substrate of the search. \\
\texttt{INTEGRAL\_EXPLORE} & 587 & Integral $K_{\mathbb{Z}}$ exploration lying outside the final proof: superseded or alternative integral-lift routes---earlier saturation, descent, and torsion-freeness attempts---that were replaced by the route actually used in Sections~\ref{sec:quotient}--\ref{sec:theta}. The integral facts that the final proof does use are counted inside the closure, not here. \\
\texttt{CONDITIONAL\_SHELL} & 562 & Conditional ``if--then'' scaffolding that was useful during exploration but did not itself close into the final proof, including inductive endpoint packages, higher-rank conditional reductions, and truncation templates. \\
\texttt{ALT\_PROOF} & 229 & Redundant but valid alternative derivations, kept as cross-checks for the main route; these include independent approaches to the Chern-$\alpha$ inequality and to the $P^K=\operatorname{Hilb}$ identity. \\
\texttt{LIT\_RECON} & 178 & Reconstructions of external results in the notation of the project, mainly to align conventions and make cited theorems usable by the internal proof search. These facts are reflected in the final paper mostly as references rather than as separately stated lemmas. \\
\texttt{OTHER} & 88 & Off-path fragments not belonging to the main clusters above. \\
\hline
\end{tabular}
\caption{Cluster decomposition of the 2,493 facts outside the integral main-theorem closure.}
\label{tab:unused-clusters}
\end{table}

\end{document}